\numberwithin{equation}{section}
\theoremstyle{definition}
\theoremstyle{theorem}
\newtheorem{theorem}{Theorem}[section]
\newtheorem{lemma}[theorem]{Lemma}
\newtheorem{proposition}[theorem]{Proposition}
\newtheorem{corollary}[theorem]{Corollary}
\theoremstyle{definition}
\newtheorem{definition}[theorem]{Definition}
\newtheorem{remark}[theorem]{Remark}
\renewcommand{\a}{\alpha}
\newcommand{\ova}{{\overline\alpha}}
\renewcommand{\b}{\beta}
\newcommand{\g}{\gamma}
\renewcommand{\d}{\delta}
\newcommand{\z}{\zeta}
\newcommand{\x}{\xi}
\newcommand{\Ups}{\Lambda}%{\varDelta}%{\Upsilon}
\newcommand{\sub}{\subset}
\newcommand{\R}{\mathbb{R}}
\newcommand{\N}{\mathbb{N}}
\newcommand{\C}{\mathbb{C}}
\newcommand{\Z}{\mathbb{Z}}
\newcommand{\X}{\mathbb{X}}
\newcommand{\MM}{\mathcal{M}}
\newcommand{\NN}{\mathcal{N}}
\newcommand{\FF}{\mathcal{F}}
\newcommand{\PP}{\mathcal{P}}
\newcommand{\TP}{\mathcal{TP}}
\newcommand{\we}{\wedge}
\newcommand{\ot}{\otimes}
\newcommand{\WE}{{\textstyle\bigwedge}}
\newcommand{\hel} {
\hskip2.5pt{\vrule height7pt width.5pt depth0pt}
\hskip-.2pt\vbox{\hrule height.5pt width7pt depth0pt}
\, }
\newcommand{\restr}{\hel}
\newcommand{\pf} {\,\!_\#\,} %{\raisebox{-1pt}{\scriptsize \#}} %
\renewcommand{\j}{\jmath}
\renewcommand{\i}{\imath}
\newcommand{\sm}{\backslash}
\newcommand{\pt}{\partial}
\newcommand{\vhi}{\varphi}
\newcommand{\eps}{\varepsilon}
\newcommand{\oo}{\infty}
\newcommand{\loc}{_{\textnormal{loc}}}
\newcommand{\ov}{\overline}
\newcommand{\wh}{\widehat}
\newcommand{\wt}{\widetilde}
\newcommand{\h}{\mathcal{H}}
\newcommand{\dw}{\downarrow}
\newcommand{\up}{\uparrow}
\newcommand{\cd}{\cdot}
\newcommand{\longto}{\longrightarrow}
\newcommand{\Id}{\text{Id}}
\renewcommand{\t}{\times}
\newcommand\un{\bs{1}}
\newcommand{\Om}{\Omega}
\newcommand{\om}{\omega}
\newcommand{\ds}{\displaystyle}
\newcommand{\lb}{\llbracket}
\newcommand{\rb}{\rrbracket}
\newcommand{\be}{\begin{equation}}
\newcommand{\ee}{\end{equation}}
\newcommand{\XXint}[3]{{\setbox0=\hbox{$#1{#2#3}{\int}$}
      \vcenter{\hbox{$#2#3$}}\kern-.5\we0}}
\newcommand{\void}{\varnothing}
\newcommand{\st}{\stackrel}
\newcommand{\lt}{\left}
\newcommand{\rt}{\right}
\newcommand{\bs}{\boldsymbol}
\newcommand{\M}{\mathbb{M}}
\newcommand{\F}{\mathbb{F}}
\newcommand{\Mt}{\mathbb{M}^{\text{\tiny$\times$}}}
\newcommand{\Ft}{\mathbb{F}^{\text{\tiny$\times$}}}
\newcommand{\Nt}{\mathbb{N}^{\text{\tiny$\times$}}}
\newcommand{\Fw}{\mathbb{F}^{\text{\tiny$\we$}}}
\newcommand{\Mw}{\mathbb{M}^{\text{\tiny$\we$}}}
\newcommand{\Nw}{\mathbb{N}^{\text{\tiny$\we$}}}
\newcommand{\chiw}{\chi^{\text{\tiny$\we$}}}
\newcommand{\psiw}{\psi^{\text{\tiny$\we$}}}
\newcommand{\et}{\,\!_*\,}
\newcommand{\E}{\mathcal{E}}
\newcommand{\GG}{\mathcal{G}}
\newcommand{\HH}{\mathcal{H}}
\DeclareMathOperator{\frt}{frt}
\DeclareMathOperator{\Lip}{Lip}
\DeclareMathOperator{\supp}{supp}
\DeclareMathOperator{\Span}{span}
\DeclareMathOperator{\Sl}{Sl}
\DeclareMathOperator{\bndry}{fr}
\DeclareMathOperator{\Ima}{Im}
\newcommand{\comfig}[1]{#1}
\title{Tensor rectifiable $G$-flat chains}
\author{M. Goldman\footnote{ CMAP, CNRS, \'Ecole polytechnique, Institut Polytechnique de Paris, 91120 Palaiseau,
France, email: michael.goldman@cnrs.fr} \and B. Merlet\footnote{Univ. Lille, CNRS, UMR 8524, Inria - Laboratoire Paul Painlevé, F-59000 Lille, email: benoit.merlet@univ-lille.fr}}
\begin{document}
\maketitle

\begin{abstract}
A rigidity result for normal rectifiable $k$-chains in $\R^n$ with coefficients in an Abelian normed group is established. Given some decompositions $k=k_1+k_2$, $n=n_1+n_2$ and some rectifiable $k$-chain $A$ in $\R^n$, we consider the properties:\\ 
(1)  The tangent planes to $\mu_A$ split as $T_x\mu_A=L^1(x)\times L^2(x)$ for some $k_1$-plane $L^1(x)\sub\R^{n_1}$ and some $k_2$-plane $L^2(x)\sub\R^{n_2}$.\\
(2)  $A=A\restr\Sigma^1\t\Sigma^2$ for some sets $\Sigma^1\sub\R^{n_1}$,  $\Sigma^2\sub\R^{n_2}$ such that $\Sigma^1$ is $k_1$-rectifiable and $\Sigma^2$ is $k_2$-rectifiable (we say that $A$ is $(k_1,k_2)$-rectifiable).\\
The main result is that for \emph{normal} chains, (1) implies (2), the converse is immediate. In the proof we introduce the new groups of \emph{tensor flat chains} (or $(k_1,k_2)$-chains) in $\R^{n_1}\t\R^{n_2}$ which generalize Fleming's $G$-flat chains. The other main tool is White's rectifiable slices theorem. 
We show that  on the one hand  any normal rectifiable chain satisfying~(1) identifies with a normal rectifiable $(k_1,k_2)$-chain and that on the other hand any normal rectifiable $(k_1,k_2)$-chain is $(k_1,k_2)$-rectifiable.
 
\end{abstract}

%\tableofcontents

\section{Introduction}

 Let  $n,n_1,n_2,k,k_1,k_2$ be nonnegative integers such that 
 \[
0\le k_1\le n_1,\quad0\le k_2\le n_2,\quad  k_1+k_2=k,\quad n_1+n_2=n.
 \] 
 We consider a $k$-rectifiable finite positive measures $\mu$ in $\R^n$. Writing $\mu=\rho\h^k\restr\Sigma$ where $\rho:\R^n\to\R_+$ is Borel measurable and $\Sigma\sub\R^n$ is a $k$-rectifiable set, $\Sigma$ admits an approximate tangent $k$-plane $\mu$-almost everywhere. Denoting by  $T_x \mu$ this approximate tangent $k$-plane we have
\[
\forall\,\vhi\in C_c(\R^n)\qquad\dfrac1{r^k}\int\vhi\lt(\dfrac1r(y-x)\rt)\,d\mu(y)\ \st{r\dw\oo}\longto\ \rho(x)\int_{T_x\mu}\vhi\,d\h^k.
\] 
 
 \begin{definition}\label{def:murectif}
We say that $\mu$ is $(k_1,k_2)$-rectifiable if $\mu$ is $k$-rectifiable and $\mu=\mu\restr\Sigma^1\t\Sigma^2$ for some $k_l$-rectifiable sets $\Sigma^l\sub\R^{n_l},\ l=1,2$.
Equivalently, $\mu(\R^n\sm\Sigma^1\t\Sigma^2)=0$.
\end{definition}
If $\mu$ is $(k_1,k_2)$-rectifiable then at $\mu$-almost every point $x\in\R^n$ the approximate tangent plane $T_x\mu$ decomposes, with the notation of the definition, as  
 \be\label{obvious0}
T_x\mu=T_x(\Sigma^1\t\Sigma^2)=T_{x^1}\Sigma^1\t T_{x^2}\Sigma^2,
\ee
where we write $x=(x^1,x^2)$ with $x^l\in\R^{n_l}$ for $l=1,2$.
\begin{definition}\label{def:musplit}
We say that  $T\mu$  is $(k_1,k_2)$-split if for $\mu$-almost every $x$ the tangent plane $T_x\mu$ writes as $T_x\mu=L^1(x)\t L^2(x)$  for some $k_l$-planes $L^l(x)\sub\R^{n_l},\ l=1,2$.
\end{definition}
Notice that the two definitions depend on $(n_1,n_2)$ and not only on $(k_1,k_2)$. We should write for instance ``$T\mu$ is $(k_1,k_2)$-split with respect to the decomposition $(n_1,n_2)$'' but to lighten notation we avoid explicit references to $n_1$, $n_2$.\medskip

With this vocabulary,~\eqref{obvious0} rewrites as follows. For any $k$-rectifiable measure $\mu$ on $\R^n$,
 \be\label{obvious}
\mu\text{ is }(k_1,k_2)\text{-rectifiable}
 \implies\ T\mu\text{ is }(k_1,k_2)\text{-split.}
\ee

The main result of the article is a partial converse of~\eqref{obvious}.
 
 \begin{theorem}\label{thm_main}
Let $G$ be a complete Abelian normed group and let $A$ be a normal and rectifiable $k$-chain with coefficients in $G$. Denoting $\mu_A$ the measure defined by $\mu_A(S)=\M(A\restr S)$ for $S$ Borel subset of $\R^n$, the following are equivalent.
\begin{enumerate}[(i)]
\item $T\mu_A$  is $(k_1,k_2)$-split.
\item $\mu_A$ is $(k_1,k_2)$-rectifiable.
\end{enumerate}
The latter is equivalent to $A=A\restr\Sigma^1\times\Sigma^2$ for some $k_l$-rectifiable sets $\Sigma^l\sub\R^{n_l}$, $l=1,2$. We also say that $A$ is $(k_1,k_2)$-rectifiable.
\end{theorem}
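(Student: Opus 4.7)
The direction (ii)$\Rightarrow$(i) is immediate from~\eqref{obvious} applied to $\mu_A$; the content of the theorem is the converse. I follow the two-step program advertised in the abstract, which hinges on the newly introduced groups of \emph{tensor flat chains} in $\R^{n_1}\times\R^{n_2}$. I expect these to be constructed as a suitable completion of free tensor products $A_1\ot A_2$ of flat $k_l$-chains $A_l$ in $\R^{n_l}$, under a tensorized flat norm designed so that the boundary obeys a Leibniz rule and so that the mass of a simple tensor is $\M(A_1)\M(A_2)$. In particular, normality is preserved under these operations, and there is a canonical morphism $\pi$ from tensor $(k_1,k_2)$-chains to ordinary flat $k$-chains in $\R^n$, given fiberwise by wedging the two multivector densities.

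\textbf{Step 1: Lifting $A$ to a tensor chain.} Assuming (i), for $\mu_A$-a.e.\ $x=(x^1,x^2)$ the unit simple $k$-vector orienting the approximate tangent plane of $A$ factors as $\vec A(x)=\vec A_1(x)\wedge\vec A_2(x)$ with $\vec A_l(x)\in\WE_{k_l}\R^{n_l}$. Using this measurable pointwise factorization together with the normality of $A$, one builds a normal tensor $(k_1,k_2)$-chain $\hat A$ with $\pi(\hat A)=A$. The factorization gives a candidate multivector density for $\hat A$; the real work is to certify that the resulting object lies in the completed tensor group, i.e.\ that the natural polyhedral or rectifiable approximations converge in the tensorized flat norm, for which normality (and not merely finite mass) seems indispensable.

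\textbf{Step 2: Extracting a product rectifiable set.} Once $A$ is realized as a normal rectifiable tensor $(k_1,k_2)$-chain, I slice $\hat A$ along the first factor. A Fubini-type statement in the tensor framework shows that the slice $\hat A\restr\{y^1\}\times\R^{n_2}$ is, for $\h^{n_1}$-a.e.\ $y^1$, a flat $k_2$-chain in $\R^{n_2}$ whose mass and boundary mass are integrable in $y^1$. White's rectifiable slices theorem then yields rectifiability of these slices, and a measurable selection lets one collect their carriers into a single $k_2$-rectifiable set $\Sigma^2\sub\R^{n_2}$. Symmetrically one obtains a $k_1$-rectifiable $\Sigma^1\sub\R^{n_1}$. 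Combining the two inclusions gives $\supp\mu_A\sub\Sigma^1\times\Sigma^2$, hence $A=A\restr\Sigma^1\times\Sigma^2$.

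\textbf{Main obstacle.} The heart of the argument is Step~1: translating an almost-everywhere condition on the tangent planes of the scalar measure $\mu_A$ into a genuine factorization of the $G$-valued chain $A$. The splitting hypothesis a priori says nothing about $G$-multiplicities, so the lift $\hat A$ must be constructed by hand, typically by approximation by pieces adapted to the product structure, and shown to converge in the tensorized flat norm. This is precisely what the tensor chain formalism is introduced to handle, and it is the main technical novelty underlying the theorem.
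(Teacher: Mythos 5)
Your proposal captures the two broad tools the paper uses---tensor flat chains and White's rectifiable slices theorem---but it misidentifies where the difficulty lies and has several gaps that would block the argument.

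The ``lift'' in your Step~1 is in fact the easy part. The paper's morphism $\j$ (Section~\ref{Ssij}) is defined on all of $\FF^G_k(\R^n)$ by density from polyhedral chains, so $\hat A:=\j_{k_1,k_2}A$ is always a well-defined tensor chain---there is no convergence issue to certify. The content of the splitting hypothesis~(i) is not the existence of $\hat A$ but the vanishing of the \emph{other} components $\j_{k'_1,k'_2}A$ for $(k'_1,k'_2)\ne(k_1,k_2)$ (Corollary~\ref{coro_splitting_iff_jk'1k'2=0}, via Proposition~\ref{prop_split_and_Sl}), and to recover $A$ from $\hat A$ at the very end one needs the injectivity of $\j$ (Theorem~\ref{thm_j_121}), which rests on the identification of finite mass $(0,0)$-chains with $G$-valued measures. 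None of this is reflected in your sketch, and your claim that ``the mass of a simple tensor is $\M(A_1)\M(A_2)$'' is false in general ($G=\Z/2\Z$, see~\eqref{2.1=0}); only the inequality $\Mw(A^1\we A^2)\le\M(A^1)\M(A^2)$ holds.

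The genuinely hard step, which is missing from your proposal, is the base case $k_1=0$ (Proposition~\ref{prop_main_0k}): a normal rectifiable chain $A$ with $(0,k)$-split tangents satisfies $A=\sum_j\lb y^1_j\rb\times A^2_j$. This is where the tangent-plane information is converted into a global countable decomposition, and it requires the set-decomposition theorem for normal rectifiable chains (Theorem~\ref{thm_decomp}), the divergence formula for restrictions to half-spaces (Proposition~\ref{prop_divformula}), and the isomorphism $\NN^G_{k,(0,k)}\cong\NN^G_{0,k}$ (Theorem~\ref{thm_Ups}). Your Step~2 then has a dimension error: slicing by $\{y^1\}\times\R^{n_2}$ (codimension $n_1$) yields a $(k-n_1)$-chain, not a $k_2$-chain, and is zero whenever $k_1<n_1$; the paper instead applies $\i$ to view $A'$ as a $k_1$-chain $\i A'$ in $\R^{n_1}$ with coefficients in $\FF^G_{k_2}(\R^{n_2})$, and studies its $0$-slices by codimension-$k_1$ planes. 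You also have White's theorem backwards: it goes from rectifiable $0$-slices to a rectifiable chain, so one must \emph{first} prove the $0$-slices are atomic (this is exactly what Proposition~\ref{prop_main_0k} provides) and \emph{then} apply White to produce $\Sigma^1$. Finally, ``collecting carriers'' from an uncountable family of rectifiable slices into one rectifiable set does not work on its own---White's theorem is precisely the device that performs this gluing.
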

This result is a \emph{first order} rigidity property in the sense that a local information about the tangent planes to $\mu_A$ yields a global behavior of the measure $\mu_A$, compared for instance to Alexandrov's rigidity theorem~\cite{Alexandrov62} where an information about \emph{second order} derivatives (the compact manifold has constant mean curvature) yields a global information (it is a sphere).\medskip

The theorem concerns rectifiable chains and not general rectifiable measures. However, given a $k$-rectifiable finite positive Borel measure $\mu$ on $\R^n$, we have $\mu_A=\mu$ for any real flat chain $A=\x  \mu$ where $x\mapsto \x(x)$ is any Borel measurable choice of orientation of the tangent planes $T_x\mu$. The difficulty is that the theorem applies only to chains with finite boundary mass and except in the cases $\mu=0$ and $k=0$, there holds $\M(\pt A)=\oo$ for most (and sometimes all)  of the choices of  $\x$. On the bright side, we can pick $A$ with coefficients in any complete Abelian normed group, which might help for measures with branching structure, see e.g. \cite{MarMas12,MarMul,BonOrlOud16}. Anyway, the hypothesis on $\pt A$ (or at least some weaker condition) is necessary as shown by the counterexample~5 of Section~\ref{Sexples}. Consequently, without further assumption, the fact that for a $k$-rectifiable measure $\mu$, $T\mu$ is $(k_1,k_2)$-split does not imply in general that $\mu$ is $(k_1,k_2)$-rectifiable.\medskip

This article is motivated by former and current works by the authors. More precisely,  in~\cite{GM1} we studied some functional $\E(u)$ that penalize oblique oscillations of a function $u:(0,1)^{n_1}\t(0,1)^{n_2}\to\R$. The functional vanishes on functions $u(x^1,x^2)$ of the form $u(x)=u_1(x^1)$ or $u(x)=u_2(x^2)$.  The sequel~\cite{GM2} studies the $\R^{n_1\t n_2}$-valued distribution $\mu[u]$ obtained by extracting the block $D_{x^1}D_{x^2} u$ from the Hessian matrix $D^2u$. In particular $\mu[u]$ vanishes if and only if $u$ decomposes as $u_1(x^1)+u_2(x^2)$. Theorem \ref{thm_main} is used in \cite{GM2} to establish that when $u$ is bounded and such that $\E(u)<\oo$ then $\mu[u]$ is a $(n_1-1,n_2-1)$-rectifiable measure. As pointed out in \cite{GM2}, this example fits in the general framework treated in \cite[Theorem 1.5]{arroyo} which however would yield a much weaker result namely that the most singular part of $\mu[u]$ is $(n-2)-$rectifiable. \medskip

\subsection*{Outline of the proof of Theorem~\ref{thm_main}}
\label{Ss_outline}
 
First, the direction (ii)$\implies$(i) is the easy part and in fact holds for general rectifiable measures as stated in~\eqref{obvious}.\medskip

Let us give a quick idea of the proof of (i)$\implies$(ii).\\
\noindent
\textit{Step 1. The case $k_1=0$ (Proposition~\ref{prop_main_0k}).} 
 We prove that  if $A$ is a normal rectifiable $k$-chain in $\R^n$ such that $T\mu_A$ is $(0,k)$-split then, there exist sequences $y^1_j\in \R^{n_1}$ and $A^2_j$ normal and rectifiable $k-$chains in $\R^{n_2}$ such that 
\be
\label{intro_k1=0}
A=\sum \lb y^1_j \rb \times A^2_j. 
\ee

By~\cite[Theorem~1.2]{GM_decomp}, given a normal rectifiable $k$-chain $A$ in $\R^n$, there exists a partition $S_j$ of $\R^n$ in Borel sets such that setting $A_j:=A\restr S_j$, the $A_j$'s are normal rectifiable chains and 
\[
\N(A)=\sum\N(A_j).
\]
Moreover this decomposition is maximal in the sense that for each $j$ and for every Borel set $S\sub\R^n$ there holds 
\[
\N(A_j)=\N(A_j\restr S)+\N(A_j\restr(\R^n\sm S))\ \implies\  A_j\restr S=A_j\text{ or }A_j\restr S=0.
\]
 We say that $A_j$ is set-indecomposable. Since we argue separately on each $A_j$, we may assume that $A=A_j$ is set-indecomposable and prove that $A=\lb y\rb\times A^2$.\\ 
 Next, given $i\in\{1,\dots,n_1\}$, we consider the family of half-spaces $H_i(s)=\{x\in\R^n:x_i>s\}$. The fact that $T\mu_A$ is $(0,k)$-split implies that $T_x\mu_A$ and $\bndry H_i(s)=\{x:x_i=s\}$ are parallel so that, roughly speaking, the intersection of $A$ and  $\{x:x_i=s\}$ is not transverse ($A$ does not cross the hyperplane $\{x:x_i=s\}$). With this idea, we establish rigorously that the slice of $A$ by $\{x:x_i=s\}$ vanishes. Equivalently, the hyperplane $\{x:x_i=s\}$ splits $A$ in $A=A\restr H_i(s)+ A\restr(\R^n\sm H_i(s))$ without creating additional boundary, we have:
 \[
 \N(A)=\N(A\restr H_i(s))+\N\lt(A\restr(\R^n\sm H_i(s))\rt)\
 \]
Since $A$ is indecomposable this implies that $A\restr H_i(s)=0$ or $A\restr (\R^n\backslash H_i(s))=0$. Proceeding by dichotomy we obtain that $A=A\restr\{x:x_i=y_i\}$ for some $y_i\in\R$ and using all the directions $e_i$ for $i=1,\dots,n_1$ we deduce that $A$ is supported in $\{y^1\}\t\R^{n_2}$ for some $y^1\in \R^{n_1}$. This concludes the proof of \eqref{intro_k1=0}.
\medskip

\noindent
\textit{Step 2. The  general case (Theorem~\ref{thm_main_bis}).}
If $P\in \PP_k^G(\R^n)$ is a polyhedral chain and $T\mu_P$ is $(k_1,k_2)$-split it is not hard to see that it must be of the form
\[%\be\label{intro_P}
P=\sum g_ip_i^1\t p_i^2,
\]%\ee
where $g_j\in G$ and for $l=1,2$, $p^l_i$ is an oriented polyhedral $k_l$-cell in $\R^{n_l}$. In particular $P$ is in the tensor product $\PP_{k_1,k_2}^\Z(\R^n):=\PP_{k_1}^G(\R^{n_1}) \ot \PP_{k_2}^G(\R^{n_2})$. Moreover, it can be alternatively interpreted as an element of $\FF_{k_1}(\R^{n_1}, \FF_{k_2}^G(\R^{n_2}))$ (the set of $k_1$-flat chains with values in the group $\FF_{k_2}^G(\R^{n_2})$ of $k_2$-chains with values in $G$) through the identification
\[
\i P:=\sum\lt[g_ip_i^2\rt]p_i^1.
\]
In this notation we consider $ g_ip_i^2$ as an element of $\FF_{k_2}^G(\R^{n_2})$.
 One of the main points of the proof of Theorem~\ref{thm_main_bis} is to show that every normal and rectifiable chain $A$ for which $T\mu_A$ is $(k_1,k_2)$-split identifies with an element $\i A$ of $\FF_{k_1}(\R^{n_1}, \FF_{k_2}^G(\R^{n_2}))$. The advantage is that we can now  apply the rectifiable slices theorem  \cite{White1999-2} of White to $\i A$. Indeed, at least formally, $0$-slices of $\i A$ correspond to $(0,k_2)-$slices of $A$ and the fact that $A$ is $(k_1,k_2)$-split implies that the $0$ slices of $\i A$ are $(0,k_2)$-split. Using  {\it Step 1} we can write these slices in the form \eqref{intro_k1=0}. In particular this means that the $0$-slices of $\i A$ should write as
 \[
  \sum A_j^2 \lb y_j^1\rb
 \]
which exactly means that they are $0-$rectifiable. By White's rectifiable slices theorem we then obtain that $\i A=(\i A)\restr\Sigma_1$ for some $k_1$-rectifiable set $\Sigma_1\subset \R^{n_1}$. This yields $A= A\restr\Sigma_1\times \R^{n_2}$. Eventually, exchanging the roles of $n_1$ and $n_2$ concludes the proof.\medskip

In order to make this sketch of proof rigorous, an essential step is to extend the operator $\i$. As usual for flat chains, this is done by continuity. Setting for $P\in \PP_{k_1,k_2}^G(\R^n)$
\begin{equation}\label{eq_isom_intro}
\Fw(P):=\F(\i P),
\end{equation}
we see that $\Fw$ is a norm on $\PP_{k_1,k_2}^G(\R^n)$. The operator $\i$ then naturally extends to the completion of this space for the $\Fw$ norm. It turns out (see Proposition \ref{prop_tensor}) that this space is exactly the same as the space obtained as completion by the projective norm (which coincides with $\Fw$ in this case) of the tensor product $\FF^\Z_{k_1}(\R^{n_1})\ot\FF^G_{k_2}(\R^{n_2})$. For this reason we call this  (new) group, the group of tensor flat chains (or $(k_1,k_2)$-chains). One of the contributions of the paper is to start developing  the theory of these tensor flat chains. The overall theory resemble the one for flat chain. For instance, $\Fw$ has a Whitney type representation (see Proposition \ref{prop_Wwalternative}). In order to clarify the analogies with  the classical theory of flat chain, rather than \eqref{eq_isom_intro}, we actually take  this definition as the  starting point of our construction, see \eqref{WwP}. An important role in the theory is played by the two partial boundary operators $\pt_1$ and $\pt_2$ which for a tensor flat chain $A'=A^1\ot A^2$ are given by
\[
\pt_1A':=(\pt A^1)\ot A^2,\qquad\qquad\pt_2A':=(-1)^{k_1} A^1\ot(\pt A^2).
\]
Because of this extra structure, in the core of the paper, we use to the notation $A^1\we A^2$ rather that $A^1\ot A^2$.

% In the path to the main theorem, we introduce the (new) groups of tensor flat chains ($(k_1,k_2)$-chains) and expose their basic theory. These groups are associated with the decomposition of $\R^n$ as $\R^{n_1}\t\R^{n_2}$ and in the case $n_1=0$ or $n_2=0$, we recover Fleming's theory of $G$-flat chains~\cite{Fleming66}.
% One way for defining these groups consists in considering the tensor product $\FF^\Z_{k_1}(\R^{n_1})\ot\FF^G_{k_2}(\R^{n_2})$ (viewed as a tensor product of $\Z$-modules). An element $A'\in\FF^\Z_{k_1}(\R^{n_1})\ot\FF^G_{k_2}(\R^{n_2})$ admits representations of the form,
% \be\label{intro_A'}
% A'=\sum A_j^1\ot A_j^2.
% \ee
% Setting $\Fw(A'):=\inf \sum\F(A_j^1)\F(A_j^2)$, where the infimum runs over all the representations~\eqref{intro_A'}, we obtain a norm on the group $\FF^\Z_{k_1}(\R^{n_1})\ot\FF^G_{k_2}(\R^{n_2})$. The group of $(k_1,k_2)$-chains in $\R^{n_1}\t\R^{n_2}$ with coefficients in $G$ is simply the completion of $(\FF^\Z_{k_1}(\R^{n_1})\ot\FF^G_{k_2}(\R^{n_2}),\Fw)$.
% This family of groups come with two partial boundary operators defined for $A'$ of the form~\eqref{intro_A'} by,
% \[
% \pt_1A':=\sum(\pt A_j^1)\ot A_j^2,\qquad\qquad\pt_2A':=(-1)^{k_1}\sum A_j^1\ot(\pt A_j^2).
% \]
% (Because of this extra structure, in the core of the paper, we use to the notation $A^1\we A^2$ rather that $A^1\ot A^2$ or $A^1\t A^2$). \medskip
%

Let us point out that another source of difficulties comes from the fact that a  tensor chain does not always identify with a classical chain. There are two reasons for that. The first one is of \emph{geometrical nature}: if $n_1=n_2=1$ and $S$ is the segment with vertices $(0,0)$ and $(1,1)$, the two 1-currents $T_1=e_1\h^1\restr S$ and $T_2=e_2\h^1\restr S$ are $(1,0)$- and $(0,1)$-chains respectively but they are not flat chains (however, their sum is a flat chain). The other reason is of \emph{Functional Analysis} nature: the norm on the groups of tensor chains is weaker than the classical flat norm and the groups of tensor chains are ``larger''. Indeed, the usual flat norm involves only boundary operators of first order (see e.g. Proposition \ref{prop_alternateWG}) and is thus reminiscent of the space $W^{-1,1}(\Om)$.  The tensor flat norm $\Fw$ instead, involves not only $\pt_1$ and $\pt_2$ but also $\pt_1 \pt_2$, see \eqref{WwP}, and is thus closer in spirit to   $W^{-2,1}(\Om)$.\\
Conversely, in general a $k$-chain cannot be interpreted as a tensor chain. For instance if $n_1=n_2=1$, the ``transverse'' $1$-chain associated with the integration over the segment $S$ defined above is not a tensor chain (but writes as the sum of a $(1,0)$-chain and a $(0,1)$-chain).\\

% This example suggests that any $k$-chain decomposes as a sum of $(k'_1,k'_2)$-chains with $k'_1+k'_2=k$. Such decomposition is the purpose of the operator $\j$ introduced further on.
This example shows that we need to be careful when considering the embeddings of subgroups of  $k-$chains  in the group of tensor chains (which is a necessary step in light of the sketch of proof above).   We do this through the operator
\begin{align*}
 \j :\FF^G_k(\R^n)&\longto  \lt(\FF^G_{k'_1,k-k'_1}(\R^n)\rt)_{0\le k'_1\le k},\\
A\quad&\longmapsto\quad (\j_{k'_1,k-k'_1}A)_{k'_1},
\end{align*}
which morally speaking decomposes a flat chain into its $(k_1',k-k_1')$-components.
\medskip

The following properties are essential in the proof of the main results.
\begin{enumerate}[(0)]
\item The operators $\i$ and $\j$ satisfy the expected commutation properties with slicing and restrictions (see Proposition \ref{prop_Sltfc} and Proposition \ref{prop_restr}).
\item[(1)] (Proposition~\ref{prop_pt_and_j}) We have the identities  $\j_{k'_1,k'_2}\pt A=\pt_1\j_{k'_1+1,k'_2}A+\pt_2\j_{k'_1,k'_2+1}A$.
\item[(2)]  (Theorem~\ref{thm_Ups}) Normal $(0,k_2)$-chains identify with the subgroup of normal $k_2$-chains such that 
\[
\forall\, (k'_1,k'_2)\ne (0,k_2)\quad \j_{k'_1,k'_2}A=0.
\]
\item[(3)] (Theorem~\ref{thm_j_121}) $\j$ is one-to-one. 
\item[(4)] (Corollary~\ref{coro_splitting_iff_jk'1k'2=0}) If $A$ is a normal rectifiable $k$-chain in $\R^n$, 
\[
T\mu_A\text{ is }(k_1,k_2)\text{-split}\qquad\iff\qquad\forall\, (k'_1,k'_2)\ne (k_1,k_2)\quad \j_{k'_1,k'_2}A=0.
\]

\end{enumerate}

\subsubsection*{Further remarks}

\noindent
(a) Building on the existing theory of flat chains, the axiomatic definition of tensor chains and the proof of their basic properties is mostly routine though lengthy. On the contrary, and maybe surprisingly, the isomorphism stated in~(2) and the fact~(3) that $\j$ is one-to-one are more delicate. Notice that in the complementary note~\cite{GM_comp} we generalize Theorem~\ref{thm_Ups} to all dimensions: the group of normal $(k_1,k_2)$-chain identifies with a subgroup of the group of normal $k$-chains. We point out that this is a delicate question since the counterpart for chains of finite mass is not true, see \cite{GM_comp}.\medskip

\noindent
(b) In the case $k_1=0$, as in Step~1,  a normal $(0,k)$-rectifiable chain writes as (see Section \ref{Ss_ctrex})
\[
A=\sum \lb y_j^1 \rb\t A_j^2\qquad\text{with }\quad\N(A)=\sum\N(A_j^2),
\]
for some sequence $y_j^1\in\R^{n_1}$ and some sequence $A_j^2$ of normal $k$-rectifiable $G$-flat chains in $\R^{n_2}$. One may wonder whether such property generalizes in higher dimension. Let us restrict ourselves to cycles for simplicity. The question is: given a $(k_1,k_2)$-rectifiable cycle $A$ in $\R^n$, is it possible to find sequences $A_j^l$ for $l=1,2$ with $A_j^l$ a $k_l$-cycle in $\R^{n_l}$ such that
\be\label{contrex_intro}
A=\sum A_j^1\t A_j^2,\qquad\qquad\M(A)=\sum \M(A_j^1)\M(A_j^2).
\ee

As already noticed, the answer is \emph{yes} if $k_1=0$ (or $k_2=0$). In the other limit case $k_1=n_1$  (or $k_2=n_2$) the question is trivial as the only $(n_1,k_2)$-rectifiable cycle is $0$ by the constancy theorem (see~\cite[Lemma~7.2]{Fleming66}) applied to $\i\j_{n_1,k_2}A$. On the contrary, in the intermediate cases $1\le k_1\le n_1-1$, $1\le k_2\le n_2-1$, the answer is \emph{no} in general for any non trivial group. This is established in the final subsection~\ref{Ss_ctrex} by exhibiting a counterexample. We do not know whether the answer is still \emph{no} if we relax  the identity constraint on masses and demand only
\[
\sum \M(A_j^1)\M(A_j^2)\le C \M(A),
\]
for some constant $C$ possibly depending on the $n_l$'s and $k_l$'s.

\subsubsection*{Organization of the paper}

In Section~\ref{Sexples}, we construct a few examples which show both why the question is delicate and to which extent our result is sharp.
% to give a grasp on the difficulty of the problem and assess the optimality of the theorem we build various representative rectifiable chains $A$ such that $T\mu_A$ is $(k_1,k_2)$-split or satisfies a variant of this property.

In Section~\ref{SGfc}, we recall the theory of $G$-flat chains in $\R^n$ as introduced by Fleming and developed by White. We also give a characterization of the $(k_1,k_2)$-splitting property in terms of slices and state Theorem~\ref{thm_decomp} from~\cite{GM_decomp} about the set-decomposition of normal rectifiable chains.

The groups of tensor flat chains are defined in Section~\ref{Stfc}. We introduce various operations involving these objects (slices, restrictions, the morphisms $\i$ and $\j$) and study their basic properties.
 
In Section~\ref{SId}, we focus on identifications of chains with tensor chains \textit{via} the operator $\j$.  We prove Theorem~\ref{thm_j_121}, Corollary~\ref{coro_splitting_iff_jk'1k'2=0} and Theorem~\ref{thm_Ups}, (the points~(2)(3)\&(4) in the above sketch). 

The main results, namely Proposition~\ref{prop_main_0k}, Theorem~\ref{thm_main2} and Theorem~\ref{thm_main},  are established successively in the last section. In a final subsection, we exhibit some counterexamples for~\eqref{contrex_intro}.

\subsubsection*{Conventions and notation}
\label{S_CPC}
In the article $(G,+,|\cd|_G)$ is a complete Abelian normed group, that is $(G,+)$ is a commutative group, $|\cd|_G:G\to\R_+$ satisfies for $g,g'\in G$,
\[
|g+g'|_G\le |g|_G+|g'|_G,\qquad\qquad|g|_G=0\ \iff g=0,
\]
and denoting $d_G(g,g')=|g'-g|_G$, the metric space $(G,d_G)$ is complete.
\medskip

\noindent
The integers $n_1,n_2\ge0$ and $n\ge1$ are fixed and satisfy $n_1+n_2=n$. The integers $k_1,k_2,k$ are non negative and we always assume $0\le k_1\le n_1$, $0\le k_2\le n_2$ and $k=k_1+k_2$.\medskip

\noindent
The standard basis of $\R^n$ is $(e_1,\dots,e_n)$. \medskip

\noindent
We denote by $\h^k$ the $k$-dimensional Hausdorff measure in $\R^n$. If $S\subset \R^n$, we denote by $\frt S$ its topological boundary. \medskip

\noindent
Unless otherwise specified, the sequences are indexed on $j\ge1$. Most of the time we use the index $i$ for finite sequences. \medskip

\noindent
We denote by $I^n_k$ the set of subsets of $\{1,\dots,n\}$ with $k$ elements. We also denote $I^n:=\bigcup I^n_k$ the collection of subsets of $\{1,\dots,n\}$. We simply write $j$ for the singleton $\{j\}\in I^n_1$ and $\b\sm j$ for $\b\sm\{j\}$ and so on.\medskip

For $\b,\g\in I^n$,  
\begin{enumerate}[($*$)]
\item $\ov\b$ is the complement of $\b$ in $\{1,\dots,n\}$, In particular $\ov\void=\{1,\dots,n\}$ and $\ov j=\ov\void\sm j$;
\item $\X_\b:=\Span\{e_j:j\in\b\}$ and for $x\in\R^n$, $\X_\b(x)$ is the affine space $x+\X_\b$. These spaces are called coordinate spaces;
\item $|\b|$ is the number of elements of $\b$. Setting $m:=|\b|$, we denote by  $\b_1,\dots,\b_m$ the elements of $\b$ arranged in increasing order and we define the $m$-vector,
\begin{equation}\label{ebeta}
e_\b:=e_{\b_1}\we\dots\we e_{\b_m}\,\in{\WE}_m\X_\b.
\end{equation}
Here $\WE_m\X$ denotes the space of $m$-vectors in $\X$.
\end{enumerate}

Given $\b\in I^n$, we denote by $\PP^G_r(\X_\b)\sub\NN^G_r(\X_\b)\sub\MM^G_r(\X_\b)\sub\FF^G_r(\X_\b)$ the groups of polyhedral $r$-chains,  normal $r$-chains, finite mass $r$-chains and flat $r$-chains in $\X_\b$ with coefficients in $G$.  By convention, for $r<0$ and $r>|\b|$, these groups are identified with the trivial group $\{0\}$. Moreover, the boundary operator is denoted $\pt$, the flat norm of a chain $A$ is denoted $\F(A)$ and its mass $\M(A)$. We also set $\N(A):=\M(A)+\M(\pt A)$.\medskip

We have some specific notation for tensor flat chains.  For $0\le k\le n$, we denote
\[
D_k:=\{(k'_1,k'_2): 0\le k'_1\le n_1,\ 0\le k'_2\le n_2,\ k'_1+k'_2=k\}.
\]
We introduce 
\[
\a:=\{1,\dots,n_1\}\, \in I^n,
\]
so that $\ova=\{n_1+1,\dots,n\}$.\medskip

\noindent 
 For $\b\in I^n$ we denote $\b^1:=\b\cap\a$ and $\b^2:=\b\sm\a=\b\cap\ova$.\medskip

\noindent 
For $x\in\R^n$, we denote $x^1$ (resp. $x^2$) the orthogonal projection of $x$ on $\X_\a$ (resp. on $\X_\ova$), so that $x$ decomposes as $x^1+x^2$.
\medskip

\noindent 
The groups of tensor chains are defined in $\R^n$ with respect to the decomposition $\X_\a+\X_\ova$. The groups of tensor chains in $\X_\b$ are always defined with respect to the decomposition $\X_{\b^1}+\X_{\b^2}$. The paper introduces $\PP_{k_1,k_2}^G(\X_\b)\sub\NN^G_{k_1,k_2}(\X_\b)\sub\MM^G_{k_1,k_2}(\X_\b)\sub\FF^G_{k_1,k_2}(\X_\b)$ which are respectively the groups of polyhedral, finite mass, normal and flat $(k_1,k_2)$-chains in $\X_\b$ with coefficients in $G$.\medskip

\noindent 
We sometimes deal with functions in $L^1(\Om,G)$ or $L^1\loc(\Om,G)$ where $\Om$ is a measure space and $(G,+,|\cdot|_{G})$ is a complete Abelian normed group (in fact $\Om$ is always a finite dimensional space of the form $\X_\b(x)$). We use the integration in the sense of Bochner, that is, $f\in L^1(\Om,G)$  if there exist a sequence of measurable simple functions $f_j:\Om\to G$ and a sequence of integrable functions $g_j:\Om\to\R_+$ such that $|f_j-f|_G\le g_j$ almost everywhere in $\Om$ and $\int g_j\to0$. In this case $\int |f|_{G}\,:=\lim \int |f_j|_{G}$.

\section{Examples and counterexamples}
\label{Sexples}

\textit{Example~1}.  Let us start with the obvious. If $A$ is the $1$-chain with multiplicity 1 associated with the graph of a Lipschitz continuous function $f:(0,1)\to\R$, assuming that $T\mu_A$ is $(1,0)$-split in $\R\t\R$ is equivalent to $f'\equiv 0$, hence $f$ is constant. Using the theorem, we only obtain that $f$ takes at most countably many values on a set of full measure and we still have to use the Lipschitz continuity of $f$ to conclude that it is constant. 

Let us build a slightly more complex example, again with $k=n_1=n_2=1$. Let $S_j\sub\R^2$ be a sequence of disjoint oriented segments of length $2^{-j}$ and let us define $A$ as the sum of the $1$-chains associated with the integration along the $S_j$'s. If we assume that $T\mu_A$ is $(1,0)$-split, then for every $j$ the approximate tangent direction to $\mu_A$ is horizontal (except at the end points of the $S_j$'s where it is not defined). We deduce that each $S_j$ is an horizontal segment as above so that, up to a $\h^1$-negligible set, $\cup S_j\sub\R\t\Sigma^2$ with $\Sigma^2\sub\R$ finite or countable. This is exactly the conclusion of the theorem.\bigskip

\noindent
\textit{Example~2}. To understand why  the statement of the theorem is reasonable, let us discuss the case of smooth chains. Let $\MM$  be a $k$-manifold of class $C^1$ in $\R^n$ and assume that $\MM$ is \emph{connected} and \emph{without boundary}. We consider the property:
\be\label{localM1M2}
\forall\,x\in\MM\ \exists\,k'_l(x)\text{-planes }L^l(x)\sub\R^{n_l}\text{ for }l=1,2\text{ such that }T_x\MM=L^1(x)\t L^2(x). 
\ee
This condition is weaker than the $(k_1,k_2)$-splitting property in the sense that the values of $k_1$ and $k_2$ might depend on $x$. 

We claim that if $\MM$ satisfies~\eqref{localM1M2}, then $\MM$ is of the form $\MM^1\t\MM^2$ where for $l=1,2$, $\MM^l$ is a $k_l$-manifold in $\R^{n_l}$ for some $k_l$'s such that $k=k_1+k_2$. Let us prove this claim. Assuming without loss of generality that $0\in\MM$ we set $(k_1,k_2):=(k'_1(0),k'_2(0))$. Up to a change of orthonormal frames in $\X_\a$ and $\X_\ova$ we may assume that
\[
\begin{array}{ll}
%  \Span\{\eps_1,\dots,\eps_{n_1}\}=\Span\{e_1,\dots,e_{n_1}\}, &\quad \Span\{\eps_{n_1+1},\dots,\eps_n\}=\Span\{e_{n_1+1},\dots,e_n\},\\ \\
V^1:=L^1(0)=\Span\{e_1,\dots,e_{k_1}\},& \quad V^2:=L^2(0)=\Span\{e_{n_1+1},\dots,e_{n_1+k_2}\},
\end{array}
\] 
Let us also denote,
\[
W^1:=\Span\{e_{k_1+1},\dots,e_{n_1}\}\qquad\text{and}\qquad W^2:=\Span\{e_{n_1+k_2+1},\dots,e_n\},
\]
so that $V^1+W^1= \X_\a$ and $V^2+W^2=\X_{\ov \a}$.
In some nonempty open ball centered at $0$, $\MM$ is the graph of a $C^1$ mapping
\[
f:V^1+V^2\longto W^1+W^2,
\] 
such that $f(0)=0$ and $Df(0)=0$. Writing $x=x^1+x^2\in V^1+V^2$ and 
\[
f(x)=f^1(x)+f^2(x)\ \in W^1+W^2,
\]
the linear operator
\[
\Id+Df(x): V^1+V^2\to V^1+W^1+V^2+W^2,
\]
writes as the block matrix,
\[
M(x):=\Id+Df(x) =
\lt(
\begin{array}{cc}
\Id_{k_1}&0\\ 
D_{x^1} f^1(x) &D_{x^2} f^1(x)\\
0& \Id_{k_2}\\
D_{x^1} f^2(x)& D_{x^2} f^2(x)
\end{array}
\rt).
\]
For $x$ in some neighborhood of $0$ in $\MM$, the tangent space to $\MM$ at point $x+f(x)$ is the space $L(x)$ spanned by the columns of $M(x)$. The condition~\eqref{localM1M2} is then equivalent to the following property: 
\be\label{dimL(x)}
\text{For }l=1,2\qquad\dim \lt[L(x)\cap (V^l+W^l)\rt]=k'_l(x).
\ee
We first remark that the left-hand side of these identities are upper semi-continuous and integer valued and since $k'_1(x)+k'_2(x)=k$ is constant, we must have $k_l'(x)=k_l$ in some neighborhood of $0$.\\
Next, for $l=1$, the condition~\eqref{dimL(x)} and the form $(0\ \Id_{k_2})$ of the third block of rows of $M(x)$ enforces that $L(x)\cap (V^1+W^1)$ is the space spanned by the first $k_1$ columns of $M(x)$. Furthermore, all these columns belong to $V^1+W^1$ if and only if $D_{x^1} f^2(x)=0$. Similarly, for $l=2$, we obtain $D_{x^2} f^1(x)=0$. Therefore $f(x)=(f^1(x^1),f^2(x^2))$ in some neighborhood of $0$ and locally $\MM$ is of the form $\MM^1\t\MM^2$. Using the fact that $\MM$ is connected, we deduce that $\MM=\MM^1\t\MM^2$ for some $k_l$-manifolds as described above.\\
As a conclusion, in the case of a connected $C^1$ manifold without boundary, the theorem is elementary. Moreover:
\begin{enumerate}[($*$)]
\item The $(k_1,k_2)$-splitting can be replaced by the weaker $(k'_1(x),k'_2(x))$-splitting assumption~\eqref{localM1M2}.
\item The conclusion can be strengthen by replacing the inclusion $\MM\sub\Sigma^1\t\Sigma^2$ by an equality.
\end{enumerate}
\medskip

\noindent
\textit{Example~3}. If $\MM$ is a manifold with boundary, this local analysis is still valid locally away from the boundary, but in the neighborhood of a boundary point the conditions $D_{x^1} f^2(x)=0$, $D_{x^2} f^1(x)=0$ do not imply that $f(x)$ is of the form $(f^1(x^1),f^2(x^2))$. Let us build a counterexample in $\R^3$ with $n_1=2$ and $n_2=k_1=k_2=1$.\\ 
Let $f\in C^\oo([-1/2,1],\R)$ such that $f\equiv 0$ on $[-1/2,0]$ and $f>0$ on $(0,1]$. For $j\ge0$ we set $f_j:= 2^{-j}f$ and we denote $\GG_j$ the graph of $f_j$. By construction, theses graphs are the same in $[-1,0]\t\R$ but do not meet in $(0,1]\t\R$. Let us define,
\[
\Sigma:=\bigcup_{j\ge0}\GG_j\t \lt[2^{-j-1},2^{-j}\rt]  \ \sub\ \R^3.
\]

\comfig{
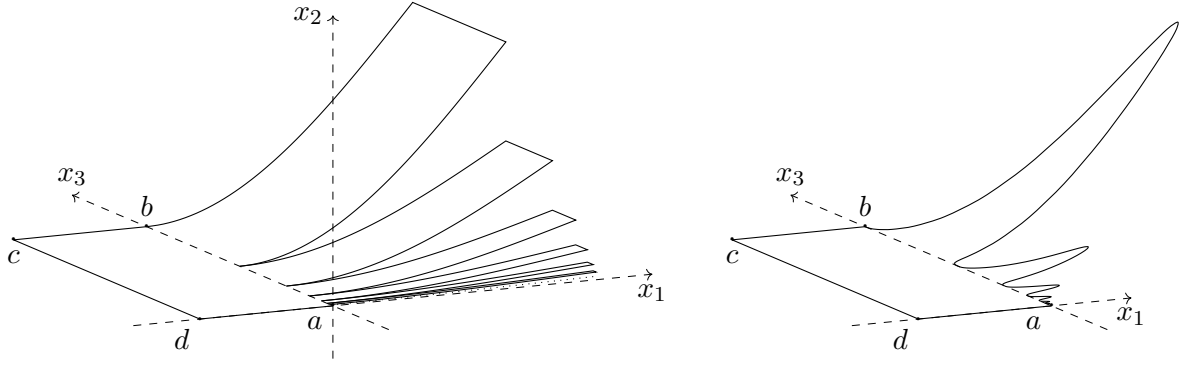
\begin{figure}[ht]   %%% DECOMMENTER
\centering
\begin{tikzpicture}
%Perspective cavalière
\pgfmathsetmacro{\A}{-.7}
\pgfmathsetmacro{\B}{.3}
\pgfmathsetmacro{\C}{.1}
%Unité
\pgfmathsetmacro{\L}{1}

\begin{scope}[scale=3.5]
% Tracé de axes
\pgfmathsetmacro{\xa}{-.75*\L}
\pgfmathsetmacro{\xb}{1.2*\L}
\draw[very thin,dashed,->] (\xa,\C*\xa) -- (\xb,\C*\xb) node[below]{$x_1$};
\pgfmathsetmacro{\ya}{-.2*\L}
\pgfmathsetmacro{\yb}{1.1*\L}
\draw[very thin,dashed,->] (0,\ya) -- (0,\yb) node[left]{$x_2$};
\pgfmathsetmacro{\za}{-.3*\L}
\pgfmathsetmacro{\zb}{1.4*\L}
\draw[very thin,dashed,->] (\A*\za,\B*\za) -- (\A*\zb,\B*\zb) node[above]{$x_3$};

% Rectangle plat
\draw[thin]  (\A*\L,\B*\L) node{$\cdot$} node[above]{$b$} --
                   (\A*\L-\L/2,\B*\L-\C*\L/2) node{$\cdot$} node[below]{$c$}  --
                   (-\L/2,-\C*\L/2)  node{$\cdot$} node[below left]{$d$} -- 
                   (0,0) node{$\cdot$} node[below left]{$a$};                   
% Segment [0,1]x{0}x{0} 
%\draw[thin]  (0,0) -- (\L,0);         

% Tracé des langues
\foreach \i in {0,...,5}
{
	\pgfmathsetmacro{\za}{1/2^\i}
	\pgfmathsetmacro{\zb}{1/2^(\i+1)}
	\draw [thin,smooth]
			 plot[domain={0:\L}] ({\A*\za + \x}, {\C*\x + \B*\za + 1.5*\za*sin(45*\x)^2}) --
			 plot[domain=(\L:0)] ({\A*\zb + \x}, {\C*\x +\B*\zb + 1.5*\za*sin(45*\x)^2});
}

\pgfmathsetmacro{\zb}{1.5/2^7}
\draw [thin,dotted] plot[domain={0:\L}] ({\A*\zb + \x}, {\C*\x +\B*\zb + 1.5*\zb*sin(45*\x)^2});
\end{scope}

\begin{scope}[scale=3.5, xshift=2.7cm]
% Tracé de axes
\pgfmathsetmacro{\xa}{-.75*\L}
\pgfmathsetmacro{\xb}{.3*\L}
\draw[very thin,dashed,->] (\xa,\C*\xa) -- (\xb,\C*\xb) node[below]{$x_1$};
%\pgfmathsetmacro{\ya}{-.2*\L}
%\pgfmathsetmacro{\yb}{1.2*\L}
%\draw[very thin,dashed,->] (0,\ya) -- (0,\yb) node[left]{$x_2$};
\pgfmathsetmacro{\za}{-.3*\L}
\pgfmathsetmacro{\zb}{1.4*\L}
\draw[very thin,dashed,->] (\A*\za,\B*\za) -- (\A*\zb,\B*\zb) node[above]{$x_3$};

% Rectangle plat
\draw[thin]  (\A*\L,\B*\L) node{$\cdot$} node[above]{$b$} --
                   (\A*\L-\L/2,\B*\L-\C*\L/2) node{$\cdot$} node[below]{$c$}  --
                   (-\L/2,-\C*\L/2)  node{$\cdot$} node[below left]{$d$} -- 
                   (0,0) node{$\cdot$} node[below left]{$a$};    

% Tracé des langues
\foreach \i in {0,...,3}
{
	\pgfmathsetmacro{\za}{1/2^\i}
	\pgfmathsetmacro{\zb}{1/2^(\i+1)}
	\pgfmathsetmacro{\om}{360*2^\i}
	\pgfmathsetmacro{\X}{(2/5)^\i}
	\draw [thin, smooth]	plot[variable=\z, domain={\zb:\za}] 
		({\A*\z - \X*sin(\om*\z)^3}, {\B*\z - \C* \X*sin(\om*\z)^3 + 1.5*\za*sin(45*\X*sin(\om*\z)^3)^2});
}  
\foreach \i in {4,...,5}
{
	\pgfmathsetmacro{\za}{1/2^\i}
	\pgfmathsetmacro{\zb}{1/2^(\i+1)}
	\pgfmathsetmacro{\om}{360*2^\i}
	\pgfmathsetmacro{\X}{(2/5)^\i}
	\draw [very thin, smooth]	plot[variable=\z, domain={\zb:\za}]
		({\A*\z - \X*sin(\om*\z)^3}, {\B*\z - \C* \X*sin(\om*\z)^3 + 1.5*\za*sin(45*\X*sin(\om*\z)^3)^2});
}  

\end{scope}
\end{tikzpicture}
\caption{\label{Fig:1} On the left, the set $\Sigma$, on the right, the $2$-manifold $\MM$ of Example~3.} 
\end{figure}
}

The set $\Sigma$ is of dimension $2$, contains the rectangle $[-1/2,0]\t\{0\}\t[0,1]$ and has Lipschitz regularity away from the cusp points $(0,0, 2^{-j})$ for $j\ge 1$. To obtain a Lipschitz manifold, we cut just below these points in the first coordinate. We set for $j\ge1$,
\[
C_j:=\lt\{(x_1,x_2,x_3) :x_2\in\R,\ 2^{-j}\le|x_3|<2^{1-j},\ x_1 <3^{-j}\sin^2 \lt(2^{-j}\pi x_3\rt)\rt\},
\]
and then $\MM:=\Sigma\cap \bigcup_j C_j$, see Figure~\ref{Fig:1}.

We have built a Lipschitz connected 2-manifold with boundary in $\R^3$. Moreover, $\MM$ is of class $C^1$ away from the four vertices $a$, $b$, $c$, $d$ of the rectangle $[-1/2,0]\t\{0\}\t[0,1]$. % Smoothing the boundary of $\MM$ near these points, we obtain a $2$-manifold of class $C^1$, still denoted by $\MM$.  
By construction $\MM\sub\Sigma^1\t\R$ where $\Sigma^1$ is the countable union of the graphs $\GG_j$. This time, we do not have $\MM=\MM^1\t\MM^2$, neither $\MM\sub \MM^1\t\MM^2$ with $\MM^1\sub\R^2$, $\MM^2\sub\R$ connected 1-manifolds.\\
Obviously the tangent planes of $\MM$ satisfy the $(1,1)$-splitting property with $n_1=2$ and $n_2=1$. Moreover, orienting $\MM$ arbitrarily, the $2$-chain $A$ with multiplicity 1 associated with the integration on $\MM$ has  finite boundary mass (the boundary of $\MM$ has finite length) so the theorem applies to $A$. We observe that the conclusion of the theorem is sharp in the sense that we do need a \emph{countable} union of Cartesian products of connected manifolds to cover $\MM$.\bigskip

\noindent
\textit{Example~4}. We now turn to a more complex example. Let $f_1,f_2\in C^\oo(\R,[0,1])$ with 
\[
f_1\equiv0 \text{ on }\R\sm(-2,2),\qquad f_1\equiv 1\text{ on }[-1,1],\qquad f_2\equiv0\text{ on }\R\sm(-1,1),\qquad f_2>0\text{ on }(-1,1).
\]
We define for $x\in\R^2$, $f(x_1,x_2):=f_1(x_1)f_2(x_2)$ and we denote $Q:=([-2,-1]\cup [1,2])\t [-1,1]$. This set is the union of two closed squares and $\pt_{x_1} f\equiv 0$ on $\R^2\sm Q$. Next, let $R\ge10$ and let $x^{(j)}$ be a dense sequence of points in a ball $B$ of $\R^2$ with radius $R$. We set, for $x\in\R^2$, 
\[
F(x):=\sum 8^{-j} f\lt(2^{j}(x-x^{(j)})\rt), 
\]
so that $F$ is of class $C^2$. Denoting 
\[
E:=\bigcup \lt[x^{(j)}+2^{-j} Q\rt],
\] 
there holds, $\h^2(E)\le 8/3$ and $\pt_{x_1}F=0$ in $\R^2\sm E$. However, remark that for every open set $\om\sub\R^2$ intersecting $B\sm E$, there exists a set of real numbers $y^2$ with positive measure such that $F$ is not constant on $\om\cap \{x:x^2=y^2\}$.
Let us define $\GG_F$ as the graph of $F_{|B}$ and
\[
\Sigma:=\lt\{(x,F(x)):x\in B\sm E\rt\} =\GG_f\cap\lt[(\R^2\sm E)\t\R\rt].
\] 
Orienting arbitrarily $\GG_f$, we define $A$ as the $2$-chain with multiplicity $1$ corresponding to the integration over $\Sigma$. This chain has finite positive mass and finite boundary mass and with $n_1=1$ and $n_2=2$, $T\mu_A$ is $(1,1)$-split. The theorem applies and we get that $\Sigma\sub \R\t\Sigma^2$ where, up to negligible set, $\Sigma^2\sub\R^2$ is a countable union of $C^1$-curves.

  The point of this example is that even though, like in Example~3,  $A=A\restr \GG_F$ with $\GG_F$ diffeomorphic to the 2-dimensional ball, we cannot cover $\R^3$ by countably many balls $B_j$ such that $A\restr B_j$ is the integration over $(\MM_j^1\t\MM_j^2)\cap B_j$ for some $C^1$ curves $\MM_j^1\sub\R$, $\MM_j^2\sub\R^2$. 
\bigskip

\noindent
\textit{Example~5}. To stress the necessity of the assumption $\M(\pt A)<\oo$ in Theorem~\ref{thm_main}, we build an example for which the conclusion of the theorem fails.  Here, $k=n_1=n_2=1$. We first introduce the following positive atomic measure on $\R$.
\[
\nu:=\dfrac12\sum_{j\ge0}\, \dfrac1{4^j} \,\sum_{i=1}^{2^j} \d_{i/2^j},
\]
We have $\nu(\R)=1$ and $\nu=\nu\restr D$ where $D$ is the set of dyadic numbers in $(0,1]$. More precisely:
\[
\nu=\dfrac23\sum_{x\in D}\dfrac1{4^{j(x)}} \d_x,
\]
where for $x\in D$, $j(x):=j$ is the nonnegative integer such that $x=i2^{-j}$ for some odd number $0<i<2^j$. 
Next,  for $x_1\in\R$, we set  $f(x_1):=\nu((-\oo,x_1))$. We have that $f'=\nu$ in the sense of distributions, $f$ is left-continuous, increasing on $[0,1]$ and differentiable on $\R\sm D$ with $f'(x_1)=0$ in the classical sense. We denote $\GG_f$ the graph of the restriction of $f$ to $(0,1)$. Remark that the closure of the projection of $\GG_f$ on the vertical axis is a classical Cantor set. Since $f'=0$ almost everywhere, the $1$-chain $A_1$ with support $\ov{\GG_f}$, orientation $e_1$ and multiplicity 1 is rectifiable, see Figure~\ref{Fig:2}.\\% We remark that $A_1$ has mass 1.\\
 Now, on the one hand, since $f'\equiv0$ on $\R\sm D$, $T\mu_{A_1}$ is $(1,0)$-split (at every point $\ds\lt(x_1,f(x_1)\rt)$ with $x_1\in (0,1)\sm D$). 
 On the other hand, $f$ being increasing on $(0,1)$, for any countable set $\Sigma^2\sub\R$, the intersection  $\GG_f\cap \R\t\Sigma^2$ is at most countable and therefore $\h^1(\GG_f\cap(\R\t\Sigma^2))=0$. Since $\mu_{A_1}\ll\h^1\restr \GG_f$, we also have $\mu_{A_1}(\R\t\Sigma^2)=0\ne 1=\mu_{A_1}(\R^2)$. This implies that $\mu_{A_1}$ is not $(1,0)$-rectifiable (the conclusion of the theorem is wrong). Here the theorem does not apply because $A$ has infinite boundary mass. This example shows that some control on $\pt A$ is necessary.

\comfig{
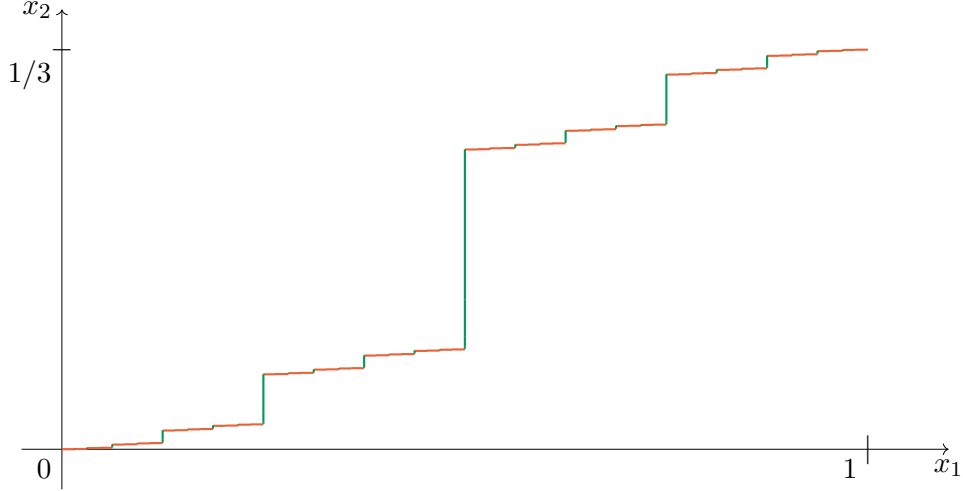
\begin{figure}[ht]   %%% DECOMMENTER
\centering
\begin{tikzpicture}[scale=5.3]
% Tracé des axes
\draw[very thin,->] (-.1,0) -- (2.2,0) node[below]{$x_1$};
\draw[very thin,->] (0,-.1) -- (0,1.1) node[left]{$x_2$};
\draw (0,0) node[below left]{$0$};
\draw (2,0) node{$\vert$} node[below left]{$1$};
\draw (0,1) node{$-$} node[below left]{$1/3$};

\foreach \i in {0,1}
{
\foreach \j in {0,1}
{
\foreach \k in {0,1}
{
\foreach \l in {0,1}
	{
	\foreach \m in {0,1}
		{	
		\foreach \n in {0,1}
			{
			\pgfmathsetmacro{\f}{\i/2+\j/8+\k/32+\l/128+\m/512+\n/2048}
			\pgfmathsetmacro{\x}{\i+\j/2+\k/4+\l/8+\m/16+\n/32}
			\draw[RedOrange, thick]  (\x, 1.5*\f) -- (\x+1/32, 1.5*\f);
			}
		}
	}
}
}	
}

\begin{scope}
\clip (0,0) rectangle (2,1); 
\foreach \i in {0,1}
{
\pgfmathsetmacro{\fm}{(2*\i+1)/4}
\pgfmathsetmacro{\fp}{(2*\i+2)/4}
\pgfmathsetmacro{\x}{(\i+1) }
\draw[ForestGreen, thick] (\x,1.5*\fm) -- (\x,1.5*\fp);
\foreach \j in {0,1}
{
\pgfmathsetmacro{\fm}{\i/2+(2*\j+1)/16}
\pgfmathsetmacro{\fp}{\i/2+(2*\j+2)/16}
\pgfmathsetmacro{\x}{\i+(\j+1)/2}
\draw[ForestGreen, thick] (\x,1.5*\fm) -- (\x,1.5*\fp);
\foreach \k in {0,1}
{
\pgfmathsetmacro{\fm}{\i/2+\j/8+(2*\k+1)/64}
\pgfmathsetmacro{\fp}{\i/2+ \j/8+(2*\k+2)/64}
\pgfmathsetmacro{\x}{\i+\j/2+(\k+1)/4}
\draw[ForestGreen, thick] (\x,1.5*\fm) -- (\x,1.5*\fp);
\foreach \l in {0,1}
	{
	\pgfmathsetmacro{\fm}{\i/2+\j/8+\k/32+(2*\l+1)/256}
	\pgfmathsetmacro{\fp}{\i/2+ \j/8+\k/32+(2*\l+2)/256}
	\pgfmathsetmacro{\x}{\i+\j/2+\k/4+(\l+1)/8}
	\draw[ForestGreen, thick] (\x,1.5*\fm) -- (\x,1.5*\fp);
	\foreach \m in {0,1}
		{
		\pgfmathsetmacro{\fm}{\i/2+\j/8+\k/32+\l/128+(2*\m+1)/1024}
		\pgfmathsetmacro{\fp}{\i/2+\j/8+\k/32+\l/128+(2*\m+2)/1024}
		\pgfmathsetmacro{\x}{\i+\j/2+\k/4+\l/8+(\m+1)/16}
		\draw[ForestGreen, thick] (\x,1.5*\fm) -- (\x,1.5*\fp);	
		\foreach \n in {0,1}
			{
			\pgfmathsetmacro{\fm}{\i/2+\j/8+\k/32+\l/128+\m/512+\n/2048}
			\pgfmathsetmacro{\x}{\i+\j/2+\k/4+\l/8+\m/16+\n/32}
			\draw[ForestGreen, thick] (\x,1.5*\fm) -- (\x,1.5*\fm+0.001);
			}
		}
	}
}
}	
}
\end{scope}
\end{tikzpicture}
\caption{\label{Fig:2} An approximate representation of the ``horizontal'' chain $A_1$ (in red) and of the ``vertical'' chain $A_2$ (in green) of Examples {5} and {6}.} 
\end{figure}
}

\noindent
\textit{Example~6}. Let us rule out a possible generalization of the theorem suggested by the weaker assumption~\eqref{localM1M2}. We consider the following variant of the splitting condition where the dimensions $k_1$, $k_2$ may vary.
\be\label{wtsc}
\text{for }\mu\textit{ a.e. }x,\ T_x\mu=L^1(x)\t L^2(x)\text{ for some }k'_l(x)\text{-planes }L^l(x)\sub\R^{n_l},\ l=1,2.
\ee
A natural question is whether, when $A$ is a rectifiable normal $k$-chain such that $\mu=\mu_A$ satisfies~\eqref{wtsc}, there holds, 
\be
\label{k'_1k'_2rectifiability}
\mu_A=\mu_A\restr \bigcup_{k'_1}\Sigma^1_{k'_1}\t\Sigma^2_{k-k'_1},
\ee
for some $k'_l$-rectifiable sets $\Sigma^l_{k'_l}\sub \R^{n_l}$ for $0\le k'_l\le k$ and $l=1,2$. The answer is~\emph{no} and we get a counterexample by building on the construction of Example~5. \\
Let $A_2$ be the sum of the $1$-chains with multiplicity $1$, orientation $e_2$ and supports $V_{x_1}:=\{x_1\}\t[f(x_1),f(x_1^+)]$ for $x_1\in D\sm\{1\}$, see again Figure~\ref{Fig:2}. The chain $A_2$ ``fills the holes'' of $A_1$ in the sense that $\pt (A_1+A_2)=\lb(1,1/3) \rb -\lb(0,0)\rb$. Besides $A_2$ is $(0,1)$-rectifiable so that $T\mu_{A_2}$ is $(0,1)$-split. Summing up, the rectifiable chain  $A=A_1+A_2$ has finite boundary mass and $\mu_A$ satisfies~\eqref{wtsc}.  However, for any countable sets $\Sigma^1,\Sigma^2\sub\R$ we have,
\[
\mu_A(\R^2\sm(\Sigma^1\t\R\cup \R\t\Sigma^2)\ge \mu_{A_1}(\R^2\sm(\Sigma^1\t\R\cup \R\t\Sigma^2))=\mu_{A_1}(\R^2\sm\Sigma^1\t\R)=1.
\]
%because the projection of $\mu_{A_1}$ on the first axis is the restriction of the Lebesgue measure on $(0,1)$.
Consequently,~\eqref{k'_1k'_2rectifiability} does not hold.

\begin{remark}
The proof of the main result presented in this paper takes a long detour by introducing tensor flat chains. As building a theory with a single application might be seen as overkill we could look for a more direct method. However, mimicking the smooth case arguments of Example~2 in the general case seems hopeless. As illustrated by Examples~{3}\,\&\,{4}, the presence of a  boundary with a possibly dense support is an obstacle. Even in the very favorable situation where $A$ is a rectifiable cycle with multiplicity 1, the assumptions do no not ensure that the restriction of $A$ to a $k$-surface of class $C^1$ has a reasonable boundary. 
\end{remark}

\section{$G$-flat chains}
\label{SGfc}
Here we recall the theory of $G$-flat chains in $\R^n$ as introduced by Fleming in~\cite{Fleming66}. We also recall the notion of slices of $G$-flat chains by $m$-planes and the rectifiable slices theorem of White~\cite{White1999-2}. As it is now standard and contrarily to~\cite{Fleming66}, we use the notation $\restr$ and not $\cap$ for restrictions, the notation $\cap$ is used for slicings. Two remarks are in order. First, to avoid the introduction of compact sets which are not relevant here, the flat chains we consider are not assumed to be compactly supported.  Second, in addition to the classical definition of the slice $A\cap \X_{\ov\g}(x)$ which is a flat chain in $\R^n$ supported in $ \X_{\ov\g}(x)$, we introduce the object $\Sl_\g^x A$ which is the flat chain in $ \X_{\ov\g}$ obtained by projection of the former on $\X_{\ov\g}=\X_{\ov\g}(0)$.\\ 
In anticipation of the generalization of Section~\ref{Stfc} we describe the basic theory in detail.
The expert reader can skip most of the section, however, besides the unusual slicing operators $\Sl_\g$, let us point out the following less standard or new results:
\begin{enumerate}[(1)] 
\item Proposition~\ref{prop_divformula} gives a formula for the boundary of the restriction of a chain on a half-space. 
\item Proposition~\ref{prop_split_and_Sl} expresses the $(k_1,k_2)$-splitting property (recall Definition \ref{def:musplit}) by the vanishing of some family of slices.
\item Theorem~\ref{thm_decomp} from~\cite{GM_decomp} asserts that normal rectifiable chains can be decomposed in set-indecomposable subchains.\smallskip
\end{enumerate}

Let $(G,+,|\cdot|_G)$ be a complete Abelian normed group, let $\b\in I^n$ and $0\le k\le |\b|$. 

We say that the pair $p=(q,\x)$ is a $k$-cell of $\X_\b$ if  $q\sub\X_\b$ is a closed convex $k$-polyhedron and $\x\in\WE_k\X_\b$ is a unit simple $k$-vector orienting the affine $k$-plane spanned by $q$. There are two possible choices of orientation and the $k$-cell $(q,-\x)$ is denoted $-p$. The multivector $\xi$ is the orientation of $p$ and is denoted $\x_p$. The polyhedron $q$, denoted $\supp p$, is the support of $p$,  However, for set operations such as unions and intersections, we usually write $p$  for $\supp p$.\\
As  usual the $0$-cell with support $x$ and orientation $1$ is denoted $\lb x\rb$ and for $x\ne y\in\X_\b$, the $1$-cell supported by $[x,y]$ and with orientation $(y-x)/|y-x|$ is denoted $\lb(x,y)\rb$. If $q$ is a $|\b|$-polyhedron, we denote $\lb\un_q\rb$ the $|\b|$-cell supported by $q$ and with orientation $e_{\b_1}\we e_{\b_2}\we\dots\we e_{\b_{|\b|}}$ (referred to as \emph{positive} orientation). \medskip
 
The group $\PP^G_k(\X_\b)$ of $G$-polyhedral $k$-chains in $\X_\b$ is the group of formal finite sums,
\be\label{Gpolyhedralchain}
P= \sum g_i p_i,\qquad g_i\in G,\quad p_i\ k \text{-cells in }\X_\b,
\ee
quotiented by the following relations : for $g,g'\in G$ and $p,p',p''$, $k$-cells,
\begin{enumerate}[(i)]
\item $g p+g(-p)=0$;
\item $g p+g' p = (g+g')p$;
\item $g p'+g p'' = g p$ if $\x_p=\x_{p'}=\x_{p''}$ and, as sets, $p=p'\cup p''$ with $p'$ and $p''$ intersecting on a common $(k-1)$-face.
\end{enumerate}

The representation~\eqref{Gpolyhedralchain} of $P\in\PP^G_k(\X_\b)$ is not unique. The mass of $P$ is defined as,
\be\label{defM(P)}
\M(P):=\min\lt\{\sum |g_i|_G\h^k(p_i)\rt\},
\ee
where the infimum runs over all the representations of $P$.\footnote{This infimum if reached at any representation such that $|g_i|_G|g_j|_G \h^k(p_i\cap p_j)=0$ whenever $i\ne j$.}\\
The boundary $\pt p$ of a $k$-cell $p$ is the sum of its $(k-1)$-faces $p'_j$ with orientation $\x'_j$ such that $\nu_j\we \x'_j=\x_p$ where $\nu_j$ is the exterior normal to $p$ on $p'_j$. Coming back to~\eqref{Gpolyhedralchain}, it is easy to see that the $(k-1)$-chain $\sum g_i \pt p_i$ does not depend on the particular representation of $P$ and we can safely define the boundary of $P$ as
\[
\pt P:=\sum g_i \pt p_i.
\]
For instance for $g\in G$ and $x,y\in\X_\b$,
\[
\pt(g\lb x\rb)=0\qquad{ and }\qquad\pt(g\lb(x,y)\rb)=g\lb y\rb-g\lb x\rb.
\]
We obtain a group morphism $\pt:\PP^G_k(\X_\b)\to\PP^G_{k-1}(\X_\b)$ satisfying $\pt^2=0$. Whitney's flat norm is then given for polyhedral chains by:
\[
\F(P):=\inf\lt\{\M(Q)+\M(R) :Q\in\PP^G_k(\X_\b),\  R\in \PP^G_{k+1}(\X_\b),\  P=Q+\pt R\rt\}.
\]
It turns out that $\F$ is a norm on $(\PP_k(\X_\b),+)$ (\cite[Theorem~2.2]{Fleming66}) moreover $\pt$ is continuous with respect to the $\F$-norm. Indeed writing $P=Q+\pt R$, we have $\pt P=\pt Q$ and $\F(\pt P)\le\M(Q)$ which yields $\F(\pt P)\le\F(P)$ by taking the infimum over the possible representations. The group $\FF^G_k(\X_\b)$ of $k$-chains in $\X_\b$ and with coefficients in $G$ is the completion of $(\PP^G_k(\X_\b),+,\F)$. Unless specified otherwise, we always intend convergence of flat chains in $\F$ norm. The boundary operator extends by continuity as a $1$-Lipschitz continuous morphism $\pt:\FF^G_k(\X_\b)\to\FF^G_{k-1}(\X_\b)$. In summary, the sequence $(\FF^G_k(\X_\b))_{k\in\Z}$ endowed with the norm $\F$ and the operator $\pt$ forms a chain complex of normed groups: 
\[
\begin{array}{ccccccc} 
\cdots\ \st\pt\longto\ &\FF^G_{k+1}(\X_\b) &\st\pt\longto\ & \FF^G_k(\X_\b) &\st\pt\longto&\FF^G_{k-1}(\X_\b) &\st\pt\longto\ \cdots \\
\end{array}
\]  
The mass of $A\in\FF^G_k(\X_\b)$ is then defined as,
\[
\M(A):=\inf\lt\{\liminf \M(P_j):\PP^G_k(\X_\b)\ni P_j\st\F\to A\rt\}.
\]
By~\cite[Theorem~2.3]{Fleming66}, the function $\M$ first introduced in~\eqref{defM(P)} is lower semi-continuous with respect to $\F$-convergence in the group of polyhedral chains, so the definition of $\M(P)$ is not ambiguous.   We have the following properties.
\begin{proposition}[{\cite[Theorem~3.1]{Fleming66} for~(ii), \cite[Theorem~5.6.]{Fleming66} for (iii)}]
\label{prop_alternateWG} 
Let $A\in\FF^G_k(\X_\b)$.
\begin{enumerate}[(i)]
\item $\F(\pt A)\le\F(A)\le \M(A)$ with possibly $\M(A)=\oo$.
\item $\F(A)=\inf \lt\{\M(B)+\M(C): B\in\FF^G_k(\X_\b),\, C\in\FF^G_{k+1}(\X_\b)\text{ such that }A=B+\pt C\rt\}$.
%\item There exists a sequence $P_j\in\PP^G_k(\X_\b)$ such that $P_j\to A$ and moreover $\M(P_j)\to\M(A)$, $\M(\pt P_j)\to\M(\pt A)$. 
%\item If $\M(A)<\oo$ there exists a sequence $A_j\in\NN^G_k(\X_\b)$ such that $\M(A_j-A)\to 0$.
\end{enumerate}
\end{proposition}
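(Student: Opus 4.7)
For part~(i), I would first note that $\F(\pt A)\le\F(A)$ is a direct restatement of the $\F$-continuity of $\pt$ already established in the preamble (on polyhedral chains $P=Q+\pt R\Rightarrow\pt P=\pt Q$, then extended by density). For $\F(A)\le\M(A)$, the case $\M(A)=\oo$ is trivial; otherwise the plan is to pick polyhedral $P_j\to A$ in $\F$ realizing the infimum in the definition of $\M(A)$, so that $\liminf_j\M(P_j)=\M(A)$, and to use the trivial decomposition $P_j=P_j+\pt 0$ which yields $\F(P_j)\le\M(P_j)$; continuity of $\F$ transfers the bound to the limit.

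For part~(ii), denote by $\F'(A)$ the right-hand side. The direction $\F(A)\le\F'(A)$ is immediate from the triangle inequality combined with~(i):
\[
\F(B+\pt C)\le\F(B)+\F(\pt C)\le\M(B)+\F(C)\le\M(B)+\M(C),
\]
using $\F(\pt C)\le\F(C)$ and $\F(C)\le\M(C)$, both from (i); take the infimum over admissible pairs.

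The core of the proof will be the reverse inequality $\F'(A)\le\F(A)$. My plan is to construct $B$ and $C$ by telescoping a carefully chosen polyhedral approximation of $A$. Given $\eps>0$, I would select polyhedral $P_j\to A$ in $\F$ fast enough that $\F(P_1)\le\F(A)+\eps/8$ and $\F(P_j-P_{j-1})\le\eps\, 2^{-j}$ for $j\ge 2$, then apply the polyhedral definition of $\F$ to $P_1$ and to each difference $P_j-P_{j-1}$ to produce polyhedral splittings $P_1=B_1+\pt C_1$ with $\M(B_1)+\M(C_1)<\F(A)+\eps/4$ and $P_j-P_{j-1}=B_j+\pt C_j$ with $\M(B_j)+\M(C_j)<\eps\, 2^{1-j}$ for $j\ge 2$. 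The partial sums of $\sum_j B_j$ and $\sum_j C_j$ are then Cauchy in mass, hence Cauchy in $\F$ by~(i), hence convergent in $\FF^G_k(\X_\b)$ and $\FF^G_{k+1}(\X_\b)$ to limits $B,C$; the telescoping identity $P_N=\sum_{j\le N}(B_j+\pt C_j)$ together with $P_N\to A$ gives $A=B+\pt C$, and lower semicontinuity of $\M$ with respect to $\F$ yields $\M(B)+\M(C)<\F(A)+\eps$. Letting $\eps\downarrow 0$ concludes.

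The main obstacle, and really the only nontrivial step, is justifying the passage to the limit in the telescoping: this requires completeness of the flat chain groups in $\F$-norm (which supplies the limits $B,C$) together with lower semicontinuity of $\M$ with respect to $\F$-convergence (which transfers the mass-subadditive estimate to the limit). Both ingredients are already available from the standard polyhedral extension recalled above, so once the geometric decay $\F(P_j-P_{j-1})\le\eps\, 2^{-j}$ is arranged, the rest is bookkeeping.
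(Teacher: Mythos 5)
Your proof is correct and follows the standard ``rapidly convergent subsequence plus telescoping'' argument; note that the paper itself gives no proof of this proposition but simply cites Fleming~\cite[Theorems~3.1 and~5.6]{Fleming66}, and your argument is essentially the one found there. One trivial bookkeeping slip: with your constants, $\sum_j(\M(B_j)+\M(C_j))<\F(A)+\eps/4+\sum_{j\ge2}\eps\,2^{1-j}=\F(A)+\tfrac{5}{4}\eps$ rather than $\F(A)+\eps$, but since you let $\eps\downarrow 0$ this is harmless.
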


\subsubsection*{The limit case $k=|\b|$}
We assume here that $k=|\b|$. Since $\FF^G_{k+1}(\X_\b)=\{0\}$, there holds $\F=\M$ on $\FF^G_k(\X_\b)$. Moreover, a $G$-polyhedral  $k$-chain of the form~\eqref{Gpolyhedralchain} identifies with the function $f_P:\X_\b\to G$ given by 
\[
f_P:=\sum \lt<\x_{p_i},e_\b\rt>g_i \un_{\supp p_i},
\]
(here the factors $ \lt<\x_{p_i},e_\b\rt>=\pm 1$ account for the orientation). There holds $\|f_P\|_{L^1}=\M(P)$ and as the $f_P$'s form a dense subgroup of $L^1(\X_\b,G)$, we have in fact the isometry, 
\[
(\FF^G_{|\b|}(\X_\b),\F)=(\FF^G_{|\b|}(\X_\b),\M)\sim(L^1(\X_\b,G),\|\cdot\|_{L^1}).
\]

 \subsubsection*{Subgroups of chains}
We denote by $\MM^G_k(\X_\b)$, the subgroup of finite mass $k$-chains in $\X_\b$ with coefficients in $G$. This group, endowed with $\M$ is complete but the family $(\MM^G_*(\X_\b),\pt)$ does not form a chain complex. Denoting $\N(A):=\M(A)+\M(\pt A)$, we define the group of normal flat $k$-chains, 
\[
\NN^G_k(\X_\b):=\lt\{A\in\FF^G_k(\X_\b):\N(A)<\oo\rt\}.
\]
Again, $(\NN^G_k(\X_\b),\N)$ is a complete normed group. Moreover, since $\N(\pt A)=\M(\pt A)\le\N(A)$, the family $(\NN^G_*(\X_\b),\pt)$ forms a chain complex of normed groups.

A chain such that $\pt A=0$ is called a cycle (the finite mass $k$-cycles form a closed subgroup of $\NN^G_k(\X_\b)$ and the sequence of subgroups of finite mass cycles also form a chain complex).

\subsubsection*{Supports}
We say that the flat chain $A\in \FF^G_k(\X_\b)$ is supported in the closed set $S\sub\X_\b$ if for any neighborhood $U$ of $S$, there exists a sequence $P_j\in \PP^G_k(\X_\b)$ which admits representations $P_j=\sum g^j_ip^j_i$ with $\supp p^j_i\sub U$, such that $P_j\to A$ in $\FF^G_k(\X_\b)$. If there exists a smallest closed subset $S$ supporting $A$, we write $\supp A=S$. Finite mass flat chains admit a support and moreover for $A\in \MM^G_k(\X_\b)$ and any neighborhood $U$ of $\supp A$ there exists a sequence $P_j\in \PP^G_k(\X_\b)$ supported in $U$ such that $P_j\to A$ with $\M(P_j)\to\M(A)$.

\subsubsection*{Tensor products of chains (see \cite[Sections~6]{Fleming66})}

Let $\b$, $\g\in I^n$ such that $\b\cap\g=\void$ and assume for simplicity that $\max\b<\min\g$. Given two polyhedral cells $p^1=(q^1,\x^1)$ and $p^2=(q^2,\x^2)$ respectively in $\X_\b$ and $\X_\g$ and $g\in G$, we define the tensor product $p^1\we(gp^2)$ of $p^1$ and $gp^2$ as the chain $gp\in\PP^G_*(\X_{\b\cup\g})$ where $p$ is the polyhedral chain with support  $q^1+q^2$ and orientation $\x^1\we\x^2$. Given $0\le k_1,k_2\le n$, the definition extends by $\Z$ bilinearity as a bilinear mapping 
\begin{align*}
\PP^\Z_{k_1}(\X_\b)\t\PP^G_{k_2}(\X_\b)&\longto\PP_{k_1+k_2}^G(\X_{\b\cup\g}),\\
(P^1,P^2)&\longmapsto P^1\t P^2.
\end{align*}
There holds, 
\[
\pt (P^1\t P^2)=(\pt P^1)\t P^2+(-1)^{k_1}P^1\t(\pt P^2),
\]
and we have the estimates, 
\[
\M(P^1\t P^2)\le \M(P^1)\M(P^2),\qquad\F(P^1\t P^2)\le\N(P^1)\F(P^2),\qquad\F(P^1\t P^2)\le\F(P^1)\N(P^2).
\]
Using these estimates, and a density argument, we can define $A^1\t A^2$ for $A^1\in\FF^\Z_{k_1}(\X_\b)$, $A^2\in\FF^G_{k_2}(\X_\g)$ as soon as both $A^1$ and $A^2$ have finite mass or at least one of them is normal.\medskip

This construction foreshadows the definition of tensor chains in Section~\ref{Stfc}. It has the disadvantage of not being well defined for any pair of flat chains $A^1$, $A^2$. The groups of tensor flat chains correct this defect but inevitably contain objects which are not flat chains.

\subsubsection*{Push-forwards by Lipschitz maps and homotopies  (see \cite[Sections~5\&6]{Fleming66})}
Let $\g\in I^n$. A Lipschitz mapping $f:\X_\b\to\X_\g$ induces a morphism of chain complexes of normed groups, $f\pf:\FF^G_*(\X_\b)\to\FF^G_*(\X_\g)$. In particular, for $A\in\FF^G_*(\X_\b)$, there holds $\pt f\pf A=f\pf\pt A$. Moreover the following estimates hold true.
\[
 \M(f\pf A)\le C^k \M(A),\qquad\qquad\F(f\pf A)\le\max(C^k,C^{k+1})\F(A),
\]
where $C$ is a Lipschitz constant of $f$ on $\supp A$.
This mapping has the following additional properties for $\g,\d\in I^n$ and $A\in \FF^G_k(\X_\b)$.
\begin{enumerate}[(1)]
\item If $f\in \Lip(\X_\b,\X_\g)$, and $A$ is supported in $\{x\in \X_\b: f(x)=0\}$ then $f\pf A=0$.
\item For $f\in \Lip(\X_\b,\X_\g)$ and $g\in \Lip(\X_\g,\X_\d)$ then $(g\circ f)\pf A=g\pf(f\pf A)$.
\item $\Id\pf A=A$ and it follows from the previous points that if $g\circ f=\Id$ on some closed set $S$ supporting $A$  then $g\pf(f\pf A)=A$.
\item If the image of $f\in \Lip(\X_\b,\X_\g)$ lies in some closed set $S$ then $f\pf A$ is supported in $S$. If moreover $S=L$ is an affine subspace of $\X_\g$  we have a stronger property: there exists a sequence of $G$-polyhedral chains $P_j$ all supported in $L$ such that $P_j\to f\pf A$ so that $f\pf A$ identifies with a chain in $L$.
\end{enumerate}

Given, $f,g\in \Lip(\X_\b,\X_\g)$ and $A\in\FF^G_k(\X_\b)$, we define a linear homotopy from $f\pf A$ to $g\pf A$ as follows. Let $e_0:=(1,0_{\R^n})\in \R\times \R^n$ and consider the mapping: 
\[
h: te_0+x\in \R e_0+\X_\b\longmapsto tf(x)+(1-t)g(x)\in \X_\g,
\]  
We have the identity
\be\label{homotopy}
f\pf A -g\pf A= \pt\lt[h\pf(\lb(0,e_0)\rb\t A)\rt]+h\pf(\lb(0,e_0)\rb\t \pt A),
\ee
and the estimate
\be\label{estim_homotopy}
\M(h\pf(\lb(0,e_0)\rb\t A))\le \|f-g\|_{L^\oo(\supp A)}C^k\M(A),
\ee
where $C$ is a common Lipschitz constant of $f$ and $g$ on $\supp A$.

\subsubsection*{Slicing of $G$-flat chains}
In this part, we recall the definition and properties of slices as introduced in~\cite[Section~3]{White1999-2} with the small addition of the new operators $\Sl_\g$ (see below). We only consider  slices by affine coordinate planes.

Let $\g\sub\b$ and set $m:=|\b|$, $r:=|\g|$. Given a $G$-polyhedral $k$-chain $P$ with representation~\eqref{Gpolyhedralchain} and $x\in\X_\g$, if $r\le k$, the slice of $P$ with respect to $\X_{\b\sm\g}(x)$ is the $(k-r)$-chain defined as, 
\[
P\cap\X_{\b\sm\g}(x) := \sum g_i q_i,
\]
where $q_i=0$ if the affine subspace spanned $p_i\cap \X_{\b\sm\g}(x)$ is not of dimension $k-r$ and in the other cases $q_i$ is the $(k-r)$-cell of $\X_{\b\sm\g}(x)$ described by (recall the notation \eqref{ebeta}): 
\[
\begin{cases}
\supp q_i:=\supp p_i\cap \X_{\b\sm\g}(x)\\
\x_{q_i}\text{ is the unit simple }(k-r)\text{-vector orienting }q_i\text{ such that }\lt<e_\g\we\x_{q_i},\x_{p_i}\rt>>0.
\end{cases}
\]
When $r>k$, we set $P\cap\X_{\b\sm\g}(x) :=0$. Notice that for $x\in\X_\g$, the chain $P\cap\X_{\b\sm\g}(x)$ is supported in $\X_{\b\sm\g}(x)$. Identifying each $\X_{\b\sm\g}(x)$ with $\X_{\b\sm\g}$, it will be convenient to see all these chains as chains in $\X_{\b\sm\g}$. For this reason we define, 
\be\label{Slcap}
\Sl_\g^xP:=\pi\pf (P\cap\X_{\b\sm\g}(x))\ \in\PP^G_k(\X_{\b\sm\g}),
\ee
where  $\pi$ is the orthogonal projection on $\X_{\b\sm\g}$.  Denoting $\pi_x:=x+\pi$ the orthogonal projection on $\X_\g(x)$, 
we see that $\pi_x:\X_{\b\sm\g}\to\X_{\b\sm\g}(x)$ is an affine isometry with inverse $\pi$. We deduce a reverse formula for~\eqref{Slcap},
\[
P\cap\X_{\b\sm\g}(x)=\pi_x\pf \Sl_\g^xP.
\]
The usual properties of $P\cap \X_\g(x)$ transfer to $\Sl^x_\g$. We have in particular,
\[
\pt\Sl^x_\g P=\Sl^x_\g\pt P\quad\text{for a.e. }x\qquad\text{ and }\qquad\int_{\X_\g}\M(\Sl_\g^xP)\,d\h^{r}(x) \le \M(P),
\]
so that $\Sl_\g$ extends on $\FF^G_k(\X_\b)$ as a Lipschitz continuous group morphism:
\begin{align*}
\Sl_\g : \FF^G_k(\X_\b) &\,\longto\ L^1\lt(\X_\g,\FF^G_{k-r}(\X_{\b\sm\g})\rt),\\
A\quad&\,\longmapsto\qquad  x\mapsto \Sl_\g^xA. 
\end{align*}  
For the usual slicing operator, we have:
\begin{align*}
 \FF^G_k(\X_\b) &\,\longto\ L^1(\X_\g,\FF^G_{k-r}(\X_\b),\\
A\quad&\,\longmapsto\qquad  x\mapsto A \cap(x+\X_{\b\sm \g}). 
\end{align*}  
The two definitions of slicing contain the same information (same mass, same $\F$-norm, ....). The second definition~\eqref{Slcap} is not standard but useful in the proof of the main result.

\begin{proposition}[{\cite[Proposition~3.1~(1,2)]{White1999-2}}]\label{prop_MWSl}
Let $\g\sub\b$. The operator $\Sl_\g$ is a group morphism  satisfying the following properties for $A\in \FF^G_k(\X_\b)$. 
\begin{enumerate}[(i)]
\item There holds $\Sl_\g^x\pt A=\pt\Sl_\g^xA$ for almost every $x\in\X_\g$.
\item For $\z\sub\g$ and almost every $x\in\X_\g$, writing $x=x'+\ov x$ with $x'\in \X_\z$, $\ov x\in\X_{\g\sm\z}$, we have, 
\[
\Sl_\g^x A=\Sl_\g^{\ov x}\Sl_\z^{x'}A.
\] 
With the usual operator, this translates as,
\[
A\cap\X_{\b\sm\g}(x)=\lt[A\cap \X_{\b\sm\z}(x')\rt]\cap\X_{\b\sm\g}(x). 
\]
\item Moreover, denoting $r:=|\g|$, we have the estimates:
\[
\int_{\X_\g}\M(\Sl_\g^xA)\,d\h^r(x)\le\M(A),\qquad\qquad
\int_{\X_\g}\F(\Sl_\g^xA)\,d\h^r(x)\le\F(A).
\]
In particular, if $A\in \MM^G_k(\X_\b)$ then $\Sl_\g^xA\in\MM^G_{k-r}(\X_{\b\sm\g})$ for almost every $x\in\X_\g$.
\end{enumerate}
\end{proposition}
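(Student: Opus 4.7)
The strategy is the standard one for slicing results: establish everything by direct computation on polyhedral chains and then extend by density using the continuity estimates in~(iii). So the first move is to define $\Sl_\g^x$ on $\PP^G_k(\X_\b)$ via the formula~\eqref{Slcap} and check the three assertions at the polyhedral level, after which everything passes to $\FF^G_k(\X_\b)$.

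The computation at the polyhedral level reduces to a single $k$-cell $p=(q,\x_p)$ with coefficient $g$, since the mappings involved are $\Z$-linear. For assertion~(i), one writes $\pt p$ as a signed sum of its $(k-1)$-faces $p'_j$ with orientations $\x'_j$ determined by the outer-normal convention $\nu_j\we\x'_j=\x_p$. For $x\in\X_\g$ in general position (i.e.\ outside a $\h^r$-null set where $\X_{\b\sm\g}(x)$ contains a face of $p$ of dimension $>k-r$), the slice $q\cap\X_{\b\sm\g}(x)$ is a $(k-r)$-polyhedron whose faces are exactly the slices of the faces $p'_j$; the sign comparison comes from the defining condition $\lt<e_\g\we\x_{q_i},\x_{p_i}\rt>>0$ together with the outer normal to the slice being the restriction of $\nu_j$, which is the standard Stokes-type sign cancellation. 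Point~(ii) is immediate on polyhedra from the set-theoretic identity $\X_{\b\sm\g}(x)=\X_{\b\sm\g}(x)\cap\X_{\b\sm\z}(x')$ together with the factorization $e_\g=\pm\,e_\z\we e_{\g\sm\z}$, which ensures the two orientations coincide after the push-forward by the orthogonal projections.

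For~(iii) at the polyhedral level, the mass inequality is the coarea formula for polyhedra: for each cell $p_i$ in a minimal representation of $P$, Fubini gives $\int_{\X_\g}\h^{k-r}(p_i\cap\X_{\b\sm\g}(x))\,d\h^r(x)\le\h^k(p_i)$ (with equality when $p_i$ is transverse to $\X_\g$), and summing over the minimizing representation yields $\int_{\X_\g}\M(\Sl_\g^x P)\,d\h^r(x)\le\M(P)$. This bound shows that $P\mapsto(x\mapsto \Sl_\g^x P)$ is $1$-Lipschitz from $(\PP^G_k(\X_\b),\M)$ into $L^1(\X_\g,\MM^G_{k-r}(\X_{\b\sm\g}))$; combined with the flat-norm variant obtained below, this extends $\Sl_\g$ to $\FF^G_k(\X_\b)$, and assertions~(i) and~(ii), holding a.e.\ on polyhedra, pass to the limit on a full-measure set of $x$ in each fixed approximating sequence.

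The flat-norm estimate then follows from Proposition~\ref{prop_alternateWG}~(ii): given $\eps>0$, write $A=B+\pt C$ with $\M(B)+\M(C)\le\F(A)+\eps$. Using~(i) (already established a.e.) we get $\Sl_\g^xA=\Sl_\g^xB+\pt\Sl_\g^xC$ for a.e.~$x$, whence $\F(\Sl_\g^xA)\le\M(\Sl_\g^xB)+\M(\Sl_\g^xC)$; integrating and using the mass bound twice yields $\int_{\X_\g}\F(\Sl_\g^xA)\,d\h^r(x)\le\F(A)+\eps$, and $\eps\to0$ concludes. The $\MM^G$ statement is the special case $C=0$. The only delicate point throughout is the sign bookkeeping needed for~(i); once the orientation convention is parsed correctly everything else is a routine extension-by-density argument, which is why White only states the result without detailed proof.
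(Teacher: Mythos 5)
The paper itself gives no proof of this proposition: it is recalled from White~\cite{White1999-2} with a citation, and only the preparatory discussion (definition of $\Sl_\g$ on polyhedral chains, the two polyhedral estimates, and a one-line assertion that the operator therefore extends by density to $\FF^G_k$) appears in the text. Your overall strategy --- verify on polyhedral chains, extend by $\F$-density, pass the identities to the limit --- is exactly the right one and matches the paper's sketch.

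As written, however, your final paragraph is circular. To prove $\int_{\X_\g}\F(\Sl_\g^xA)\le\F(A)$ you invoke Proposition~\ref{prop_alternateWG}(ii) to write $A=B+\pt C$ with $B,C$ general finite-mass flat chains, then apply assertion~(i) and the mass estimate to $B$ and~$C$. But at that stage neither~(i) nor the mass estimate has been established beyond polyhedral chains: you say they ``pass to the limit'' only after the extension, and the extension itself rests on the flat-norm estimate you are in the middle of proving. The standard fix is to first prove $\int_{\X_\g}\F(\Sl_\g^xP)\le\F(P)$ directly for $P\in\PP^G_k(\X_\b)$, using that on polyhedral chains $\F(P)$ is by definition the infimum over \emph{polyhedral} decompositions $P=Q+\pt R$, where~(i) and the mass bound are already available. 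This makes $\Sl_\g$ a $1$-Lipschitz map from $(\PP^G_k(\X_\b),\F)$ into $L^1(\X_\g,\FF^G_{k-r}(\X_{\b\sm\g}))$, which extends uniquely; the general flat-norm estimate is then automatic. A second, smaller slip: ``the $\MM^G$ statement is the special case $C=0$'' is not right --- with $C=0$ your argument yields $\int\F(\Sl_\g^xA)\le\M(A)$, not $\int\M(\Sl_\g^xA)\le\M(A)$, and finiteness of $\F(\Sl_\g^xA)$ does not give finiteness of $\M(\Sl_\g^xA)$. To recover the mass estimate for $A\in\MM^G_k$ one should approximate $A$ in flat norm by polyhedral $P_j$ with $\M(P_j)\to\M(A)$, pass to a subsequence so that $\Sl_\g^xP_j\to\Sl_\g^xA$ for a.e.~$x$, and conclude by lower semicontinuity of $\M$ together with Fatou.
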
\medskip

The collection of the slices of $k$-chain by coordinate planes of codimension $k$ completely characterizes this chain.  

\begin{theorem}[{\cite[Theorem~3.2]{White1999-2}}]
\label{coro_defWhite2}
If $A\in \FF^G_k(\X_\b)$ is such that $\Sl_\g A=0$ for  every $\g\sub\b$ with $|\g|=k$, then $A=0$.
\end{theorem}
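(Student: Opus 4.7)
I would argue by induction on $k$. The base case $k=0$ is immediate: the only $\gamma\subset\b$ with $|\gamma|=0$ is $\gamma=\emptyset$, in which case $\X_{\b\setminus\emptyset}=\X_\b$ and $\Sl_\emptyset A=A$, so the hypothesis directly forces $A=0$.

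For the inductive step, assume the statement holds for $(k-1)$-chains in every coordinate subspace. Given $A\in\FF^G_k(\X_\b)$ whose top-codimension slices all vanish, fix $j\in\b$ and set $B^j_x:=\Sl_{\{j\}}^x A\in\FF^G_{k-1}(\X_{\b\setminus\{j\}})$. For every $\gamma'\subset\b\setminus\{j\}$ with $|\gamma'|=k-1$, the iterated slicing identity (Proposition~\ref{prop_MWSl}(ii)) applied with $\gamma=\{j\}\cup\gamma'$ and $\zeta=\{j\}$ yields
\[
\Sl_{\gamma'}^y B^j_x \;=\; \Sl_{\{j\}\cup\gamma'}^{x+y} A \;=\; 0
\]
for a.e.\ $(x,y)$. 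The induction hypothesis, applied inside the smaller ambient $\X_{\b\setminus\{j\}}$, gives $B^j_x=0$ for a.e.\ $x$; hence $\Sl_{\{j\}}A=0$ for every $j\in\b$. The problem is thus reduced to the codimension-one claim: a flat chain whose codimension-one coordinate slices all vanish must itself vanish.

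For this core step my plan is quantitative. I would establish, for polyhedral $P\in\PP^G_k(\X_\b)$, an estimate of the form
\[
\F(P) \;\le\; C(|\b|)\sum_{j\in\b}\int_{\X_{\{j\}}}\F(\Sl_{\{j\}}^x P)\,d\h^1(x),
\]
and then extend it to arbitrary $A\in\FF^G_k(\X_\b)$ by density of polyhedral chains together with the Lipschitz bound of Proposition~\ref{prop_MWSl}(iii). After a preliminary Fleming-type deformation to a fine cubical grid, the generic slice $\Sl_{\{j\}}^x P$ is a $0$-chain reading off the signed weights of the $k$-cells of $P$ transverse to the coordinate hyperplane $\X_{\b\setminus\{j\}}(x)$. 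Combining the read-outs across all $j\in\b$ determines every coefficient of $P$, and an explicit filling of $P$ can be assembled as a sum over $j\in\b$ of prisms obtained by translating the $j$-slices infinitesimally along $e_j$, with signs and indexing chosen so the boundaries telescope and recover $P$.

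The principal obstacle I foresee is controlling the total mass of this prism filling and its homological compatibility across the $|\b|$ coordinate directions, so that the assembled $(k+1)$-chain genuinely realizes $P$ modulo a remainder of controlled mass; this requires a careful combinatorial bookkeeping reminiscent of the proof of the deformation theorem. Once the displayed estimate is in place, continuity transfers it to all of $\FF^G_k(\X_\b)$, and the vanishing of every $\Sl_{\{j\}}A$ forces $\F(A)=0$, hence $A=0$, completing the induction.
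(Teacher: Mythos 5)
Your reduction to the codimension-one case via iterated slicing (Proposition~\ref{prop_MWSl}(ii)) is correct, and so is the base case $k=0$. The core of the proposal, however, rests on the inequality
\[
\F(P) \le C(|\b|)\sum_{j\in\b}\int_{\X_{\{j\}}}\F\bigl(\Sl_{\{j\}}^x P\bigr)\,d\h^1(x),
\]
and this inequality is false. Take $G=\Z$, $\b=\{1,2\}$, $k=1$, $0<\eps,\delta<1$, and
\[
P:=\lb((0,0),(\eps,0))\rb-\lb((0,\delta),(\eps,\delta))\rb\ \in\PP^\Z_1(\R^2),
\]
two oppositely oriented parallel horizontal segments of length $\eps$ at vertical distance $\delta$. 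For a.e.\ $x=se_1\in\X_{\{1\}}$ with $0<s<\eps$ one has $\Sl_{\{1\}}^{x}P=\lb 0\rb-\lb\delta e_2\rb=-\pt\lb(0,\delta e_2)\rb$, hence $\F(\Sl_{\{1\}}^{x}P)\le\delta$; for other $x$, and for all slices in the direction $e_2$, the slice vanishes. The right-hand side above is therefore at most $\eps\delta$. On the other hand $\pt P$ is the alternating $0$-cycle on the four corners of the $\eps\times\delta$ rectangle, and pairing any decomposition $\pt P=B+\pt C$ against a Lipschitz-$1$ function bounded by $\tfrac12\min(\eps,\delta)$ and taking the values $\pm\tfrac12\min(\eps,\delta)$ at those corners (with signs matching $\pt P$) gives $\F(\pt P)\ge 2\min(\eps,\delta)$; Proposition~\ref{prop_alternateWG}(i) then yields $\F(P)\ge 2\min(\eps,\delta)$. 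The ratio of the two sides thus exceeds $2/\max(\eps,\delta)\to\oo$ as $\eps,\delta\to0$, so no constant $C(|\b|)$ can exist.

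The failure is structural, not a bookkeeping issue. Knowing that each individual slice is flat-norm small only says that each slice is close to some unrelated boundary; these unrelated fillings cannot be assembled across $x$ and $j$ into the single $(k+1)$-chain that a bound on $\F(P)$ requires, and your ``prism'' construction -- translating the $(k-1)$-dimensional slices along $e_j$ -- produces objects of dimension $k$, one dimension short of a filling. You flag this as ``the principal obstacle,'' but since the target inequality is itself false, no amount of combinatorial care can close the gap along this route. Note also that the paper supplies no independent proof of this statement: it simply cites~\cite[Theorem~3.2]{White1999-2}, and White's argument is of a genuinely different, qualitative nature (deformation to grid-aligned polyhedral chains combined with the identification of finite-mass $0$-chains with $G$-valued measures and the compatibility of slicing with restriction), not a quantitative slice-to-chain flat-norm estimate. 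As written, the proposal does not establish the theorem.
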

  
\begin{remark}[and vocabulary]
For $0\le m\le k$, the slices by planes of codimension $k-m$ are of dimension $m$ (if $A$ is a $k$-chain in $\X_\b$, for $\g\sub\b$ with $|\g|=m$, $\Sl^x_\g A$ and $A\cap\X_{\b\sm\g}(x)$ are $m$-chains). They are called its $m$-slices. Theorem~\ref{coro_defWhite2} states that a chain is determined by its $0$-slices (and, using iterate slicing, in fact  by its $m$-slices for any $m\in\{0,\dots,k\}$). Since finite mass $0$-chains have a nice structure (as we see below they are measures), this result is of particular interest.
\end{remark}

\subsubsection*{Finite mass $G$-flat chains}

Let us now turn our attention to finite mass $G$-flat chains.
%  First, the group $\MM^G_0(\X_\b)$ identifies with the group of finite measures over $\X_\b$ with values in $G$. Next, 
 To every $A\in\MM^G_k(\X_\b)$ is associated a mapping 
\[
S\in\{\text{Borel subsets of }\X_\b\}\ \longmapsto\ A\restr S\in \MM^G_k(\X_\b),
\]
and a finite Borel measure $\mu_A$ defined by
\[
\mu_A(S):=\M(A\restr S).
\]

The restriction $A\restr S$ corresponds to ``the part of $A$ in $S$''. For a polyhedral chain $P$ and a $n$-interval $I$, $P\restr I$ is defined in the obvious way. The restriction enjoys the following properties for $A\in\MM^G_k(\X_\b)$ and $S$ Borel subset of $\X_\b$ (see\footnote{Beware that in Fleming's paper the restriction is denoted $\cap$ and not $\restr$.}~\cite[Lemma~4.2, Theorem~4.3]{Fleming66})
\begin{enumerate}[(i)]
\item If $A_j\in\MM^G_k(\X_\b)$ is such that $A_j\to A$ and $\M(A_j)\to\M(A)$ then $A_j\restr S\to A\restr S$ whenever $\mu_A(\frt S)=0$.
\item $A$ admits a support and if $S$ is closed, $\supp (A\restr S)=S\cap\supp\mu_A$. In particular, $\supp A=\supp\mu_A$. If $S$ is merely a Borel set, we still have $\supp(A\restr S)\sub \ov S\cap\supp\mu_A$.
\end{enumerate}
We recall that  the slicing and restriction operators commute.
\begin{proposition}[{\cite[Proposition~3.1~(3)]{White1999-2}}]\label{prop_Slrestr}
Let $A\in \FF^G_k(\X_\b)$,  let $S\sub\X_\b$ be a Borel set and let $\g\sub\b$. For almost every $x\in\X_\g$, there holds 
\[
(A\restr S)\cap \X_{\b\sm\g}(x)=(A\cap \X_{\b\sm\g}(x))\restr S = (A\cap \X_{\b\sm\g}(x))\restr (S\cap \X_{\b\sm\g}(x)).
\]
Using the $\Sl_\g$ operator, this rewrites as,
\[
\Sl_\g^x(A\restr S)=(\Sl_\g^x A)\restr (S-x) = (\Sl_\g^x A)\restr [(S-x)\cap \X_{\b\sm\g}].
\]
\end{proposition}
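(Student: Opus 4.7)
The plan is threefold: verify the identity for polyhedral $P$ on simple sets, extend to all Borel $S$ via a Dynkin $\pi$--$\lambda$ argument, and pass to the limit $P_j \to A$. Throughout I assume $A \in \MM^G_k(\X_\b)$, since $A\restr S$ is only defined when $A$ has finite mass. The last equality in the statement is trivial: the chain $A \cap \X_{\b\sm\g}(x)$ is supported in $\X_{\b\sm\g}(x)$, so its restriction to $S$ coincides with its restriction to $S \cap \X_{\b\sm\g}(x)$.

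\textbf{Polyhedral case.} Let $P=\sum g_i p_i$ be polyhedral. Outside a Lebesgue-null set $N \sub \X_\g$ depending only on $P$, the affine plane $\X_{\b\sm\g}(x)$ is transverse to every $(k-1)$-face of every $p_i$, so $P \cap \X_{\b\sm\g}(x)$ is itself polyhedral of dimension $k-r$. Fix such an $x$. For an open axis-aligned box $B$ whose face-planes avoid the finitely many hyperplanes spanned by faces of the $p_i$, one checks directly on each cell---using the orientation convention $\langle e_\g \we \x_{q_i}, \x_{p_i}\rangle > 0$ from the definition of slicing---that
\[
(P \restr B) \cap \X_{\b\sm\g}(x) = (P \cap \X_{\b\sm\g}(x)) \restr B.
\]
Then the Borel class
\[
\mathcal{L}_x := \lt\{ S \sub \X_\b \text{ Borel}: (P \restr S) \cap \X_{\b\sm\g}(x) = (P \cap \X_{\b\sm\g}(x)) \restr S \rt\}
\]
contains the $\pi$-system of such boxes, contains $\X_\b$, and is closed under complements (since $P \restr S^c = P - P \restr S$) and under countable disjoint unions (by the $\sigma$-additivity of restriction on $\MM^G_k$); Dynkin's $\pi$--$\lambda$ theorem then shows that $\mathcal{L}_x$ equals the full Borel $\sigma$-algebra.

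\textbf{Passing to the limit.} Choose polyhedral $P_j \to A$ in flat norm with $\M(P_j)\to \M(A)$. Fix a Borel set $S$ with $\mu_A(\frt S)=0$. Fleming's continuity of restriction yields $P_j \restr S \to A \restr S$ in flat norm, and Proposition~\ref{prop_MWSl}(iii) gives
\[
(P_j \restr S) \cap \X_{\b\sm\g}(\cdot) \longto (A \restr S) \cap \X_{\b\sm\g}(\cdot) \quad \text{in } L^1\bigl(\X_\g, \FF^G_{k-r}(\X_\b)\bigr).
\]
Similarly $P_j \cap \X_{\b\sm\g}(\cdot) \to A \cap \X_{\b\sm\g}(\cdot)$ in $L^1$, and the Fubini-type inequality
\[
\int_{\X_\g} \mu_{A\cap\X_{\b\sm\g}(x)}\bigl(\frt S \cap \X_{\b\sm\g}(x)\bigr)\,d\h^r(x) \le \mu_A(\frt S) = 0,
\]
obtained by disintegrating $\mu_A$ along $\X_\g$ and comparing with the slice mass measures, shows that $\mu_{A\cap\X_{\b\sm\g}(x)}(\frt S \cap \X_{\b\sm\g}(x)) = 0$ for a.e. $x$; Fleming's continuity then yields $(P_j \cap \X_{\b\sm\g}(x))\restr S \to (A \cap \X_{\b\sm\g}(x))\restr S$ for a.e. $x$, along a suitable subsequence. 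Applying the polyhedral identity to each $P_j$ and passing to the limit proves the result for such $S$. An arbitrary Borel $S$ is then handled by approximating from inside and outside by closed/open sets with $\mu_A$-null boundaries and using monotone continuity of restriction.

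\textbf{Main obstacle.} The delicate point is the interplay between the \emph{a.e.} $x$ nature of slicing and the fact that Fleming's continuity of $S\mapsto A\restr S$ requires $\mu_A(\frt S)=0$. The Fubini-type estimate on disintegrated mass measures above is what reconciles these two constraints, ensuring that slice-wise restriction converges along a subsequence; without it the final limit step---passing the polyhedral identity through both slicing and restriction simultaneously---would be uncontrolled.
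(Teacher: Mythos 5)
The paper itself gives no proof of this proposition: it is taken verbatim from White~\cite[Proposition~3.1~(3)]{White1999-2} and used as a black-box input. So there is no internal argument to compare yours against; what follows is a review of the proof on its own merits.

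Your overall architecture (polyhedral case on boxes, Dynkin to all Borel sets, limit along $P_j\to A$) is the right shape for this kind of commutation statement, and you correctly observe that the restriction operator only makes sense for finite-mass $A$ and that the second equality is immediate from supports. However, there are two genuine gaps.

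First, the Dynkin step is not well-posed as stated. You fix $x$ outside a $P$-dependent null set and then form $\mathcal{L}_x$, the class of Borel $S$ for which $(P\restr S)\cap\X_{\b\sm\g}(x)=(P\cap\X_{\b\sm\g}(x))\restr S$, and claim it is a $\lambda$-system. But for a non-polyhedral finite-mass chain such as $P\restr S$, the slice at a \emph{fixed} $x$ is only defined up to an $L^1$-a.e.\ equivalence in $x$, with the exceptional null set depending on the chain being sliced, hence on $S$. So for a fixed $x$, the left-hand side need not be well-defined for all Borel $S$, and closure under countable disjoint unions requires commuting $\Sl_\g^x$ (at a single $x$) with a mass-convergent series — an operation that the slicing theory does not license pointwise, only in $L^1(\X_\g,\cdot)$. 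The clean fix is to reverse the quantifiers: prove the identity for $S$ ranging over a fixed countable $\pi$-system of boxes (obtaining a single null set $N\sub\X_\g$), and then, for each $x\notin N$, argue that both $S\mapsto\Sl_\g^x(A\restr S)$ and $S\mapsto(\Sl_\g^x A)\restr(S-x)$ are finite-mass-chain-valued measures agreeing on a generating $\pi$-system, hence everywhere — with the $\sigma$-additivity of the first map established via the $L^1$-Fubini bound $\int\M(\Sl_\g^x(A\restr S_j))\,dx\le\M(A\restr S_j)$ and Borel--Cantelli, not pointwise.

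Second, the ``Fubini-type inequality''
\[
\int_{\X_\g}\mu_{A\cap\X_{\b\sm\g}(x)}\big(\frt S\cap\X_{\b\sm\g}(x)\big)\,d\h^r(x)\le\mu_A(\frt S)
\]
is exactly the content of the proposition applied to the set $\frt S$, so invoking it as an ingredient is circular. You flag it as ``the delicate point'' but your justification (``disintegrating $\mu_A$ and comparing with slice mass measures'') is an assertion, not an argument. It \emph{is} provable independently — for open $T$ with $\mu_A(\frt T)=0$ one combines $\int\mu_{\Sl_\g^xP_j}((T-x))\,dx\le\mu_{P_j}(T)$ for polyhedra with $\mu_{P_j}(T)\to\mu_A(T)$, Fatou, and the weak lower semicontinuity of $\mu_{\Sl_\g^xA}$ on open sets along an a.e.-convergent subsequence of $\Sl_\g^xP_j$, then outer regularity — but this derivation is a substantive missing step, not a routine disintegration.
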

 
The next proposition is a key tool in the proof of Proposition~\ref{prop_main_0k}.
 
\begin{proposition}\label{prop_divformula}
 Let $A\in\NN^G_k(\R^n)$. Denoting  $H(s):=\{x\in\R^n: x_1>s\}$ then $A\restr H(s)\in \NN^G_k(\R^n)$ for almost every $s\in\R$ and the following identity holds true.
\[
\pt(A\restr H(s))=(\pt A)\restr H(s)+A\cap \X_{\ov1}(se_1).
\]
Notice that $ \X_{\ov1}(se_1)=\bndry H(s)$.
\end{proposition}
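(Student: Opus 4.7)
My plan is to first establish the identity for $G$-polyhedral chains by a direct geometric computation at generic values of $s$, and then to pass to the limit by combining a polyhedral approximation of $A$ in $\N$-norm with the slicing machinery recalled above.

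For the polyhedral step, I would fix an oriented $k$-cell $p=(q,\x_p)$ and $s\in\R$ such that $\{x_1=s\}$ either misses $q$ or cuts it transversally. Then $q$ splits as $q^+:=q\cap\overline{H(s)}$ and $q^-:=q\sm H(s)$, both inheriting the orientation $\x_p$, and the boundary of the polyhedral chain $p\restr H(s)=(q^+,\x_p)$ is the sum of its $(k-1)$-faces. A direct inspection reveals two types of contributions: (i) the $(k-1)$-faces of $\pt p$ whose relative interior lies in $H(s)$, which together realize $(\pt p)\restr H(s)$; and (ii) the newly created face supported on $q\cap\{x_1=s\}$, whose outward unit normal with respect to $q^+$ is $-e_1$. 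Matching this outward normal with the slicing convention $\langle e_1\we\x_q,\x_p\rangle>0$ identifies this new face with $p\cap\X_{\ov 1}(se_1)$, exactly with the sign prescribed by the statement. By $\Z$-linearity over a representation $P=\sum g_ip_i$, the identity then holds for every $P\in\PP^G_k(\R^n)$ and every $s$ outside a finite exceptional set depending on $P$.

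For the limit step, since $A\in\NN^G_k(\R^n)$, a standard approximation result for normal chains furnishes a sequence $P_j\in\PP^G_k(\R^n)$ with
\[
\M(P_j-A)+\M(\pt P_j-\pt A)\longto 0.
\]
Since $\mu_{P_j-A}(H(s))\le\M(P_j-A)$, we have $P_j\restr H(s)\to A\restr H(s)$ in mass for every $s$, and similarly $(\pt P_j)\restr H(s)\to(\pt A)\restr H(s)$ in mass for every $s$. Proposition~\ref{prop_MWSl}~(iii) applied to $P_j-A$ yields
\[
\int_\R \F\lt(P_j\cap\X_{\ov 1}(se_1)-A\cap\X_{\ov 1}(se_1)\rt)\,d\h^1(s)\longto 0,
\]
so along a subsequence $P_j\cap\X_{\ov 1}(se_1)\to A\cap\X_{\ov 1}(se_1)$ in $\F$ for a.e.\ $s\in\R$. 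For such $s$, additionally excluded from the countable union of the finite exceptional sets attached to each $P_j$, I apply the polyhedral identity to $P_j$ and pass to the limit: the right-hand side by the two convergences above, the left-hand side by $\F$-continuity of $\pt$ applied to $P_j\restr H(s)\to A\restr H(s)$. This yields the identity for $A$. The bound
\[
\M(\pt(A\restr H(s)))\le\M(\pt A)+\M(A\cap\X_{\ov 1}(se_1))<\oo
\]
is finite for a.e.\ $s$ by Proposition~\ref{prop_MWSl}~(iii), and consequently $A\restr H(s)\in\NN^G_k(\R^n)$.

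The main obstacle is orchestrating the simultaneous almost-everywhere statements—convergence of the slices along a common subsequence, finiteness of the slice masses, and avoidance of the countably many exceptional values where some $P_j$ fails the polyhedral identity. All three merge into a single null set, so it is pure bookkeeping. A secondary subtlety, already handled in the polyhedral step, is the orientation check that reconciles the outward normal $-e_1$ of $H(s)$ at a new face with the slicing convention of the paper.
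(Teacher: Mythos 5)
The limit step contains a genuine gap. You invoke an approximation result asserting that for $A\in\NN^G_k(\R^n)$ there exist polyhedral chains $P_j$ with $\M(P_j-A)+\M(\pt P_j-\pt A)\to 0$. This is false: polyhedral chains are not dense in $\NN^G_k(\R^n)$ with respect to the $\N$-norm. For instance, if $A$ is the integral $1$-chain given by integration over a smooth non-polygonal curve, then any polyhedral $P_j$ has a support meeting $\supp A$ in a set of $\h^1$-measure zero, so $\M(P_j-A)\ge\M(A)>0$ stays bounded away from zero. The correct approximation (Fleming, Theorem 5.6, which the paper cites) only gives $\F(Q_j-A)\to 0$ together with $\M(Q_j)\to\M(A)$ and $\M(\pt Q_j)\to\M(\pt A)$; these are convergences \emph{of masses}, not convergence \emph{in mass}, and that distinction is exactly what makes the limit step nontrivial.

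Because your starting premise is too strong, the step ``$\mu_{P_j-A}(H(s))\le\M(P_j-A)$, hence $P_j\restr H(s)\to A\restr H(s)$ in mass for every $s$'' collapses once the premise is weakened. With only $\F$-convergence and mass convergence in hand, one must instead invoke the continuity of restrictions under these hypotheses (Fleming, Lemma 4.2: if $A_j\to A$ in flat norm with $\M(A_j)\to\M(A)$, then $A_j\restr S\to A\restr S$ whenever $\mu_A(\frt S)=0$). Since $\mu_A$ is finite, $\mu_A(\{x_1=s\})=0$ for all but countably many $s$, so this yields $Q_j\restr H(s)\to A\restr H(s)$ and $(\pt Q_j)\restr H(s)\to(\pt A)\restr H(s)$ only for almost every $s$, which is of course enough. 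This is precisely what the paper's proof does. Your handling of the slice convergence via Proposition~\ref{prop_MWSl}(iii) and the bookkeeping of the null sets is fine, and the polyhedral step is essentially correct; the missing ingredient is the restriction-continuity lemma, without which the limit cannot be taken.
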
 
\begin{proof}
Let $A\in\NN^G_k(\R^n)$. By~\cite[Theorem~5.6]{Fleming66}, there exists $Q_j\in \PP^G_k(\R^n)$ such that $Q_j\to A$, $\M(Q_j)\to\M(A)$ and $\M(\pt Q_j)\to\M(\pt A)$. By~\cite[Lemma~4.2]{Fleming66} we have for almost every $s\in\R$ (recall that convergence is intended in $\F$-norm),
\be\label{proof_prop_divformula1}
Q_j\restr H(s) \longto A\restr H(s),\qquad\qquad (\pt Q_j)\restr H(s) \longto(\pt A)\restr H(s).
\ee
Moreover, the proposition is obvious for polyhedral chains, so,
\be\label{proof_prop_divformula2}
\pt(Q_j\restr H(s))-(\pt Q_j)\restr H(s)  = Q_j\cap \X_{\ov1}(se_1).
\ee
By Proposition~\ref{prop_MWSl}, the mapping $s\mapsto Q_j\cap \X_{\ov1}(se_1)$ converges towards $s\mapsto A\cap \X_{\ov1}(se_1)$ in $L^1(\R,\FF^G_{k-1}(\R^n))$. Consequently, up to extraction, $Q_j\cap \X_{\ov1}(se_1)\to A\cap \X_{\ov1}(se_1)$. Passing to the limit in~\eqref{proof_prop_divformula2} and using~\eqref{proof_prop_divformula1} yield the result.
\end{proof}

Let $\MM(\X_\b,G)$ be the group of $G$-valued Borel measures in $\X_\b$. The total variation of $\nu\in\MM(\X_\b,G)$ is defined as,
\[
|\nu|:=\sup\lt\{\sum|\nu(S_j)|_G:S_j\text{ Borel partition of }\X_\b\rt\}.
\]
We recall that $(\MM(\X_\b,G),+,|\cd|)$ is a complete Abelian normed group and that finite mass $0$-chains identify with $G$-valued Borel measures. 
\begin{theorem}[{\cite[Theorem~2.1]{White1999-2}}]\label{thm_psi}
There exists an isometric isomorphism, 
\[
\psi:(\MM^G_0(\X_\b),+,\M)\longto(\MM(\X_\b,G),+,|\cd|).
\]
\end{theorem}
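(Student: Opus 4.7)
The plan is to build $\psi$ by first defining it on polyhedral $0$-chains, then extending it to $\MM^G_0(\X_\b)$ through an ``augmentation'' morphism, and finally constructing an inverse by approximating $G$-valued measures by atomic ones.

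First, I would set $\psi(P):=\sum g_i\d_{x_i}$ for any polyhedral $0$-chain $P=\sum g_ip_i$ written in reduced form, i.e.\ with distinct points $x_i:=\supp p_i$ and $p_i=\lb x_i\rb$. Using the equivalence relations defining $\PP^G_0(\X_\b)$, the reduced form is unique up to permutation, so $\psi$ is a well-defined group morphism, and the definition~\eqref{defM(P)} yields $\M(P)=\sum|g_i|_G=|\psi(P)|$. Thus $\psi$ is an isometry on $(\PP^G_0(\X_\b),\M)$ with image the atomic $G$-valued measures.

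Next I would extend $\psi$ to $\MM^G_0(\X_\b)$ via an augmentation map $\eps:\FF^G_0(\X_\b)\to G$ defined by $\eps(\sum g_i\lb x_i\rb):=\sum g_i$ on polyhedral $0$-chains. Since the boundary of any $1$-cell $g\lb(x,y)\rb$ augments to $g-g=0$, by linearity $\eps\circ\pt=0$ on $\PP^G_1(\X_\b)$. For $P\in\PP^G_0$ with a Whitney representation $P=Q+\pt R$ as in Proposition~\ref{prop_alternateWG}~(ii), we obtain $|\eps(P)|_G=|\eps(Q)|_G\le\M(Q)$, hence $|\eps(P)|_G\le\F(P)$. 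This yields a $1$-Lipschitz extension $\eps:\FF^G_0(\X_\b)\to G$. For $A\in\MM^G_0(\X_\b)$, I then define $\psi(A)(S):=\eps(A\restr S)$ for every Borel set $S\sub\X_\b$. Countable additivity follows from $\mu_A(\bigsqcup_j S_j)=\sum_j\mu_A(S_j)$, which gives $\M$-convergence of partial sums of $A\restr S_j$ and hence $\F$-convergence, to which the continuous morphism $\eps$ applies. The total-variation bound $|\psi(A)|\le\M(A)$ is immediate, since $\sum_j|\eps(A\restr S_j)|_G\le\sum_j\M(A\restr S_j)=\mu_A(\X_\b)=\M(A)$ for any Borel partition.

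It remains to show surjectivity and the reverse inequality $\M(A)\le|\psi(A)|$, which amounts to constructing an $\M$-preserving inverse to $\psi$. Given $\nu\in\MM(\X_\b,G)$, I fix an exhausting sequence of balls $B_R$ carrying most of the positive measure $|\nu|$, pick Borel partitions $\{S^{(m)}_i\}$ of $B_R$ of vanishing diameter $\d_m\dw0$ with $|\nu|(\pt S^{(m)}_i)=0$, and points $x^{(m)}_i\in S^{(m)}_i$. The polyhedral chains $P_m:=\sum_i\nu(S^{(m)}_i)\lb x^{(m)}_i\rb$ satisfy $\psi(P_m)=\sum_i\nu(S^{(m)}_i)\d_{x^{(m)}_i}$ and $\M(P_m)=\sum_i|\nu(S^{(m)}_i)|_G$, which converges to $|\nu|(B_R)$ as $\d_m\dw0$ by definition of the total variation. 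On successive refined partitions the difference $P_{m+1}-P_m$ can be written as the boundary of a $1$-chain of the form $\sum\nu(S)\lb(x,y)\rb$ with $|x-y|\le 2\d_m$, giving $\F(P_{m+1}-P_m)\le 2\d_m|\nu|(B_R)$. Hence $(P_m)$ is $\F$-Cauchy with uniformly bounded mass, its $\F$-limit $A_R$ belongs to $\MM^G_0(\X_\b)$ by lower semicontinuity of $\M$, and continuity of the restriction and of $\eps$ yields $\psi(A_R)=\nu\restr B_R$. A standard diagonal argument as $R\up\oo$ produces $A\in\MM^G_0(\X_\b)$ with $\psi(A)=\nu$ and $\M(A)\le|\nu|$, which combined with the previously established $|\psi(A)|\le\M(A)$ gives isometry. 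The main technical obstacle is the non-compact case: ensuring that the truncation at scale $R$ does not spoil $\F$-convergence and that the final $A$ has the prescribed measure on every Borel set, which forces a careful choice of partitions respecting the $|\nu|$-continuity of the $S^{(m)}_i$.
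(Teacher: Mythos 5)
Your construction of $\psi$ — defining it on atomic chains, extending via an augmentation $\eps$ (this is precisely the morphism $\chi$ of Theorem~\ref{thm_chi}), and setting $\psi(A)(S):=\eps(A\restr S)$ — is exactly the route of \cite[Theorems~2.1--2.2]{White1999-2}, which the paper cites for this result, and the inequality $|\psi(A)|\le\M(A)$ is correct. However, your surjectivity step has a genuine gap: you only verify the isometry on the range of the section $\nu\mapsto A$ that you construct, i.e.\ you show that \emph{some} preimage of $\nu$ has mass $|\nu|$. You never prove the reverse inequality $\M(A)\le|\psi(A)|$ for an \emph{arbitrary} $A\in\MM^G_0(\X_\b)$, nor injectivity of $\psi$, and without one of these the conclusion ``$\psi$ is an isometric isomorphism'' does not follow: a priori there could be another preimage $A'$ of $\nu$ with $\M(A')>|\nu|$.

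To close the gap one should argue directly that $\M(A)\le|\psi(A)|$. Given $A$, take a finite Borel partition $\{S_i\}$ of a large ball with $\diam S_i\le\d$, $\mu_A(\frt S_i)=0$, marked points $x_i\in S_i$, and set $\wt P:=\sum_i\chi(A\restr S_i)\lb x_i\rb$. Since each $A\restr S_i$ is a $0$-cycle, the homotopy formula~\eqref{homotopy} with the estimate~\eqref{estim_homotopy} gives $\F(\wt P-A\restr\bigcup_iS_i)\le\d\,\M(A)$, while $\M(\wt P)\le\sum_i|\psi(A)(S_i)|_G\le|\psi(A)|$; letting $\d\dw0$ and the ball grow, lower semicontinuity of $\M$ under $\F$-convergence yields $\M(A)\le|\psi(A)|$ and hence injectivity (the same homotopy mechanism that the paper uses in Step~1 of the proof of Theorem~\ref{thm_Ups}). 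Incidentally, this also repairs two smaller imprecisions in your surjectivity step: the assertion $\M(P_m)\to|\nu|(B_R)$ is not ``by definition of the total variation'' and in fact is not needed; and the restriction-continuity property you invoke requires $\M(P_m)\to\M(A_R)$, which you have not established and would be circular to assume without the argument above.
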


\begin{remark}
Using this identification, given finite sequences $x_i\in \X_\b$ and $g_i\in G$, we may write $\sum g_i\d_{x_i}$ for the polyhedral $0$-chain $\sum g_i\lb x_i\rb$.
\end{remark}

\subsubsection*{Morphisms of the group of coefficients}
Let $(G^a,+,|\cd|_{G^a})$ and $(G^b,+,|\cd|_{G^b})$ be two complete Abelian normed groups and  $\phi:G^a\to G^b$ be a Lipschitz continuous group morphism. For $P^a\in\PP_k(\X_\b,G^a)$ with representation,
\be\label{Pa}
P^a=\sum g^a_jp_j,
\ee
we define 
\begin{equation}\label{star}
\phi\et P^a:=\sum \phi(g^a_j)p_j.
\end{equation}
This mapping extends as a Lipschitz continuous morphism:
\[
\phi\et :\FF_k(\X_\b,G^a)\longto\FF_k(\X_\b,G^b).
\]
Moreover, for every $A\in\FF_k(\X_\b,G^a)$, there holds 
\begin{equation}\label{phidiese}
\pt\phi\et A=\phi\et\pt A \qquad \textrm{and} \qquad \M(\phi\et A)\le \Lip(\phi) \M(A),
\end{equation}
where $\Lip(\phi)$ is the Lipschitz constant of $\phi$.
More generally, if $(\Om,\mu)$ is a measured space and 
\[
\Phi:G^a\to L^1(\Om,G^b),
\]
is a Lipschitz continuous group morphism, the mapping defined, for $P^a$ of the form~\eqref{Pa} and $\om\in\Om$, by 
\[
(\Phi\et P^a)(\om):=\sum [\Phi(g^a_j)](\om)p_j,
\]
extends as a Lipschitz continuous morphism:
\[
\Phi\et :\FF_k(\X_\b,G^a)\longto L^1\lt(\Om,\FF_k(\X_\b,G^b)\rt).
\]
\begin{proposition}\label{prop_phi_Phi}
With the above notation, for every $A\in\FF_k(\X_\b,G^a)$, there hold
\[
\phi\et \Sl_\g A= \Sl_\g \phi\et A,\qquad\qquad(\Phi\et \Sl_\g A)(\om)= \Sl_\g[\Phi\et A(\om)]\quad\text{for almost every }\om\in\Om.
\]
Similarly,  for every $A\in\MM_k(\X_\b,G^a)$ and every Borel set $S\sub\X_\b$, there hold
\[
\phi\et(A\restr S)= (\phi\et A)\restr S,\qquad\qquad[\Phi\et(A\restr S)](\om)=[\Phi\et A(\om)]\restr S\quad\text{for almost every }\om\in\Om.
\]
\end{proposition}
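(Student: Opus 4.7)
The plan is to verify each identity first on a polyhedral chain $P = \sum g_i^a p_i$ and then extend to the general case by continuity. On polyhedra, the four operations act by explicit formulas: $\Sl_\g^x P$ and $P \restr S$ modify the cell supports (by intersection-and-projection, or by restriction to $S$) while leaving the coefficients alone, whereas $\phi\et$ and $\Phi\et$ replace each $g_i^a$ by $\phi(g_i^a)$ or $\Phi(g_i^a)$ without touching the geometry. The two kinds of operations are therefore independent and commute on any polyhedral $P$, for every $x \in \X_\g$, every Borel $S$ and every $\om \in \Om$.

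For the slicing identities, I take polyhedral $P_j \to A$ in $\F$ norm. By \eqref{phidiese} combined with Proposition~\ref{prop_alternateWG}(ii), $\phi\et$ is $\F$-Lipschitz, so $\phi\et P_j \to \phi\et A$. By Proposition~\ref{prop_MWSl}(iii), $\Sl_\g$ is Lipschitz from $\FF_k(\X_\b, G^c)$ into the normed group $L^1(\X_\g, \FF_{k-r}(\X_{\b\sm\g}, G^c))$ endowed with the norm $F \mapsto \int \F(F(x))\,d\h^r(x)$, for $c \in \{a,b\}$. Lifting $\phi\et$ to act pointwise in $x$ gives a Lipschitz morphism between these $L^1$ spaces, so passing to the limit in the polyhedral identity yields $\phi\et \Sl_\g A = \Sl_\g \phi\et A$ as elements of $L^1(\X_\g, \FF_{k-r}(\X_{\b\sm\g}, G^b))$, which is the a.e. statement in $x$. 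The argument with $\Phi\et$ is identical and produces equality in $L^1(\X_\g \t \Om, \FF_{k-r}(\X_{\b\sm\g}, G^b))$; Fubini then gives the claimed pointwise identity for a.e. $\om$.

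The restriction identities are the delicate point, because $A \mapsto A \restr S$ is not $\F$-continuous in general. The remedy is to invoke \cite[Theorem~5.6]{Fleming66} to select polyhedral $Q_j \to A$ with $\M(Q_j) \to \M(A)$; then by \cite[Lemma~4.2]{Fleming66} we have $Q_j \restr S \to A \restr S$ in $\F$ whenever $\mu_A(\frt S) = 0$, and the $\F$-Lipschitz property of $\phi\et$ yields $\phi\et(A \restr S) = (\phi\et A) \restr S$ for every such $S$. The class $\mathcal{A}$ of Borel sets with $\mu_A$-negligible topological boundary is stable under finite intersections and generates the Borel $\sigma$-algebra. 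Both set functions $S \mapsto \phi\et(A \restr S)$ and $S \mapsto (\phi\et A) \restr S$ are countably additive with values in $\MM_k(\X_\b, G^b)$ (the series converging absolutely in mass since $\sum \M(A \restr S_j) \le \M(A)$), so a Dynkin-class argument extends the equality from $\mathcal{A}$ to all Borel sets. The $\Phi\et$ case is handled identically, producing an equality in $L^1(\Om, \MM_k(\X_\b, G^b))$ and hence pointwise a.e. in $\om$. The main obstacle throughout is this non-continuity of restriction combined with the Fubini bookkeeping needed to pass between pointwise statements in $\om$ and $L^1$-valued identities.
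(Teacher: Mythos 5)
The slicing half of your argument is sound: $\phi\et$ is $\F$-Lipschitz, $\Sl_\g$ is $\F$-to-$L^1$-Lipschitz, and the polyhedral identity therefore passes to the limit cleanly, with Fubini handling the $\Phi\et$ variant. This matches the paper's one-line proof.

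The restriction half has a genuine gap, located precisely at the step you flagged as delicate. From $Q_j\restr S\to A\restr S$ (Fleming's Lemma 4.2, using $\M(Q_j)\to\M(A)$ and $\mu_A(\frt S)=0$), the $\F$-Lipschitz property of $\phi\et$ and the polyhedral identity give $(\phi\et Q_j)\restr S\to\phi\et(A\restr S)$. But to identify this limit with $(\phi\et A)\restr S$ you need $(\phi\et Q_j)\restr S\to(\phi\et A)\restr S$, i.e.\ Lemma 4.2 applied to the sequence $\phi\et Q_j\to\phi\et A$. That would require $\M(\phi\et Q_j)\to\M(\phi\et A)$ and $\mu_{\phi\et A}(\frt S)=0$, neither of which you verify. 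The first does \emph{not} follow from $\M(Q_j)\to\M(A)$: you only control $\M(\phi\et Q_j)\le\Lip(\phi)\M(Q_j)$ from above and $\M(\phi\et A)\le\liminf\M(\phi\et Q_j)$ from below. Nor can one in general re-choose $Q_j$ to make both mass sequences converge, because the coefficients of $Q_j$ are constrained to lie in $G^a$ while $\phi$ need not be surjective.

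Two ways to close it. (a) Use the slicing half you already established together with Proposition~\ref{prop_Slrestr}: for $\g\in I^n_k$ and a.e.\ $x$,
\[
\Sl_\g^x[\phi\et(A\restr S)]=\phi\et\bigl[(\Sl_\g^x A)\restr T\bigr],
\qquad
\Sl_\g^x[(\phi\et A)\restr S]=\bigl[\phi\et\Sl_\g^x A\bigr]\restr T,
\]
with $T:=(S-x)\cap\X_{\b\sm\g}$, so it suffices to prove the restriction identity for the finite-mass $0$-chain $\Sl_\g^x A$; under the isomorphism of Theorem~\ref{thm_psi} this reduces to the obvious commutation of restricting a $G^a$-valued measure to a Borel set with post-composing by $\phi$. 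Theorem~\ref{coro_defWhite2} then upgrades equality of $0$-slices to equality of chains. (b) Replace mass convergence by rapid convergence: pick $Q_j\to A$ rapidly (so $\phi\et Q_j\to\phi\et A$ rapidly by $\F$-Lipschitzness); Fleming's Lemma 2.1 applied to both sequences and to the difference of $\phi\et Q_j$ with a defining sequence for $(\phi\et A)\restr\cdot$ shows that, for every fixed interval $I_0$ and a.e.\ $x$, both $Q_j\restr(x+I_0)\to A\restr(x+I_0)$ and $(\phi\et Q_j)\restr(x+I_0)\to(\phi\et A)\restr(x+I_0)$, and the polyhedral identity passes to the limit on this generating class. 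Your Dynkin-class argument then extends to all Borel sets.
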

\begin{proof}
These identities hold true for polyhedral chains and we get the general cases by a continuity/density argument.
\end{proof}

\subsubsection*{Rectifiable chains}  
It follows from the deformation theorem of White (see~\cite[Theorem 3.1]{White1999-1}) that given a $k$-chain $A\in\MM^G_k(\X_\b)$ and a Borel set $S\sub\X_\b$ if $\h^k(S)=0$ or even if  $S$ has vanishing $k$-integral geometric measure then $A\restr S=0$.\footnote{In~\cite[Theorem~3.1]{White1999-1} the result is stated under the stronger assumption $\h^k(\supp A)=0$, but the present statement transpires from some arguments of~\cite{White1999-2} and follows rather directly from~\cite[Corollary~6.1]{White1999-2}.} 
%{Let us give a short proof.  Using $\mathcal{I}^k(S)=0$ and the co-area formula there exists an orthonormal basis for which $\h^0(S\cap \X)=0$ for a.e. affine coordinate $(n-k)$-plane. This implies $\Sl_\g(A\restr S)=0$ for every $\g\in I^n_k$. Using the notation of~\cite[Theorem~1.1]{White1999-1}, the formula of~\cite[Proposition~2.2]{White1999-1} then yields $P_\eps(\tau_y A\restr S)=0$ for $\eps>0$ and a.e. $y\in (0,\eps)^n$. Eventually,~\cite[Corollary~1.3]{White1999-1} leads to $A\restr S=0$.}
This means that $\mu_A$ cannot concentrate more than $\h^k$. The limit case corresponds to rectifiable chains defined below.\medskip

Let us recall that a Borel set $S\sub\X_\b$ is (countably) $k$-rectifiable if $S\sub\Sigma_0\cup(\cup_{j\ge1}\Sigma_j)$ where the $\Sigma_j$'s are Borel subsets of $\X_\b$ such that $\h^k(\Sigma_0)=0$ and for $j\ge1$, $\Sigma_j$ is a $C^1$ $k$-manifold.

\begin{definition}~
\begin{enumerate}[(1)]
\item A finite Borel measure $\mu$ on $\X_\b$ is $k$-rectifiable if $\mu=\mu\restr S$ for some $k$-rectifiable set $S\sub\X_\b$ and $\mu\ll\h^k\restr S$. Observe that the set $\Sigma_0$ does not play any role here.
\item A $G$-flat chain $A\in\MM^G_k(\X_\b)$ is rectifiable if $A=A\restr S$ for some $k$-rectifiable set $S\sub\X_\b$.  Equivalently $\mu_A$ is a $k$-rectifiable measure.
\end{enumerate}
\end{definition}
Remark that if by definition rectifiable chains have finite mass they are not assumed to be compactly supported.\medskip

By Theorem \ref{thm_psi}, finite mass $0$-chains identify with finite measures with values in $G$ and rectifiable $0$-chains in $\X_\b$ are the atomic measures (of the form $\sum g_j\lb x_j\rb$ with $x_j\in\X_\b$, $g_j\in G$ and $\sum|g_j|_G<\oo$).  Given a  rectifiable $k$-chain $A$ in $\X_\b$ and $\g\sub\b$ with $|\g|=k$, for almost every $x\in\X_\g$, $\Sl_\g^xA$ is a rectifiable $0$-chain in $ \X_{\ov\g}$.
White's rectifiable slices theorem states that the converse is also true. This is a crucial tool in our proof of the main result.
\begin{theorem}[\cite{White1999-2}]\label{thm_White}
Let $A\in\MM^G_k(\X_\b)$. The following are equivalent.
\begin{enumerate}[(i)]
\item A is rectifiable.
\item For every $\g\sub\b$ with $|\g|=k$ and almost every $x\in \X_\g$, $\Sl_\g^xA$ is a rectifiable $0$-chain. 
\end{enumerate}
\end{theorem}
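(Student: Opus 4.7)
The direction (i) $\Rightarrow$ (ii) is routine. Assume $A=A\restr S$ with $S$ Borel and $k$-rectifiable, and write $S$ modulo an $\h^k$-null set as a countable union $\bigcup_j f_j(K_j)$ of Lipschitz images of compacta $K_j\sub\R^k$. For any $\g\sub\b$ with $|\g|=k$, Proposition~\ref{prop_Slrestr} gives $\Sl_\g^x A=(\Sl_\g^x A)\restr(S-x)$. Applying Eilenberg's inequality to the Lipschitz map $\pi_\g\circ f_j:\R^k\to\X_\g$ (where $\pi_\g$ is the orthogonal projection onto $\X_\g$), for $\h^k$-a.e.\ $x\in\X_\g$ the preimage $f_j^{-1}(\X_{\ov\g}(x))$ is $\h^0$-finite; summing on $j$, the fibre $(S-x)\cap\X_{\ov\g}$ is at most countable. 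Combined with $\M(\Sl_\g^x A)<\oo$ from Proposition~\ref{prop_MWSl}(iii) and the identification of Theorem~\ref{thm_psi}, $\Sl_\g^x A$ is a purely atomic $G$-valued measure, i.e.\ a rectifiable $0$-chain.

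The converse (ii) $\Rightarrow$ (i) is the substantive direction. I plan to proceed in two steps. \emph{Step 1 (Absolute continuity).} For any Borel $E\sub\X_\b$ with $\h^k(E)=0$, Eilenberg's inequality applied to $\pi_\g$ gives $\X_{\ov\g}(x)\cap E=\void$ for $\h^k$-a.e.\ $x\in\X_\g$ and each $\g\sub\b$ with $|\g|=k$, so by Proposition~\ref{prop_Slrestr}, $\Sl_\g^x(A\restr E)=(\Sl_\g^x A)\restr(E-x)=0$ a.e.; Theorem~\ref{coro_defWhite2} then forces $A\restr E=0$, hence $\mu_A\ll\h^k$. \emph{Step 2 (Reconstruction from atomic slices).} Using the slice hypothesis and Theorem~\ref{thm_psi}, for every coordinate direction $\g$ and a.e.\ $x\in\X_\g$ one writes $\Sl_\g^x A=\sum_i g_i(x)\lb y_i(x)\rb$ with $y_i(x)\in\X_{\ov\g}$, $g_i(x)\in G$ and $\sum_i|g_i(x)|_G=\M(\Sl_\g^x A)$. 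By the iterated slicing formula of Proposition~\ref{prop_MWSl}(ii), the atom locations $x\mapsto\{x+y_i(x)\}_i$ satisfy compatibility relations across all $\binom{n}{k}$ coordinate projections, so one can assemble them into measurable ``branches''. A careful matching argument on a dyadic grid, passing to the limit as the mesh tends to $0$, shows that these branches are Lipschitz $k$-graphs; their union is a $k$-rectifiable Borel set $S_0$ on which $\mu_A$ is concentrated, giving $A=A\restr S_0$.

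\emph{Main obstacle.} The substantive difficulty lies in Step 2: matching the atoms $y_i(x)$ of $\Sl_\g^x A$ consistently as $x$ varies in $\X_\g$, and across the different coordinate directions $\g$, in order to exhibit Lipschitz $k$-graphs. A bare support condition (countable coordinate fibres) is too weak for this, since by Eilenberg's inequality it is already enjoyed by \emph{every} Borel set with finite $\h^k$-measure, whether rectifiable or not; the crucial ingredient is the \emph{quantitative} $G$-valued weight structure of the atomic slices, together with the compatibility relations arising from Proposition~\ref{prop_MWSl}(ii). Exploiting these weights to reassemble atoms into Lipschitz $k$-graphs using dyadic decompositions of $\R^n$ and the completeness of $G$ in an essential way is the technical heart of White's argument, and goes well beyond classical Besicovitch--Federer projection theory.
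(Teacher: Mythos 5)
The theorem in question is cited verbatim from~\cite{White1999-2}; the paper supplies no proof of its own, so there is no internal argument to compare against. Evaluating your proposal on its own merits: the direction (i)~$\Rightarrow$~(ii) is essentially sound (the coarea/Eilenberg argument together with Proposition~\ref{prop_Slrestr} and Theorem~\ref{thm_psi} does the job, modulo first replacing $S$ by $S\sm\Sigma_0$ using the fact, recalled in the paper via the deformation theorem, that $A\restr\Sigma_0=0$ whenever $\h^k(\Sigma_0)=0$). Step~1 of (ii)~$\Rightarrow$~(i), establishing $\mu_A\ll\h^k$, is also correct.

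However, Step~2 is not a proof; it is only a description of what a proof would have to accomplish, and you flag this yourself (``the technical heart of White's argument\ldots goes well beyond classical Besicovitch--Federer projection theory''). The sentence ``a careful matching argument on a dyadic grid\ldots shows that these branches are Lipschitz $k$-graphs'' asserts the key structural conclusion without supplying any mechanism. The difficulty is genuine: the atom locations $y_i(x)$ produced by Theorem~\ref{thm_psi} carry no intrinsic labeling, atoms can merge, split, or cancel in pairs (in particular when $G$ has torsion), and the compatibility relations coming from Proposition~\ref{prop_MWSl}(ii) are a.e.\ measure-theoretic identities that by themselves give no Lipschitz modulus of continuity for the purported ``branches''. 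Absolute continuity $\mu_A\ll\h^k$ does not close the gap either: a purely unrectifiable set of positive finite $\h^k$-measure yields a measure absolutely continuous with respect to $\h^k$, so absolute continuity plus countable slice fibres are strictly weaker than rectifiability, and some further use of the slice hypothesis is mandatory. That further use is precisely what Step~2 leaves unjustified; as written, the proposal does not establish (ii)~$\Rightarrow$~(i), which is the substantive content of White's theorem.
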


The $(k_1,k_2)$-splitting property (recall Definition \ref{def:musplit}) can be characterized in terms of slices.
\begin{proposition}\label{prop_split_and_Sl}
Let $A\in\MM^G_k(\R^n)$ be rectifiable and let $n_1,n_2,k_1,k_2\ge0$ with $n_1+n_2=n$ and $k_1+k_2=k$. The following statements are equivalent. 
\begin{enumerate}[(i)]
\item $T\mu_A$  is $(k_1,k_2)$-split.
\item $\Sl_\g A=0$ for every $\g\in I_k^n$ such that $(|\g^1|,|\g^2|)\ne (k_1,k_2)$.
\end{enumerate}
\end{proposition}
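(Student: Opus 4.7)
My plan is to re-express both conditions as pointwise properties of the orientation field of the rectifiable chain $A$ and then match them up. Writing $A=\theta\,\h^k\restr\Sigma$ with Borel multiplicity $\theta:\Sigma\to G$ and orientation $\x:\Sigma\to\WE_k\R^n$ (a unit simple $k$-vector field at $\h^k$-a.e.\ point of $\Sigma$), the $(k_1,k_2)$-splitting of $T\mu_A$ and the vanishing of the slices $\Sl_\g A$ will both be characterized through the pairings $\langle e_\g,\x(y)\rangle$.

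The first (and main) step is a purely algebraic claim: for a simple unit $k$-vector $\x\in\WE_k\R^n$ orienting a $k$-plane $L\sub\R^n$,
\[
L\text{ is }(k_1,k_2)\text{-split}\ \iff\ \langle e_\g,\x\rangle=0\ \text{ for all }\g\in I^n_k\text{ with }(|\g^1|,|\g^2|)\ne (k_1,k_2).
\]
The $(\Rightarrow)$ implication is immediate: if $L=L^1\t L^2$ with $L^l\sub\X_{\a^l}$ a $k_l$-plane (setting $\a^1=\a$, $\a^2=\ov\a$), one may orient $L$ by $\x=\x^1\we\x^2$ with $\x^l\in\WE_{k_l}\X_{\a^l}$, so $\x\in\WE_{k_1}\X_\a\ot\WE_{k_2}\X_{\ov\a}$ and $\langle e_\g,\x\rangle$ vanishes unless $|\g^l|=k_l$ for $l=1,2$. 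For $(\Leftarrow)$, set $r:=\dim\pi_\a(L)$ and pick a basis $v_1,\dots,v_k$ of $L$ with $v_{r+1},\dots,v_k$ spanning $L\cap\X_{\ov\a}$. Decomposing $v_i=v_i^1+v_i^2$ with $v_i^l\in\X_{\a^l}$, one has $v_i^1=0$ for $i>r$; expanding $v_1\we\cdots\we v_k$ then shows that $\x$ contains only bidegrees $(m_1,k-m_1)$ with $m_1\le r$, while the bidegree-$(r,k-r)$ component equals $v_1^1\we\cdots\we v_r^1\we v_{r+1}\we\cdots\we v_k\ne 0$. Purity of bidegree $(k_1,k_2)$ therefore forces $r=k_1$; symmetrically $\dim\pi_{\ov\a}(L)=k_2$, so $\dim(L\cap\X_\a)+\dim(L\cap\X_{\ov\a})=k$ and $L$ splits.

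The second step relies on the slicing-coarea formula for rectifiable chains (which underlies Theorem~\ref{thm_White}): for every $\g\in I^n_k$ and $\h^k$-a.e.\ $x\in\X_\g$,
\[
\Sl_\g^xA=\sum_{y\in\Sigma\cap\pi_\g^{-1}(x)}\langle e_\g,\x(y)\rangle\,\theta(y)\,\lb\pi_{\ov\g}(y)\rb,
\]
where $\pi_\g$ and $\pi_{\ov\g}$ denote the orthogonal projections onto $\X_\g$ and $\X_{\ov\g}$. Since $\pi_{\ov\g}$ is injective on each fiber of $\pi_\g$, no cancellation of the Dirac masses occurs, and integrating masses yields
\[
\int_{\X_\g}\M(\Sl_\g^xA)\,d\h^k(x)\ =\ \int_\Sigma\bigl|\langle e_\g,\x(y)\rangle\bigr|\,|\theta(y)|_G\,d\h^k(y).
\]
Hence $\Sl_\g A=0$ is equivalent to $\langle e_\g,\x(y)\rangle=0$ for $\mu_A$-a.e.\ $y$.

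Combining both steps gives the claimed equivalence: $T\mu_A$ is $(k_1,k_2)$-split iff at $\mu_A$-a.e.\ $y$ the orientation $\x(y)$ has pure bidegree $(k_1,k_2)$, iff $\Sl_\g A=0$ for every $\g\in I^n_k$ with $(|\g^1|,|\g^2|)\ne(k_1,k_2)$. The principal obstacle is the converse of the algebraic step, namely ruling out simple $k$-vectors of pure bidegree $(k_1,k_2)$ whose oriented plane fails to split; once that is settled, the slicing side reduces to a routine application of the coarea formula.
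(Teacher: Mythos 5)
Your proposal is a genuinely different and conceptually cleaner route than the paper's. You decouple the problem into a purely algebraic fact about simple $k$-vectors (pure bidegree $(k_1,k_2)$ is equivalent to the oriented plane splitting as $L^1\times L^2$) and a slicing fact about rectifiable chains (vanishing of $\Sl_\g A$ is equivalent to vanishing of $\langle e_\g,\x\rangle$ $\mu_A$-a.e.). Your linear-algebra argument is correct and complete: the dimension count $\dim\pi_\a(L)=k_1$, $\dim\pi_{\ova}(L)=k_2$ forced by purity of bidegree does yield $\dim(L\cap\X_\a)+\dim(L\cap\X_{\ova})=k$ and hence the splitting.

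The paper, by contrast, never isolates the linear-algebra lemma and never invokes an exact pointwise slicing formula. For (i)$\Rightarrow$(ii) it proves only a one-sided coarea \emph{inequality} $\h^k(D)\le\int_S|\langle e_\g,\x\rangle|\,d\h^k$ together with an absolute-continuity observation $\h^k\restr S\ll\mu_A$. For (ii)$\Rightarrow$(i) it argues by contradiction with a local $C^1$ graph parametrization of $\Sigma$ over $\X_\g$ for a judicious $\g$, shows $\pi\pf A_*=0$ from the hypothesis, and recovers $A_*=0$ by pushing forward through a Lipschitz right inverse $F$ of $\pi$ on $\Sigma_*$. That route stays entirely inside the toolkit developed in Section~\ref{SGfc} (Propositions~\ref{prop_MWSl}, \ref{prop_Slrestr}, push-forwards, constancy on graphs).

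The one real issue in your proof is the pointwise slicing formula $\Sl_\g^xA=\sum_{y\in\Sigma\cap\pi_\g^{-1}(x)}\langle e_\g,\x(y)\rangle\,\theta(y)\,\lb\pi_{\ov\g}(y)\rb$ together with the attendant mass \emph{equality}. This is standard for integer and real rectifiable currents (Federer), and your ``no cancellation'' remark correctly uses injectivity of $\pi_{\ov\g}$ on $\pi_\g$-fibers to see that the Dirac masses cannot cancel, but for flat chains with coefficients in a general complete normed Abelian group $G$ this identity is not a formula one can simply cite from~\cite{White1999-2}: that paper gives the estimate $\int\M(\Sl_\g^xA)\,dx\le\M(A)$ and the converse rectifiable-slices theorem, not the explicit representation of the slice with multiplicities. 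Of the two directions, (i)$\Rightarrow$(ii) only needs the upper bound and would survive, but (ii)$\Rightarrow$(i) needs the lower bound (no cancellation), which is exactly what the paper's graph-plus-push-forward argument supplies from more elementary ingredients. So your approach trades the paper's geometric contradiction argument for a cleaner algebraic decomposition, at the cost of taking the $G$-valued rectifiable slicing formula as a black box; if you want a self-contained proof in this generality, you still need to justify that formula, and doing so is roughly the content of the paper's Step~2.
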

\begin{proof}
Without loss of generality, we assume that $A\in\MM^G_k(\R^n)$ is such that $A=A\restr\Sigma$ where $\Sigma$ is a compact $k$-manifold of class $C^1$ (with boundary). \medskip

\noindent
\textit{Step 1, (i)$\implies$(ii).} 
Let $x\in\Sigma\mapsto\x(x)\in\WE_k(\R^n)$ be a Borel mapping such that for every $x\in\Sigma$, $\x(x)$ is a unit $k$-vector orienting  $T_x\Sigma$ and let $\g\in I^n_k$. 
Let us define, 
\[
D:=\{x\in \X_\g:A\cap \X_{\ov\g}(x)\ne0\}.
\]  
Denoting by $\pi$ the orthogonal projection on $\X_\g$ we have that, up to a $\h^k$-negligible set, $D\sub\pi\Sigma$. Indeed, let 
$x\in \X_\g\sm\pi\Sigma$. Since $\Sigma$ is compact, there exists $r>0$ such that denoting $B'_r(x):=\{y\in\X_\g:|y|<r\}$ we have $\Sigma\cap B'_r(x)+\X_{\ov\g}=\void$. It follows that 
\[
A\restr [B'_r(x)+\X_{\ov\g}]=(A\restr\Sigma) \restr [B'_r(x)+\X_{\ov\g}]=A\restr(\Sigma\cap[B'_r(x)+\X_{\ov\g}])=0.
\]
Hence $A\cap \X_{\ov\g}(y)=0$ $\h^k$-almost everywhere in $B_r'(x)$ and $D\cap B'_r(x)$ is negligible. We conclude that $D\sub\pi\Sigma$ as claimed.\\
Denoting $S:=\Sigma\cap(D+\X_{\ov\g})$, we have $D=\pi S$ by the previous inclusion and using the co-area formula~\cite[Theorem~2.93]{Am_Fu_Pal},  we compute,
\be\label{prop_split_and_Sl_1}
\h^k(D)\le\int_{\X_\g} \h^0( \X_{\ov\g}(y)\cap S)\,dy=\int_S\lt|\lt<e_\g,\x(x)\rt>\rt|\,d\h^k(x).
\ee
Next, let $S'\sub\R^n$ be a Borel set such that $\mu_A(S')=0$. We have $A\restr S'=0$ and using Proposition~\ref{prop_Slrestr} we see that for almost every $x\in \X_\g$, 
\[
\lt[A\cap \X_{\ov\g}(x)\rt]\restr S'= \lt[A\restr S'\rt]\cap \X_{\ov\g}(x)=0.
\]
It follows that $\h^k(S\cap S')=0$. We deduce that, 
\be\label{prop_split_and_Sl_2}
\h^k\restr S\ll\mu_A.
\ee
Let us now assume that~(i) holds true. For $\g$ such that $(|\g^1|,|\g^2|)\ne(k_1,k_2)$, we have by assumption $\lt<e_\g,\x\rt>=0$  $\mu_A$-almost everywhere. By~\eqref{prop_split_and_Sl_2}, this holds true $\h^k\restr S$-almost everywhere and the right-hand side of~\eqref{prop_split_and_Sl_1} vanishes. We conclude that $\Sl_\g A=0$ so~(ii) holds true.\medskip

\noindent
\textit{Step 2, (ii)$\implies$(i).} 
Conversely, let us assume that $A$ satisfies~(ii) and let us suppose by contradiction that $T\mu_A$ is not $(k_1,k_2)$-split. In this case there exist a $k$-manifold $\Sigma$ of class $C^1$, a point $x$ on $\Sigma$ and $r>0$ such that $T_x\Sigma$ is well defined and \emph{not $(k_1,k_2)$-split} and denoting $A':=A\restr\Sigma$, there holds,
\be\label{proof_prop_split_and_Sl_2-1}
\mu_{A'}(B_s(x))>0\qquad\text{for }s\in (0,r).
\ee
Let us assume without loss of generality that $x=0$. Since $T_0\Sigma$ is not $(k_1,k_2)$-split there exists $\g\in I^n_k$ with $(|\g^1|,|\g^2|)\ne (k_1,k_2)$ such that, in some neighborhood of $0$, $\Sigma$ is the graph of a Lipschitz mapping over $\X_\g$.
Denoting $B'_r:=B_r(0)\cap \X_\g$, we define the cylinder $C_r:=B'_r+\X_{\ov\g}$. Reducing $r$ if necessary, we may assume that the connected component of $\Sigma\cap C_r$ which contains $0$ is the graph of a Lipschitz continuous function over $B'_r$. We denote $\Sigma_*$ this component and write
 \be\label{proof_prop_split_and_Sl_2-15}
\Sigma_*=\{y+f(y):y\in B_r'\},
\ee
with  $f\in\Lip(\X_\g,\X_{\ov\g})$.\bigskip

\noindent
\textit{Step 2.a.}  Let $A_*:=A\restr\Sigma_*$. Notice that this chain also satisfies~\eqref{proof_prop_split_and_Sl_2-1}. Let $\pi$ be the orthogonal projection on $\X_\g$.  Let us establish the identity:
\be\label{proof_prop_split_and_Sl_2-2}
 \Sl_\g\pi\pf A'= \pi\pf \Sl_\g A'\qquad\text{for every }A'\in\FF^G_k(\R^n).
\ee
We first prove it for $A'=gp$ with $g\in G$ and $p=(q,\x)$ a polyhedral $k$-cell in $\R^n$. If $\h^k(\pi q)=0$ then both $\pi\pf( gp)$ and $\Sl_\g (gp)$ vanish. In the other case, denoting $q':=\pi q$, $q'$ is a convex $k$-polyhedron in $\X_\g$ and $q$ is the restriction on $q'$ of the graph of a linear mapping $\ell:\X_\g\to\X_{\ov\g}$. We observe that: \smallskip

\noindent
($*$) $\pi\pf A'$ is the polyhedral cell $\sigma g(q', e_\g)$ where $\sigma=\pm1$ is the sign of $\lt<e_\g;\x\rt>$. \smallskip\\  
\noindent
($*$) $\Sl^y_\g A'=\ds\begin{cases}\quad0&\text{for }y\in\X_\g\sm \ov{q'},\\ \sigma g\lb \ell(y)\rb&\text{for }y\text{ in the relative interior of }q'.\end{cases}\smallskip$
% \noindent
% ($*$) $\Sl^y_\g A'=\ds\begin{cases}\quad0&\text{for }y\in\X_\g\sm \ov{q'},\\ \sigma g\lb y+\ell(y)\rb&\text{for }y\text{ in the relative interior of }q'.\end{cases}\smallskip$

\noindent
It follows that $\Sl^y_\g\pi\pf A'=\pi\pf\Sl^y_\g A'=\ds\begin{cases}\quad0&\text{for }y\in\X_\g\sm \ov{q'},\\ \sigma g\lb 0\rb&\text{for }y\text{ in the relative interior of }q'.\end{cases}\smallskip$\\
The relation~\eqref{proof_prop_split_and_Sl_2-2} is then true for $A'=gp$. The case of polyhedral $k$-chains follows by $\Z$ linearity and the general case by a density/continuity argument.

Next, applying~\eqref{proof_prop_split_and_Sl_2-2} with $A'=A_*$, we get by assumption,
\be\label{proof_prop_split_and_Sl_2-1.5}
  \Sl_\g \pi\pf A_*=\pi\pf \Sl_\g A_*=\pi\pf (\Sl_\g(A\restr S_*))=\pi\pf ( (\Sl_\g A)\restr S_*) =\pi\pf ( 0\restr S_*)=0.
 \ee
  Since $\pi\pf A_*$ has maximal dimension in $\X_\g$, it writes as a $L^1$ function $g_*:\X_\g\to G$. With this notation,  $\Sl^y_\g\pi\pf A_*=g_*(y)\lb 0 \rb$ for almost every $y\in\X_\g$. We deduce from~\eqref{proof_prop_split_and_Sl_2-1.5} that $g_*\equiv 0$ and consequently, by Theorem \ref{coro_defWhite2}
 \be\label{proof_prop_split_and_Sl_2-3}
 \pi\pf A_*=0.
 \ee

\noindent
\textit{Step 2.b.}  
Recalling~\eqref{proof_prop_split_and_Sl_2-15}, we define $F\in\Lip(C_r,\R^n)$ by, 
\[
F(x)=y+f(y)\qquad\text{with the decomposition } x=y+z,\ y\in L\cap C_r,\ z\in\X_{\ov \g}.
\] 
Noticing that $F\circ\pi$ is the identity on $\Sigma_*$ and using $A_*=A_*\restr\Sigma_*$, we have 
\[
A_*=(F\circ\pi)\pf A_*=F\pf(\pi\pf A_*)\st{\eqref{proof_prop_split_and_Sl_2-3}}=F\pf0=0.
\] 
This contradicts~\eqref{proof_prop_split_and_Sl_2-1}.
\end{proof}

To end this section we recall the notion of set-decomposition of normal chains introduced in~\cite{GM_decomp} and state the existence of maximal set-decompositions for normal \emph{rectifiable} chains. This result and the formula of Proposition~\ref{prop_divformula} are the main ingredients of the proof of Proposition~\ref{prop_main_0k}.
\begin{definition}[{\cite[Definition 1.1]{GM_decomp}}]
\label{def_decomp}
Let $A\in\NN^G_k(\R^n)$.\medskip

\noindent 
(1) A set-decomposition of $A$ is a sequence of normal chains $A_j$ such that there exists a Borel partition $S_j$ of $\R^n$ such that $A_j=A_j\restr S_j$ for every $j$ and $\N(A)=\sum \N(A_j)$.\medskip

\noindent
(2) We say that $A$ is set-indecomposable if the only set-decompositions of $A$ are trivial, that is, for any such decomposition $A_j$ there holds $A_j=A$ for some index $j$ and $A_j=0$ for the others.
\end{definition}

\begin{theorem}[{\cite[Theorem 1.2]{GM_decomp}}]\label{thm_decomp}
Let $A\in\NN^G_k(\R^n)$, if $A$ is rectifiable then  it admits a set-decomposition in set-indecomposable components.
\end{theorem}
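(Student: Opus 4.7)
My plan is to prove this by a greedy iterated splitting procedure, with rectifiability entering at the limit step.

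For a normal chain $B\in\NN^G_k(\R^n)$, define the \emph{splitting capacity}
\[
\sigma(B):=\sup\{\N(B\restr S):S\sub\R^n\text{ Borel},\ 0\ne B\restr S\ne B,\ \N(B)=\N(B\restr S)+\N(B\restr(\R^n\sm S))\};
\]
then $B$ is set-indecomposable in the sense of Definition~\ref{def_decomp} if and only if $\sigma(B)=0$. Starting from the trivial decomposition $(A)$, I would iteratively refine: whenever some component $B$ satisfies $\sigma(B)>0$, replace it by a splitting $B=B\restr S+B\restr(\R^n\sm S)$ with $\N(B)=\N(B\restr S)+\N(B\restr(\R^n\sm S))$ and both parts nonzero, chosen so that $\N(B\restr S)$ is at least, say, $\sigma(B)/2$. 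Organize these choices as a rooted binary tree whose nodes are indexed by finite binary strings and carry subchains $A_\tau$ of $A$.

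By additivity of $\N$ along each splitting, the components at any fixed finite depth $n$ form a finite set-decomposition: $\sum_{|\tau|=n}\N(A_\tau)=\N(A)$. In particular, at each stage $n$ and for each $\varepsilon>0$, at most $\N(A)/\varepsilon$ components have $\N$-mass $\ge\varepsilon$. Along any infinite descending branch, the $\N$-masses form a nonincreasing sequence with a well-defined limit (the \emph{residual mass}), and the corresponding nested restrictions $A\restr S_n$ converge in $\N$-norm to a limit chain $A_\infty=A\restr(\bigcap_nS_n)$. Counting branches whose residual mass is $\ge 1/m$ and taking countable union over $m$ shows that only countably many infinite branches carry a nonzero limit chain. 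The countable collection of set-indecomposable leaves at finite depth, together with the countable family of nonzero limits along infinite branches, then yields a candidate countable set-decomposition of $A$.

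The main obstacle is to show that each limit chain $A_\infty$ along an infinite branch is itself set-indecomposable. Heuristically, the greedy procedure has consumed all splittings of sufficiently large mass at each stage; but transferring this property to the $\N$-limit demands a lower-semicontinuity statement for $\sigma$ under nested restrictions, which is not obviously true for general normal chains (indeed the theorem's hypothesis excludes the non-rectifiable case). The rectifiability of $A$ enters precisely here: via White's rectifiable slices theorem (Theorem~\ref{thm_White}) and Proposition~\ref{prop_divformula}, any nontrivial splitting of a rectifiable normal chain can be detected through its $0$-slices, which are atomic measures. This should force the algebra of splitting sets modulo $\mu_A+\mu_{\pt A}$-null sets to be essentially atomic, thereby giving lower semicontinuity of $\sigma$ along decreasing sequences. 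Verifying this atomicity, and the ensuing lower semicontinuity, is the technical heart of the proof.
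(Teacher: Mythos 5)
This theorem is not actually proved in the present paper: it is imported with its label and statement from the companion work \cite{GM_decomp}, which is cited precisely for this result. There is therefore no in-paper proof to compare against, and your proposal has to be judged on its own. Read that way, it is a plausible starting strategy, but it contains two serious gaps, only one of which you acknowledge.

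The gap you do flag is the crux, and the route you sketch toward it is not convincing. You want lower semicontinuity of $\sigma$ along a nested sequence $A\restr S_1\supset A\restr S_2\supset\cdots$ converging in $\N$-norm, and you suggest getting it from ``essential atomicity of the algebra of splitting sets modulo $\mu_A+\mu_{\pt A}$-null sets.'' But the Boolean algebra of Borel sets $S$ with $\N(A\restr S)+\N(A\restr(\R^n\sm S))=\N(A)$ is a very different object from the $\sigma$-algebra on which $\mu_A$ and $\mu_{\pt A}$ are atomic, and neither Proposition~\ref{prop_divformula} nor Theorem~\ref{thm_White} gives you control of it. Proposition~\ref{prop_divformula} only handles half-spaces, and White's rectifiable slices theorem tells you the $0$-slices are atomic as $G$-valued measures, which does not immediately encode whether a Borel set $S$ is a valid splitting set. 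Turning ``splittings are seen by slices'' into a precise statement strong enough to yield lower semicontinuity of $\sigma$ is the whole theorem, not a lemma one can wave at.

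The second gap you do not flag at all is the $\N$-mass bookkeeping at the end. You assert that the set-indecomposable leaves at finite depth, together with the nonzero limits along infinite branches, form a set-decomposition of $A$; this is not a consequence of the construction. At finite depth $n$ you correctly have $\sum_{|\tau|=n}\N(A_\tau)=\N(A)$, but this identity does not pass to the collection of leaves and branch-limits. Consider the scenario where every node admits a balanced split (both children with $\N$-mass exactly half of the parent's). Then no leaf is ever produced, every branch is infinite, and along every infinite branch the residual mass is zero; your candidate collection is empty while $\N(A)>0$. The greedy rule $\N(B\restr S)\ge\sigma(B)/2$ does not rule this out, since $\sigma(B)$ can equal $\N(B)/2$ at every node, so the rule is compatible with a perpetual dyadic split. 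Showing that a \emph{rectifiable} normal chain cannot undergo such perpetual splitting --- i.e. that splittable mass must coalesce into genuine indecomposable pieces --- is the hard part, and one cannot set up the tree accounting before establishing it. As written, the proposed decomposition may fail both $\sum A_j=A$ and $\sum\N(A_j)=\N(A)$.
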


\section{Tensor flat chains}
\label{Stfc}

Let us assume $n=n_1+n_2$ with $n_1,n_2\ge0$. We present a generalization of the theory of $G$-flat chains adapted to the decomposition $\R^n=\R^{n_1}\times\R^{n_2}$ and to the partial boundary operators $\pt_1$, $\pt_2$ defined below. The case $\{n_1,n_2\}=\{0,n\}$ corresponds to the classical $G$-flat chains. 

We recall some notation:  we set $\a:=\{1,\dots,n_1\}$ and for $\b\in I^n$,  $\b^1:=\b\cap\a$ and $\b^2:=\b\sm\a=\b\cap\ova$. Eventually,  for $k\ge0$,
\[
D_k:=\{(k'_1,k'_2): 0\le k'_1\le n_1,\ 0\le k'_2\le n_2,\ k'_1+k'_2=k\}.
\]
Throughout the  section, $k_1,k_2,k\in\Z$ and $\b\in I^n$. If not indicated otherwise we assume $k=k_1+k_2$, $0\le k_1\le|\b^1|$ and $0\le k_2\le|\b^2|$.\medskip

\subsection{Tensor polyhedral chains and $\Ft$-norm}
We define the group $\PP_{k_1,k_2}^G(\X_\b)$ of polyhedral $(k_1,k_2)$-chains with coefficients in $G$ as the subgroup of $\PP^G_k(\X_\b)$ formed by elements with representations:
\be\label{k1k2polychain}
P=\sum g_i p^1_i\t p^2_i,
\ee
where for every $i$, $g_i\in G$, $p^1_i$ is a $k_1$-cell in $\X_{\b^1}$ and $p^2_i$ is a $k_2$-cell in $\X_{\b^2}$.  

We denote by $\TP^G_k(\X_\b)$ the subgroup of $\PP^G_k(\X_b)$ spanned by the union of the groups $\PP_{k'_1,k'_2}^G(\X_\b)$ for $(k'_1,k'_2)$ ranging in $D_k$.  The decomposition of $P\in \TP^G_k(\X_\b)$ as $\sum P_{k'_1,k'_2}$ with $P_{k'_1,k'_2}\in\PP_{k'_1,k'_2}^G(\X_\b)$ is unique. Moreover 
\begin{equation}\label{eq:MP}
 \M(P)=\sum \M(P_{k_1',k_2'}).
\end{equation}
We denote this decomposition,
\be\label{jP}
\begin{array}{rcl}
\j : \TP^G_k(\X_\b)&\longto& \lt(\PP_{k'_1,k'_2}^G(\X_\b)\rt)_{(k'_1,k'_2)\in D_k},\\
P&\longmapsto& \j_{k'_1,k'_2}P = P_{k'_1,k'_2}.
\end{array}
\ee
As a consequence of the deformation theorem~\cite[Corollary~1.3]{White1999-1}, the group $\TP^G_k(\X_\b)$ is dense in $\FF^G_k(\X_\b)$. For $A\in \FF^G_k(\X_\b)$, we define 
\[
\Mt(A):=\inf \{\liminf \M(P_j) : P_j\in\TP^G_k(\X_\b),\ P_j\to A\}.
\]
There holds, 
\be\label{MtA<=CMA}
\M(A)\le \Mt(A)\le C(k,n_1,n_2) \M(A),
\ee
for some constant $C(k,n_1,n_2)\ge1$. The left inequality follows directly from the definitions as $\TP_k^G(\X_\b)\sub\PP_k^G(\X_\b)$. The  right inequality holds true for polyhedral chains and then extends to all chains by lower semicontinuity of $\Mt$. In fact, using the Cauchy--Binet formula, the optimal constant is
\[
C(k,n_1,n_2)=\sqrt m,
\]
where $m$ is the cardinal of $D_k$, that is, 
\[
m=
\begin{cases}
1+\min(k_,n_1,n_2)&\text{if }k\le\max(n_1,n_2),\\
\quad1+n-k&\text{if }k>\max(n_1,n_2).
\end{cases}
\]
In particular $C(k,n_1,n_2)\le\sqrt{\min(k,n_1,n_2)+1}$.

We define the corresponding flat norm as, 
\[
\Ft(A):=\inf \{\Mt(B)+\Mt(C):B\in\FF^G_k(\X_\b),\,C\in\FF^G_{k+1}(\X_\b),\, A=B+ \partial C\}.
\]
With the convention $C(n+1,n_1,n_2)=1$, we have
\[
\F(A)\le\Ft(A)\le\max(C(k,n_1,n_2),C(k+1,n_1,n_2))\,\F(A)\qquad\text{for every }A\in\FF^G_k(\R^n),
\]
so that $\Ft$ is a norm on $\FF^G_k(\X_\b)$, equivalent to $\F$.\\
Notice that for $k=|\b|$, $\TP^G_{|\b|}(\X_\b)$ is $\M$-dense in $\FF^G_{|\b|}(\X_\b)$ and $\Ft=\Mt=\F=\M$. For $k=0$, $\TP^G_0(\X_\b)=\PP^G_0(\X_\b)$ and $\Mt=\M$.

\subsection{Partial boundary operators and $\Fw$-norm}
Let us now introduce the \emph{partial} boundary operators $\pt_1$, $\pt_2$. For $P\in \PP_{k_1,k_2}^G(\X_\b)$ of the form~\eqref{k1k2polychain} we set,
\[
\pt_1 P:=\sum g_i (\pt p^1_i)\t p^2_i,\qquad\qquad \pt_2 P:=(-1)^{k_1}  \sum g_i p^1_i\t (\pt p^2_i).
\]
This defines group morphisms,
\[
\pt_1:\PP_{k_1,k_2}^G(\X_\b)\to  \PP^G_{k_1-1,k_2}(\X_\b),\qquad\qquad\pt_2:\PP_{k_1,k_2}^G(\X_\b)\to  \PP^G_{k_1,k_2-1}(\X_\b),
\] 
which satisfy the relations, 
\[
\pt_1^2=0,\qquad\quad\pt_2^2=0,\qquad\quad \pt_2\pt_1=-\pt_1\pt_2\qquad\text{and}\qquad\pt=\pt_1+\pt_2.
\]
With these operators we build a \emph{tensor flat norm} $\Fw$ defined for $P\in\PP_{k_1,k_2}^G(\X_\b)$ by 
\begin{multline}\label{WwP}
\Fw(P):=\inf\lt\{\M(Q^{0,0})+\M(Q^{1,0})+\M(Q^{0,1})+\M(Q^{1,1}) : \rt.\\
\lt. Q^{i_1,i_2}\in \PP^G_{k_1+i_1,k_2+i_2}(\X_\b),\ P=Q^{0,0}+\pt_1Q^{1,0}+\pt_2Q^{0,1}+\pt_1\pt_2Q^{1,1}\rt\}.
\end{multline}
At this point, we could continue as in~\cite{Fleming66} and first show that $\Fw$ is a norm on $\PP^G_{k_1,k_2}(\X_\b)$ before defining the groups of $(k_1,k_2)$-chains by completion. However, we take a detour by first identifying the elements of $\PP^G_{k_1,k_2}(\X_\b)$ with flat chains on $\X_{\b^1}$.

We break the symmetry and interpret $\PP_{k_1,k_2}^G(\X_\b)$ as the tensor product $\PP^\Z_{k_1}(\X_{\b^1})\ot\PP^G_{k_2}(\X_{\b^2})$ and in fact as\footnote{For a better readability, we sometimes write $\PP_{k_1}(\X_\b,G)$ for $\PP_{k_1}^{G}(\X_\b)$.} $\PP_{k_1}(\X_{\b^1},G'_{k_2})$ where here and below $G'_{k_2}:=\PP^G_{k_2}(\X_{\b^2})$. For $P\in\PP_{k_1,k_2}^G(\X_\b)$ with representation of the form~\eqref{k1k2polychain}, we denote $\i P$ the corresponding element of $\PP_{k_1}(\X_{\b^1},G'_{k_2})$: 
\begin{equation}\label{def:i}
\i P:=\sum [g_i p^2_i] p^1_i,
\end{equation}
the terms $g_i p^2_i$ being understood as coefficients in $G'_{k_2}$.\\
Let us denote $G^*_{k_2}:=\FF^G_{k_2}(\X_{\b^2})$ and let us endow the group $(G^*_{k_2},+)$ with the $\F$-norm. Since $G'_{k_2}\sub G^*_{k_2}$, we have
 \[\i P\in\PP_{k_1}(\X_{\b^1},G^*_{k_2})\] 
and now $(G^*_{k_2},+,\F)$ is a legit complete Abelian normed group. There holds $\M(\i P)\le\M(P)$ but equality does not hold in general, except in the limit case $k_2=|\b^2|$. We recover a favorable behavior when passing to the flat norms.

\begin{proposition}\label{prop_Ww=W}
Let $P\in\PP^G_{k_1,k_2}(\X_\b)$, there hold,
\begin{enumerate}[(i)]
\item $\M(\i P)=\inf\ds \M(Q^{0,0})+\M(Q^{0,1})$,\\
where the infimum is over the decompositions $P=Q^{0,0}+\pt_2 Q^{0,1}$ with $Q^{0,0}\in\PP_{k_1,k_2}^G(\X_\b)$, $Q^{0,1}\in\PP_{k_1,k_2+1}^G(\X_\b)$.
\item $\F(\i P)=\Fw(P)$.
\end{enumerate}
\end{proposition}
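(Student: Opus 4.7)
My plan is to exploit the natural injective morphism
\[
\i : \PP^G_{k_1,k_2}(\X_\b) \hookrightarrow \PP_{k_1}(\X_{\b^1}, G^*_{k_2}),\qquad G^*_{k_2} := (\FF^G_{k_2}(\X_{\b^2}),\F),
\]
which sends $\sum g_ip_i^1 \t p_i^2$ to $\sum [g_ip_i^2]\,p_i^1$, as a dictionary between the two theories. Under $\i$, the partial boundary $\pt_1$ corresponds to the chain boundary $\pt$ on $\X_{\b^1}$, while $\pt_2$ corresponds (up to the sign $(-1)^{k_1}$) to the coefficient-wise boundary $\pt\et : \FF^G_{k_2+1}(\X_{\b^2}) \to \FF^G_{k_2}(\X_{\b^2})$, which is $1$-Lipschitz with respect to the $\F$-norm on coefficients. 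The content of the proposition is that this dictionary exactly translates the tensor-flat structure of $P$ into the standard flat structure of $\i P$, viewed as a $k_1$-chain with coefficients in the complete Abelian normed group $G^*_{k_2}$.

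For (i), the upper bound is direct: applying $\i$ to $P = Q^{0,0}+\pt_2 Q^{0,1}$ gives $\i P = \i Q^{0,0} + (-1)^{k_1}\pt\et\,\i Q^{0,1}$, and combining the elementary bound $\M(\i R)\le\M(R)$ (which stems from $\F \le \M$ on the coefficient group) with the $1$-Lipschitz property of $\pt\et$ yields $\M(\i P)\le\M(Q^{0,0})+\M(Q^{0,1})$. For the lower bound, I refine the natural representation of $\i P$ to one with pairwise essentially disjoint cells $p_i^1 \sub \X_{\b^1}$ and polyhedral coefficients $c_i \in \PP^G_{k_2}(\X_{\b^2})$; the mass infimum is attained on such a representation, giving $\M(\i P) = \sum\F(c_i)\,\h^{k_1}(p_i^1)$. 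Since each $c_i$ is polyhedral, Whitney's original definition of $\F$ provides polyhedral $R_i \in \PP^G_{k_2}(\X_{\b^2})$ and $S_i \in \PP^G_{k_2+1}(\X_{\b^2})$ with $c_i = R_i + \pt S_i$ and $\M(R_i)+\M(S_i) \le \F(c_i)+\eps_i$, where the $\eps_i$ are chosen so that $\sum \eps_i\,\h^{k_1}(p_i^1) < \eps$. Setting $Q^{0,0} := \sum p_i^1 \t R_i$ and $Q^{0,1} := (-1)^{k_1}\sum p_i^1 \t S_i$ produces tensor polyhedral chains such that $Q^{0,0}+\pt_2 Q^{0,1} = \sum p_i^1 \t c_i$, which equals $P$ by the injectivity of $\i$, and $\M(Q^{0,0})+\M(Q^{0,1}) \le \M(\i P)+\eps$.

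The proof of (ii) cascades from (i). For $\F(\i P) \le \Fw(P)$, apply $\i$ to any $\Fw$-decomposition $P = Q^{0,0}+\pt_1 Q^{1,0}+\pt_2 Q^{0,1}+\pt_1\pt_2 Q^{1,1}$ to obtain $\i P = \i[Q^{0,0}+\pt_2 Q^{0,1}] + \pt\,\i[Q^{1,0}+\pt_2 Q^{1,1}]$, then use (i) to bound the masses of the two bracketed tensor polyhedral chains. For the reverse inequality, select polyhedral $B,C$ in $\PP_*(\X_{\b^1}, G^*_{k_2})$ with $\i P = B+\pt C$ and $\M(B)+\M(C) \le \F(\i P)+\eps$. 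A Fleming-type iterative approximation then replaces each flat coefficient of $B$ and $C$ by a polyhedral one in $\PP^G_{k_2}(\X_{\b^2})$, the small $\F$-remainders being absorbed in successive correction terms, reducing to the case $B = \i B'$ and $C = \i C'$ for tensor polyhedral $B' \in \PP^G_{k_1,k_2}(\X_\b)$ and $C' \in \PP^G_{k_1+1,k_2}(\X_\b)$. The identity $\pt\,\i = \i\,\pt_1$ together with the injectivity of $\i$ then give $P = B'+\pt_1 C'$; applying (i) separately to $B'$ and $C'$ produces the four tensor polyhedral chains $Q^{i_1,i_2}$ that assemble into a $\Fw$-decomposition of $P$ of total mass at most $\F(\i P)+O(\eps)$. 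The principal technical obstacle is precisely this coefficient-polyhedralization of $B$ and $C$, since the flat coefficients must be replaced by polyhedral ones while preserving the identity $\i P = B+\pt C$ up to arbitrarily small error; this is handled by an iteration with geometrically decaying remainders, using the density of polyhedral chains in flat chains with mass control.
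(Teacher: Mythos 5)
Your proof is correct and follows essentially the same route as the paper's: part (i) rests on the disjoint-cell representation of $\i P$ together with the polyhedral definition of $\F$ on the coefficient group $G^*_{k_2}$, and part (ii) unwinds $\F(\i P)$ through the bijection $\i$ and the identity $\pt\,\i=\i\,\pt_1$, then invokes (i). The only place you diverge is in spelling out a polyhedralization step in (ii) that the paper compresses into a ``by definition'' (restricting to decompositions with coefficients in $G'_{k_2}=\PP^G_{k_2}(\X_{\b^2})$); note, however, that no iterated scheme is needed there---after approximating the coefficients of $C$ alone by polyhedral ones on the same cells (so that $\M(\pt(C-\tilde C))$ is small), the chain $\i P-\pt\tilde C$ automatically lies in $\PP_{k_1}(\X_{\b^1},G'_{k_2})$ since both terms do, and setting $B'=\i^{-1}(\i P-\pt\tilde C)$, $C'=\i^{-1}\tilde C$ produces the tensor polyhedral decomposition $P=B'+\pt_1 C'$ in a single pass.
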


\begin{proof} We first prove~(i). By definition,
\[
\M(\i P)=\inf\lt\{\sum \M(p^1_j)\F(P^2_j)\rt\},
\]
where the infimum runs over all the representations $P=\sum p^1_j\we P^2_j$, with $p^1_j$, $k_1$-cell in $\X_{\b^1}$ and $P^2_j\in G'_{k_2}$. Then, by definition of $\F$ in $G^*_{k_2}$, 
\[
\M(\i P)=\inf\lt\{\sum \M(p^1_j)(\M(Q^2_j)+\M(R^2_j))\rt\},
\]
where now the infimum runs over the representations $P=\sum p^1_j\t (Q^2_j+\pt R^2_j)$ with $p^1_j$, $k_1$-cell in $\X_{\b^1}$, $Q^2_j\in G'_{k_2}$ and $R^2_j\in G'_{k_2+1}$. Using the obvious identities $\M(p^1_j)\M(Q^2_j)=\M(p^1_j\t Q^2_j)$, $ \M(p^1_j)\M(R^2_j)=\M(p^1_j\t R^2_j)$, we get,
\[
\M(\i P)=\inf\lt\{\sum \M(S_j)+\M(T_j)\rt\},
\]
where $S_j\in \PP_{k_1,k_2}^G(\X_\b)$, $T_j\in\PP_{k_1,k_2+1}^G(\X_\b)$ are such that $P=\sum S_j+\pt_2 T_j$. Eventually setting $Q^{0,0}:=\sum S_j$ and $Q^{0,1}:=\sum T_j$, we have established the identity~(i).\medskip

\noindent 
Let us prove the second point. By definition,
\[
\F(\i P)=\inf\lt\{\M(P^0)+\M(P^1) : P^{i_1}\in\PP_{k_1+i_1}(\X_{\b^1},G'_{k_2})\text{ for }i_1=0,1,\, \i P=P^0+\pt P^1\rt\},
\]
and since  $\i$ is a bijection and $\i (\pt_1 P)=\pt(\i P)$, this rewrites as 
\[
\F(\i P)=\inf\lt\{\M(\i Q^0)+\M(\i Q^1) : Q^{i_1}\in\PP_{k_1+i_1,k_2}^G(\X_\b)\text{ for }i_1=0,1,,\, P=Q^0+\pt_1 Q^1\rt\}.
\]
The result then follows by applying point (i) to $Q^0$ and $Q^1$.
\end{proof}

\subsection{Groups of tensor flat chains}
As a consequence of Proposition~\ref{prop_Ww=W}, $\Fw$ is a norm on $\PP_{k_1,k_2}^G(\X_\b)$. The group of $(k_1,k_2)$-chains is defined as the completion of $(\PP_{k_1,k_2}^G(\X_\b),\Fw)$ and is denoted $\FF^G_{k_1,k_2}(\X_\b)$.  For pairs $(k_1,k_2)\in\Z$ with $\min(k_1,k_2)<0$ or $\max(k_1-|\b^1|,k_2-|\b^2|)>0$ we set by convention $\FF^G_{k_1,k_2}(\X_\b)=\{0\}$. Let $P\in \PP_{k_1,k_2}^G(\X_\b)$.  Given a decomposition, 
\be\label{Pdecomp}
P=Q^{0,0}+\pt_1Q^{1,0}+\pt_2Q^{0,1}+\pt_1\pt_2Q^{1,1},
\ee
we have $\pt_1P=\pt_1Q^{0,0}+\pt_1\pt_2Q^{0,1}$ and $\pt_2P=\pt_2Q^{0,0}-\pt_1\pt_2 Q^{1,0}$, so that $\Fw(\pt_1P)\le \M(Q^{0,0})+\M(Q^{0,1})$ and $\Fw(\pt_2P)\le \M(Q^{0,0})+\M(Q^{1,0})$. Optimizing over the decompositions of $P$, we obtain
\[
\Fw(\pt_lP)\le\Fw(P)\qquad\text{for }l=1,2.
\]
With these inequalities, we can extend the partial differential operators $\pt_1$, $\pt_2$ by continuity on $\FF^G_{k_1,k_2}(\X_\b)$. Besides, 
\[
\pt_1: \FF^G_{k_1,k_2}(\X_\b)\longto\FF^G_{k_1-1,k_2}(\X_\b)\quad\qquad\text{and}\qquad\quad\pt_2:\FF^G_{k_1,k_2}(\X_\b)\longto\FF^G_{k_1,k_2-1}(\X_\b)
\]
are 1-Lipschitz group morphisms. Moreover
\[
\pt_1^2=0,\qquad\qquad\pt_2^2=0,\qquad\qquad\pt_2\pt_1=-\pt_1\pt_2.
\]
 In summary, the family $(\FF^G_{k_1,k_2}(\X_\b))_{k_1,k_2}$ forms a two-dimensional chain complex of normed groups: 
\[
\begin{array}{ccccc} 
                 &\text{\scriptsize$\pt_2$}\Big\dw                              &                       & \text{\scriptsize$\pt_2$}\Big\dw& \smallskip\\
\cdots\ \st{\pt_1}\longto& \FF^G_{k_1,k_2}(\X_\b)  &\st{\pt_1}\longto& \FF^G_{k_1-1,k_2}(\X_\b) &\st{\pt_1}\longto\ \cdots \\
\\
                 &\text{\scriptsize$\pt_2$}\Big\dw                              &                       & \text{\scriptsize$\pt_2$}\Big\dw& \smallskip\\
\cdots\ \st{\pt_1}\longto& \FF^G_{k_1,k_2-1}(\X_\b)  &\st{\pt_1}\longto& \FF^G_{k_1-1,k_2-1}(\X_\b) &\st{\pt_1}\longto\ \cdots \\
\\
                 &\text{\scriptsize$\pt_2$}\Big\dw                              &                       & \text{\scriptsize$\pt_2$}\Big\dw& \smallskip\\
\end{array}
\]

As for flat chains, we  say that $A\in\FF^G_{k_1,k_2}(\X_\b)$ is supported in the closed set $S$ if for every neighborhood $S
\sub U\sub\X_\b$, there exists a sequence $P_j\in\PP_{k_1,k_2}^G(\X_\b)$  with each $P_j$ supported in $U$ such that $P_j\to A$ (from now on for $A_j,A\in\FF^G_{k_1,k_2}(\X_\b)$,  $A_j\to A$ means that $A_j$ converges to $A$ in $\Fw$-norm, that is $\Fw(A_j-A)\to0$).\medskip

When $\{|\b^1|,|\b^2|\}=\{0,|\b|\}$, $\FF^G_{k_1,k_2}(\X_\b)$ and $\FF^G_k(\X_\b)$ are exactly the same groups by construction. In the other cases, any element $P\in\PP^G_{k_1,k_2}(\X_\b)$ can be considered either as an element of $\FF^G_k(\X_\b)$ or as an element of  $\FF^G_{k_1,k_2}(\X_\b)$. For clarity, in the later case we use from now on the``wedge product'' notation, 
\[
P=\sum g_i p^1_i\we p^2_i,
\]
in place of the ``cross product''~\eqref{k1k2polychain} even though these expressions refer to the same object.

\begin{remark}\label{rem_on_tfc}~
\medskip

\noindent
(a)  If $\b^2=\void$, $\FF^G_{k,0}(\X_\b)$ is just $\FF^G_k(\X_\b)$ and on these groups $\Fw=\F$, $\pt_1=\pt$ and $\pt_2=0$. Symmetrically, if $\b^1=\void$, $\FF^G_{0,k}(\X_\b)=\FF^G_k(\X_\b)$ and $\pt_2=\pt$, $\pt_1=0$.\medskip

\noindent
(b) In the extreme case $(k_1,k_2)=(|\b^1|,|\b^2|)$, there holds $\FF^G_{k_1,k_2}(\X_\b)=\FF^G_k(\X_\b)=L^1(\X_\b,G)$ with $\Fw=\F=\M$. \medskip

\noindent
(c)  In the other cases, tensor flat chains are not flat chains in general. For instance let $n_1=n_2=1$ and let us consider the triangle 
\[
T=\{(x^1,x^2)\in\R^2:x^1,x^2\ge0, x^1+x^2\le1\}.
\]
As a current, $\pt_1\lb T\rb$ is a measure supported on the frontier $\frt T$ of $T$ but it is oriented along $e_2$ which is not tangent to $\frt T$ on the segment $\{(t,1-t):0\le t\le 1\}$, so $\pt_1\lb T\rb$ is not a flat chain. This example is detailed in the setting of general complete Abelian normed groups $G\ne\{0\}$ in the proof of~\cite[Proposition~A.5]{GM_comp}.\medskip

\noindent
(d)  Another phenomenon arises from the second order term $\pt_1\pt_2 Q^{1,1}$ in the decomposition~\eqref{Pdecomp}. Let us consider again $n_1=n_2=1$ and $G=\Z$. Let $S\sub\R^2$ be a (non degenerate) closed square with sides parallel to the axes and let us set $P:=\pt_1\pt_2\lb S\rb\in\FF^\Z_{0,0}(\R^2)$. We easily see that $P=-\lb a\rb+\lb b\rb-\lb c\rb+\lb d\rb$ where $a,b,c,d$ are the vertices of $S$  labeled counterclockwise with $a$ at the bottom left corner. Denoting $\ell$ the side-length of $S$, on the one hand we have of course $\Fw(P)\le \M(\lb S\rb)=\ell^2$ and in fact this inequality is an identity if $\ell\le 2$. On the other hand, $\F(P)=2\ell$ if $\ell\le 2$. \\
Now, let $S_j$ be a sequence of such squares with side lengths $\ell_j=1/j$ for $j\ge1$ and such that $d(S_i,S_j)\ge2\ell_i$ for $1\le i<j$. Let us denote $P_j:=\pt_1\pt_2\lb S_j\rb$. Since $\Fw(P_j)=1/j^2$, the sequence $A_j:=\sum_{i\le j} P_i$ converges in $\Fw$-norm towards some $(0,0)$-chain $A_\oo$ which identifies with a $0$-current. On the contrary, for $j\ge1$ and $m\ge2j$,
\[
\F(A_m-A_j)=2\sum_{r=j+1}^{m}\dfrac1r\, \ge\, 2\sum_{r=j+1}^{2j}\dfrac1r \sim 2\ln 2\quad \text{ as }j\up\oo.
\]
Hence $A_j$ does not admit any Cauchy subsequence in $(\FF^\Z_0(\R^2),\F)$. Moreover, as a current, $A_\oo$ is not a flat chain.\footnote{Indeed, if $A_\oo$ were a flat chain we would have $\sum_{j\ge1}|\vhi(a_j)-\vhi(b_j)+\vhi(c_j)-\vhi(d_j)|<\oo$ for every compactly supported Lipschitz function $\vhi$, where $(a_j,b_j,c_j,d_j)$ are the vertices of $S_j$ listed counterclockwise. Setting $\vhi(x):=\sum_{j\ge1} (\ell_j-|x-a_j|)_+$ we obtain the contradiction $\sum_{j\ge1}1/j<\oo$.}\medskip

(e) However, as mentioned in the introduction, we establish a positive result, namely: the groups of normal tensor chains (defined further on) identify with subgroups of normal chains, see~\cite[Theorem~A.3]{GM_comp}.\medskip

(f) The basic theory of tensor chains is similar to its counterpart for flat chains, however some results do not generalize. For instance, if $A$ is a flat chain, we can write $A=B+\pt C$ for some \emph{finite mass} flat chains $B$, $C$ and we have the obvious implication
\be\label{rem_on_tfc_1} 
\M(A)<\oo\ \implies\   C\text{ is normal}.
\ee 
In the case of a tensor chain $A'$, we can still write (see Proposition \ref{prop_Wwalternative} below) $A'=B'_{0,0}+\pt_1B'_{1,0}+\pt_2B'_{0,1}+\pt_1\pt_2B'_{1,1}$ for some finite mass tensor chains $B'_{i_1,i_2}$ but for the corresponding $\Mw$-mass (introduced in Subsection~\ref{Ss_Mtfc} below) the additional information $\Mw(A')<\oo$ does not imply that $\pt_1B'_{1,0}$, $\pt_2B'_{0,1}$ or $\pt_1\pt_2B'_{1,1}$ have finite $\Mw$-mass.

This seems innocent but~\eqref{rem_on_tfc_1} is used to establish that the spaces of flat chains $\FF^\R_k(\R^n)$, as defined above with $G=\R$, are the same as the spaces of real valued flat chains defined as subspaces of currents in~\cite{FedFlem60,Federer} (namely the closure of the space of polyhedral currents with respect to the dual of the norm defined by $\|\om\|_{\F}:=\max(\|\om\|_\oo,\|d\om\|_\oo)$ for $\om$ smooth and compactly supported $k$-form on $\R^n$).\footnote{See~\cite[Section~2.1.12]{Federer}. Only a partial result is established there but the proof can be easily completed.} Here, it is not clear whether the spaces of tensor chains identify with the closures of the spaces of tensor polyhedral currents with respect to the dual of the norm defined (with the obvious definition of the partial exterior derivatives $d_1,d_2$) by $\|\om\|_{\Fw}:=\max(\|\om\|_\oo,\|d_1\om\|_\oo,\|d_2\om\|_\oo,\|d_1d_2\om\|_\oo)$ .
\end{remark}

\subsection{Orthogonal projection of tensor chains}
We only consider orthogonal projections of tensor chains on affine subspaces $L\sub\X_\b$ which write as $L=L^1+L^2$ with $L^l$ affine $m_l$-plane of $\X^{\b^l}$ for $l=1,2$. Denoting $\pi$ the orthogonal projection on such $L$ and $\pi^l$ the orthogonal projection on $L^l$ for $l=1,2$, we have for $P\in\PP_{k_1,k_2}^G(\X_\b)$ with representation~\eqref{k1k2polychain},
\[
\pi\pf P=\sum g_i (\pi^1\pf p^1_i)\we (\pi^2\pf p^2_i).
\]
Consequently, 
\[
\pt_1\pi\pf P=\sum g_i (\pt \pi^1\pf p^1_i)\we (\pi^2\pf p^2_i)= \sum g_i  (\pi^1\pf \pt p^1_i)\we (\pi^2\pf p^2_i)= \pi\pf \pt_1P.
\]
Similarly, $\pt_2\pi\pf P=\pi\pf \pt_2P$. It follows that for a representation of $P$ of the form~\eqref{Pdecomp}, 
\[
\pi\pf P=\pi\pf Q^{0,0}+\pt_1\pi\pf Q^{1,0}+\pt_2\pi\pf Q^{0,1}+\pt_1\pt_2\pi\pf Q^{1,1}.
\]
Observe that for a tensor polyhedral chain $Q$ there holds $\M(\pi\pf Q)\le\M(Q)$. Using this with $Q=Q^{i_1,i_2}$ for $i_1,i_2\in\{0,1\}$ and optimizing over the decompositions of $P$, we get $\Fw(\pi\pf P)\le \Fw(P)$. We can then extend the operator $\pi\pf$ by continuity.

\begin{proposition}\label{prop_pipf}With the above notation the orthogonal projection on $L=L^1+L^2$ with $L^l$ affine $m_l$-plane of $\X^{\b^l}$ for $l=1,2$ extends as a continuous group morphism $\pi\pf :\FF^G_{k_1,k_2}(\X^\b)\to \FF^G_{k_1,k_2}(L)$. Moreover, for $A\in\FF^G_{k_1,k_2}(\X^\b)$, there hold
\[
\pt_l\pi\pf A=\pi\pf \pt_lA\quad\text{for }l=1,2,\qquad\quad\Fw(\pi\pf A)\le\Fw(A).
\]
\end{proposition}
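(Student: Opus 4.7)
The plan is to follow the computation already sketched before the statement, turning it into a rigorous argument by first establishing everything on the dense subgroup $\PP_{k_1,k_2}^G(\X_\b)$ and then extending by continuity.

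First, I would verify the key algebraic identities on polyhedral tensor chains. For $P = \sum g_i p_i^1 \we p_i^2 \in \PP_{k_1,k_2}^G(\X_\b)$, the factorization $\pi = \pi^1 + \pi^2$ together with the fact that $\pi^l$ maps $\X_{\b^l}$ into $L^l$ yields $\pi\pf P = \sum g_i (\pi^1\pf p_i^1) \we (\pi^2\pf p_i^2) \in \PP_{k_1,k_2}^G(L)$. Using the classical commutation $\pt \circ \pi^l\pf = \pi^l\pf \circ \pt$ for $G$-flat chains, one checks $\pt_1\pi\pf P = \pi\pf\pt_1 P$ and similarly for $\pt_2$. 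The mass inequality $\M(\pi\pf Q) \le \M(Q)$ for polyhedral tensor chains follows from the corresponding bound for flat chains applied factor by factor (projections are 1-Lipschitz).

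Next, I would establish $\Fw(\pi\pf P) \le \Fw(P)$ for every $P \in \PP_{k_1,k_2}^G(\X_\b)$. Given any admissible decomposition
\[
P = Q^{0,0} + \pt_1 Q^{1,0} + \pt_2 Q^{0,1} + \pt_1\pt_2 Q^{1,1},
\]
the commutation with $\pi\pf$ immediately gives
\[
\pi\pf P = \pi\pf Q^{0,0} + \pt_1 \pi\pf Q^{1,0} + \pt_2 \pi\pf Q^{0,1} + \pt_1\pt_2 \pi\pf Q^{1,1},
\]
which is an admissible decomposition of $\pi\pf P$ in $\PP_{k_1,k_2}^G(L)$. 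The mass bound from the first step gives $\sum \M(\pi\pf Q^{i_1,i_2}) \le \sum \M(Q^{i_1,i_2})$; taking the infimum over decompositions of $P$ yields the desired inequality by definition of $\Fw$ (cf.\ \eqref{WwP}).

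Finally, I would extend $\pi\pf$ by continuity: since $\PP_{k_1,k_2}^G(\X_\b)$ is dense in $\FF_{k_1,k_2}^G(\X_\b)$ and $\pi\pf$ is a 1-Lipschitz group morphism with respect to $\Fw$ on this dense subgroup, it admits a unique 1-Lipschitz extension $\pi\pf : \FF_{k_1,k_2}^G(\X_\b) \to \FF_{k_1,k_2}^G(L)$. The identities $\pt_l \pi\pf A = \pi\pf \pt_l A$ for $l = 1,2$ and the inequality $\Fw(\pi\pf A) \le \Fw(A)$ then pass to the limit, the former using continuity of $\pt_l$ and $\pi\pf$ in $\Fw$-norm, the latter by lower semicontinuity of $\Fw$ applied to any approximating sequence $P_j \to A$.

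There is no real obstacle here: the argument is essentially the standard completion procedure used throughout Section~\ref{Stfc}. The only point requiring a small amount of care is checking that the tensor structure is preserved by $\pi\pf$ (so that $\pi\pf P$ actually lies in $\PP_{k_1,k_2}^G(L)$ and not merely in $\PP_k^G(L)$), which is guaranteed by the product structure $\pi = \pi^1 + \pi^2$ coming from the hypothesis $L = L^1 + L^2$ with $L^l \sub \X_{\b^l}$.
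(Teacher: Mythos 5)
Your proposal is correct and follows essentially the same path as the paper, which presents the argument in the paragraph immediately preceding the proposition statement (polyhedral level: factorization of $\pi\pf$ through $\pi^1,\pi^2$, commutation with $\pt_1,\pt_2$, the mass bound $\M(\pi\pf Q)\le\M(Q)$, optimization over decompositions \eqref{Pdecomp}, then extension by continuity). One small nitpick: to pass $\Fw(\pi\pf A)\le\Fw(A)$ to the limit you do not need lower semicontinuity of $\Fw$; since $P_j\to A$ in $\Fw$-norm, $\Fw(P_j)\to\Fw(A)$ by continuity of the norm, and 1-Lipschitzness is automatically preserved under unique continuous extension.
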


\subsection{Mass of tensor chains}\label{Ss_Mtfc}
We are now almost ready to define the mass of a tensor chain, see~\eqref{def_Mw} after the proof of the next lemma. Later on we consider restrictions to Borel sets of tensor chains of finite mass. This analysis is almost identical to the one of~\cite{Fleming66} but we reproduce it in details because on the one hand the necessary adaptations might not be completely obvious and on the other hand the estimate~\eqref{lem2.1_1} of the next lemma is used in~\cite{GM_comp}. As in the case of classical chains, we start with restriction on intervals.
\begin{definition}
We call intervals of $\X_\b$ the sets of the form $I_1\t I_2\t \dots\t I_n$ where for $j\in\b$, $I_j\sub\R$ is any interval and for $j\in\ov\b$, $I_j=\{0\}$.
\end{definition}

\begin{lemma}[{Counterpart of~\cite[Lemma~2.1 \& Theorem~2.3]{Fleming66}}] \label{lem2.1}~
\begin{enumerate}[(i)]
\item Let $P_j\in \PP_{k_1,k_2}^G(\X_\b)$ such that $\sum \Fw(P_j)<\oo$, then for every interval $I\sub\X_\b$, 
there holds
\[
\sum \Fw(P_j\restr (x+I))<\oo\qquad\text{for almost every }x\in\X_\b.
\] 
The intervals $x+I$ for which $\sum \Fw(P_j\restr (x+I))<\oo$ are called non-exceptional with respect to the sequence $P_j$.\\
More precisely, for any interval $I$ of $\X_\b$ and any Cartesian product $\om=\om_1\t \om_2\t\cdots\t\om_n\sub\X^\b$ such that for $i\in\b$, $\om_i\sub\R$ is measurable with finite length (and $\om_i=\{0\}$ for $i\in\ov\b$), there holds
\be\label{lem2.1_1}
\int_\om\sum\Fw\lt(P_j\restr(x+I)\rt)\,d\h^{|\b|}(x)\le c_\om\sum\Fw(P_j)<\oo,
\ee
where $c_\om\ge0$ depends on $\om$.
\item The mapping $P\mapsto\M(P)$ is lower semi-continuous in $(\PP_{k_1,k_2}^G(\X_\b),\Fw)$.
\end{enumerate}
\end{lemma}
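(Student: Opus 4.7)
The plan is to adapt Fleming's proof of~\cite[Lemma~2.1 \& Theorem~2.3]{Fleming66}, with the four-term decomposition~\eqref{WwP} replacing the classical two-term one, and with an orthogonal projection argument in~(ii) that exploits the tensor structure.

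For~(i), I would fix $P \in \PP_{k_1,k_2}^G(\X_\b)$ and $\eps > 0$, and take a decomposition $P = Q^{0,0} + \pt_1 Q^{1,0} + \pt_2 Q^{0,1} + \pt_1\pt_2 Q^{1,1}$ with $\sum \M(Q^{i_1,i_2}) \le \Fw(P) + \eps$. Iterating Proposition~\ref{prop_divformula} coordinate by coordinate in $\b^l$ gives, for any polyhedral tensor chain $R$, any interval $I = I^1 \t I^2 \sub \X_{\b^1} \t \X_{\b^2}$, and $l\in\{1,2\}$, the identity
\[
(\pt_l R)\restr(x+I) = \pt_l[R\restr(x+I)] - \sigma_l(R,x,I),
\]
where $\sigma_l(R,x,I)$ is a signed sum of slices of $R$ by the $2|\b^l|$ hyperplane faces of $x+I^l$; the operator $\pt_{l'}$ for $l'\ne l$ commutes with slicing in $\b^l$-directions, so no mixed correction appears. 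Applying this identity to each of the four terms above, and twice for the mixed term $\pt_1\pt_2 Q^{1,1}$, produces
\[
P\restr(x+I) = \wt Q^{0,0}(x) + \pt_1 \wt Q^{1,0}(x) + \pt_2 \wt Q^{0,1}(x) + \pt_1\pt_2 \wt Q^{1,1}(x),
\]
where each $\wt Q^{i_1,i_2}(x)$ is a finite combination of restrictions of the $Q^{i'_1,i'_2}$ to $x+I$ and iterated slices of these at faces of $x+I^l$. By Proposition~\ref{prop_MWSl}(iii), after integration over a Cartesian product $\om$ of bounded measurable sets, the $\M$-mass of each slice is bounded by $c_\om \M(Q^{i'_1,i'_2})$, with $c_\om$ depending only on $|\om_j|$ for $j\in\b$ and on $I$. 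Summing and using $\Fw(P\restr(x+I)) \le \sum \M(\wt Q^{i_1,i_2}(x))$ gives $\int_\om \Fw(P\restr(x+I))\,d\h^{|\b|}(x) \le c_\om(\Fw(P) + \eps)$. Letting $\eps\to 0$, applying to each $P_j$, and summing in $j$ via monotone convergence establishes~(i).

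For~(ii), I would follow Fleming's reduction to a local estimate. Suppose $P_j \to P$ in $\Fw$-norm; after extraction, assume $\M(P_j) \to M := \liminf \M(P_j) < \infty$ and $\sum_j \Fw(P_j - P) < \infty$, so by~(i), $P_j\restr(x+I) \to P\restr(x+I)$ in $\Fw$-norm for a.e.~$x$ and every interval $I$. Fix $\eps > 0$. Subdivide the polyhedral representation of $P$ so that its finitely many cells $c_1,\dots,c_N$, each of the form $g_i\cdot p^1_i \we p^2_i$, have pairwise disjoint $k$-interiors. For each $i$, choose finitely many pairwise disjoint small cubes $J_{i,m} \sub \X_\b$, each meeting $\supp P$ only in the $k$-interior of $c_i$ (so $P\restr J_{i,m} = g_i(c_i\restr J_{i,m})$), chosen generically so that $P_j\restr J_{i,m} \to P\restr J_{i,m}$ in $\Fw$-norm and so that $\sum_{i,m} \M(P\restr J_{i,m}) \ge \M(P) - \eps$. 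Let $L_i = L^1_i + L^2_i$ be the $k$-plane containing $c_i$, with $\dim L^l_i = k_l$. The key local inequality to establish is $\M(P\restr J_{i,m}) \le \liminf_j \M(P_j\restr J_{i,m})$: writing $\pi$ for the orthogonal projection onto $L_i$, Proposition~\ref{prop_pipf} gives $\Fw$-nonexpansiveness of $\pi\pf$, so $\pi\pf(P_j\restr J_{i,m}) \to \pi\pf(P\restr J_{i,m}) = P\restr J_{i,m}$ in $\Fw$-norm in $L_i$. In $L_i$, tensor $(k_1,k_2)$-chains are top-dimensional and, by Remark~\ref{rem_on_tfc}(b), coincide with $L^1(L_i,G)$ on which $\Fw = \M = \|\cdot\|_{L^1}$. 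Fatou's lemma then yields $\M(P\restr J_{i,m}) \le \liminf_j \M(\pi\pf(P_j\restr J_{i,m})) \le \liminf_j \M(P_j\restr J_{i,m})$, the last step using that mass does not increase under $1$-Lipschitz push-forward. Summing over $(i,m)$ (finitely many terms) and letting $\eps\to 0$ concludes $\M(P) \le M$.

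The main obstacle is the local LSC step in~(ii). Because $\Fw$-convergence is strictly weaker than $\F$-convergence (Remark~\ref{rem_on_tfc}(c)--(d)), Fleming's classical LSC of $\M$ on $(\PP^G_k,\F)$ does not apply; moreover the identification via $\i$ only delivers LSC for the mass of $\i P$ with $\F$-valued coefficients in $\FF^G_{k_2}(\X_{\b^2})$, which is strictly smaller than $\M(P)$ when $k_2 \notin \{0,|\b^2|\}$. Projecting onto a tensor $k$-plane $L = L^1 + L^2$ neutralizes both issues by reducing to the top-dimensional setting, where $\Fw$, $\F$, and $\M$ all collapse to the $L^1$-norm and LSC is elementary.
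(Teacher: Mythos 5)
Your proof is correct and follows essentially the same route as the paper: for (i) you restrict a four-term $\Fw$-decomposition to half-spaces/intervals and control the slice corrections via Propositions~\ref{prop_divformula} and~\ref{prop_MWSl}, and for (ii) you localize to non-exceptional intervals where $P$ restricts to a chain supported on a single tensor $k$-plane $L=L^1+L^2$, project onto $L$, and invoke the $L^1$-identification in top dimension where $\Fw$ and $\M$ coincide. The only cosmetic difference is that you carve up $\supp P$ into small cubes inside the $k$-interiors of the cells, whereas the paper selects disjoint intervals with the same tensor-plane support property, and you invoke Fatou where the convergence is actually in $L^1$-norm (so the $\liminf$ is in fact a limit); neither affects validity.
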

\begin{proof}
Without loss of generality, we assume that $\b=\{1,\dots,n\}$. We proceed exactly as in~\cite{Fleming66}.\medskip

\noindent
\textit{Proof of~(i).} We start with $I$ being a half-space. Lets $i\in\{1,\dots,n\}$. For $s\in\R$ we denote,
\[
H_i(s):=\lt\{x\in \R^n: x_1<s\rt\}.
\]
 Let $P_j\in \PP_{k_1,k_2}^G(\R^n)$ such that $\sum \Fw(P_j)<\oo$. By definition of $\Fw$, for every $j\ge 1$, we can write $P_j=Q_j^{0,0}+\pt_1Q_j^{1,0}+\pt_2Q_j^{0,1}+\pt_1\pt_2Q_j^{1,1}$ with $Q_j^{i_1,i_2}\in \PP^G_{k_1+i_1,k_2+i_2}(\R^n)$ such that $\sum_{i_1,i_2}\M(Q_j^{i_1,i_2})\le2\Fw(P_j)$. Consequently,
\[
\sum_j\sum_{i_1,i_2}\M(Q_j^{i_1,i_2})\le2\sum_j\Fw(P_j).
\] 
Next, there holds for almost every $s\in\R$ and $j\ge1$,
\begin{multline*}
P_j\restr H_i(s)=Q_j^{0,0}\restr H_i(s)+\pt_1(Q_j^{1,0}\restr H_i(s))+\pt_2(Q_j^{0,1}\restr H_i(s)) +\pt_1\pt_2(Q_j^{1,1}\restr H_i(s))+R_j(s).
\end{multline*}
where the remaining term decomposes as $R_j(s):=S_j^{1,0}(s)+S_j^{0,1}(s)+S_j^{1,1}(s)$ with,
\begin{align*}
S_j^{1,0}(s) &=  (\pt_1Q_j^{1,0})\restr H_i(s)  - \pt_1(Q_j^{1,0}\restr H_i(s)), \\
S_j^{0,1}(s)&=  (\pt_2Q_j^{0,1})\restr H_i(s)  - \pt_2(Q_j^{0,1}\restr H_i(s)),\\
S_j^{1,1}(s) &=  (\pt_1\pt_2Q_j^{1,1})\restr H_i(s)  - \pt_1\pt_2(Q_j^{1,1}\restr H_i(s)).
\end{align*}
We have,
\[
\sum\Fw(P_j\restr H_i(s)-R_j(s))\le \sum_{i_1,i_2\in\{0,1\}}\sum_j \M\lt(Q_j^{i_1,i_2}\restr H_i(s)\rt).
\]
Since $\M(Q_j^{i_1,i_2}\restr H_i(s) )\le \M(Q_j^{i_1,i_2}))$ for every $j\ge1$ and $i_1,i_2\in\{0,1\}$, we deduce,
\be\label{proof_lem2.1_1}
\sum\Fw(P_j\restr H_i(s)-R_j(s))\le2\sum_j\Fw(P_j)<\oo.
\ee

Let us treat the remaining terms $R_j(s)$. We assume that $i\in\a$ (the other case is similar). In this case, $\pt_2$ commutes with the restriction on $H_i(s)$, hence $S_j^{0,1}(s)=0$ and $S_j^{1,1}(s) $ rewrites as $S_j^{1,1}(s) =\pt_2 T_j(s)$, with
\[
T_j(s) =  -(\pt_1Q_j^{1,1})\restr H_i(s) +\pt_1(Q_j^{1,1}\restr H_i(s)).
\]
By Proposition~\ref{prop_divformula}, we have
\[
S_j^{1,0}(s) =\sigma Q^{1,0}_j\cap \X_{{\ov i}}(se_{i}),\qquad\qquad T_j(s) =\sigma' Q^{1,1}_j\cap \X_{\ov i}(se_i),
\] 
where the factors  $\sigma,\sigma'=\pm 1$ depend on the orientation conventions. From Proposition~\ref{prop_MWSl}, we have the estimates,
\[
\int_\R \M(S^{1,0}_j(s))\, ds \le \M(Q_j^{1,0}),\qquad\qquad\int_\R \M(T_j(s))\, ds \le \M(Q_j^{1,1}).
\]
Writing $R_j(s)=S^{1,0}_j(s)+\pt_2 T_j(s)$.  We deduce that
\be\label{proof_lem2.1_2}
\int_\R\sum\Fw(R_j(s))\, ds\le2\sum_j\Fw(P_j)<\oo.
\ee
Combining~\eqref{proof_lem2.1_1}\&\eqref{proof_lem2.1_2} we get that for every measurable set $\om_1\sub\R$ with finite length $\ell_1$, there holds
\be\label{proof_lem2.1_3}
\int_{\om_1}\sum\Fw\lt(P_j\restr H_i(s)\rt)\,ds\le2(1+\ell_1)\sum_j\Fw(P_j)<\oo.
\ee
The first point of the lemma in the case of an open half-space follows from~\eqref{proof_lem2.1_3} (notice that $x+H_i(0)=H_i(x_i)$). From $P\restr (\R^n\sm H_i(s))=P-P\restr H_i(s)$, we see that the result also holds for the closed half-space $\R^n\sm H_i(0)$. By symmetry the result holds for any closed or open half-space with boundary normal to some $e_i$. For a general interval $I$, writing $I$ as the intersection of at most $2n$ coordinate half-spaces and applying the result recursively, we obtain the estimate~\eqref{lem2.1_1}. This proves the first point.\medskip

\noindent
\textit{Proof of~(ii).}  Let $P\in\PP_{k_1,k_2}^G(\X_\b)$ and let $P_j\to P$ in $\Fw$-norm such that the sequence $\M(P_j)$ has a finite limit. Up to extraction, we assume moreover $\sum\Fw(P_i-P)<\oo$.\\
First we consider an interval  $I$ of $\X_\b$ which is non exceptional with respect to the sequence $P_j-P$  and is such that $P^0:=P\restr I$ is supported in some affine $(k_1+k_2)$-plane $L\sub\X_\b$ of the form $L=L^1+L^2$ (with $L^l$ affine $k_l$-plane of $\X^{\b^l}$ for $l=1,2$). Let us set $P_j^0:=\pi\pf (P_j\restr I)$ where $\pi$ is the orthogonal projection on $L$. The chain $P_j^0-P^0$ being of maximal dimension in $L$, there holds
\[
\M(P_j^0-P^0) = \Fw(P_j^0-P^0)\le \Fw((P_j-P)\restr I).
\] 
Since $I$ is non exceptional, we have $\sum\Fw((P_j-P)\restr I)<\oo$ and the right-hand side tends to $0$. We conclude that,
\be\label{proof_lem2.1_4}
\M(P^0)=\lim \M(P^0_j)\le \liminf \M(P_j\restr I).
\ee

Eventually, given $\eps>0$, there exists a finite set $I^1,\cdots,I^m$ of disjoint intervals of $\X_\b$ of the form above such that $\M(P\restr (\X_\b\sm \cup I^r))<\eps$. The result then follows from~\eqref{proof_lem2.1_4} applied to the $I^r$'s and the obvious fact that if $P$ and $P'$ have disjoint supports then $\M(P+P')=\M(P)+\M(P')$.
\end{proof}

We define the mass of $A\in\FF^G_{k_1,k_2}(\X_\b)$ in the same way as the mass of classical chains. Namely,
\be\label{def_Mw}
\Mw(A):=\inf\lt\{\liminf\M(P_j): P_j\in\PP_{k_1,k_2}^G(\X_\b),\, P_j\to A \text{ in }\Fw\text{-norm}\rt\}.
\ee
Thanks to Lemma~\ref{lem2.1}(ii), we have $\Mw(P)=\M(P)$ for $P\in \PP_{k_1,k_2}^G(\X_\b)$ and for such chains we use the two notations interchangeably.\\ 
The elements of $\FF^G_{k_1,k_2}(\X_\b)$ with finite mass form a subgroup denoted $\MM^G_{k_1,k_2}(\X_\b)$ and 
\[
(\MM^G_{k_1,k_2}(\X_\b),+,\Mw)\text{ is a complete normed group}.
\]
As in the case of flat chains, if $|\b|\ge1$ these groups do not form a chain complex
\begin{remark}
In the extreme case $(k_1,k_2)=(|\b^1|,|\b^2|)$, we have $\Mw=\Fw$ on $\FF^G_{k_1,k_2}(\X_\b)$ and  $\MM^G_{k_1,k_2}(\X_\b)=\FF^G_{k_1,k_2}(\X_\b)$. Recalling Remark~\ref{rem_on_tfc}(b) we have in fact $\MM^G_{k_1,k_2}(\X_\b)=\FF^G_{k_1,k_2}(\X_\b)=\MM^G_k(\X_\b)=\FF^G_k(\X_\b)=L^1(\X_\b,G)$ with $\Mw=\Fw=\M=\F$.

\end{remark}

Before continuing let us come back to the push-forwards by orthogonal projections and complete Proposition~\ref{prop_pipf}.
\begin{proposition}\label{prop_MpipfA}
Given a projection $\pi$ on $L=L^1+L^2\sub\X_\b$ as in Proposition~\ref{prop_pipf}, there holds $\Mw(\pi\pf A)\le\Mw(A)$ for every $A\in\FF^G_{k_1,k_2}(\X_\b)$.
\end{proposition}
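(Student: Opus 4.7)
The plan is to reduce to the polyhedral case and then use the continuity of $\pi\pf$ together with the definition of $\Mw$ as a lower semicontinuous relaxation.

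First, I would establish the inequality at the level of tensor polyhedral chains: for $P \in \PP_{k_1,k_2}^G(\X_\b)$ with representation $P = \sum g_i\, p^1_i \we p^2_i$, one has $\pi\pf P = \sum g_i\, (\pi^1\pf p^1_i) \we (\pi^2\pf p^2_i)$. Since $\pi^l$ is 1-Lipschitz, the standard push-forward inequality for polyhedral cells gives $\M(\pi^l\pf p^l_i) \le \M(p^l_i)$ (when the image is degenerate, $\pi^l\pf p^l_i = 0$ and the inequality is trivial). Optimizing over representations of $P$ and using the factorization of mass on tensor products then yields
\begin{equation*}
\M(\pi\pf P) \le \M(P) \qquad \text{for every } P \in \PP_{k_1,k_2}^G(\X_\b).
\end{equation*}

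Next, I would invoke the definition of $\Mw(A)$: given $\eps > 0$, pick a sequence $P_j \in \PP_{k_1,k_2}^G(\X_\b)$ with $P_j \to A$ in $\Fw$-norm and $\liminf_j \M(P_j) \le \Mw(A) + \eps$. By Proposition~\ref{prop_pipf}, $\pi\pf$ is 1-Lipschitz in $\Fw$-norm, hence $\pi\pf P_j \to \pi\pf A$ in $\FF^G_{k_1,k_2}(L)$. Since $\pi\pf P_j \in \PP_{k_1,k_2}^G(L)$, the definition of $\Mw$ applied to $\pi\pf A$ gives
\begin{equation*}
\Mw(\pi\pf A) \le \liminf_j \M(\pi\pf P_j) \le \liminf_j \M(P_j) \le \Mw(A) + \eps.
\end{equation*}
Letting $\eps \downarrow 0$ concludes the proof.

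There is no real obstacle here; the only slightly delicate point is the polyhedral inequality, and it reduces to the elementary fact that an orthogonal projection on a product of affine subspaces acts componentwise on tensor polyhedral chains and is 1-Lipschitz on each factor. Everything else is formal: the result is a standard consequence of the continuity of $\pi\pf$ in $\Fw$-norm (already available from Proposition~\ref{prop_pipf}) combined with the relaxation definition of $\Mw$ in~\eqref{def_Mw}.
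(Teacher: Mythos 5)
Your proof is correct and follows essentially the same approach as the paper, which records the polyhedral inequality $\M(\pi\pf Q)\le\M(Q)$ just before Proposition~\ref{prop_pipf} and then observes that the general case follows from the $\Fw$-continuity of $\pi\pf$ and the definition of $\Mw$. You simply spell out the two steps in more detail than the paper's one-line proof.
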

\begin{proof}
The result holds true for $A\in\PP_{k_1,k_2}^G(\X_\b)$. The general case follows from the continuity of $\pi\pf$ in $(\FF^G_{k_1,k_2}(\X_\b),\Fw)$ and the definition of $\Mw$.
\end{proof}

Now that we have defined the mass, we can express $\Fw(A)$ with a formula similar to~\eqref{WwP}.
\begin{proposition}[{Counterpart of~\cite[Theorem~3.1]{Fleming66}}]
\label{prop_Wwalternative}
Let $A\in\FF^G_{k_1,k_2}(\X_\b)$, there holds 
\be\label{alternativeWw}
\Fw(A)=\inf \sum_{i_1,i_2\in\{0,1\}} \Mw(B^{i_1,i_2}),
\ee
 where the infimum runs over the decompositions
\be\label{Adecomp}
 A=B^{0,0}+\pt_1B^{1,0}+\pt_2B^{0,1}+\pt_1\pt_2B^{1,1},
 \ee
with $B^{i_1,i_2}\in\FF^G_{k_1+i_1,k_2+i_2}(\X_\b)$ for $i_1,i_2\in\{0,1\}$.
\end{proposition}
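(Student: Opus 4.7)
The plan is to prove the two inequalities separately, in the manner of Fleming's classical argument for the corresponding statement about flat chains.

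Let $F'(A)$ denote the infimum on the right-hand side of~\eqref{alternativeWw}. The inequality $\Fw(A)\le F'(A)$ is the easier direction. Given a decomposition of $A$ as in~\eqref{Adecomp} with finite-mass factors $B^{i_1,i_2}$, I would use the definition~\eqref{def_Mw} of $\Mw$ to pick, for each $(i_1,i_2)$, a sequence of polyhedral chains $P_j^{i_1,i_2}\in\PP^G_{k_1+i_1,k_2+i_2}(\X_\b)$ such that $P_j^{i_1,i_2}\to B^{i_1,i_2}$ in $\Fw$-norm and $\M(P_j^{i_1,i_2})\to\Mw(B^{i_1,i_2})$. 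Setting $P_j:=P_j^{0,0}+\pt_1P_j^{1,0}+\pt_2P_j^{0,1}+\pt_1\pt_2P_j^{1,1}$, continuity of $\pt_1$ and $\pt_2$ on $(\FF^G_{*,*}(\X_\b),\Fw)$ yields $P_j\to A$ in $\Fw$-norm, so $\Fw(P_j)\to \Fw(A)$. The definition~\eqref{WwP} of $\Fw$ on polyhedral chains gives $\Fw(P_j)\le\sum_{i_1,i_2}\M(P_j^{i_1,i_2})$, and passing to the limit and optimizing over admissible decompositions yields $\Fw(A)\le F'(A)$.

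For the converse $F'(A)\le\Fw(A)$, I would use a telescoping argument. Since $\PP_{k_1,k_2}^G(\X_\b)$ is $\Fw$-dense in $\FF^G_{k_1,k_2}(\X_\b)$, for any $\eps>0$ I can extract a subsequence of polyhedral approximations $P_j\to A$ such that setting $P_0:=0$ and $R_j:=P_j-P_{j-1}\in\PP_{k_1,k_2}^G(\X_\b)$ we have $A=\sum_{j\ge1}R_j$ in $\Fw$-norm and $\sum_{j\ge1}\Fw(R_j)\le\Fw(A)+\eps$. For each $j$, the definition~\eqref{WwP} allows me to choose a polyhedral decomposition $R_j=Q_j^{0,0}+\pt_1Q_j^{1,0}+\pt_2Q_j^{0,1}+\pt_1\pt_2Q_j^{1,1}$ with $\sum_{i_1,i_2}\M(Q_j^{i_1,i_2})\le\Fw(R_j)+\eps\, 2^{-j}$. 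Summing over $j$ gives
\[
\sum_{j\ge1}\sum_{i_1,i_2\in\{0,1\}}\M(Q_j^{i_1,i_2})\le\Fw(A)+2\eps<\oo.
\]

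Then I would set, for each $(i_1,i_2)$, $B^{i_1,i_2}:=\sum_{j\ge1}Q_j^{i_1,i_2}$. The partial sums form a Cauchy sequence in $(\MM^G_{k_1+i_1,k_2+i_2}(\X_\b),\Mw)$, which is complete as stated right before the proposition, so $B^{i_1,i_2}$ is a well-defined finite-mass tensor chain with $\Mw(B^{i_1,i_2})\le\sum_{j\ge1}\M(Q_j^{i_1,i_2})$. Continuity of $\pt_1$, $\pt_2$ and the inequality $\Fw\le\Mw$ then give
\[
A=\sum_{j\ge1}R_j=B^{0,0}+\pt_1 B^{1,0}+\pt_2 B^{0,1}+\pt_1\pt_2 B^{1,1},
\]
and summing the mass bounds yields $\sum_{i_1,i_2}\Mw(B^{i_1,i_2})\le\Fw(A)+2\eps$. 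Since $\eps$ is arbitrary, $F'(A)\le\Fw(A)$.

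The main obstacle is the convergence of the series defining $B^{i_1,i_2}$: one must know that $(\MM^G_{k_1',k_2'}(\X_\b),\Mw)$ is a complete normed group, which the paper has just asserted (and which ultimately rests on the lower semicontinuity statement of Lemma~\ref{lem2.1}(ii)). Once completeness is available, the argument is routine; the only slightly delicate point is to verify that convergence in $\Mw$ of each summand is preserved through the partial boundary operators so that the reassembled decomposition actually sums to $A$, but this is immediate from the $1$-Lipschitz continuity of $\pt_1,\pt_2:(\FF^G_{*,*}(\X_\b),\Fw)\to(\FF^G_{*,*}(\X_\b),\Fw)$ together with $\Fw\le\Mw$.
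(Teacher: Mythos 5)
Your proof is correct and follows essentially the same route as the paper's: both directions rely on density of polyhedral tensor chains, the polyhedral formula~\eqref{WwP}, a telescoping series argument, and completeness of $(\MM^G_{*,*}(\X_\b),\Mw)$. The only cosmetic difference is bookkeeping — you anchor the telescope at $P_0=0$ and build a single decomposition for each $\eps>0$, whereas the paper anchors at $P_j$ for every $j$ (using separate decompositions of $P_j$ and of the increments $P_{j+1}-P_j$) and then lets $j\to\oo$; both implementations of the same idea are valid.
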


\begin{proof}Let $A\in\FF^G_{k_1,k_2}(\X_\b)$ and let us denote by $\wt\F(A)$ the right-hand side of~\eqref{alternativeWw}. 

Let us consider a decomposition of $A$ of the form~\eqref{Adecomp}. For $i_1,i_2\in\{0,1\}$, there exist sequences of tensor polyhedral chains $Q_j^{i_1,i_2}$ with $Q_j^{i_1,i_2}\to B^{i_1,i_2}$ in $\Fw$-norm and $\Mw(Q_j^{i_1,i_2})\to \Mw(B^{i_1,i_2})$. The sequence 
\[
P_j:=Q_j^{0,0}+\pt_1Q_j^{1,0}+\pt_2Q_j^{0,1}+\pt_1\pt_2Q_j^{1,1},
\]
converges to $A$ in $\Fw$-norm. Hence 
\[
\Fw(A)\le\liminf\Fw(P_j)\le\lim_j\sum_{i_1,i_2\in\{0,1\}}\Mw(Q_j^{i_1,i_2})=\sum_{i_1,i_2\in\{0,1\}}\Mw(B^{i_1,i_2}).
\]
We deduce that $\Fw(A)\le\wt\F(A)$.\medskip

Let us prove the converse inequality. Let $P_j\in \PP_{k_1,k_2}^G(\X_\b)$ converging rapidly to $A$ (that is $\sum\Fw(P_j-A)<\oo$) and let $\eps_j>0$ converging to $0$.  For $j\ge1$, there exist decompositions
\begin{align*}
P_j&=Q_j^{0,0}+\pt_1Q_j^{1,0}+\pt_2Q_j^{0,1}+\pt_1\pt_2Q_j^{1,1},\\
P_{j+1}-P_j&=R_j^{0,0}+\pt_1R_j^{1,0}+\pt_2R_j^{0,1}+\pt_1\pt_2R_j^{1,1},
\end{align*}
with  
\begin{align}
\label{proof_prop_Wwalternative_1}
\sum_{i_1,i_2}\Mw(Q_j^{i_1,i_2})<\Fw(P_j)+\eps_j\quad&\text{for }j\ge 1,\\
\label{proof_prop_Wwalternative_2}
\sum_j\Mw(R_j^{i_1,i_2})<\oo\quad&\text{for }i_1,i_2\in\{0,1\}.
\end{align}
For $i_1,i_2\in\{0,1\}$, we set $
B^{i_1,i_2}_j:=Q^{i_1,i_2}_j+\sum_{i\ge j} R^{i_1,i_2}_i$.
There holds for every $j\ge1$, 
\be\label{proof_prop_Wwalternative_3}
A=B_j^{0,0}+\pt_1B_j^{1,0}+\pt_2B_j^{0,1}+\pt_1\pt_2B_j^{1,1},
\ee
so that  we obtain,
\[
\wt\F(A)\stackrel{\eqref{proof_prop_Wwalternative_3}}{\le}\sum_{i_1,i_2}\Mw(B_j^{i_1,i_2})\le\sum_{i_1,i_2}\Mw(Q_j^{i_1,i_2})+\sum_{i\ge j}\Big(\sum_{i_1,i_2}\Mw(R_i^{i_1,i_2})\Big)\stackrel{\eqref{proof_prop_Wwalternative_1}\&\eqref{proof_prop_Wwalternative_2}}{\le}\Fw(P_j)+\eps'_j,
\]
with $\eps'_j\to0$. Passing to the limit, we get $\wt\F(A)\le\Fw(A)$.
\end{proof}

\subsection{The operators $\i$ and $\j$}\label{Ssij}
We introduce here operations that involve both tensor and classical chains.\medskip

Let us first extend the morphism $\i$. Recall that $\i:\PP_{k_1,k_2}^G(\X_\b)\to \PP_{k_1}\lt(\X_{\b^1},\PP^G_{k_2}(\X_{\b^2})\rt)$  is a group isomorphism which is moreover an isometry by the identity of Proposition~\ref{prop_Ww=W}(ii). By density of $\PP^G_{k_2}(\X_{\b^2})$ in $\FF^G_{k_2}(\X_{\b^2})$ and a diagonal extraction argument we deduce that $\i$ extends as the following group isomorphism which is again an isometry.
\[
\i : \lt( \FF^G_{k_1,k_2}(\X_\b),+,\Fw\rt)\longto \lt(\FF_{k_1}\lt(\X_{\b^1},\FF^G_{k_2}(\X_{\b^2})\rt),+,\F\rt).
\] 
Besides, by continuity, we still have the identity $\i \pt_1A= \pt \i A$ for $A\in\FF^G_{k_1,k_2}(\X_\b)$. Moreover 
\be\label{eq:iM}
\M(\i A)\le\Mw(A).
\ee

We now come back to the operator $\j$ of~\eqref{jP}. Let $P\in\TP^G_k(\X_\b)$ and let us consider a decomposition  $P=Q+\pt R$ with $Q\in\TP^G_k(\X_\b)$, $R\in\TP^G_{k+1}(\X_\b)$. By identification, we have for $(k'_1,k'_2)\in D_k$, 
\[
P_{k'_1,k'_2}=\j_{k'_1,k'_2}P=\j_{k'_1,k'_2}Q+\j_{k'_1,k'_2}(\pt R)=Q_{k'_1,k'_2}+\pt_1R_{k'_1+1,k'_2}+\pt_2R_{k'_1,k'_2+1}. 
\]
We deduce $\Fw(P_{k'_1,k'_2})\le\M(Q_{k'_1,k'_2})+\M(R_{k'_1+1,k'_2} )+\M(R_{k'_1,k'_2+1})$ (remark that $Q_{k'_1,k'_2}$, $R_{k'_1+1,k'_2}$ and $R_{k'_1,k'_2+1}$ are tensor polyhedral chains). Summing over $(k'_1,k'_2)\in D_k$ and optimizing, with respect to the decompositions of $P$, we get
\[
\sum_{(k'_1,k'_2)\in D_k}\Fw(P_{k'_1,k'_2})\le 2\Ft(P).
\]
This inequality and  the density of $\TP^G_k(\X_\b)$ in $\FF^G_k(\X_\b)$ allows to extend $\j$ on the latter. We obtain a continuous group morphism
\[
\begin{array}{rcl}
\j : \FF^G_k(\X_\b)&\longto& \lt(\FF^G_{k'_1,k'_2}(\X_\b)\rt)_{(k'_1,k'_2)\in D_k},\\
A&\longmapsto& \j_{k'_1,k'_2}A = A_{k'_1,k'_2}.
\end{array}
\]
Moreover, for $A\in\FF^G_k(\X_\b)$, we have the estimate
\be\label{estimjA}
\sum_{(k'_1,k'_2)\in D_k}\Fw(\j_{k'_1,k'_2}A)\le 2\Ft(A).
\ee
In particular given $A\in\FF^G_k(\X_\b)$ a sequence of polyhedral chains $P_j\in\PP^G_k(\X_\b)$ such that $P_j\to A$, we have  $\j_{k'_1,k'_2}P_j\to \j_{k'_1,k'_2}A$ for every $(k'_1,k'_2)\in D_k$. We deduce from~\eqref{eq:MP},~\eqref{MtA<=CMA} and the lower semicontinuity of $\Mw$ in $(\FF^G_{k_1',k_2'}(\X_\beta),\Fw)$ that, 
\be\label{estimjA_M}
\sum_{(k_1',k_2')\in D_k}\Mw(\j_{k'_1,k'_2}A)\le\Mt(A)\le C(k,n_1,n_2)\Mw(A).
\ee

For later use, we state the following relations.
\begin{proposition}\label{prop_pt_and_j}
For every $A\in\FF^G_k(\R^n)$ and every $(k'_1,k'_2)\in D_{k-1}$,
\[
\j_{k'_1,k'_2} (\pt A)= \pt_1 \j_{k'_1+1,k'_2}A+\pt_2 \j_{k'_1,k'_2+1}A.
\]
\end{proposition}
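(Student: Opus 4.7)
The plan is to first verify the identity on tensor polyhedral chains and then extend by density. Let $P \in \TP^G_k(\R^n)$ and write its (unique) bidegree decomposition $P = \sum_{(k''_1,k''_2) \in D_k} P_{k''_1,k''_2}$ with $P_{k''_1,k''_2} = \j_{k''_1,k''_2} P \in \PP^G_{k''_1,k''_2}(\R^n)$. Since the partial boundary operators satisfy $\pt = \pt_1 + \pt_2$ on each bigraded summand (directly from the Leibniz rule used to define $\pt_1, \pt_2$), and since $\pt_1 P_{k''_1,k''_2}$ lives in $\PP^G_{k''_1 - 1, k''_2}$ while $\pt_2 P_{k''_1,k''_2}$ lives in $\PP^G_{k''_1, k''_2 - 1}$, the $(k'_1,k'_2)$-bidegree component of $\pt P$ receives exactly two contributions, namely $\pt_1 P_{k'_1+1,k'_2}$ and $\pt_2 P_{k'_1,k'_2+1}$. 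This gives
\[
\j_{k'_1,k'_2}(\pt P) \;=\; \pt_1 \j_{k'_1+1,k'_2} P \;+\; \pt_2 \j_{k'_1,k'_2+1} P
\]
for every $P \in \TP^G_k(\R^n)$, which establishes the polyhedral case.

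To pass to $A \in \FF^G_k(\R^n)$, invoke the density of $\TP^G_k(\R^n)$ in $\FF^G_k(\R^n)$ (a consequence of the deformation theorem, as already recorded just before~\eqref{eq:MP}) and pick $P_j \in \TP^G_k(\R^n)$ with $\F(P_j - A) \to 0$. The estimate~\eqref{estimjA}, combined with the equivalence of $\Ft$ and $\F$, yields
\[
\Fw\bigl(\j_{k''_1,k''_2} P_j - \j_{k''_1,k''_2} A\bigr) \;\le\; 2\Ft(P_j - A) \;\le\; C\,\F(P_j - A) \;\longto\; 0
\]
for every $(k''_1,k''_2) \in D_k$, so $\j_{k''_1,k''_2} P_j \to \j_{k''_1,k''_2} A$ in $\Fw$-norm. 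Since $\pt$ is $1$-Lipschitz on $(\FF^G_*(\R^n),\F)$ and each $\pt_l$ is $1$-Lipschitz on $(\FF^G_{*,*}(\R^n),\Fw)$, and since $\j_{k'_1,k'_2}$ is continuous from $(\FF^G_{k-1}(\R^n),\F)$ to $(\FF^G_{k'_1,k'_2}(\R^n),\Fw)$ (same estimate applied in degree $k-1$), every term in the identity proved above for $P_j$ converges in $\Fw$-norm to the corresponding term for $A$.

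The whole argument is really bookkeeping: the only content is the matching of bidegrees, everything else is a mechanical density/continuity passage. The one point that deserves a moment of care is the continuity of $\j$ in the $\F$-to-$\Fw$ topology, but this is exactly the content of~\eqref{estimjA}, so no new analytical ingredient is needed. I therefore expect no real obstacle beyond writing the identity out carefully on the polyhedral side and checking the sign convention in the definition of $\pt_2$, which is already tuned so that $\pt = \pt_1 + \pt_2$ agrees with the total boundary on each bigraded piece.
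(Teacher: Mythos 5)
Your proof is correct and takes essentially the same approach as the paper: the paper's terse ``holds by identification for polyhedral chains'' is precisely the bidegree bookkeeping you spell out, and the density-plus-continuity passage via~\eqref{estimjA} and the $1$-Lipschitz continuity of $\pt$, $\pt_1$, $\pt_2$ matches the paper's argument.
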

\begin{proof}
The identity holds true by identification for polyhedral $(k_1,k_2)$-chains. Recalling that by the deformation theorem of White~\cite{White1999-1}, $\PP^G_{k_1,k_2}(\R^n)$ is dense in $\FF^G_k(\R^n)$, the general case follows by continuity of $\j$ and of the (partial) boundary operators.
\end{proof}

\subsection{Tensor products of chains as tensor chains}
\label{Ss_tpofc}
Before introducing restrictions and slices of tensor chains, we pause to discuss the structure of the groups of tensor chains as (completion of) tensor products.

We have seen in Section~\ref{SGfc} that given $P_1\in\PP^\Z_{k_1}(\X_{\b^1})$ and $P_2\in\PP^G_{k_1}(\X_{\b^2})$, we can form a chain $P:=P^1\t P^2\in\PP^G_k(\X_\b)$. More precisely, $P$ is of the form~\eqref{k1k2polychain} and we have $P\in\PP^G_{k_1,k_2}(\X_\b)\sub\FF^G_{k_1,k_2}(\X_\b)$. Recall that when we consider $P$ as an element of $\FF^G_{k_1,k_2}(\X_\b)$ we write $P=P^1\we P^2$. We have obviously
\[
\i P=[P^2]P^1\ \in\PP_{k_1}\lt(\X_{\b^1},\FF^G_{k_2}(\X_{\b^2})\rt),
\]
where here $P^2$ is a coefficient. It follows that
\[
\Fw(P^1\we P^2)=\F(\i P)\le\F(P^1)\F(P^2).
\] 
We also have
\[
\Mw(P^1\we P^2)\le\M(P^1)\M(P^2).
\]
In general, these inequalities are not identities. Indeed, if $G=\Z/2\Z=\ds\lt\{0,\ov1\rt\}$ and $p^1$, $p^2$ are non zero polyhedral cells in $\X_{\b^1}$ and $\X_{\b^2}$ respectively, taking $P^1=2p^1$ and $P^2=\ov1p^2$, the chains $P^1$ and $P^2$ do not vanish but 
\be\label{2.1=0}
P^1\we P^2=(2p^1)\we(\ov1p^2)=p^1\we(2\cd\ov1p^2)=p^1\we(0p^2)=0.
\ee
Anyway, we can extend the tensor product by density as a continuous  $\Z$-bilinear mapping,
\be\label{wedge}
\begin{array}{rcl}
\FF^\Z_{k_1}(\X_{\b^1})\t\FF^G_{k_2}(\X_{\b^2})&\longto&\FF^G_{k_1,k_2}(\X_\b),\smallskip\\
(A^1,A^2)\qquad&\longmapsto&A^1\we A^2.
\end{array}
\ee
Moreover, there hold
\[%\be
\label{ptA1A2}
\pt_1(A^1\we A^2)=(\pt A^1)\we A^2,\qquad\qquad\pt_2(A^1\we A^2)=(-1)^{k_1}A^1\we(\pt A^2),
\]%\ee
and 
\be
\label{estimA1A2}
\Mw(A^1\we A^2)\le\M(A^1)\M(A^2),\qquad\qquad\Fw(A^1\we A^2)\le\F(A^1)\F(A^2).
\ee
Taking finite sums, 
\[
\sum A^1_i\we A^2_i\ \in\FF^G_{k_1,k_2}(\X_\b),
\] 
we obtain the tensor product of the groups $\FF^\Z_{k_1}(\X_{\b^1})$ and $\FF^G_{k_2}(\X_{\b^2})$ that we denote $\FF^\Z_{k_1}(\X_{\b^1})\ot\FF^G_{k_2}(\X_{\b^2})$ and~\eqref{wedge} extends as  
\be\label{wedge2}
\begin{array}{rcl}
\phi:\FF^\Z_{k_1}(\X_{\b^1})\ot\FF^G_{k_2}(\X_{\b^2})&\longto&\FF^G_{k_1,k_2}(\X_\b),\smallskip\\
\sum A^1_i\otimes A^2_i\qquad&\longmapsto& \sum A^1_i\we A^2_i.
\end{array}
\ee
By convention we use the index $i$ for finite sums and the index $j$ for (at most) countable sums.
We observe that by density of tensor polyhedral chains, the image of $\phi$ is dense in $(\FF^G_{k_1,k_2}(\X_\b),\Fw)$. Let us first state more precisely this density result and give another expression for $\Fw(A)$. Apart from~\eqref{Adecomp_2} the rest of this subsection is not used in the proof of the main results and can be safely skipped.

\begin{proposition}\label{prop_proj_oo}
Let $A\in\FF^G_{k_1,k_2}(\X_\b)$, there exists (infinite) sequences $A_j^1\in\FF^\Z_{k_1}(\X_{\b^1})$, $A_j^2\in\FF^G_{k_2}(\X_{\b^2})$ such that, 
\be\label{Adecomp_2}
 A=\sum A_j^1\we A_j^2,
 \ee
with a normal convergence of the series in $\Fw$ norm. Moreover,
\be\label{alternativeWw_2}
\Fw(A)=\inf \sum\F(A_j^1)\F(A_j^2),
\ee
where the infimum runs over the decompositions~\eqref{Adecomp_2}.
\end{proposition}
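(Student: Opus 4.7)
The plan is to prove the two directions of the identity in \eqref{alternativeWw_2}; the existence of a decomposition with normal $\Fw$-convergence will follow automatically from the construction. The easy direction is $\Fw(A)\le\inf\sum\F(A^1_j)\F(A^2_j)$: for any decomposition $A=\sum_j A^1_j\we A^2_j$ with $\sum\F(A^1_j)\F(A^2_j)<\infty$, the estimate~\eqref{estimA1A2} shows $\sum\Fw(A^1_j\we A^2_j)<\infty$, so the series converges normally in $\Fw$ and by $\Fw$-subadditivity $\Fw(A)\le\sum\Fw(A^1_j\we A^2_j)\le\sum\F(A^1_j)\F(A^2_j)$.

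For the reverse inequality, fix $\eps>0$. The plan is to first reduce to the polyhedral case and then use the isometry $\i$. By density of $\PP^G_{k_1,k_2}(\X_\b)$ in $\FF^G_{k_1,k_2}(\X_\b)$, choose polyhedral tensor chains $P_\ell\in\PP^G_{k_1,k_2}(\X_\b)$ with $\Fw(A-P_\ell)<2^{-\ell-1}\eps$, and write
\[
A=P_1+\sum_{\ell\ge1}(P_{\ell+1}-P_\ell),
\]
where each summand is a polyhedral tensor chain and $\Fw(P_1)\le\Fw(A)+\eps/4$, $\Fw(P_{\ell+1}-P_\ell)\le 3\cdot 2^{-\ell-1}\eps$ for $\ell\ge1$. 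It therefore suffices to prove that for every polyhedral tensor chain $Q\in\PP^G_{k_1,k_2}(\X_\b)$ and every $\delta>0$, there is a finite decomposition $Q=\sum_i A^1_i\we A^2_i$ with $\sum_i\F(A^1_i)\F(A^2_i)\le\Fw(Q)+\delta$; applying this to $P_1$ with $\delta=\eps/4$ and to each $P_{\ell+1}-P_\ell$ with $\delta_\ell=2^{-\ell-1}\eps$ and concatenating the resulting finite sums yields the desired decomposition of $A$ with total projective cost $<\Fw(A)+2\eps$.

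For the polyhedral reduction, use Proposition~\ref{prop_Ww=W}(ii), which gives $\Fw(Q)=\F(\i Q)$ where $\i Q\in\PP_{k_1}\lt(\X_{\b^1},H\rt)$ for $H:=\FF^G_{k_2}(\X_{\b^2})$. By the very definition of $\F$ on polyhedral chains (valid over any complete Abelian normed group of coefficients, in particular $H$), there exist polyhedral $H$-chains $\tilde B\in\PP_{k_1}(\X_{\b^1},H)$ and $\tilde C\in\PP_{k_1+1}(\X_{\b^1},H)$ with $\i Q=\tilde B+\pt\tilde C$ and $\M(\tilde B)+\M(\tilde C)<\F(\i Q)+\delta/2$. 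Picking representations $\tilde B=\sum_i h^B_i q^B_i$ and $\tilde C=\sum_j h^C_j q^C_j$ that nearly realize the masses (i.e.\ $\sum_i \F(h^B_i)\M(q^B_i)<\M(\tilde B)+\delta/4$ and similarly for $\tilde C$), we pull back through the isometry $\i$, using that $\i$ intertwines $\pt$ with $\pt_1$, to obtain
\[
Q=\i^{-1}(\tilde B)+\pt_1\i^{-1}(\tilde C)=\sum_i q^B_i\we h^B_i+\sum_j(\pt q^C_j)\we h^C_j,
\]
which is a finite wedge decomposition since $q^B_i,\pt q^C_j\in\PP^\Z_{k_1}(\X_{\b^1})$ and $h^B_i,h^C_j\in\FF^G_{k_2}(\X_{\b^2})$. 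Its projective cost is bounded using $\F\le\M$ on polyhedral chains by $\sum_i\M(q^B_i)\F(h^B_i)+\sum_j\M(q^C_j)\F(h^C_j)<\M(\tilde B)+\M(\tilde C)+\delta/2<\Fw(Q)+\delta$, completing the polyhedral step.

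The main obstacle (and what makes this the right way to proceed rather than approximating $A$ directly by finite-mass tensor chains) is that one cannot generally approximate a finite-mass tensor chain by polyhedral tensor chains in $\Mw$-norm: polyhedral approximations $P_\ell\to B$ give $\M(P_\ell)\to\Mw(B)$ only in the weak sense of \eqref{def_Mw}, and differences $P_{\ell+1}-P_\ell$ have $\Fw$-norm tending to zero but their $\Mw$-norm is not controlled. The present route sidesteps this entirely: by performing the $\F$-level Whitney decomposition of $\i Q$ at the polyhedral level for each piece $P_{\ell+1}-P_\ell$ separately, one never needs to expand a finite-mass tensor chain as a controlled sum of wedges, and the polyhedral definition of $\F$ is exactly what makes the projective-cost bound an equality in the limit.
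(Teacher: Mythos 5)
Your proof is correct, and it takes a genuinely different route from the paper's in the key technical step, while sharing the same overall strategy (pass to a rapidly $\Fw$-convergent polyhedral sequence, decompose each polyhedral piece into wedges with controlled projective cost, concatenate the pieces). The difference is in how a single polyhedral tensor chain $Q$ is decomposed. The paper applies the definition~\eqref{WwP} of $\Fw$ directly, producing four polyhedral tensor chains $Q^{i_1,i_2}$ whose cell representations $\sum_i p_i^1\we (g_ip_i^2)$ are pushed through $\pt_1$, $\pt_2$; this gives a wedge decomposition in which \emph{both} factors $A_j^1$, $A_j^2$ are polyhedral. You instead invoke Proposition~\ref{prop_Ww=W}(ii) to write $\Fw(Q)=\F(\i Q)$ with $\i Q$ a polyhedral $H$-chain in $\X_{\b^1}$ for $H=\FF^G_{k_2}(\X_{\b^2})$, then use the definition of the flat norm for polyhedral $H$-chains to get a two-term decomposition $\i Q=\tilde B+\pt\tilde C$, and pull back through $\i$ (using that $\i$ intertwines $\pt$ with $\pt_1$ and that $\F(\pt q)\le\F(q)\le\M(q)$ controls the $\pt q_j^C$ factors). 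This yields wedges with $A_j^1$ polyhedral but $A_j^2\in H$ arbitrary flat. Your route is leaner (two kinds of terms rather than four, and only $\pt_1$ appears so no sign bookkeeping) and exploits the isometry $\i$ as the structural tool — which sits naturally beside the projective-tensor-product viewpoint of Proposition~\ref{prop_tensor}. The paper's route is symmetric in the two factors and produces an explicitly bi-polyhedral decomposition, which can be convenient. Both are valid, since the statement only asks for $A_j^l\in\FF_{k_l}(\cdot)$. Your closing remark about the failure of $\Mw$-approximation by polyhedral tensor chains is accurate, though not actually a pitfall the paper's own argument risks, since it too works piecewise on $P_j$ and $P_{j+1}-P_j$; the numerical constants in your $\eps$-bookkeeping are slightly off ($2.5\eps$ rather than $2\eps$ as written) but this is immaterial.
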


\begin{proof}
The proof is reminiscent of the proof of Proposition \ref{prop_Wwalternative} and we use similar notation. Let $A\in\FF^G_{k_1,k_2}(\X_\b)$. We denote by $\wt\F(A)$ the right-hand side of \eqref{alternativeWw_2}. For the moment we do not know whether $A$ admits a decomposition of the form~\eqref{Adecomp_2}, if not we set $\wt\F(A):=+\oo$.\medskip

\noindent
Let us consider a decomposition of $A$ of the form~\eqref{Adecomp_2}. By the triangle inequality and~\eqref{estimA1A2}, we have
\[
\Fw(A)\le\sum\Fw(A_j^1\we A_j^2)\le\sum\F(A_j^1)\F(A_j^2), 
\]
and taking the infimum with respect to all the decompositions, we obtain,
\be\label{FwA<tFA}
\Fw(A)\le\wt\F(A).
\ee
Let us establish the converse inequality. Let  $P_j\in \PP_{k_1,k_2}^G(\X_\b)$ converging rapidly to $A$ and let $\eps_j>0$ converging to $0$.  From the proof of Proposition \ref{prop_Wwalternative}, there exist sequences of polyhedral tensor chains $Q_j^{i_1,i_2}$, $R_j^{i_1,i_2}$ for $i_1,i_2\in\{0,1\}$ such that,
\begin{align*}
P_j&=Q_j^{0,0}+\pt_1Q_j^{1,0}+\pt_2Q_j^{0,1}+\pt_1\pt_2Q_j^{1,1},\\
P_{j+1}-P_j&=R_j^{0,0}+\pt_1R_j^{1,0}+\pt_2R_j^{0,1}+\pt_1\pt_2R_j^{1,1},
\end{align*}
with  
\begin{align*}
\sum_{i_1,i_2}\M(Q_j^{i_1,i_2})\le\Fw(P_j)+\eps_j\quad&\text{for }j\ge1,\\
\label{proof_prop_Wwalternative_2}
\sum_j\M(R_j^{i_1,i_2})<\oo\quad&\text{for }i_1,i_2\in\{0,1\}.
\end{align*}
By construction, each tensor chain $Q=Q_j^{0,0}$ or $Q=R_j^{0,0}$ writes as a finite sum $\sum_i p^1_i\we (g_i p_i^2)$ with $g_i\in G$ and  $p^\ell_i$ oriented $k_\ell$-cell of $\X_{\beta^\ell}$ such that
\be\label{proof_proj_oo_0}
\M(Q)=\sum_i\M(p^1_i)\M(g_i p^2_i).
\ee
Denoting, $A_j:=Q_j^{0,0}+\sum_{r\ge j}R_r^{0,0}$, we see that
\[
A_j=\sum_s A_{j,s}^{1}\we  A_{j,s}^{2},
\] 
for some sequence of polyhedral chains $A_{j,s}^1\in\PP^\Z_{k_1}(\X_{\b^1})$, $A_{j,s}^2\in\PP^G_{k_2}(\X_{\b^2})$, with
\be\label{proof_proj_oo_1}
\sum_s\F(A_{j,s}^1)\F(A_{j,s}^2)\le\sum_s\M(A_{j,s}^1)\M(A_{j,s}^2)\le\M(Q_j^{0,0})+\sum_{r\ge j}\M(R_r^{0,0}).
\ee
Next, each chain $Q=Q_j^{1,0}$ or $Q=R_j^{1,0}$ also writes as a finite sum $\sum_i p^1_i\we (g_i p_i^2)$ with $g_i\in G$ and  $p^\ell_i$ oriented $(k_\ell+1)-$cell in $\X_{\b^\ell}$ such that~\eqref{proof_proj_oo_0} holds. Writing
\[
\pt_1Q=\sum_i(\pt p^1_i)\we (g_i p_i^2),
\] 
and defining $B_j:=\pt_1Q_j^{1,0}+\sum_{r\ge j}\pt_1R_r^{1,0}$, we see that $B_j$ writes as
\[
B_j=\sum_s(\pt B_{j,s}^{1})\we B_{j,s}^{2},
\]
for some sequence of polyhedral chains $B_{j,s}^1\in\PP^\Z_{k_1+1}(\X_{\b^1})$, $B_{j,s}^2\in\PP^G_{k_2}(\X_{\b^2})$ and we have the estimate,
\be\label{proof_proj_oo_2}
\sum_s\F(\pt B_{j,s}^1)\F(B_{j,s}^2)\le\sum_s\M(B_{j,s}^1)\M(B_{j,s}^2)\le\M(Q_j^{1,0})+\sum_{r\ge j}\M(R_r^{1,0}).
\ee
Similarly, we define $C_j:=\pt_2Q_j^{0,1}+\sum_{r\ge j}\pt_2R_r^{0,1}$ and $D_j:={\partial_1}\pt_2Q_j^{1,1}+\sum_{r\ge j}\pt_1\pt_2R_r^{1,1}$ which write as
\[
C_j=\sum_sC_{j,s}^{1}\we (\pt C_{j,s}^{2}),\qquad\qquad D_j=\sum_s(\pt D_{j,s}^{1})\we (\pt D_{j,s}^{2}),
\]
with the estimates,
\begin{align}
\label{proof_proj_oo_3}
\sum_s\F(C_{j,s}^1)\F(\pt C_{j,s}^2)&\le\M(Q_j^{0,1})+\sum_{r\ge j}\M(R_r^{0,1}),\\
\label{proof_proj_oo_4}
\sum_s\F(\pt D_{j,s}^1)\F(\pt D_{j,s}^2)&\le\M(Q_j^{1,1})+\sum_{r\ge j}\M(R_r^{1,1}).
\end{align}
By construction, there holds, for every $j\ge1$,
\begin{align*}
A&=A_j+B_j+C_j+D_j\\
&=\sum_sA_{j,s}^{1}\we A_{j,s}^{2}+\sum_s(\pt B_{j,s}^{1})\we B_{j,s}^{2}+(-1)^{k_1}\sum_sC_{j,s}^{1}\we (\pt C_{j,s}^{2})+(-1)^{k_1}\sum_s(\pt D_{j,s}^{1})\we (\pt D_{j,s}^{2}),
\end{align*}
which is a series of the form~\eqref{Adecomp_2}. Collecting~\eqref{proof_proj_oo_1}--\eqref{proof_proj_oo_4}, we obtain for $j\ge1$,
\[
\wt\F(A)\le\sum_{i_1,i_2\in\{0,1\}}\Big(\M(Q_j^{i_1,i_2})+\sum_{r\ge j}\M(R_r^{i_1,i_2})\Big)\le\Fw(A)+\eps'_j,
\]
with $\eps'_j\to0$ as $j\up\oo$. We conclude that $\wt\F(A)\le\Fw(A)$ and with~\eqref{FwA<tFA} we get~\eqref{alternativeWw_2}.
\end{proof}

In the context of Banach spaces, given two (say real) Banach spaces $(X,\|\cd\|_X)$ and $(Y,\|\cd\|_Y)$, the natural norms on their algebraic tensor product $X\ot Y$ are the so called cross norms which satisfy $\|x\ot y\|=\|x\|_X\|y\|_Y$ for every $x\in X$, $y\in Y$. These norms have been classified by Grothendieck in~\cite{Gro} (see~\cite[Section~3]{Pisier} or~\cite{Ryan} for an account in english). Among these norms the projective norm is defined for 
\be\label{rep_z}
z=\sum_i x_i\ot y_i\ \in X\ot Y,
\ee
by
\[
\pi(z):=\inf\sum_i\|x_i\|_X\|y_i\|_Y,
\]
where we take the infimum over all representations of $z$ as finite sums of the form~\eqref{rep_z}. With this definition, $\pi$ is obviously a seminorm on $X\ot Y$ and it turns out that it is also a norm. Moreover, by the triangle inequality, we have $\|\cd\|\le\pi$ for any cross norm $\|\cd\|$ so the projective norm is the largest natural norm and the space $X\wh\ot_\pi Y,$ obtained by taking the completion of $(X\ot Y,\pi)$ is the smallest natural completion of $X\ot Y$.

Let us extend these notions to the tensor product $\GG:=\GG_1\ot\GG_2$ of the two complete normed Abelian groups 
\[
\GG_1:=(\FF_{k_1}^\Z(\X_{\beta^1}),+,\F),\qquad\qquad\GG_2:=(\FF_{k_1}^G(\X_{\beta^2}),+,\F).
\]
More precisely, $\GG$ is the group of formal finite sums $\sum (A_i^1,A_i^2)$ of pairs $(A_i^1,A_i^2)\in\GG_1\t\GG_2$ quotiented by the relations 
\[
(A^1_a+A^1_b,A^2)=(A^1_a,A^2)+(A^1_b,A^2),\qquad\qquad(A^1,A^2_a+A^2_b)=(A^1,A^2_a)+(A^1,A^2_b),
\]
for every $A^l,A^l_a,A^l_b\in \GG_l$, $l\in\{1,2\}$. The coset of $\sum (A_i^1,A_i^2)$ is denoted $\sum A_i^1\ot A_i^2$.\\
Next, we say that a seminorm $\|\cd\|$ on $\GG$ is a cross seminorm if 
 \be\label{cross_norm}
 \|A^1\ot A^2\|\le\F(A^1)\F(A^2)\qquad\text{for every }A^1\in \GG_1,\, A^2\in\GG_2.
 \ee
By the second estimate of~\eqref{estimA1A2} the mapping defined by $\|\sum A_i^1\ot A_i^2\|_{\Fw}:=\Fw(\sum A_i^1\we A_i^2)$ defines a cross seminorm  on $\GG$ (notice that in view of~\eqref{2.1=0} it would be too optimistic to ask for equality in~\eqref{cross_norm}).\\ 
Next, we define the projective seminorm of $A\in\GG$ as 
  \be\label{proj_norm}
  \pi(A):=\inf \sum_i \F(A_i^1)\F(A_i^2),
 \ee
 where the infimum runs over the representation of $A$ as \emph{finite} sums $\sum_i A_i^1\ot A_i^2$. Again, by the triangle inequality, we have $\|\cd\|\le \pi$ for any cross seminorm $\|\cd\|$ on $\GG$. \\
 By Proposition~\ref{prop_proj_oo} the morphism of~\eqref{wedge2} satisfies
 \[%\be\label{estimFwphiAbypiA}
 \Fw(\phi(A))\le\pi(A)\qquad\text{for every }A\in\GG.
 \]%\ee
In particular  $\phi\equiv0$ on the subgroup
\[
\GG_0:=\lt\{A\in\GG:\pi(A)=0\rt\}.
\] 
Denoting $\HH:=\GG/\GG_0$ and setting $\pi(A+\GG_0):=\pi(A)$ for $A\in\GG$, we have that $(\HH,+,\pi)$ is an Abelian normed group and taking the quotient in~\eqref{wedge2}, we obtain a 1-Lipschitz group morphism 
\[
\begin{array}{rcl}
\wt\phi:(\HH,+,\pi)&\longto&(\FF^G_{k_1,k_2}(\X_\b),+,\Fw)\\
A+\GG_0&\longmapsto& \phi(A).
\end{array}
\]
Eventually, denoting $\GG_1\wh\ot_\pi \GG_2$ the completion of $(\GG/\GG_0,\pi)$ and extending $\wt\phi$ by continuity we obtain a 1-Lipschitz group morphism 
\[
\wh \phi:(\GG_1\wh\ot_\pi \GG_2,+,\pi)\longto(\FF^G_{k_1,k_2}(\X_\b),+,\Fw).
\]
In the introduction, we claimed that $\FF^G_{k_1,k_2}(\X_\b)$ could be viewed as the complement of $\GG_1\ot \GG_2$ with respect to some appropriate norm. This is justified by the following result up to the operation of taking the quotient of $\GG=\GG_1\ot \GG_2$ with respect to $\GG_0$. We do not know whether in general $\GG_0=\{0\}$ or equivalently whether $\pi$ is a norm on $\GG$. 
\begin{proposition}\label{prop_tensor}With the above notations, the mapping $\wh\phi$ is an isometric group isomorphism from $(\GG_1\wh\ot_\pi \GG_2,+,\pi)$ onto $(\FF^G_{k_1,k_2}(\X_\b),+,\Fw)$.

\end{proposition}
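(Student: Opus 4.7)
The plan is to prove Proposition~\ref{prop_tensor} in two main steps: surjectivity of $\wh\phi$, and isometry of $\wh\phi$. Together these yield the bijective isometric statement.

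For surjectivity, I would start from an arbitrary $A'\in\FF^G_{k_1,k_2}(\X_\b)$ and invoke Proposition~\ref{prop_proj_oo} to produce a representation $A'=\sum_j A_j^1\we A_j^2$ with $\sum_j \F(A_j^1)\F(A_j^2)<\oo$. The partial sums $S_N:=\sum_{j\le N}(A_j^1\ot A_j^2+\GG_0)\in\HH$ are $\pi$-Cauchy: for $M<N$ the triangle inequality gives $\pi(S_N-S_M)\le\sum_{j=M+1}^N\F(A_j^1)\F(A_j^2)\to 0$. They therefore converge to some $\hat A\in\GG_1\wh\ot_\pi\GG_2$, and continuity of $\wh\phi$ together with the explicit formula $\phi(S_N)=\sum_{j\le N}A_j^1\we A_j^2$ gives $\wh\phi(\hat A)=A'$.

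For isometry, the inequality $\Fw(\wh\phi(\hat A))\le\pi(\hat A)$ is immediate since $\wh\phi$ is 1-Lipschitz. The reverse direction is the substantive part. First, by a standard fast-Cauchy / telescoping argument in the completion, I would establish the equivalent characterisation
\[
\pi(\hat A)=\inf\lt\{\sum_j \F(A_j^1)\F(A_j^2):\hat A=\sum_j A_j^1\ot A_j^2\ \text{ absolutely }\pi\text{-convergent}\rt\},
\]
the $\le$ direction coming from the triangle inequality and the $\ge$ direction by picking a $\pi$-Cauchy approximating sequence $T_k\to\hat A$ with $\pi(T_k)\to\pi(\hat A)$, representing each increment $T_{k+1}-T_k\in\HH$ with a finite tensor sum whose $\sum\F\F$ is close to $\pi(T_{k+1}-T_k)$, and concatenating. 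Combined with Proposition~\ref{prop_proj_oo}, this gives: for any $\eps>0$, a representation $\wh\phi(\hat A)=A'=\sum_j B_j^1\we B_j^2$ with $\sum\F(B_j^1)\F(B_j^2)\le\Fw(A')+\eps$ lifts to an element $\hat B$ of the completion satisfying $\wh\phi(\hat B)=A'$ and $\pi(\hat B)\le\Fw(A')+\eps$.

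The main obstacle is then injectivity of $\wh\phi$: to conclude that $\pi(\hat A)\le\Fw(\wh\phi(\hat A))$ we must know that the preimages $\hat A$ and $\hat B$ coincide in $\GG_1\wh\ot_\pi\GG_2$, equivalently that $\wh\phi(\hat C)=0\ \Longrightarrow\ \pi(\hat C)=0$. Using the series characterisation of $\pi$, this reduces to the following reshuffling claim: if $\hat C=\sum_j C_j^1\ot C_j^2$ is absolutely $\pi$-convergent and $\sum_j C_j^1\we C_j^2=0$ in $\FF^G_{k_1,k_2}(\X_\b)$, then $\hat C$ admits alternative series representations with $\sum\F\F$ arbitrarily small. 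I would obtain this by truncating at level $N$, applying Proposition~\ref{prop_proj_oo} to the finite chain $\sum_{j\le N}C_j^1\we C_j^2$ (whose $\Fw$-mass equals $\Fw(-\sum_{j>N}C_j^1\we C_j^2)\le\sum_{j>N}\F(C_j^1)\F(C_j^2)\to 0$) to produce a corrector series $\hat R_N$ with $\pi(\hat R_N)\to 0$ and $\wh\phi(\hat R_N)=\sum_{j\le N}C_j^1\we C_j^2$, and then noting that the splice of $(C_j^1\ot C_j^2)_{j>N}$ with $-\hat R_N$ represents $\hat C$ with total $\sum\F\F\to 0$. Checking that this bookkeeping is compatible with the relations defining $\HH=\GG/\GG_0$, and that the $\wh\phi$-image of the resulting series is genuinely $\hat C$ in the completion (not merely zero), is the one delicate point of the proof.
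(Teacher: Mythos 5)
Your first two steps --- surjectivity via density of the image, the $1$-Lipschitz inequality $\Fw(\wh\phi(\wh A))\le\pi(\wh A)$, and the countable-series characterisation of the projective norm $\pi$ on the completion --- are exactly the paper's approach. That series characterisation, established by the same fast-Cauchy / telescoping argument you outline, is the key identity in the paper's proof (the analogue of Ryan's Proposition~2.8 and the counterpart of~\eqref{alternativeWw_2}).

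You are also right that something further is needed, and here both your argument and the paper's come up short. The paper reduces the isometry to the series formula for $\pi$ by a one-line ``equivalently'', which implicitly uses the very lifting step you worry about: together with Proposition~\ref{prop_proj_oo}, the series formula produces, for $A:=\wh\phi(\wh A)$ and any $\eps>0$, an element $\wh B\in\GG_1\wh\ot_\pi\GG_2$ with $\pi(\wh B)\le\Fw(A)+\eps$ and $\wh\phi(\wh B)=A$, but this only bounds $\pi(\wh A)$ if $\wh B=\wh A$, i.e.\ if $\wh\phi$ is injective. An isometry is automatically injective, so proving $\pi\le\Fw\circ\wh\phi$ and proving injectivity of $\wh\phi$ are one and the same problem; the series formula alone does not discharge it. Your ``reshuffling claim'' reproduces exactly this circularity: the corrector $\wh R_N$ has the same $\wh\phi$-image as the finite truncation $\wh C_{\le N}$, and substituting one for the other in a representation of $\wh C$ would again require injectivity of $\wh\phi$ on elements with equal images (there is also a sign slip --- the splice of the tail with $-\wh R_N$ equals $\wh C-\wh C_{\le N}-\wh R_N$, not $\wh C$). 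The genuinely missing ingredient is the estimate $\pi(C)\le\Fw(\phi(C))$ for \emph{finite} tensors $C\in\GG$, which Proposition~\ref{prop_proj_oo} alone cannot supply, because it only yields countable --- not finite, nor $\pi$-liftable --- decompositions of $\phi(C)$.
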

\begin{remark}
With this proposition, we see that the norm we choosed on $\FF_{k_1}^\Z(\X_{\b^1})\ot\FF_{k_2}^G(\X_{\b^2})$ is the largest among ``natural'' norms and that our definition of $\FF_{k_1,k_2}^G(\X_\b)$ corresponds to the smallest complete normed group that extends the definition of the product $(A^1,A^2)\mapsto A^1\t A^2$ to any pair of chains $(A^1,A^2)\in\FF_{k_1}^\Z(\X_{\b^1})\t\FF_{k_2}^G(\X_{\b^2})$ as a  Lipschitz continuous group morphism.
\end{remark}

\begin{proof}[Proof of Proposition~\ref{prop_tensor}] Since the image of $\phi$ is dense in $(\FF_{k_1,k_2}^G(\R^n),\Fw)$ it is enough to establish that $\Fw(A)=\Fw(\pi(A))$ for every $\wh A\in\GG_1\wh\ot_\pi\GG_2$. Equivalently, in view of Proposition~\ref{prop_proj_oo} it is enough to establish that for every $\wh A\in\GG_1\wh\ot_\pi\GG_2$ there holds 
\begin{equation}\label{prf_prop_tensor_1}
 \pi(\wh A)=\inf \lt\{ \sum_{j\ge1}\F(A_j^1)\F(A_j^2) \, : \, \wh A= \sum_{j\ge1}[A_j^1\we A_j^2+\GG_0]\rt\},
\end{equation}
where the convergence of the series is in $\pi$ norm. In fact this identity is the analogue in the context of tensor products of Abelian normed groups of~\cite[Proposition~2.8]{Ryan} which deals with tensor products of Banach spaces.

\noindent
Let us prove~\eqref{prf_prop_tensor_1}.
Let $\wh A\in\GG_1\wh\ot_\pi\GG_2$. We denote by $\wt\pi$ the right-hand side of~\eqref{prf_prop_tensor_1}. First given a decomposition $\wh A= \sum_{j\ge1}[A_j^1\we A_j^2+\GG_0]$ with $\sum \F(A_j^1)\F(A_j^2)<\oo$ we introduce the partial sums
\[
\wh A_j:=\sum_{i=1}^j[A_j^1\we A_j^2+\GG_0].
\]
Since $\pi(A_j^1\we A_j^2)\le \F(A_j^1)\F(A_j^2)$, $\wh A_j$ is a Cauchy sequence in $(\GG_1\wh\ot_\pi\GG_2,\pi)$. Its limit is $\wh A$ and we have $\pi(\wh A)\le \sum \F(A_j^1)\F(A_j^2)$. Taking the infimum with respect to the decompositions, we get
\be\label{prf_prop_tensor_2}
\pi(\wh A)\le\wt\pi(\wh A).
\ee
Let us establish the converse inequality. Let $\wh A_j=A_j+\GG_0\in\GG/\GG_0$ with $\pi(\wh A_j-\wh A)\to 0$. Let $\eps>0$, up to extraction   we may assume that $\pi(\wh A_1-\wh A)<\eps/3$ and denoting $B_j=A_{j+1}-A_j$,
\[
 \sum \pi(B_j)<\eps/3.
\]
By definition of $\pi$, there exist finite sequences $A_r^l\in \GG_l$ and (for $j\ge 1$) $B_{j,s}^l\in\GG_l$ such that
\[
 A_1=\sum_r A^1_r\ot A^2_r,\qquad\qquad B_j=\sum_s B^1_{j,s}\ot B^2_{j,s}
\]
and
\be\label{prf_prop_tensor_3}
 \sum_r \F(A^1_r)\F(A^2_r) +\sum_j \sum_s \F(B^1_{j,s})\F(B^2_{j,s})<\pi(A_1) +\sum \pi(B_j) +\eps/3<\pi(\wh A)+\eps.
\ee
Writing
\[
 \wh A=\sum_r [A^1_r\ot A^2_r+\GG_0]+ \sum_j\sum_s [B^1_{j,s}\ot B^2_{j,s}+\GG_0]=:\sum_t [C_t^1\ot C_t^2+\GG_0],
\]
we deduce  
\[
\wt \pi(\wh A)\le\sum_t\Fw(C_t^1)\Fw(C_t^2)\st{\eqref{prf_prop_tensor_3}}< \pi(\wh A) +\eps.
\]
Since $\eps$ is arbitrary we get $\wt\pi(\wh A)\le\pi(\wh A)$ and together with~\eqref{prf_prop_tensor_2}, this establishes~\eqref{prf_prop_tensor_1} and the proposition.\end{proof}

Let us consider eventually the particular case when $G=V$ is a Banach space. In this case we obviously have
\[
\FF^\Z_{k_1}(\X_{\b^1})\ot\FF^V_{k_2}(\X_{\b^2})  \ =\ \FF^\R_{k_1}(\X_{\b^1})\ot\FF^V_{k_2}(\X_{\b^2}),
\]
that is: the ``group tensor product" is the same as the ``vector tensor product''. As a direct consequence of Proposition~\ref{prop_tensor} and~\cite[Proposition 2.1]{Ryan} we have the following result.

\begin{proposition}~\\
$\Fw$  is the projective norm on the tensor product of Banach spaces $\FF^\R_{k_1}(\X_{\b^1})\ot\FF^V_{k_2}(\X_{\b^2})$. %Moreover $\Fw$ is the projective norm on $\FF^\R_{k_1}(\R^{n_1})\ot\FF^V_{k_2}(\R^{n_2})$.
\end{proposition}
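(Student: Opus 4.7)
The plan is to observe that this proposition is essentially a translation of Proposition~\ref{prop_tensor} into the language of tensor products of Banach spaces, so the argument is a routine combination of that result with the standard characterization of the projective tensor norm recalled in \cite[Proposition 2.1]{Ryan}.

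First step: Proposition~\ref{prop_tensor} already provides an isometric group isomorphism $\wh\phi$ between the completion $\GG_1\wh\ot_\pi\GG_2$ of $\FF^\Z_{k_1}(\X_{\b^1})\ot\FF^V_{k_2}(\X_{\b^2})$ under the projective seminorm $\pi$ defined in~\eqref{proj_norm} and the group $(\FF^V_{k_1,k_2}(\X_\b),\Fw)$. In other words, $\Fw$ already identifies, through $\wh\phi$, with $\pi$ on this tensor product; it remains to reinterpret each side in the Banach space framework.

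Second step: I would invoke the identification $\FF^\Z_{k_1}(\X_{\b^1})\ot\FF^V_{k_2}(\X_{\b^2})=\FF^\R_{k_1}(\X_{\b^1})\ot\FF^V_{k_2}(\X_{\b^2})$ stated just before the proposition. This holds because $\FF^V_{k_2}(\X_{\b^2})$ is a real vector space, so any real scalar acting on the first factor can be absorbed into the second factor, making the $\Z$-bilinear and $\R$-bilinear tensor products coincide. Consequently $\GG_1\wh\ot_\pi\GG_2$ is the same object as the completion of $\FF^\R_{k_1}(\X_{\b^1})\ot\FF^V_{k_2}(\X_{\b^2})$ under $\pi$.

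Third step: I would then apply \cite[Proposition 2.1]{Ryan}, which asserts that for two Banach spaces $X,Y$ the projective tensor norm on $X\ot Y$ is given exactly by the infimum formula $\pi(u)=\inf\sum\|x_i\|_X\|y_i\|_Y$ over all finite representations $u=\sum x_i\ot y_i$. Specialized to $X=\FF^\R_{k_1}(\X_{\b^1})$ and $Y=\FF^V_{k_2}(\X_{\b^2})$ with the flat norms, this is verbatim the definition of $\pi$ used in~\eqref{proj_norm}. Combining this with the first two steps yields that $\Fw$ coincides, under $\wh\phi$, with the Banach projective tensor norm on $\FF^\R_{k_1}(\X_{\b^1})\ot\FF^V_{k_2}(\X_{\b^2})$, which is the claim.

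No real obstacle is expected here: the delicate construction has been carried out in Proposition~\ref{prop_tensor}, and what remains is only to match the algebraic tensor product and the defining formula for the norm with their Banach space counterparts, both of which are immediate in the present setting.
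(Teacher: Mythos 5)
Your proposal is correct and follows essentially the same route as the paper, which simply declares the proposition a direct consequence of Proposition~\ref{prop_tensor} and~\cite[Proposition 2.1]{Ryan}; your three steps spell out what that ``direct consequence'' involves. Two small remarks, both inherited from the paper's own brevity: the scalar-absorption heuristic in step~2 really only identifies a dense sub-object of $\FF^\R_{k_1}(\X_{\b^1})\ot\FF^V_{k_2}(\X_{\b^2})$ with $\GG_1\ot\GG_2$ (the two algebraic tensor products are not literally equal, but they share the same completion for $\pi$, which is all the argument needs), and the essential content drawn from \cite[Proposition 2.1]{Ryan} is not the infimum formula (which is the definition) but the fact that $\pi$ is a genuine norm on the Banach tensor product, which is precisely how the paper then concludes $\GG_0=\{0\}$.
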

In particular, when $G$ is a Banach space, the second inequality of~\eqref{estimA1A2} is an identity.  We also have in this case $\GG_0=\{0\}$.

\subsection{Slicing of tensor chains}

The slicing operators $\Sl_\g$  have been introduced in Section~\ref{SGfc}.  Let us check that these operators have a good behavior on tensor polyhedral chains. Let $\g\sub\b$ and let $P\in \PP_{k_1,k_2}^G(\X_\b)$ with representation~\eqref{k1k2polychain}. From the definition of $\Sl_\g$ we have that for almost every $x=x^1+x^2\in\X_{\g}$ with $x^l\in\X_{\g^l}$,
\[
\Sl_\g^xP=\sum g_i (\Sl_{\g^1}^{x^1}p^1_i)\we(\Sl_{\g^2}^{x^2}p^2_i).
\]
We deduce from this formula that
\[
\pt_1\Sl_\g^xP=\Sl_\g^x\pt_1P,\qquad \pt_2\Sl_\g^xP=(-1)^{|\g^1|}\Sl_\g^x\pt_2P\qquad\text{for almost every }x\in\X_\g.
\]
Together with the bound
\[
\int_{\X_\g}\Mw(\Sl_\g^xP)\,dx\le\Mw(P),
\]
this leads to 
\[
\int_{\X_\g}\Fw(\Sl_\g^xP)\,dx\le \Fw(P).
\] 
Moreover, denoting $r_1=|\g^1|$, $r_2=|\g^2|$, there holds for $(k'_1,k'_2)\in D_k$ (recall the definition \eqref{jP} of $\j$), and $P\in \PP_k^G(\X_\b)$, 
\[
\j_{k'_1-r_1,k'_2-r_2} \Sl_\g^x P= \Sl_\g^x \j_{k'_1,k'_2}P.
\] 
Notice that if $P\in \PP_{k_1,k_2}^G(\X_\b)$, then $\j_{k'_1,k'_2}P=0$ if $(k_1',k_2')\neq (k_1,k_2)$. 
If $\g\sub\b^1$, we have the obvious identity
\[
\Sl_\g^x \i P = \i \Sl_\g^x P.
\]
Eventually, by construction $\Sl_\g^x P$ is a tensor chain in $\X_{\b\sm\g}$.%It follows that if $\b\sm\g\sub\ova$, that is $\g^1=\b^1$ (recall that $\b^1=\beta\cap \alpha$), then $\pt_1\Sl_\g^x P=\Sl_\g^x\pt_1P=0$.  Symmetrically, if $\g^2=\b^2$ then $\pt_2\Sl_\g^x P=\Sl_\g^x\pt_2P=0$.

We deduce the following result by continuity and density arguments.
\begin{proposition}\label{prop_Sltfc}
Let $\g\sub\b$, the slicing operators $\Sl_\g$ extend as continuous group morphisms,
\[
\FF^G_{k_1,k_2}(\X_\b)\longto L^1(\X_\g,\FF^G_{k_1-|\g^1|,k_2-|\g^2|}(\X_{\b\sm\g})),
\]
with the following properties, for $A\in \FF^G_{k_1,k_2}(\X_\b)$ and almost every $x\in\X_\g$.
\begin{enumerate}[(i)]
\item  $\pt_1\Sl_\g^xA=\Sl_\g^x\pt_1A$ and $\pt_2\Sl_\g^xA=(-1)^{|\g^1|}\Sl_\g^x\pt_2A$.
\item We have the estimates
\[
\int_{\X_\g}\Mw(\Sl_\g^xA)\,dx\le \Mw(A)\qquad\quad\text{and}\qquad\quad\int_{\X_\g}\Fw(\Sl_\g^xA)\,dx\le \Fw(A).
\] 
\item For $A\in \FF_k^G(\X_\b)$ and  $(k'_1,k'_2)\in D_k$, there holds $\j_{k'_1-|\g^1|,k'_2-|\g^2|} \Sl_\g^x A= \Sl_\g^x \j_{k'_1,k'_2}A$.
\item If $\g\sub\b^1$, there holds $\Sl_\g^x \i A = \i \Sl_\g^x A$.
%\item If $\g^1=\b^1$ then $\pt_1\Sl_\g A=0$ and if $\g^2=\b^2$ then $\pt_2\Sl_\g A=0$.
\end{enumerate}
\end{proposition}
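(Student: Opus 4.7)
\textbf{Proof plan for Proposition \ref{prop_Sltfc}.}
The plan is the standard "check on polyhedral chains, then extend by density" strategy, with the only nontrivial input being the $\Fw$-bound for slices. First I would verify that on a tensor polyhedral chain $P=\sum g_i p^1_i\we p^2_i$ the slicing formula
\[
\Sl_\g^x P=\sum g_i\,(\Sl_{\g^1}^{x^1}p^1_i)\we(\Sl_{\g^2}^{x^2}p^2_i)
\]
(with $x=x^1+x^2$, $x^l\in\X_{\g^l}$) indeed lands in $\PP^G_{k_1-|\g^1|,k_2-|\g^2|}(\X_{\b\sm\g})$ for a.e.\ $x$. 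Having this explicit expression, the identities
\[
\pt_1\Sl_\g^x P=\Sl_\g^x\pt_1 P,\qquad \pt_2\Sl_\g^x P=(-1)^{|\g^1|}\Sl_\g^x\pt_2 P,
\]
and the commutations with $\i$ (when $\g\sub\b^1$) and with the decomposition $\j$ (as in the text preceding the statement) are immediate by $\Z$-bilinearity. The mass estimate $\int_{\X_\g}\Mw(\Sl_\g^x P)\,d\h^{|\g|}(x)\le \Mw(P)$ comes directly from Proposition~\ref{prop_MWSl}(iii) applied to $P$ viewed as a classical polyhedral chain, combined with the identity $\Mw=\M$ on polyhedral chains (Lemma~\ref{lem2.1}(ii)) and the fact that each $\Sl_\g^x P$ is again a tensor polyhedral chain, hence also has $\Mw=\M$.

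The key step is to upgrade this to the $\Fw$-bound. Given any decomposition
\[
P=Q^{0,0}+\pt_1Q^{1,0}+\pt_2Q^{0,1}+\pt_1\pt_2Q^{1,1},\qquad Q^{i_1,i_2}\in\PP^G_{k_1+i_1,k_2+i_2}(\X_\b),
\]
applying $\Sl_\g^x$ and using the commutation with $\pt_1,\pt_2$ established above yields, for a.e.\ $x$, a decomposition of $\Sl_\g^x P$ of the same form (up to the signs $(-1)^{|\g^1|}$ which do not affect masses). Consequently
\[
\Fw(\Sl_\g^x P)\le \sum_{i_1,i_2\in\{0,1\}}\Mw(\Sl_\g^x Q^{i_1,i_2}),
\]
and integrating over $\X_\g$ and using the mass bound for each $Q^{i_1,i_2}$ gives $\int_{\X_\g}\Fw(\Sl_\g^x P)\,d\h^{|\g|}(x)\le\sum_{i_1,i_2}\Mw(Q^{i_1,i_2})$. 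Taking the infimum over decompositions and invoking Proposition~\ref{prop_Wwalternative} (the expression of $\Fw$ via finite-mass decompositions, which equals the polyhedral expression on polyhedral chains) produces
\[
\int_{\X_\g}\Fw(\Sl_\g^x P)\,d\h^{|\g|}(x)\le \Fw(P).
\]

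With this bound in hand, the extension is routine: the map $P\mapsto (x\mapsto\Sl_\g^x P)$ from $(\PP^G_{k_1,k_2}(\X_\b),\Fw)$ to $L^1(\X_\g,\FF^G_{k_1-|\g^1|,k_2-|\g^2|}(\X_{\b\sm\g}))$ is $1$-Lipschitz on a dense subgroup, so extends uniquely by continuity. Passing to $\Fw$-limits along a sequence $P_j\to A$ in $\FF^G_{k_1,k_2}(\X_\b)$, extracting a subsequence so that $\Sl_\g^{\cdot} P_j\to\Sl_\g^{\cdot}A$ in $L^1$ and almost everywhere in $\Fw$-norm, then transfers all the algebraic identities (commutation with $\pt_1,\pt_2,\i,\j_{k'_1,k'_2}$) and the mass/flat estimates from the polyhedral level to $A$. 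For (iii), the continuity of $\j$ established after \eqref{estimjA} is used to pass to the limit; for (iv), one uses that $\i$ is an isometric isomorphism (Proposition \ref{prop_Ww=W}(ii) and the paragraph opening Subsection \ref{Ssij}), so that slicing in $\X_{\b^1}$ commutes with $\i$ on polyhedral chains and the identity $\Sl_\g^x\i A=\i\Sl_\g^x A$ persists under $\Fw$-limits. The only real obstacle is the $\Fw$-bound in the second step, and that is handled exactly by exploiting the second-order decomposition \eqref{WwP} together with the commutations of $\Sl_\g$ with $\pt_1$ and $\pt_2$.
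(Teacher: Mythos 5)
Your proposal follows the same approach as the paper: verify the explicit slicing formula and the commutation identities with $\pt_1,\pt_2,\i,\j$ on tensor polyhedral chains by $\Z$-bilinearity, obtain the mass bound from Proposition~\ref{prop_MWSl}(iii), upgrade to the $\Fw$-bound by slicing the decomposition \eqref{WwP} term by term and using the established commutations, then extend everything by density and continuity. Your expansion of the $\Fw$-bound step (which the paper leaves implicit) is exactly what is intended, and the whole argument is correct.
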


Let us establish the counterpart of Theorem~\ref{coro_defWhite2} for tensor chains, namely that if the $(0,0)$-slices of a tensor chain vanish then it also vanishes.
\begin{proposition}
\label{coro_defWhite3}
If  $A\in \FF^G_{k_1,k_2}(\X_\b)$ is such that $\Sl_\g A=0$ for every $\g\sub\b$ with $(|\g^1|,|\g^2|)=(k_1,k_2)$, then $A=0$.
\end{proposition}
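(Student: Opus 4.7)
My plan is to imitate the proof of Theorem~\ref{coro_defWhite2} by applying it twice, once for each factor of the tensor decomposition. Since $\i:\FF^G_{k_1,k_2}(\X_\b)\to\FF_{k_1}(\X_{\b^1},H)$ --- with $H:=\FF^G_{k_2}(\X_{\b^2})$ endowed with the flat norm $\F$ --- is an isomorphism (Subsection~\ref{Ssij}), it suffices to show $\i A=0$. Applying Theorem~\ref{coro_defWhite2} to the $H$-valued flat chain $\i A$, this reduces to verifying $\Sl_{\g^1}^{x^1}(\i A)=0$ for a.e.\ $x^1\in\X_{\g^1}$ and every $\g^1\sub\b^1$ with $|\g^1|=k_1$. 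By Proposition~\ref{prop_Sltfc}(iv), $\Sl_{\g^1}^{x^1}(\i A)=\i(\Sl_{\g^1}^{x^1}A)$, and since $\i$ is injective the task becomes to show that $B:=\Sl_{\g^1}^{x^1}A\in\FF^G_{0,k_2}(\X_{\b\sm\g^1})$ vanishes for a.e.\ $x^1$.

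Next I would repeat the procedure on the other factor, exploiting that the whole construction of tensor flat chains ($\Fw$, $\Mw$, $\pt_1$, $\pt_2$, $\i$, $\Sl_\g$) treats the decomposition $\b=\b^1\cup\b^2$ symmetrically. Running Subsection~\ref{Ssij} with the roles of $\b^1$ and $\b^2$ swapped yields a companion group isomorphism
\[
\i':\FF^G_{0,k_2}(\X_{\b\sm\g^1})\longto\FF_{k_2}(\X_{\b^2},H'),\qquad H':=\FF^G_0(\X_{\b^1\sm\g^1}),
\]
which is an isometry between $\Fw$ and the flat norm on the right, and which satisfies the mirror of Proposition~\ref{prop_Sltfc}(iv), namely $\Sl_{\g^2}^{x^2}\circ\i'=\i'\circ\Sl_{\g^2}^{x^2}$ for $\g^2\sub\b^2$ (both identities are routine on polyhedral chains and extend by density). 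A second application of Theorem~\ref{coro_defWhite2}, this time to $\i' B$, reduces the problem to showing $\Sl_{\g^2}^{x^2}B=0$ for a.e.\ $x^2$, for every $\g^2\sub\b^2$ with $|\g^2|=k_2$.

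To close the argument I would invoke iterated slicing for tensor chains --- the analog of Proposition~\ref{prop_MWSl}(ii), clear on polyhedral chains and extended by continuity using the $L^1$ bound of Proposition~\ref{prop_Sltfc}(ii) --- to obtain
\[
\Sl_{\g^2}^{x^2}\Sl_{\g^1}^{x^1}A \;=\; \Sl_{\g^1\cup\g^2}^{x^1+x^2}A,
\]
whose right-hand side vanishes for a.e.\ $(x^1,x^2)$ by the hypothesis applied to $\g:=\g^1\cup\g^2$, which satisfies $(|\g^1|,|\g^2|)=(k_1,k_2)$.

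The hard part will be the construction of the symmetric companion $\i'$ together with its commutation with $\Sl_{\g^2}$: this is not spelled out in the paper but amounts to reproducing the arguments of Subsection~\ref{Ssij} verbatim with the two factors exchanged. Once that is granted, the remainder is just two applications of the classical slice theorem bridged by a Fubini-type iterated slicing identity.
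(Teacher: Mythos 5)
Your proposal is correct and follows essentially the same route as the paper: both arguments apply Theorem~\ref{coro_defWhite2} twice, once over $\X_{\b^1}$ and once over $\X_{\b^2}$, bridged by the commutation of $\i$ (and its $\b^1\leftrightarrow\b^2$ companion) with coordinate slicing and by the iterated-slicing identity. The paper merely runs the two applications in the mirror order --- it first establishes $\Sl_{\g^2}A=0$ for all $\g^2\sub\b^2$ via $\i\Sl_{\g^2}A$, then applies the companion operator $\ov\i$ to $A$ itself --- but this is the same argument up to swapping which factor is sliced first.
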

\begin{proof}
Let $A\in \FF^G_{k_1,k_2}(\X_\b)$ and let us assume that $\Sl_\g A=0$ for every $\g\sub\b$ such that $(|\g^1|,|\g^2|)=(k_1,k_2)$ . We fix $\g^2\in I^n_{k_2}$ with $\g^2\sub\b^2$ and denote $A_{\g^2}:=\i \Sl_{\g^2} A$.  Recalling Proposition~\ref{prop_Sltfc}(iv) (commutativity of $\i$ with slicing) and Proposition~\ref{prop_MWSl}(ii) (iterate slicing), we compute, for every $\g^1\in I^n_{k_1}$ with $\g^1\sub\b^1$,
\[
\Sl_{\g^1}A_{\g^2}=\Sl_{\g^1} \i \Sl_{\g^2} A=\i\Sl_{\g^1}\Sl_{\g^2} A=\i\Sl_\g A=0.
\]
 Since this holds true for every $\g^1\in I^n_{k_1}$, we deduce from Theorem~\ref{coro_defWhite2} that $A_{\g^2}=0$. By injectivity of $\i$ (since it is an isometry) we get
\be\label{proof_coro_defWhite3}
\Sl_{\g^2} A=0\qquad\text{for every }\g^2\in I^n_{k_2}\text{ such that }\g^2\sub\b^2.
\ee
Exchanging the roles of $\b^1$ and $\b^2$ and denoting $\ov\i:\FF^G_{k_1,k_2}(\X_\b)\to \FF_{k_2}(\X_{\b^2},\FF^G_{k_1}(\X_{\b^1}))$ the corresponding operator, we compute for  $\g^2\in I^n_{k_2}$ with $\g^2\sub\b^2$,
\[
\Sl_{\g^2}\ov \i A= \ov\i\Sl_{\g^2} A \st{\eqref{proof_coro_defWhite3}}=0.
\] 
Again, Theorem~\ref{coro_defWhite2} leads to $\ov \i A=0$, that is $A=0$.
\end{proof}

\subsection{Restriction of finite mass tensor chains}
\label{Ss_fmtfc}
To define  the restrictions of finite mass tensor chains on Borel subsets, we can proceed exactly as in~\cite{Fleming66} using Lemma~\ref{lem2.1}(i) in place of~\cite[Lemma~2.1]{Fleming66} and the $\Fw$-convergence in place of the $\F$-convergence. Given $A\in \MM^G_{k_1,k_2}(\X_\b)$, we obtain a finite positive Borel measure $\mu_A$ and a $\FF^G_{k_1,k_2}(\X_\b)$-valued measure $A\restr\cd\,$ satisfying the relation $\mu_A(S)=\Mw(A\restr S)$ for $S$ Borel subset of $\X_\b$. Besides $A\restr S$ is supported in $\ov S$ and for a sequence $A_j\in\MM^G_{k_1,k_2}(\X_\b)$ with $\sum\Mw(A_j)<\oo$, there holds 
\[
\lt(\sum A_j\rt)\restr S= \sum \lt(A_j\restr S\rt).
\]
Eventually, given a sequence $A_j\to A$ with $\Mw(A_j)\to\Mw(A)$ we have $A_j\restr S\to A\restr S$ for any Borel subset such that $\mu_A(\bndry S)=0$.\medskip

With the aim of  establishing the natural identities involving the restrictions of $A$, $\i A$ and $\j A$, we recall the main lines of the construction of $\mu_A$ and $A\restr\cd\,$.\\
Let $A\in\MM^G_{k_1,k_2}(\X_\b)$. We first define the measure $\mu_A$. Let $P_j$ converging to $A$ rapidly and such that $\Mw(P_j)\to\Mw(A)$.  The sequence $\mu_{P_j}$ is bounded, hence there exists a finite positive Borel measure $\mu_A$ such that up to extraction $\mu_{P_j}\st*\rightharpoonup\mu_A$.\\
Let $I$ be an interval of $\X_\b$. By Lemma~\ref{lem2.1}(i), for almost every $x\in \X_\b$, the series $\sum \Fw([P_{j+1}-P_j]\restr (x+I))$ is converging. We define for such $x$,
\[
A\restr (x+I):=P_1\restr (x+I)+\sum (P_{j+1}-P_j)\restr (x+I).
\]
The construction then proceeds as follows. We first show a counterpart of~\cite[Lemma~4.1]{Fleming66}, that is, for almost every $x\in\X_\b$, there hold
\[
\Mw(A\restr (x+I))=\mu_A (x+I)\quad\qquad\text{ and }\qquad\quad\Mw(A-A\restr  (x+I))=\mu_A(\X_\b\sm (x+I)).
\]
This allows us to define $A\restr J$ for $J$ finite union of disjoint non-exceptional intervals (with respect to the sequence $P_j$). This restriction is an additive set functions and we still have $\Mw(A\restr J)=\mu_A(J)$. Since the sets of this form span the Borel algebra, we can extend $A\restr\cd\,$ as a Borel measure with values in $\FF^G_{k_1,k_2}(\X_\b)$. Notice that arguing as in \cite{Fleming66}, both $\mu_A$ and $A\restr\cd\,$ do not depend on the choice of sequence $P_j$.\\
In the proof of Proposition~\ref{prop_restr} below we use the following facts: 
\begin{enumerate}[(1)]
\item 
The set function $A\restr\cd\,$ is determined by its values on any collection of sets $x+I$ where $I$ ranges over the intervals of $\X_\b$ and $x$ ranges over $\X_\b\sm E_I$ for some negligible set $E_I\sub\X_\b$. 
\item If $I\sub\X_\b$ is an interval and $Q_j\to A$ rapidly, then $Q_j\restr (x+I)\to A\restr(x+I)$ for almost every $x\in\X_\b$.
\end{enumerate}

\begin{proposition}\label{prop_restr} Let $A'\in\MM^G_{k_1,k_2}(\X_\b)$, $A\in\MM^G_k(\X_\b)$ and let $S,\tilde S\sub\X_\b$ be Borel sets. We have the following properties.
\begin{enumerate}[(i)]
\item For $\g\sub\b$ and almost every $x\in\X_\g$, $[{A'}\cap \X_{\b\sm\g}(x)]\restr S=[{A'}\restr S]\cap \X_{\b\sm\g}(x)$.\smallskip\\
With the $\Sl_\g$ operator, this translates into $\Sl^x_\g (A'\restr S)= (\Sl^x_\g A')\restr ([S-x]\cap\X_{\b\sm\g})$.
\item $(\j_{k'_1,k'_2} A)\restr S=\j_{k'_1,k'_2}(A\restr S)$ for any $(k'_1,k'_2)\in D_k$. 
\item If $S\sub\X_{\b^1}$ then $(\i A')\restr S=\i(A'\restr [S+\X_{\b^2}])$.
\item $(A'\restr S)\restr \tilde S=A'\restr(S\cap \tilde S)$.
\end{enumerate}
\end{proposition}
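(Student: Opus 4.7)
The plan is to prove each of the four identities first on tensor polyhedral chains restricted to intervals, where they follow by direct inspection of cells, and then to extend them to general finite mass chains and to general Borel sets $S$, $\tilde S$ by three complementary mechanisms: the density of $\PP^G_{k_1,k_2}(\X_\b)$ in $(\MM^G_{k_1,k_2}(\X_\b),\Mw)$ (with rapidly converging, mass-preserving sequences as in the construction of $A'\restr\cdot$ recalled before the statement), the continuity and commutation properties of $\Sl^x_\g$, $\i$ and $\j$ already collected in Propositions~\ref{prop_Ww=W}, \ref{prop_Sltfc} and in \eqref{eq:MP}--\eqref{estimjA_M}, and the fact that $S\mapsto A'\restr S$ is a $\FF^G_{k_1,k_2}$-valued Borel measure, hence determined by its values on the algebra generated by non-exceptional intervals (cf.\ Lemma~\ref{lem2.1}(i)).

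For~(iv), choose $P_j\in\PP^G_{k_1,k_2}(\X_\b)$ rapidly converging to $A'$ with $\Mw(P_j)\to\Mw(A')$. The identity is immediate on non-exceptional intervals for polyhedral chains; both $\tilde S\mapsto(A'\restr S)\restr\tilde S$ and $\tilde S\mapsto A'\restr(S\cap\tilde S)$ are $\FF^G_{k_1,k_2}$-valued Borel measures with total mass controlled by $\mu_{A'}$, and they agree on the algebra of finite unions of non-exceptional intervals, so they agree on all Borel sets. Item~(i) is handled analogously: writing $P_j=\sum g_i p^1_i\we p^2_i$ and an interval $I=I^1\times I^2\sub \X_\b$, one checks directly that slicing by $\X_{\b\sm\g}(x)$ commutes with restriction to $x+I$ cell by cell; applying Lemma~\ref{lem2.1}(i) to $P_j-A'$ and Proposition~\ref{prop_Sltfc}(ii) and extracting a subsequence yields the identity on intervals for almost every $x\in\X_\g$, which then extends in $\tilde S$ to all Borel sets via the measure uniqueness argument above. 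Item~(iii) follows the same template: when $S$ is an interval of $\X_{\b^1}$, restricting $P=\sum g_i p^1_i\we p^2_i$ to the cylinder $S+\X_{\b^2}$ amounts to replacing each $p^1_i$ by $p^1_i\restr S$, and the definition~\eqref{def:i} gives immediately $\i(P\restr(S+\X_{\b^2}))=(\i P)\restr S$; the extension relies on the fact that, by Proposition~\ref{prop_Ww=W}(ii) and the definition~\eqref{def_Mw} of $\Mw$, the map $\i:(\MM^G_{k_1,k_2}(\X_\b),\Mw)\to(\MM_{k_1}(\X_{\b^1},\FF^G_{k_2}(\X_{\b^2})),\M)$ is an isometric embedding, so mass-preserving approximations are transported to mass-preserving approximations on the other side.

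Item~(ii) is the delicate one. Using the deformation theorem of White, approximate $A\in\MM^G_k(\X_\b)$ by $P_j\in\TP^G_k(\X_\b)$ with $P_j\to A$ and $\M(P_j)\to\M(A)$. Identity~\eqref{eq:MP} gives $\sum_{(k'_1,k'_2)\in D_k}\M(\j_{k'_1,k'_2}P_j)=\M(P_j)$, while the continuity of $\j$ combined with the lower semicontinuity of $\Mw$ yields $\liminf\Mw(\j_{k'_1,k'_2}P_j)\ge\Mw(\j_{k'_1,k'_2}A)$ for each $(k'_1,k'_2)$; summing and comparing with $\M(P_j)\to\M(A)$ and with~\eqref{estimjA_M} forces equality, so in fact $\Mw(\j_{k'_1,k'_2}P_j)\to\Mw(\j_{k'_1,k'_2}A)$ for every $(k'_1,k'_2)\in D_k$. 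The identity $(\j_{k'_1,k'_2}P_j)\restr(x+I)=\j_{k'_1,k'_2}(P_j\restr(x+I))$ is tautological for polyhedral chains (both sides equal the $(k'_1,k'_2)$-part of $P_j\restr(x+I)$); passing to the limit along an almost-every-$x$ subsequence and invoking the measure-theoretic uniqueness extends the identity to all Borel $S$. The principal obstacle in the whole proposition is exactly this simultaneous convergence of the component masses, which is precisely what~\eqref{eq:MP} delivers when we work within $\TP^G_k(\X_\b)$ rather than $\PP^G_k(\X_\b)$.
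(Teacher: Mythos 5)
Your overall plan---verify each identity on tensor polyhedral chains and non-exceptional intervals, then pass to general chains and Borel sets using rapid approximation, continuity of $\Sl_\g$, $\i$, $\j$, and the uniqueness of $\FF^G_{k_1,k_2}$-valued Borel measures determined on intervals---is the same route the paper takes, and items~(i) and~(iv) follow it correctly. However, two of the auxiliary claims you invoke for items~(ii) and~(iii) are false, and as written they break those two arguments.

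For~(iii) you assert that $\i\colon(\MM^G_{k_1,k_2}(\X_\b),\Mw)\to(\MM_{k_1}(\X_{\b^1},\FF^G_{k_2}(\X_{\b^2})),\M)$ is an isometric embedding, citing Proposition~\ref{prop_Ww=W}(ii) and the definition of $\Mw$. But Proposition~\ref{prop_Ww=W}(ii) gives $\F(\i P)=\Fw(P)$, not the analogous identity for masses: the paper is explicit that $\M(\i P)\le\M(P)$ is strict in general (except when $k_2=|\b^2|$), and \eqref{eq:iM} records only the inequality $\M(\i A)\le\Mw(A)$. Concretely, with $n_1=n_2=1$, $k_1=1$, $k_2=0$ and $A'=g\,\lb(0,e_1)\rb\we\lb 0\rb-g\,\lb(0,e_1)\rb\we\lb e_2\rb$, one has $\Mw(A')=2|g|_G$ while $\M(\i A')\le|g|_G$, because the coefficient $g\lb 0\rb-g\lb e_2\rb=-\pt\big(g\lb(0,e_2)\rb\big)$ has $\F$-norm at most $|g|_G$. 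So a mass-preserving approximation of $A'$ is \emph{not} carried by $\i$ to a mass-preserving approximation of $\i A'$, and your stated mechanism for the extension to Borel $S$ fails.

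For~(ii) you ask for $P_j\in\TP^G_k(\X_\b)$ with $P_j\to A$ and $\M(P_j)\to\M(A)$. This is in general impossible: the infimum of $\liminf\M(P_j)$ over all $\TP$-approximations of $A$ is by definition $\Mt(A)$, and the discussion after~\eqref{MtA<=CMA} shows that the sharp constant in $\M\le\Mt\le C(k,n_1,n_2)\M$ equals $\sqrt{|D_k|}$, which exceeds $1$ whenever $\min(n_1,n_2)\ge1$ and $1\le k\le n-1$; already a diagonal unit segment in $\R^2$ with $n_1=n_2=1$ has $\Mt(A)=\sqrt 2\,\M(A)$. Hence the mass-convergence bootstrap you build on this premise, and the consequent claim $\Mw(\j_{k'_1,k'_2}P_j)\to\Mw(\j_{k'_1,k'_2}A)$, do not go through (indeed, even with $\M(P_j)\to\Mt(A)$, the only comparison you have on the other side is the inequality $\sum\Mw(\j_{k'_1,k'_2}A)\le\Mt(A)$ from~\eqref{estimjA_M}, not an equality that would force the sandwich to close).

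The good news is that neither claim is needed, which is precisely why the paper's proof is so short: the two recalled facts preceding the proposition drive the whole limit passage, and they require only \emph{rapid} convergence in the relevant flat norm of approximating chains whose restrictions to intervals are already defined. For~(iii), rapid $\Fw$-convergence $P'_j\to A'$ gives rapid $\F$-convergence $\i P'_j\to\i A'$ because $\F\circ\i=\Fw$ exactly, and fact~(2) then applies on both sides with no reference to masses. For~(ii), take $P_j\in\TP^G_k(\X_\b)$ with $P_j\to A$ rapidly in $\F$; by~\eqref{estimjA} (equivalently, the equivalence of $\F$ and $\Ft$), the tensor-polyhedral chains $\j_{k'_1,k'_2}P_j$ converge rapidly in $\Fw$ to $\j_{k'_1,k'_2}A$, which is exactly what fact~(2) requires, while the identity $(\j_{k'_1,k'_2}P_j)\restr(x+I)=\j_{k'_1,k'_2}(P_j\restr(x+I))$ is tautological since both sides are the $(k'_1,k'_2)$-component of $P_j\restr(x+I)\in\TP^G_k(\X_\b)$. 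Deleting the two spurious mass-preservation steps repairs the proof and brings it in line with the intended argument.
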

\begin{proof}
These identities hold true when $A'$ is a tensor polyhedral chain (resp. $A$ is a polyhedral chain) and when $S$ and $\tilde S$ are intervals of $\X_\b$ or of $\X_{\b^1}$ for point~(iii). Taking a sequence of tensor polyhedral chains $P'_j$ converging rapidly to $A'$ (resp. a sequence of  polyhedral chains $P_j$ converging rapidly to $A$ for~(ii)) and using the continuity properties of the operators $\Sl_\g$, $\j$ and $\i$, the result follows from the two facts stated above.
\end{proof}

\section{Identification of normal tensor chains with normal chains}
\label{SId}
In this section, we first identify finite mass $(0,0)$-chains with finite mass $0$-chains and deduce that $\j$ is one-to-one. Next, we introduce the subgroups of $k$-chains $\NN_{k,(k_1,k_2)}(\R^n)$ formed by the elements of $\NN^G_k(\R^n)$ which are in the kernel of $\j_{k'_1,k'_2}$ for every $(k'_1,k'_2)\in D_k\sm\{(k_1,k_2)\}$. For our purpose, the interest of these groups lies in the fact that a normal \emph{rectifiable} $k$-chain belongs to $\NN^G_{k,(k_1,k_2)}(\R^n)$ if and only if it is $(k_1,k_2)$-split (see Corollary~\ref{coro_splitting_iff_jk'1k'2=0}). We also introduce the subgroup of normal $(k_1,k_2)$-chains $\NN^G_{k_1,k_2}(\R^n)$ and show that $\j_{k_1,k_2}$ maps (injectively) $\NN^G_{k,(k_1,k_2)}(\R^n)$ into $\NN^G_{k_1,k_2}(\R^n)$. We also prove that for $k_1=0$ this mapping is onto (Theorem~\ref{thm_Ups}).\footnote{In~\cite{GM_comp} we establish that in fact $\j_{k_1,k_2}:\NN^G_{k,(k_1,k_2)}(\R^n)\to\NN^G_{k_1,k_2}(\R^n)$ is onto for every $(k_1,k_2)\in D_k$.} These results allow us to identify normal rectifiable $k$-chains which are $(k_1,k_2)$-split with some normal rectifiable $(k_1,k_2)$-chains and deduce the main result, Theorem~\ref{thm_main} (reformulated as Theorem~\ref{thm_main_bis}) from its counterpart for normal rectifiable $(k_1,k_2)$-chains (Theorem~\ref{thm_main2}).

From now on we use capital letters for chains $A,B,C,Q$,\, \dots\ and capital letters with a prime for tensor chains: $A',B',C',Q'$,\,\dots. We also use the notation $A''$ for the chain $\i A'$.

%We start with the study of finite mass $(0,0)$-chains.

\subsection{Finite mass $(0,0)$-chains as measures and injectivity of $\j$}
\label{Ss_00}

Recall that by Theorem~\ref{thm_psi}, there exists an isometric isomorphism $\psi:\MM^G_0(\X_\b)\to\MM(\X_\b,G)$. We generalize the construction of~\cite[Section~2]{White1999-2} to obtain a similar result for $(0,0)$-chains  in Lemma~\ref{lem_00}. For this we need to introduce further material.

As a preliminary we recall the following from~\cite{White1999-2}.
\begin{theorem}[{\cite[Theorem~2.1]{White1999-2}}]\label{thm_chi}
There exists a group morphism $\chi:\FF^G_0(\X_\b)\to G$ such that,
\begin{enumerate}[(i)]
\item $\chi(\sum g_i\lb x_i \rb)=\sum g_i$, for any finite sequences $g_i\in G$ and $x_i\in\X_\b$.
%\item $\chi(\pt A)=0$ for every $A\in\FF^G_1(\X_\b)$.
\item $|\chi (A)|_G\le \F(A)$ for every $A\in\FF^G_0(\X_\b)$.
%\item $ \F(A)\le |\chi (A)|_G+\M(A)\diam(\supp A)$ for every $A\in\FF^G_0(\X_\b)$.
\end{enumerate}
\end{theorem}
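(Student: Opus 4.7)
The plan is to construct $\chi$ first on the dense subgroup $\PP^G_0(\X_\b)$ by the explicit formula in~(i), establish the estimate~(ii) there, and then extend by continuity to the completion $\FF^G_0(\X_\b)$.

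First I would define, on representatives,
\[
\chi\Big(\sum_i g_i \lb x_i\rb\Big) := \sum_i g_i \in G.
\]
For this to descend to a map on $\PP^G_0(\X_\b)$, I would check compatibility with the three defining relations of polyhedral chains: relations~(ii) and~(iii) are immediate from the group law on $G$, and relation~(i) (opposite orientation) gives $\chi(g\lb x\rb)+\chi(-g\lb x\rb)=g-g=0$ since a $0$-cell has orientation $\pm 1$. Hence $\chi:\PP^G_0(\X_\b)\to G$ is a well-defined group morphism.

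Next, I would show the two estimates
\[
|\chi(Q)|_G \le \M(Q) \quad\text{for }Q\in\PP^G_0(\X_\b),\qquad \chi(\pt R) = 0 \quad\text{for }R\in\PP^G_1(\X_\b).
\]
The first is immediate: taking a representation $Q=\sum g_i\lb x_i\rb$ with distinct $x_i$'s realizes $\M(Q)=\sum|g_i|_G$, so $|\chi(Q)|_G=|\sum g_i|_G\le \sum|g_i|_G=\M(Q)$. The second follows by writing $R=\sum g_i\lb(x_i,y_i)\rb$, so $\pt R=\sum g_i(\lb y_i\rb-\lb x_i\rb)$ and $\chi(\pt R)=\sum(g_i-g_i)=0$. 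Combining the two, for any polyhedral decomposition $P=Q+\pt R$ with $Q\in\PP^G_0(\X_\b)$, $R\in\PP^G_1(\X_\b)$, we get $|\chi(P)|_G=|\chi(Q)|_G\le \M(Q)\le\M(Q)+\M(R)$, and taking the infimum over such decompositions yields $|\chi(P)|_G\le\F(P)$ on $\PP^G_0(\X_\b)$.

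Finally, since $\PP^G_0(\X_\b)$ is dense in $(\FF^G_0(\X_\b),\F)$ (by definition of the completion), the $1$-Lipschitz group morphism $\chi$ extends uniquely to a $1$-Lipschitz group morphism $\chi:\FF^G_0(\X_\b)\to G$; the bound~(ii) passes to the limit and (i) is a special case of the original definition. The argument is essentially routine: the one point that requires care is the well-definedness in Step~1, because $\PP^G_0(\X_\b)$ is defined as a quotient and one must verify $\chi$ respects every defining identification; the estimate and the continuous extension are then immediate.
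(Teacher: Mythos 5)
Your proof is correct and follows exactly the construction the paper sketches (define $\chi$ by formula~(i) on polyhedral $0$-chains, note $\chi(\pt B)=0$ for polyhedral $1$-chains $B$ to get $|\chi|_G\le\F$, then extend by continuity), which the paper itself attributes to White. The only cosmetic imprecision is that relation~(iii) for $k=0$ is vacuous (a $0$-cell cannot be split along a common $(-1)$-face), so it is trivially respected rather than ``immediate from the group law,'' but this does not affect the argument.
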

The operator $\chi$ is defined on polyhedral $0$-chains by the relation~(i) and then extended  by continuity thanks to~(ii). Notice also that given a polyhedral $1$-chain $B$, there holds $\chi(\pt B)=0$.\\
We need a complement of this result which is not stated in~\cite{White1999-2} but is obvious from the construction there. Let $(G^a,+,|\cd|_{G^a})$ and $(G^b,+,|\cd|_{G^b})$ be two complete Abelian normed groups and let $\phi:G^a\to G^b$ be a Lipschitz continuous group morphism and $\Phi:G^a\to L^1(\Om,G^b)$ as in Proposition~\ref{prop_phi_Phi}.
% Here, we only use this in the case $k=0$.
\begin{proposition}\label{prop_cplmt_chi}
With the notation of Proposition~\ref{prop_phi_Phi}, we have for every $A\in\FF_0(\X_\b,G^a)$, 
\[
\chi(\phi\et A)=\phi(\chi (A)),\qquad\qquad\chi\lt[(\Phi\et A)(\om)\rt]=\lt(\Phi\lt[\chi (A)\rt]\rt)(\om)\quad\text{for almost every }\om\in\Om.
\]
\end{proposition}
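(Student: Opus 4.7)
The plan is to proceed by a density/continuity argument, relying on the fact that both identities hold trivially on polyhedral $0$-chains (where $\chi$ is literally defined as a sum of coefficients) and that all operators involved are Lipschitz continuous on the relevant spaces.

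First, for the identity $\chi(\phi\et A)=\phi(\chi(A))$, I would verify it on $P=\sum g_i^a\lb x_i\rb\in\PP_0(\X_\b,G^a)$: by the definition~\eqref{star} of $\phi\et$ and Theorem~\ref{thm_chi}(i), $\chi(\phi\et P)=\chi(\sum\phi(g_i^a)\lb x_i\rb)=\sum\phi(g_i^a)=\phi(\sum g_i^a)=\phi(\chi P)$, using only that $\phi$ is a group morphism. To conclude, I would invoke that $\phi\et$ and $\chi$ are Lipschitz continuous (see~\eqref{phidiese} and Theorem~\ref{thm_chi}(ii)) and that $\phi$ is Lipschitz continuous, so both sides are continuous functions of $A\in(\FF_0(\X_\b,G^a),\F)$ that agree on the dense subgroup $\PP_0(\X_\b,G^a)$; hence they coincide on the whole space.

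For the second identity $\chi[(\Phi\et A)(\om)]=(\Phi[\chi A])(\om)$, the same pointwise computation on a polyhedral chain $P=\sum g_i^a\lb x_i\rb$ gives, for almost every $\om\in\Om$,
\[
\chi[(\Phi\et P)(\om)]=\sum[\Phi(g_i^a)](\om)=\Bigl[\Phi\Bigl(\sum g_i^a\Bigr)\Bigr](\om)=(\Phi[\chi P])(\om),
\]
again because $\Phi$ is a group morphism. The extension to general $A\in\FF_0(\X_\b,G^a)$ requires a bit more care because the identity is pointwise almost everywhere in $\Om$. Given a sequence $P_j\in\PP_0(\X_\b,G^a)$ with $P_j\to A$ in $\F$, the Lipschitz continuity of $\Phi\et:\FF_0(\X_\b,G^a)\to L^1(\Om,\FF_0(\X_\b,G^b))$ yields $\Phi\et P_j\to \Phi\et A$ in $L^1$, while Lipschitz continuity of $\Phi$ on $G^a$ combined with $\chi(P_j)\to\chi(A)$ in $G^a$ yields $\Phi[\chi P_j]\to\Phi[\chi A]$ in $L^1(\Om,G^b)$. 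Passing to a common subsequence along which both convergences are pointwise a.e.\ in $\om$, and using the Lipschitz continuity of $\chi$ to pass through it in $(\Phi\et P_j)(\om)\to(\Phi\et A)(\om)$, one obtains the identity $\chi[(\Phi\et A)(\om)]=(\Phi[\chi A])(\om)$ for a.e.\ $\om$.

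The only real subtlety, and thus the only place that requires any bookkeeping, is the handling of the $L^1(\Om,\cdot)$-valued extension: one has to extract a subsequence along which the convergence in $L^1$ becomes pointwise a.e.\ convergence of chains in $\F$-norm, then apply the Lipschitz continuity of $\chi$ fiberwise. Everything else is a direct consequence of the bilinear/Lipschitz estimates already recorded.
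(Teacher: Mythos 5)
Your proof is correct and follows essentially the same strategy as the paper's: verify both identities on polyhedral $0$-chains using the definitions and the group-morphism property, then extend by density of $\PP_0(\X_\b,G^a)$ together with the Lipschitz continuity of $\chi$, $\phi\et$, $\Phi\et$, $\phi$ and $\Phi$. The paper states this in one line; your version simply supplies the details, in particular the subsequence extraction needed to turn $L^1$-convergence into a.e.\ pointwise convergence for the second identity, which is a legitimate and necessary refinement.
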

\begin{proof}
These identities hold true for polyhedral $0$-chains and the general case follow by density of these latter in $\FF_0(\X_\b,G^a)$ and Lipschitz continuity of $\phi$, $\Phi$ and of the operator $\chi$ (point~(ii) of Theorem~\ref{thm_chi}).
\end{proof}

We introduce the corresponding operator on $\FF^G_{0,0}(\X_\b)$. 
\begin{definition}\label{def_chiw}
We set for $A'\in\FF^G_{0,0}(\X_\b)$,
\[
\chiw(A'):=\chi(\chi(\i A')).
\] 
Beware that on the right-hand side the notation $\chi$ refers both to the morphism $\FF_0(\X_{\beta^2},G)\to G$ and to the morphism $\FF_0(\X_{\b^1},\FF_0(\X_{\b^2},G))\to \FF_0(\X_{\b^2},G)$.
\end{definition}

\begin{proposition}\label{prop_chiw}
The group morphism $\chiw:\FF^G_{0,0}(\X_\b)\to G$ satisfies the following properties. 
\begin{enumerate}[(i)]
\item  For every $A'\in\FF^G_{0,0}(\X_\b)$, there holds $|\chiw(A')|_G\le\Fw(A')\le\Mw(A')$. 
\item  For every  $A\in\FF^G_0(\X_\b)$, there holds $\chi(A)=\chiw(\j_{0,0}A)$.
\end{enumerate}
\end{proposition}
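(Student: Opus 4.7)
The plan is to unroll the definition $\chiw(A'):=\chi(\chi(\i A'))$ and apply Theorem~\ref{thm_chi} twice, once to the outer $\chi$ on $\FF_0^G(\X_{\b^2})$ and once to the inner $\chi$ on $\FF_0(\X_{\b^1},\FF_0^G(\X_{\b^2}))$. The fact that $\chiw$ is a group morphism is immediate since $\i$ is a group isomorphism (an isometry, by Subsection~\ref{Ssij}) and both instances of $\chi$ are morphisms by Theorem~\ref{thm_chi}.

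For the first inequality in (i), I introduce the complete Abelian normed group $G^*:=\FF_0^G(\X_{\b^2})$, whose group norm is precisely the flat norm $\F$ on $0$-chains in $\X_{\b^2}$. Theorem~\ref{thm_chi}(ii) applied to $\chi:\FF_0^G(\X_{\b^2})\to G$ yields
\[
|\chiw(A')|_G \;=\; |\chi(\chi(\i A'))|_G \;\le\; \F(\chi(\i A')) \;=\; |\chi(\i A')|_{G^*}.
\]
Theorem~\ref{thm_chi}(ii) applied to the outer $\chi:\FF_0(\X_{\b^1},G^*)\to G^*$ then gives $|\chi(\i A')|_{G^*}\le \F(\i A')$, and by the isometry $\i:(\FF^G_{0,0}(\X_\b),\Fw)\to(\FF_0(\X_{\b^1},G^*),\F)$ recorded in Subsection~\ref{Ssij}, we have $\F(\i A')=\Fw(A')$. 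The second inequality $\Fw(A')\le\Mw(A')$ follows from Proposition~\ref{prop_Wwalternative} by choosing the trivial decomposition $A'=A'+\pt_1 0+\pt_2 0+\pt_1\pt_2 0$.

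For (ii), I first verify the identity on polyhedral $0$-chains and then extend by density. Since $D_0=\{(0,0)\}$ and $\PP_0^G(\X_\b)=\PP^G_{0,0}(\X_\b)$, the morphism $\j_{0,0}$ restricted to $\PP_0^G(\X_\b)$ is the identity. For $A=\sum g_i\lb x_i\rb$ with $x_i=x_i^1+x_i^2$ in $\X_{\b^1}+\X_{\b^2}$, the definition~\eqref{def:i} gives $\i A=\sum[g_i\lb x_i^2\rb]\lb x_i^1\rb$; applying Theorem~\ref{thm_chi}(i) twice in succession yields $\chi(\i A)=\sum g_i\lb x_i^2\rb$ and then $\chi(\chi(\i A))=\sum g_i=\chi(A)$. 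Hence $\chiw(\j_{0,0} A)=\chi(A)$ on $\PP_0^G(\X_\b)$, and the identity extends to all of $\FF_0^G(\X_\b)$ by density of $\PP_0^G(\X_\b)$ together with the continuity of $\j_{0,0}$ (via~\eqref{estimjA}), $\i$ (isometry), and $\chi$, $\chiw$ (the latter by (i) just proved).

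I do not foresee a substantive obstacle; the only subtlety is to keep straight the two distinct instances of $\chi$---one acting on $\FF_0(\X_{\b^2},G)$, the other on $\FF_0(\X_{\b^1},G^*)$---and to remember that the norm on the coefficient group $G^*=\FF_0^G(\X_{\b^2})$ is the flat norm on $0$-chains, so that Theorem~\ref{thm_chi}(ii) applies in both stages with matching norms.
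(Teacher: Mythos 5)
Your proof is correct and follows the same route as the paper: for (i) you apply Theorem~\ref{thm_chi}(ii) twice (first to the outer $\chi$ acting on $\FF^G_0(\X_{\b^2})$, then to the inner $\chi$ acting on $\FF_0(\X_{\b^1},\FF^G_0(\X_{\b^2}))$), use the isometry $\F(\i A')=\Fw(A')$ from Subsection~\ref{Ssij}, and close with $\Fw\le\Mw$; for (ii) you verify the identity on polyhedral $0$-chains via Theorem~\ref{thm_chi}(i) and extend by density and continuity, exactly as the paper does. The only difference is that you spell out the polyhedral computation and the relevant continuity facts which the paper leaves implicit.
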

\begin{proof}
Let $A'\in\FF^G_{0,0}(\X_\b)$. Using~point~(ii) of Theorem~\ref{thm_chi} twice, we have
\[
|\chiw(A')|_G=\big|\chi[\underbrace{\chi(\i A')}_{\in\FF^G_{k_2}(\X_{\b^2})}]\big|_G\le\lt|\chi(\i A')\rt|_{\FF^G_{k_2}(\X_{\b^2})}\le\F(\i A')=\Fw(A')\le\Mw(A'),
\]
where we used the notation $|B|_{\FF^G_{k_2}(\X_{\b^2})}:=\F(B)$ for $B\in\FF^G_{k_2}(\X_{\b^2})$.
This proves~(i). Next,~(ii) holds true  for $0$-polyhedral chains and the general case follows by density and continuity.  
\end{proof}

We can now generalize Theorem~\ref{thm_psi} to tensor chains, with the extra relation~\eqref{lem_00_1} below.
\begin{lemma}
\label{lem_00}
There is an isometric isomorphism,  
\[
\psiw:(\MM^G_{0,0}(\X_\b),+,\Mw)\longto(\MM(\X_\b,G),+,|\cd|).
\] 
Moreover, $\psiw$ is related to the mapping $\psi$ of~Theorem~\ref{thm_psi} by
\be\label{lem_00_1}
\psi=\psiw\circ\j_{0,0}\qquad\text{on }\MM^G_0(\X_\b).
\ee
\end{lemma}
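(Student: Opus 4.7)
}
The plan is to mirror White's construction of $\psi$, defining
\[
\psiw(A')(S):=\chiw(A'\restr S) \qquad \text{for }A'\in\MM^G_{0,0}(\X_\b)\text{ and }S\subset\X_\b\text{ Borel}.
\]
The fact that $S\mapsto A'\restr S$ is a $(\MM^G_{0,0}(\X_\b),\Mw)$-valued Borel measure (from the restriction theory in Subsection~\ref{Ss_fmtfc}) together with the Lipschitz continuity $|\chiw(\cd)|_G\le\Mw(\cd)$ from Proposition~\ref{prop_chiw}(i) shows that $\psiw(A')$ is a countably additive $G$-valued Borel measure with total variation bounded by $\Mw(A')$. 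Linearity of $\chiw$ and additivity of the restriction in $A'$ make $\psiw$ a group morphism, and we already have $|\psiw(A')|(\X_\b)\le\Mw(A')$.

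Next I would verify the intertwining relation~\eqref{lem_00_1}. For $A\in\MM^G_0(\X_\b)$ and $S$ Borel, Proposition~\ref{prop_restr}(ii) gives $(\j_{0,0}A)\restr S=\j_{0,0}(A\restr S)$, and then Proposition~\ref{prop_chiw}(ii) gives
\[
\psiw(\j_{0,0}A)(S)=\chiw(\j_{0,0}(A\restr S))=\chi(A\restr S)=\psi(A)(S),
\]
which proves $\psi=\psiw\circ\j_{0,0}$. Since~\eqref{estimjA_M} with $C(0,n_1,n_2)=1$ gives $\Mw(\j_{0,0}A)\le\M(A)$, the map $\j_{0,0}:\MM^G_0(\X_\b)\to\MM^G_{0,0}(\X_\b)$ is well defined.

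For surjectivity of $\psiw$, given $\nu\in\MM(\X_\b,G)$ I would set $A:=\psi^{-1}(\nu)\in\MM^G_0(\X_\b)$ and $A':=\j_{0,0}A\in\MM^G_{0,0}(\X_\b)$; the intertwining then gives $\psiw(A')=\psi(A)=\nu$. For injectivity, I would first establish the iterated measure formula
\begin{equation}\label{proofplan_prod}
\psiw(A')(S_1\times S_2)=\psi\bigl[\psi(\i A')(S_1)\bigr](S_2) \quad\text{for all Borel }S_l\sub\X_{\b^l},
\end{equation}
unwinding the definition: using Proposition~\ref{prop_restr}(iv) to split $A'\restr (S_1\times S_2)=[A'\restr(S_1+\X_{\b^2})]\restr(\X_{\b^1}+S_2)$, then Proposition~\ref{prop_restr}(iii) to rewrite $\i[A'\restr(S_1+\X_{\b^2})]=(\i A')\restr S_1$, and finally identifying the further restriction to $\X_{\b^1}+S_2$ with restriction of the coefficient in $\FF^G_0(\X_{\b^2})$ to $S_2$. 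The identity~\eqref{proofplan_prod} is immediate for polyhedral tensor chains, and extends to $\MM^G_{0,0}(\X_\b)$ by $\Mw$-approximation, using continuity of restrictions on non-exceptional product sets and Lipschitz continuity of $\chi$ and $\i$. Once~\eqref{proofplan_prod} is available, $\psiw(A')=0$ forces $\psi[\psi(\i A')(S_1)](S_2)=0$ for every $S_1,S_2$, so by Theorem~\ref{thm_psi} applied first on $\X_{\b^2}$ with coefficient group $G$ and then on $\X_{\b^1}$ with coefficient group $\FF^G_0(\X_{\b^2})$, we obtain $\i A'=0$, and hence $A'=0$ since $\i$ is an isometric isomorphism.

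For the isometry, surjectivity yields for any $A'\in\MM^G_{0,0}(\X_\b)$ a chain $A\in\MM^G_0(\X_\b)$ with $A'=\j_{0,0}A$, whence
\[
\Mw(A')\le\M(A)=|\psi(A)|(\X_\b)=|\psiw(A')|(\X_\b)\le\Mw(A'),
\]
so equality holds throughout. The main obstacle in this plan will be the rigorous derivation of~\eqref{proofplan_prod} for arbitrary finite-mass tensor chains: in passing to the limit the relevant operator (restriction of a coefficient in $\FF^G_0(\X_{\b^2})$ on a Borel set) is not $\F$-continuous in general, so one must invoke $\Mw$-mass convergence of polyhedral approximations and the continuity of restriction on sets $S_1\times S_2$ with $\mu_{A'}(\frt(S_1\times S_2))=0$, then extend to arbitrary Borel rectangles by outer regularity.
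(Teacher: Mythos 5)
Your construction of $\psiw$, the upper bound $|\psiw_{A'}|\le\Mw(A')$ via Proposition~\ref{prop_chiw}(i), the intertwining $\psi=\psiw\circ\j_{0,0}$ via Propositions~\ref{prop_restr}(ii) and~\ref{prop_chiw}(ii), and the surjectivity argument all coincide with what the paper does. The genuine departure is your explicit injectivity argument, which the paper does not give. In the paper the lower bound $\Mw(A')\le|\psiw_{A'}|$ is only verified for $A'$ of the form $\j_{0,0}\psi^{-1}(\nu)$; unless one already knows that every element of $\MM^G_{0,0}(\X_\b)$ has this form, or equivalently that $\psiw$ is injective, the stated conclusion (isometric isomorphism) does not yet follow from surjectivity plus the one-sided inequality (consider $\R^2$ with the sup norm, $\sigma(t)=(t,0)$, $\psiw(s,t)=s$: here $\sigma$ is isometric, $\psiw$ is $1$-Lipschitz onto, $\psiw\circ\sigma=\mathrm{Id}$, yet $\psiw$ is not injective). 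Your route through the iterated measure formula~\eqref{proofplan_prod} closes this step, and the final isometry argument you give — using injectivity to identify $A'$ with $\j_{0,0}\psi^{-1}(\psiw_{A'})$ and then squeezing the masses — is correct.

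You are right to flag the extension of~\eqref{proofplan_prod} beyond polyhedral chains as the main technical obstacle, and it is more delicate than the proposal indicates. Two points need attention. First, for the right-hand side to make sense you must show $\psi(\i A')(S_1)\in\MM^G_0(\X_{\b^2})$, not merely in $\FF^G_0(\X_{\b^2})$: for polyhedral tensor chains one gets $\M(\psi(\i A')(S_1))\le\Mw(A')$ directly, but to pass to the limit you need the restrictions $(\i P_j)\restr S_1$ to converge in $\F$-norm, which requires either $\M(\i P_j)\to\M(\i A')$ (not a consequence of $\Mw(P_j)\to\Mw(A')$, since $\M\circ\i\le\Mw$ can be strict) or the ``rapid convergence on non-exceptional intervals'' argument of Lemma~\ref{lem2.1}/Fleming, after which lower semicontinuity of $\M$ under $\F$-convergence gives the finite-mass bound. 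Second, once~\eqref{proofplan_prod} is verified on a generating $\pi$-system of non-exceptional product intervals, both sides define $G$-valued Borel measures in $(S_1,S_2)$, so the identity extends to all Borel rectangles by uniqueness of measures rather than by a direct continuity argument for each $(S_1,S_2)$. With these details supplied the argument is sound, and it supplies a step the paper's proof appears to elide.
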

\begin{proof}
Let $A'\in\MM^G_{0,0}(\X_\b)$. For every Borel subset  $S\sub\X_\b$, we define,
\[
\psiw_{A'}(S):=\chiw(A'\restr S).
\]
Using Proposition~\ref{prop_chiw}(i) and arguing  as in the proof of~\cite[Theorem~2.2]{White1999-2}, we get that $\psiw_{A'}$ is a finite Borel measure in $\X_\b$ with values in $G$. Moreover,
\be\label{proof_lem_00_1}
|\psiw_{A'}|\le\Mw(A').
\ee

Let us establish~\eqref{lem_00_1}. Let $A\in\MM^G_0(\X_\b)$. By Proposition~\ref{prop_chiw}(ii) we have for any Borel set $S\sub\X_\b$,
\[
\psiw_{\j_{0,0}A}(S)=\chiw\lt((\j_{0,0}A)\restr S\rt)=\chi(A\restr S).
\]
Now, by definition (see~\cite[Theorem~2.2]{White1999-2}), $\chi(A\restr S)=\psi_A(S)$. Hence~\eqref{lem_00_1} holds true. 

It remains to show that $A'\mapsto\psiw_{A'}$ is onto and that%, taking into account~\eqref{proof_lem_00_1},
\be\label{proof_lem_00_2}
\Mw(A')\le|\psiw_{A'}|.
\ee
Let $\nu\in\MM(\X_\b,G)$. Since $\psi$ is an isometric isomorphism, there exists $A\in\MM^G_{0}(\X_\b)$ such that $\psi_A=\nu$ and $\M(A)=|\nu|$.   
Setting $A':=\j_{0,0}A\in\MM^G_{0,0}(\X_\b)$, we have $\psiw_{A'}=\nu$ by~\eqref{lem_00_1}. This proves that $\psiw$ is onto. Eventually, there holds
\[
\Mw(A')=\M(\j_{0,0}A)\le\M(A)=|\nu|,
\]
and we get~\eqref{proof_lem_00_2}. With~\eqref{proof_lem_00_1}, we conclude that $\psiw$ is an isometry from $(\MM^G_{0,0}(\X_\b),\Mw)$ onto $(\MM(\X_\b,G),|\cd|)$.
\end{proof}
The previous lemma allows to identify isomorphically $\MM^G_{0,0}(\X_\b)$ with $\MM^G_0(\X_\b)$ so that, essentially, $\MM^G_{0,0}(\R^n)$ does not depend on $n_1,n_2$ (but as shown by the example of Remark~\ref{rem_on_tfc}(d), $\FF^G_{0,0}(\X_\b)$ does!). Identity~\eqref{lem_00_1} also provides a natural isometric isomorphism.
\begin{theorem}\label{thm_MM00MM0}~
\begin{enumerate}[(i)]
\item The mapping  $\j_{0,0}:(\MM^G_0(\X_\b),+,\M)\longto(\MM^G_{0,0}(\X_\b),+,\Mw)$ 
is an isometric isomorphism. In the sequel, its inverse is denoted $\Ups_0$.
\item Besides, $\Mw(\j_{0,0}A)=\M(A)$ for every $A\in\FF^G_0(\X_\b)$. In particular, $\j_{0,0}A=0\implies A=0$.
\end{enumerate}
\end{theorem}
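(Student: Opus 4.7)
Part~(i) is immediate from Lemma~\ref{lem_00} and Theorem~\ref{thm_psi}: both $\psi$ and $\psiw$ are isometric isomorphisms onto $(\MM(\X_\b,G),|\cd|)$, and identity~\eqref{lem_00_1} gives $\psi=\psiw\circ\j_{0,0}$ on $\MM^G_0$. Composing yields $\j_{0,0}|_{\MM^G_0}=\psiw^{-1}\circ\psi$, an isometric group isomorphism onto $\MM^G_{0,0}$ whose inverse $\Ups_0:=\psi^{-1}\circ\psiw$ is thereby defined.

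For~(ii), the bound $\Mw(\j_{0,0}A)\le\M(A)$ is routine. Picking $P_j\in\PP^G_0(\X_\b)$ with $P_j\to A$ in $\F$ and $\M(P_j)\to\M(A)$, the continuity of $\j_{0,0}:(\FF^G_0,\F)\to(\FF^G_{0,0},\Fw)$, the identity $\Mw(\j_{0,0}P_j)=\M(P_j)$ on polyhedral chains, and Lemma~\ref{lem2.1}(ii) together give $\Mw(\j_{0,0}A)\le\liminf_j\M(P_j)=\M(A)$. For the reverse inequality, one may assume $\Mw(\j_{0,0}A)<\oo$; then $\j_{0,0}A\in\MM^G_{0,0}$ and part~(i) supplies $B:=\Ups_0(\j_{0,0}A)\in\MM^G_0$ with $\M(B)=\Mw(\j_{0,0}A)$ and $\j_{0,0}(A-B)=0$. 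The assertion of~(ii), including the ``in particular'' clause, therefore reduces to the injectivity claim: $\j_{0,0}C=0\Rightarrow C=0$ for every $C\in\FF^G_0(\X_\b)$.

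The injectivity is the main obstacle. My plan is to exploit the isometric identification $\i$ of Proposition~\ref{prop_Ww=W}, which transfers the hypothesis into $\i\j_{0,0}C=0$ in $\FF_0(\X_{\b^1},\FF^G_0(\X_{\b^2}))$. Combining the operator $\chi$ of Theorem~\ref{thm_chi}, applied both in the outer variable and in the coefficient variable, with functoriality under Lipschitz coefficient morphisms (Propositions~\ref{prop_phi_Phi} and~\ref{prop_cplmt_chi}), one extracts vanishings associated with product-Lipschitz tests $\lambda\in\Lip(\X_{\b^1},\R)$, $\mu\in\Lip(\X_{\b^2},\R)$. Extending these evaluations to arbitrary bounded Lipschitz test functions by a density argument in the flat cochain norm, and invoking the characterization of flat chains by their cochain pairings, finally forces $C=0$ and closes the proof.
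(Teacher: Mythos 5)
Part~(i) of your argument is exactly the paper's: compose the commutative diagram $\psi=\psiw\circ\j_{0,0}$ from Lemma~\ref{lem_00} with Theorem~\ref{thm_psi}. For part~(ii), the inequality $\Mw(\j_{0,0}A)\le\M(A)$ is indeed routine, and your observation that, given~(i), both the converse inequality and the ``in particular'' clause reduce to the injectivity of $\j_{0,0}$ on $\FF^G_0(\X_\b)$ is correct and identifies the actual content of the statement.

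The gap is in the plan you give for that injectivity. You want to pair $C$ against product Lipschitz tests $\lambda\otimes\mu$ and conclude by ``a density argument in the flat cochain norm'' and ``the characterization of flat chains by their cochain pairings.'' Neither of these tools exists in the $G$-coefficient setting. For a general complete normed Abelian group $G$ there is no scalar multiplication, so an expression such as $\sum_i g_i\,\lambda(x_i^1)\mu(x_i^2)$ for a polyhedral $0$-chain and a nonconstant Lipschitz $\lambda\otimes\mu$ is not even defined; the only admissible ``tests'' are the morphism $\chi$ (the constant test) and restriction to Borel sets (indicator tests), which is precisely what $\psi$ and $\psiw$ encode. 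Accordingly the paper never introduces flat cochains, and there is no pairing characterization of $G$-flat chains to invoke, so the proposed density step has nothing to converge against. The paper instead avoids isolating injectivity as a lemma: after~(i) it handles the case $\M(A)=\oo$ by the chain of equivalences $\M(A)<\oo\iff A\in\Ima\psi^{-1}\iff\j_{0,0}A\in\Ima\psiw^{-1}\iff\Mw(\j_{0,0}A)<\oo$, reading the middle equivalence off the same diagram; the injectivity then appears only as the final ``in particular.'' Your proposal is therefore correct up to the point where you set up the injectivity claim, but the route you sketch from there would fail for general $G$ and in any case relies on machinery the paper does not develop.
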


\begin{proof}
From the previous lemma and Theorem~\ref{thm_psi}, we have the following commutative diagram of isomorphisms,
\begin{diagram}
\MM^G_0(\X_\b)&         &  \rTo^{\j_{0,0}}           &           &\MM^G_{0,0}(\X_\b)\\
                         &\rdTo<\psi&                      & \ldTo>{\psiw} &\\
                        &         &\MM(\X_\b,G)&            &
\end{diagram}
Moreover $\psi$ and $\psiw$ are isometries and we conclude that, 
\[
\j_{0,0}:(\MM^G_0(\X_\b),+,\M)\longto(\MM^G_{0,0}(\X_\b),+,\Mw),
\] 
is an isometric isomorphism, which is~(i). As a consequence, the identity of the second point holds true as soon as $\M(A)$ is finite but we still have to check that $\Mw(\j_{0,0}A)=\oo\iff \M(A)=\oo$.  This follows again from the diagram above. Indeed we have for $A\in\FF^G_0(\X_\b)$,
\[
\M(A)<\oo\iff A\in\Ima\psi^{-1}\iff\j_{0,0}A\in\Ima{\psiw}^{-1}\iff\Mw(\j_{0,0}A)<\oo. 
\]
This proves the theorem.
\end{proof}
Using slicing, we deduce that $\j$ is one-to-one.
\begin{theorem}\label{thm_j_121}
The group morphism $\j:\FF^G_k(\X_\b)\to(\FF^G_{k'_1,k'_2}(\X_\b))_{(k'_1,k'_2)\in D_k}$ is one-to-one.
\end{theorem}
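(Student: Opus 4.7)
The plan is to reduce injectivity of $\j$ to the already-established fact that $\j_{0,0}$ restricted to $\MM^G_0(\X_\b)$ is injective (Theorem~\ref{thm_MM00MM0}(ii)), using slicing to descend from $k$-chains to $0$-chains.

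Suppose $A\in\FF^G_k(\X_\b)$ satisfies $\j_{k'_1,k'_2}A=0$ for every $(k'_1,k'_2)\in D_k$. First I would fix an arbitrary $\g\sub\b$ with $|\g|=k$ and note that $(|\g^1|,|\g^2|)\in D_k$ (since $|\g^1|+|\g^2|=k$ and $|\g^l|\le|\b^l|\le n_l$). By Proposition~\ref{prop_Sltfc}(iii) applied with $(k'_1,k'_2)=(|\g^1|,|\g^2|)$, for almost every $x\in\X_\g$,
\[
\j_{0,0}\Sl_\g^x A\ =\ \Sl_\g^x\j_{|\g^1|,|\g^2|}A\ =\ \Sl_\g^x 0\ =\ 0.
\]
Since $\Sl_\g^x A\in\FF^G_0(\X_{\b\sm\g})$, Theorem~\ref{thm_MM00MM0}(ii) yields $\Sl_\g^x A=0$ for almost every $x\in\X_\g$.

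As $\g$ was an arbitrary subset of $\b$ with cardinality $k$, we have shown that $\Sl_\g A=0$ for every such $\g$. Theorem~\ref{coro_defWhite2} (White's characterization of a flat chain by its maximal codimension slices) then gives $A=0$, as desired.

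The core step is therefore the commutation $\j_{0,0}\Sl_\g^x=\Sl_\g^x\j_{|\g^1|,|\g^2|}$ combined with the nontrivial injectivity statement for $\j_{0,0}$ on $\FF^G_0$, which itself rests on Lemma~\ref{lem_00} (i.e.\ the identification of finite mass $(0,0)$-chains with $G$-valued Borel measures through $\chiw$). Once these ingredients are in place, the argument is essentially a diagram chase via slicing, with no additional obstacle to address.
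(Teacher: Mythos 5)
Your proof is correct and follows exactly the same route as the paper's: commute $\j_{0,0}$ with $\Sl_\g$ via Proposition~\ref{prop_Sltfc}(iii), invoke the injectivity of $\j_{0,0}$ on $\FF^G_0$ from Theorem~\ref{thm_MM00MM0}(ii), and finish with White's slice characterization (Theorem~\ref{coro_defWhite2}). (One small imprecision in your framing sentence: Theorem~\ref{thm_MM00MM0}(ii) gives injectivity of $\j_{0,0}$ on all of $\FF^G_0$, not just $\MM^G_0$ — which is what you actually use.)
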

\begin{proof}
Let  $A\in\FF^G_k(\X_\b)$ such that $\j A=0$. By Proposition~\ref{prop_Sltfc}(iii), for every $\g\sub\b$ with $|\g|=k$, there holds
\[
\j_{0,0}\Sl_\g A=\Sl_\g\j_{|\g^1|,|\g^2|}A=\Sl_\g0=0.
\]
It then follows from  Theorem~\ref{thm_MM00MM0}(ii) that $\Sl_\g A=0$ for every $\g\sub\b$ with $|\g|=k$ and by~Theorem~\ref{coro_defWhite2} we get $A=0$.
\end{proof}
With the same line of arguments and using Proposition \ref{coro_defWhite3} instead of Theorem \ref{coro_defWhite2}, we deduce the following statement.
\begin{proposition}\label{coro_M_Sl=0_iif_j=0} 
Let  $A\in \FF^G_k(\X_\b)$. There holds for every  $(k'_1,k'_2)\in D_k$,  
\[
\j_{k'_1,k'_2}A=0\quad\iff\quad\Sl_\g A=0\text{ for every }\g\in I^n_k\text{ such that }(|\g^1|,|\g^2|)=(k'_1,k'_2). 
\]
\end{proposition}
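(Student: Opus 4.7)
The plan is to follow exactly the line of argument used for Theorem~\ref{thm_j_121}, substituting Proposition~\ref{coro_defWhite3} for Theorem~\ref{coro_defWhite2} in the converse direction. The key tool in both directions is the commutation identity
\[
\j_{k'_1-|\g^1|,k'_2-|\g^2|}\Sl_\g A=\Sl_\g\j_{k'_1,k'_2}A
\]
from Proposition~\ref{prop_Sltfc}(iii). I will apply it to slicing sets $\g\sub\b$ with $(|\g^1|,|\g^2|)=(k'_1,k'_2)$, so that $|\g|=k$ and $\Sl_\g A\in\FF^G_0(\X_{\b\sm\g})$ is a classical $0$-chain; this reduces everything to statements about $(0,0)$-slices.

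For the direct implication, I would start from $\j_{k'_1,k'_2}A=0$. The commutation immediately gives $\j_{0,0}\Sl_\g A=\Sl_\g\j_{k'_1,k'_2}A=0$ for every admissible $\g$. Since $\Sl_\g A\in\FF^G_0(\X_{\b\sm\g})$, Theorem~\ref{thm_MM00MM0}(ii), which states that $\j_{0,0}$ is injective on $\FF^G_0$, yields $\Sl_\g A=0$.

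For the reverse implication, I would set $A':=\j_{k'_1,k'_2}A\in\FF^G_{k'_1,k'_2}(\X_\b)$ and use Proposition~\ref{coro_defWhite3} to reduce the vanishing of $A'$ to the vanishing of all its $(0,0)$-slices $\Sl_\d A'$ for $\d\sub\b$ with $(|\d^1|,|\d^2|)=(k'_1,k'_2)$. The commutation, applied once more, yields
\[
\Sl_\d A'=\Sl_\d\j_{k'_1,k'_2}A=\j_{0,0}\Sl_\d A,
\]
and the right-hand side vanishes because $\Sl_\d A=0$ by hypothesis.

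There is no substantial obstacle: both directions are clean bookkeeping once the commutation of $\j$ with $\Sl$ and the two injectivity/vanishing criteria are in hand. The only subtlety worth flagging is that, in the converse direction, the tensor criterion Proposition~\ref{coro_defWhite3} is required (and not Theorem~\ref{coro_defWhite2}), since the object whose vanishing one wants to conclude, namely $A'=\j_{k'_1,k'_2}A$, lives in $\FF^G_{k'_1,k'_2}(\X_\b)$ rather than in a group of classical flat chains; this is precisely the substitution announced by the authors in the sentence preceding the statement.
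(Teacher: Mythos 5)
Your proof is correct and follows precisely the route the paper intends: both directions use the commutation identity of Proposition~\ref{prop_Sltfc}(iii) to reduce to $(0,0)$-slices, with the injectivity of $\j_{0,0}$ (Theorem~\ref{thm_MM00MM0}(ii)) handling the forward implication and Proposition~\ref{coro_defWhite3} replacing Theorem~\ref{coro_defWhite2} in the converse, exactly as announced in the sentence preceding the statement. Nothing to add.
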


\subsection{Normal and rectifiable tensor chains}
\label{Snrtfc}

Let us introduce some subgroups of $k$-chains which identify through $j_{k_1,k_2}$ with $(k_1,k_2)$-chains.
\begin{definition}\label{defFFkk1k2}
We set 
\[
\FF^G_{k,(k_1,k_2)}(\X_\b):=
\lt\{ A\in \FF^G_k(\X_\b): \j_{k'_1,k'_2} A=0 \text{ for every } (k'_1,k'_2)\in D_k\sm\{(k_1,k_2)\}\rt\}.
\]
Then we define, 
\[
\MM^G_{k,(k_1,k_2)}(\X_\b):=\FF^G_{k,(k_1,k_2)}(\X_\b)\cap\MM^G_k(\X_\b),\quad\qquad\NN^G_{k,(k_1,k_2)}(\X_\b):=\FF^G_{k,(k_1,k_2)}(\X_\b)\cap\NN^G_k(\X_\b).
\]
\end{definition}
\begin{remark}\label{rem_A1A2}
Let $A^1\in\FF^\Z_{k_1}(\X_{\b^1})$ and $A^2\in\FF^G_{k_2}(\X_{\b^2})$. We have seen in Section~\ref{SGfc} that when $A^1$ and $A^2$ have finite mass or at least one of them is normal, we can define the Cartesian product $A:=A^1\t A^2$ in $\FF^G_k(\X_\b)$. Considering first polyhedral chains, it is easy to see that (recall \eqref{wedge})
\[
\j_{k_1,k_2}A=A^1\we A^2=:A'\ \in\FF^G_{k_1,k_2}(\X_\b),
\] 
and 
\[
\j_{k'_1,k'_2}A=0\qquad\text{for }(k'_1,k'_2)\ne(k_1,k_2).
\]
It follows that $A$ lies in $\FF^G_{k,(k_1,k_2)}(\X_\b)$ and since $\j$ is one-to-one we can identify $A$ with $A'$. More generally, considering a sequence of pairs $(A^1_j,A^2_j)$ of the above form, if the series $\sum A_j^1\t A_j^2$ converges in $\FF^G_k(\X_\b)$, we can identify  $\sum A_j^1\t A_j^2\in\FF^G_{k,(k_1,k_2)}(\X_\b)$ with $\sum A_j^1\we A_j^2\in\FF^G_{k_1,k_2}(\X_\b)$.
\end{remark}
\begin{remark}\label{j=0<=>Sl=0}
By~Proposition~\ref{coro_M_Sl=0_iif_j=0}, we have the alternative definition:
\[
\FF^G_{k,(k_1,k_2)}(\X_\b)=
\lt\{ A\in \FF^G_k(\X_\b): \Sl_\g A=0 \text{ for every } \g\in I^n_k\text{ such that }(|\g^1|,|\g^2|)\ne(k_1,k_2)\rt\},
\]
and similarly for $\MM^G_{k,(k_1,k_2)}(\X_\b)$ and $\NN^G_{k,(k_1,k_2)}(\X_\b)$.
\end{remark}

As a consequence of this remark, we observe that Proposition~\ref{prop_split_and_Sl} rephrases as follows.

\begin{corollary}\label{coro_splitting_iff_jk'1k'2=0}
The following statements are equivalent for every $A\in\MM^G_k(\X_\b)$ rectifiable. 
\begin{enumerate}[(i)]
\item $T\mu_A$  is $(k_1,k_2)$-split.
\item $\Sl_\g A=0$ for every $\g\in I^n_k$ with $(|\g^1|,|\g^2|)\ne(k_1,k_2)$.
\item $A\in\MM^G_{k,(k_1,k_2)}(\X_\b)$.
\end{enumerate}
\end{corollary}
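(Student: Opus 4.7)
The plan is essentially to observe that the corollary is a direct repackaging of two previously proved statements, so the proof requires no new ingredients and amounts to chaining them together.

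First, I would apply Proposition~\ref{prop_split_and_Sl}, which is exactly the equivalence (i)$\iff$(ii) (stated there for rectifiable finite-mass chains in $\R^n$; the same proof applies verbatim in $\X_\b$, since rectifiability lets us reduce to the case $A=A\restr\Sigma$ for $\Sigma$ a compact $C^1$ $k$-manifold in $\X_\b$ and then argue slice-by-slice using the coarea formula).

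Next, I would get (ii)$\iff$(iii). The implication (iii)$\Longrightarrow$(ii) is immediate from Remark~\ref{j=0<=>Sl=0}: by Proposition~\ref{coro_M_Sl=0_iif_j=0}, $\j_{k_1',k_2'}A=0$ is equivalent to the vanishing of $\Sl_\g A$ for every $\g\in I^n_k$ with $(|\g^1|,|\g^2|)=(k_1',k_2')$. Conjunction over $(k_1',k_2')\in D_k\setminus\{(k_1,k_2)\}$ then gives exactly condition~(ii). The reverse implication is the same observation read in the other direction. Since $A\in\MM^G_k(\X_\b)$ by hypothesis and the condition $A\in\FF^G_{k,(k_1,k_2)}(\X_\b)$ is precisely what (ii) encodes, we conclude $A\in\MM^G_{k,(k_1,k_2)}(\X_\b)=\MM^G_k(\X_\b)\cap\FF^G_{k,(k_1,k_2)}(\X_\b)$ from Definition~\ref{defFFkk1k2}.

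There is no genuine obstacle here; the only thing to be slightly careful about is that Proposition~\ref{prop_split_and_Sl} is stated in $\R^n$ rather than in $\X_\b$, but the argument transfers without modification (one replaces $\R^n$ by $\X_\b$ and $\R^{n_l}$ by $\X_{\b^l}$ throughout). Thus the proof reduces to one line: combine Proposition~\ref{prop_split_and_Sl}, Remark~\ref{j=0<=>Sl=0}, and Definition~\ref{defFFkk1k2}.
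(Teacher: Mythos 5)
Your proof is correct and matches the paper's own argument exactly: the paper states the corollary immediately after Remark~\ref{j=0<=>Sl=0} with the one-line remark that ``Proposition~\ref{prop_split_and_Sl} rephrases as follows,'' i.e.\ (i)$\iff$(ii) comes from Proposition~\ref{prop_split_and_Sl} and (ii)$\iff$(iii) from Proposition~\ref{coro_M_Sl=0_iif_j=0} together with Definition~\ref{defFFkk1k2}. Your observation about transferring the statement from $\R^n$ to $\X_\b$ is a reasonable point of care but adds nothing beyond what the paper implicitly assumes.
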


Let us  now introduce the notion of normal tensor chains. 
\begin{definition}\label{def_ntfc}
Let $A'\in\FF^G_{k_1,k_2}(\X_\b)$, we say that  $A'$ is normal if 
\[
\Nw(A'):=\Mw(A')+\Mw(\pt_1 A')+\Mw(\pt_2 A') <\oo.
\] 
We denote by $\NN^G_{k_1,k_2}(\X_\b)$ the group of normal $(k_1,k_2)$-chains in $\X_\b$.
\end{definition}
The normed groups $(\NN^G_{k_1,k_2}(\X_\b),+,\N)$ are complete but do not form a two-dimensional chain complex\footnote{To get a true analogue of the complex  $\NN^G_*(\X_\b)$ we should consider instead $
\wt\Nw(A'):=\sum\limits_{0\le i_1,i_2\le1}\Mw(\pt_1^{i_1}\pt_2^{i_2}A)$.} (unless $|\b^1|,|\b^2|\le1$). 
 
 Notice also that $\j_{k_1,k_2}$ maps $\FF^G_{k,(k_1,k_2)}(\X_\b)$ into  $\FF^G_{k_1,k_2}(\X_\b)$ and that (recalling~\eqref{estimjA}\&\eqref{estimjA_M}) we have for $A\in\FF^G_{k,(k_1,k_2)}(\X_\b)$,
\[
\Fw(\j_{k_1,k_2}A)\le2\Ft(A),\qquad\qquad\Mw(\j_{k_1,k_2}A)\le \Mt(A).
\]
Besides, by Proposition~\ref{prop_pt_and_j}, for $(k'_1,k'_2)\in D_k$ (and still $A\in\FF^G_{k,(k_1,k_2)}(\X_\b)$), we have the formula,
\[
\j_{k'_1,k'_2}\pt A=\begin{cases}
\pt_1\j_{k_1,k_2}A&\text{if }(k'_1,k'_2)=(k_1-1,k_2),\\
\pt_2\j_{k_1,k_2}A&\text{if }(k'_1,k'_2)=(k_1,k_2-1),\\
\qquad 0&\text{in the other cases}.
\end{cases}
\] 
We deduce that $\Nw(\j_{k_1,k_2}A)\le \Nt(A)= \Mt(A)+\Mt(\pt A)$.
Let us summarize the above statements. 
\begin{proposition}\label{prop_embeddings} We have the continuous embeddings:
\begin{align*}
\j_{k_1,k_2}&:\FF^G_{k,(k_1,k_2)}(\X_\beta)\hookrightarrow\FF^G_{k_1,k_2}(\X_\beta),\\
\j_{k_1,k_2}&:\MM^G_{k,(k_1,k_2)}(\X_\beta)\hookrightarrow\MM^G_{k_1,k_2}(\X_\beta),\\
\j_{k_1,k_2}&:\NN^G_{k,(k_1,k_2)}(\X_\beta)\hookrightarrow\NN^G_{k_1,k_2}(\X_\beta).
\end{align*} 
\end{proposition}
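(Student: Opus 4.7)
The proposition is essentially a bookkeeping statement collecting properties already assembled in the paragraphs that precede it, so my plan has only two ingredients: continuity of each $\j_{k_1,k_2}$ for the relevant pair of norms, and injectivity on the source group.

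For continuity, I would simply quote the three estimates that were just derived. On $\FF^G_{k,(k_1,k_2)}(\X_\beta)$ the bound $\Fw(\j_{k_1,k_2}A)\le 2\Ft(A)$ holds; combined with the equivalence $\Ft\le C\F$ on $\FF^G_k(\X_\beta)$ recalled in Subsection~4.1, this gives continuity into $(\FF^G_{k_1,k_2}(\X_\beta),\Fw)$. On $\MM^G_{k,(k_1,k_2)}(\X_\beta)$ the bound $\Mw(\j_{k_1,k_2}A)\le\Mt(A)\le C(k,n_1,n_2)\M(A)$ does the job. On $\NN^G_{k,(k_1,k_2)}(\X_\beta)$ I would use Proposition~\ref{prop_pt_and_j}, which (because $A$ is in the subgroup where all other components vanish) reduces $\j\pt A$ to $\pt_1\j_{k_1,k_2}A$ and $\pt_2\j_{k_1,k_2}A$, yielding $\Nw(\j_{k_1,k_2}A)\le\Nt(A)=\Mt(A)+\Mt(\pt A)\le C'\N(A)$.

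For injectivity, I would invoke Theorem~\ref{thm_j_121}. If $A\in\FF^G_{k,(k_1,k_2)}(\X_\beta)$ and $\j_{k_1,k_2}A=0$, then by the very definition of $\FF^G_{k,(k_1,k_2)}(\X_\beta)$ every component $\j_{k'_1,k'_2}A$ with $(k'_1,k'_2)\ne(k_1,k_2)$ also vanishes, so the full tuple $\j A$ is zero in $(\FF^G_{k'_1,k'_2}(\X_\beta))_{(k'_1,k'_2)\in D_k}$; Theorem~\ref{thm_j_121} then forces $A=0$. Injectivity on the two smaller subgroups follows automatically since they are contained in $\FF^G_{k,(k_1,k_2)}(\X_\beta)$.

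There is no genuine obstacle: the hard analytic inputs (the estimates $\Fw(\j_{k_1,k_2}A)\le 2\Ft(A)$ and $\Mw(\j_{k_1,k_2}A)\le \Mt(A)$, the boundary commutation of Proposition~\ref{prop_pt_and_j}, and the injectivity Theorem~\ref{thm_j_121}, itself based on the slicing characterization and the identification $\j_{0,0}:\MM^G_0\to\MM^G_{0,0}$ of Theorem~\ref{thm_MM00MM0}) have all been proved upstream, and the proof of Proposition~\ref{prop_embeddings} amounts to assembling them.
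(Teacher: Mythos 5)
Your proposal is correct and follows essentially the same route as the paper: the three continuity estimates $\Fw(\j_{k_1,k_2}A)\le2\Ft(A)$, $\Mw(\j_{k_1,k_2}A)\le\Mt(A)$, and $\Nw(\j_{k_1,k_2}A)\le\Nt(A)$ (the last via Proposition~\ref{prop_pt_and_j}) are exactly what the paper assembles just before stating the proposition, and injectivity via Theorem~\ref{thm_j_121} is the right way to upgrade these to embeddings. The only minor difference is that the paper leaves the injectivity observation implicit, while you make it explicit; that is a reasonable addition given the word ``embedding.''
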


A natural question is whether some of these mappings are onto. In a complementary paper, we establish that $\j_{k_1,k_2}$ defines an isometric isomorphisms from $\NN^G_{k,(k_1,k_2)}(\X_\b)$ onto $\NN^G_{k_1,k_2}(\X_\b)$, (see~\cite[Theorems~A.1]{GM_comp}) where the isometry holds with respect to a $\N$-norm built on the so called \emph{coordinate slicing mass} rather than on the mass itself. We also show there, that except in some limit cases, $\j_{k_1,k_2}: \MM^G_{k,(k_1,k_2)}(\X_\b)\to\MM^G_{k_1,k_2}(\X_\b)$ and $\j_{k_1,k_2}: \FF^G_{k,(k_1,k_2)}(\X_\b)\to\FF^G_{k_1,k_2}(\X_\b)$ are not onto. Here, we only establish the positive result about normal chains in the case $k_1=0$ and without the isometry property.
 
\begin{theorem}\label{thm_Ups}
The mapping $\j_{0,k}$ defines a group isomorphism from $\NN^G_{k,(0,k)}(\X_\b)$ onto $\NN^G_{0,k}(\X_\b)$. Its inverse is denoted $\Ups$.
\end{theorem}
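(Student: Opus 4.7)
My plan is as follows. By Proposition~\ref{prop_embeddings}, $\j_{0,k}$ sends $\NN^G_{k,(0,k)}(\X_\b)$ into $\NN^G_{0,k}(\X_\b)$ as a continuous group morphism, and Theorem~\ref{thm_j_121} provides injectivity. The only substantive point is therefore surjectivity. First I would observe that on $\FF^G_{0,k}(\X_\b)$ the operator $\pt_1$ is identically zero (its codomain $\FF^G_{-1,k}(\X_\b)$ is trivial by convention), so normality of $A' \in \NN^G_{0,k}(\X_\b)$ reduces to $\Mw(A') + \Mw(\pt_2 A') < \infty$.

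The cornerstone of the argument is an elementary polyhedral identification. A tensor polyhedral chain $P' = \sum_i g_i \lb y_i^1 \rb \we p_i^2 \in \PP^G_{0,k}(\X_\b)$ corresponds naturally to the classical polyhedral chain $P := \sum_i g_i \lb y_i^1 \rb \t p_i^2 \in \PP^G_k(\X_\b)$; one checks directly that $\j_{0,k} P = P'$ and $\j_{k'_1,k'_2} P = 0$ for $(k'_1,k'_2) \ne (0,k)$. Crucially, because $\pt \lb y_i^1 \rb = 0$, the tensor product boundary formula gives $\pt P = \sum_i g_i \lb y_i^1 \rb \t \pt p_i^2$, which is exactly the classical realization of $\pt_2 P'$. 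Hence $\M(P) = \Mw(P')$ and $\M(\pt P) = \Mw(\pt_2 P')$, so $\N(P) = \Nw(P')$; this identity is $\Z$-linear and applies to differences $P_j - P_l$ as well.

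To construct $\Ups A'$ from $A' \in \NN^G_{0,k}(\X_\b)$, I would then seek polyhedral approximants $P_j' \in \PP^G_{0,k}(\X_\b)$ with $P_j' \to A'$ in $\Fw$, $\Mw(P_j') \to \Mw(A')$, and $\Mw(\pt_2 P_j') \to \Mw(\pt_2 A')$. The associated classical chains $P_j$ then have uniformly bounded $\N$-norm. Reducing to compactly supported $A'$ by restrictions to products $B_R \t \X_{\b^2}$ for radii $R$ avoiding exceptional values (so that a tensor analog of Proposition~\ref{prop_divformula} keeps $\Nw$ controlled), Fleming's compactness theorem for normal $G$-flat chains extracts a subsequence $P_j \to A$ in $\F$-norm with $A \in \NN^G_k(\X_\b)$. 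Continuity of $\j$ from $(\FF^G_k, \F)$ to $(\FF^G_{k'_1,k'_2}, \Fw)$ then yields $\j_{0,k} A = \lim \j_{0,k} P_j = \lim P_j' = A'$ and $\j_{k'_1,k'_2} A = 0$ for $(k'_1,k'_2) \ne (0,k)$, whence $A \in \NN^G_{k,(0,k)}(\X_\b)$. Setting $\Ups A' := A$ defines the inverse.

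The hard part will be establishing the polyhedral approximation with simultaneous convergence of $\Mw$ and $\Mw \circ \pt_2$ in the tensor setting, i.e., the $(0,k)$-tensor analog of \cite[Theorem~5.6]{Fleming66}. I would adapt Fleming's cubical mollification scheme: since $0$-cells in $\X_{\b^1}$ have no boundary, the first factor plays a purely parametric role and the approximation can be performed slice-by-slice in the $\X_{\b^2}$ direction, where Fleming's original argument applies directly. This decoupling is special to $k_1 = 0$; in the general $(k_1,k_2)$ case the interplay between $\pt_1$ and $\pt_2$ requires a more delicate construction, which is deferred to \cite{GM_comp}.
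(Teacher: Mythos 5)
Your reduction to surjectivity is correct, as is the polyhedral identity $\N(P)=\Nw(P')$ (that identity is precisely why $k_1=0$ is special), but your route is genuinely different from the paper's. Rather than invoking an approximation theorem and compactness, the paper constructs $\Ups A'$ directly as the limit of explicit dyadic discretizations $\Ups_j A':=\sum_m \lb y^j_m\rb\times\chi\bigl([\i A']\restr Q^j_m\bigr)$, and the heart of the argument is the quantitative Cauchy estimate $\F(\Ups_jA'-\Ups_iA')\le\sqrt{n_1}\,2^{-i}\,\Nw(A')$, proved by a linear-homotopy/slicing computation. No compactness is used and none is needed, because the approximating sequence is made Cauchy in $\F$-norm by hand.

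The decisive gap in your proposal is the appeal to Fleming's compactness theorem. For normal $G$-flat chains with equicompact support that theorem requires the closed balls $\{g:|g|_G\le c\}$ to be compact; it fails for an arbitrary complete Abelian normed group, which is the generality of Theorem~\ref{thm_Ups}. For instance with $G=\ell^1(\N,\Z)$ and $P_j:=e_j\lb 0\rb$ (where $e_j$ is the $j$-th unit vector of $\ell^1$) one has $\N(P_j)=1$ and $\supp P_j=\{0\}$ for all $j$, yet $\F(P_j-P_l)=2$ for $j\ne l$ (since $\chi(P_j-P_l)=e_j-e_l$ and $|\chi(\cdot)|_G\le\F(\cdot)$ force this lower bound), so no subsequence can converge. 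Nor does $\Fw$-convergence of $P'_j$ give any control on $\F(P_j-P_l)$: on tensor polyhedral chains viewed inside $\PP^G_k(\X_\b)$ the norm $\Fw$ is genuinely weaker than $\F$, as Remark~\ref{rem_on_tfc}(d) illustrates. So boundedness of $\N(P_j)$ alone does not produce a limit in $\F$-norm, and your argument stalls exactly where the paper's quantitative Cauchy estimate takes over. Secondarily, the deferred tensor analogue of~\cite[Theorem~5.6]{Fleming66} is not as innocent as ``slice-by-slice'': $\i A'$ is a measure valued in the infinite-dimensional group $\FF^G_k(\X_\ova)$, and arranging $\Mw(\pt_2 P'_j)\to\Mw(\pt_2 A')$ simultaneously with $\Fw$-convergence already requires the $\chi$/homotopy machinery the paper develops. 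But the compactness failure is the fatal obstruction.
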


\begin{proof}
Without loss of generality, we assume that $\b=\{1,\dots,n\}$ so that $\beta^1=\a$ and $\b^2=\ov \a$. In the sequel, we denote $\NN:=\NN^G_{k,(0,k)}(\R^n)$ and $\NN':=\NN^G_{0,k}(\R^n)$.

The proof is split into two steps. In the first step we define a group morphism $\Ups:\NN'\to\NN$ and show that $\j_{0,k}\circ\Ups=\Id_{|\NN'}$. In a second step, we establish that $\Ups\circ{\j_{0,k}}_{|\NN}=\Id_{|\NN}$.\medskip

\noindent
\textit{Step 1. Definition of $\Ups$ and proof of $\j_{0,k}\circ\Ups=\Id$ on $\NN'$.}\\
We start with the following observation. If $A=\sum A_j \lb x_j \rb\in \PP_0(\X_\a, \FF_k^G(\X_{\ov \a}))$ for some sequence $A_j\in \FF_k^G(\X_{\ov \a})$ such that $\sum\M(A_j)<\oo$ and some sequence of pairwise distinct points $x_j\in\X_\a$, we have $\chi(A)=\sum_j A_j$ and thus $\M(\chi(A))\le \sum_j \M(A_j)$, that is,
\begin{equation}\label{Mchi}
\M(\chi(A))\le \M(A).
\end{equation}
By density and continuity of $\chi(A)$, \eqref{Mchi} extends to all $A\in \MM_0(\X_\a,\FF^G_k(\X_\ova))$.\medskip

Next, for $j\ge 0$ and $m\in\Z^{n_1}$, we set $y^j_m:=2^{-j}m$ and $Q_m^j:=y^j_m+[0,2^{-j})^{n_1}$. For $j\ge0$ and  $A'\in\FF^G_{0,k}(\R^n)$ such that $\i A'\in\MM_0(\X_\a,\FF^G_k(\X_\ova))$ we define 
\[
\Ups_jA':=\sum_m\lb y^j_m \rb{\times}\chi\lt([\i A']\restr Q_j^m\rt).
\]
Let $A'\in\NN'$ and let us denote $A_j:=\Ups_j A'$ and $A'_j:=\j_{0,k}A_j$. By construction,
\begin{align}
\label{Step2b_0}
A_j=\sum_m\lb y^j_m \rb\t\chi([\i A']\restr Q_j^m)\ \in\FF^G_k(\R^n),\\
\label{Step2b_1}
A'_j =\sum_m\lb y^j_m \rb\we\chi([\i A']\restr Q_j^m)\ \in\FF^G_{0,k}(\R^n).
\end{align}
In the sequel, we show that $A_j$ converges in $\FF^G_k(\R^n)$ to some chain $A\in \NN$ and we define $\Ups A':=A$. We then show that $A'_j$ converges to $A'$ in $\Fw$-norm and finally get $A'=\j_{0,k}\Ups A'$ by continuity of $\j_{0,k}$.\medskip 

Let us fix ${j\ge i\ge 1}$. Let $m\in\Z^{n_1}$ and  denote by $\mathcal{Q}$ the set formed by the $2^{n_1(j-i)}$ cubes $Q^j_r$ contained in $Q^i_m$. For $Q=Q^j_r\in\mathcal{Q}$, denoting $p_Q:=\lb(y^i_m,y^j_r)\rb\in\PP^\Z_1(\X_\a)$, we set,
\[
P_Q:=p_Q\we\chi([\i A']\restr Q)\ \in\PP_1(\X_\a,\FF^G_k(\X_\ova)).
\]
Using $\pt=\pt_1+\pt_2$, we have $(\lb y^j_r \rb-\lb y^i_m \rb)\we\chi([\i A']\restr Q)=\pt P_Q+p_Q\we\pt \chi([\i A']\restr Q)$. Summing on $\mathcal{Q}$, we get
\[
(A_j-A_i)\restr[Q^i_m+\X_\ova]=\pt\sum_{Q\in\mathcal{Q}}P_Q+\sum_{Q\in\mathcal{Q}}p_Q\we\pt \chi([\i A']\restr Q).
\]
This yields, 
\be\label{Proof_Ups_step1_3}
\F\lt((A_j-A_i)\restr[Q^i_m+\X_\ova]\rt)\le\sum_{Q\in\mathcal{Q}}\M(P_Q)+\M(p_Q\we\pt \chi([\i A']\restr Q)).
\ee
On the one hand, for every $Q\in\mathcal{Q}$, there holds, $\M(p_Q)\le\sqrt{n_1}2^{-i}$ and we deduce,
\be\label{Proof_Ups_step1_4}
\sum_{Q\in\mathcal{Q}}\M(P_Q)\le\sqrt{n_1}2^{-i}\sum_{Q\in\mathcal{Q}}\M(\chi([\i A']\restr Q))\stackrel{\eqref{Mchi}}{\le}\sqrt{n_1}2^{-i}\M([\i A']\restr Q^i_m).
\ee
On the other hand, considering the Lipschitz continuous morphism
\[
\ov\pt:=\pt:\FF^G_k(\X_\ova)\to\FF^G_{k-1}(\X_\ova),
\]
we have with the notation of Proposition~\ref{prop_phi_Phi},
\[%\begin{equation}\label{etiA}
\ov\pt\et(\i A')=\i(\pt_2 A'),
\]%\end{equation}
and by Proposition~\ref{prop_cplmt_chi},
\begin{equation}\label{eq:etoile}
\pt\lt(\chi([\i A']\restr Q)\rt)=\chi\lt(\ov\pt\et([ \i A']\restr Q)\rt)=\chi([\i\pt_2A']\restr Q).
\end{equation}
We deduce similarly to~\eqref{Proof_Ups_step1_4},
\be\label{Proof_Ups_step1_5}
\sum_{Q\in\mathcal{Q}}\Mw(p_Q\we\pt \chi([\i A']\restr Q))\le\sqrt{n_1}2^{-i}\Mw((\i \pt_2A')\restr Q^i_m).
\ee
Plugging~\eqref{Proof_Ups_step1_4}\&\eqref{Proof_Ups_step1_5} in~\eqref{Proof_Ups_step1_3}, we obtain
\[
\F\lt((A_j-A_i)\restr[Q^i_m+\X_\ova]\rt)\le\sqrt{n_1}2^{-i}\lt(\M([\i A']\restr Q^i_m)+\M((\i\pt_2A')\restr Q^i_m\rt).
\]
Then, summing over $m$, we get
\begin{equation}\label{Cauchyisom}
\F(A_j-A_i)\le\sqrt{n_1}2^{-i}\Nw(A').
\end{equation}
Notice that  we cannot substitute the $\Mw$ for $\Nw$ on the right-hand side. Indeed, the corresponding result does not hold for finite mass chains see~\cite[Proposition~A.5]{GM_comp}.
From \eqref{Cauchyisom} we conclude that  $A_j$ is a Cauchy sequence in $\FF^G_k(\R^n)$ and $A_j\to A$. Moreover, since
\[
 \M(A_j)\le \sum_m\M(\chi([\i A']\restr Q^j_m))\le \M(\chi[\i A'])\le \M(\i A'),
\]
we have
\[
 \M(A)\le\liminf\M(A_j)\le\M(\i A')<\oo.
\]
Similarly,  since
\[
 \partial A_j=\sum_m\lb y^j_m \rb \times \pt\chi([\i A']\restr Q_j^m)\stackrel{\eqref{eq:etoile}}{=}\sum_m\lb y^j_m \rb \times \chi([\i \pt_2 A']\restr Q_j^m)
\]
we have
\[
 \M(\pt A_j)\le \M( \i \pt_2 A')
\]
and thus
\[
 \M(\pt A)\le\liminf\M(\pt A_j)\le\M(\i\pt_2A')<\oo.
\]
% We have moreover,
% \begin{align*}
% %\label{Step2b_2}
% \M(A)&\le\liminf\M(A_j)\le\M(\i A')<\oo,\\
% %\label{Step2b_3}
% \M(\pt A)&\le\liminf\M(\pt A_j)\le\M(\i\pt_2A')<\oo.
% \end{align*}
Besides, every $A_j$ belongs to $\NN$ so that by continuity of $\i$ we also have $A\in\NN$. Defining $\Ups A':=A$, the mapping $\Ups$ is obviously a group morphism and we have just  established that it maps $\NN'$ into $\NN$.\medskip

Let us now show that  $A'_j\to A'$. Let us fix $j\ge0$, we have, 
\be\label{Step2b_2}
\Fw(A'_j-A')=\F(\i (A'_j-A'))\le\sum_m\F\lt((\i(A'_j-A'))\restr Q^j_m\rt).
\ee
For a fixed $m\in\Z^{n_1}$ we denote $B'':=[\i A']\restr Q^j_m$. We then have from~\eqref{Step2b_1},
\[
\i(A'_j-A')\restr Q^j_m = {\chi(B'')\lb y^j_m \rb-B''}.
\]
Using the homotopy formula~\eqref{homotopy} in $\X_\a$ with $f(x)=x$ and $g(x)=y^j_m$ and taking into account that $\pt B''=0$ (because $B''$ is a $0$-chain), we get with the notation there, 
\be\label{Step2b_3}
\pt [h\pf(\lb(0,e_0)\rb\t B'')]=B''- g\pf B''.
\ee
Remark that for any polyhedral $0$-chain $C$ in $\X_\a$, we have $g\pf C+\chi(C)\lb y^j_m\rb$ and this relation extends by continuity to any $0$-chain. Hence~\eqref{Step2b_3} rewrites as
\[
\pt [h\pf(\lb(0,e_0)\rb\t B'')]=B''-B''-\chi(B'')\lb y^j_m \rb.
\]
We deduce:
\begin{align*}
\F(\i(A'_j-A')\restr Q^j_m )=\F\lt(B''- \chi(B'')\lb y^j_m \rb\rt)
&\stackrel{\phantom{\eqref{estim_homotopy}}}\le\M(h\pf(\lb(0,e_0)\rb\t B''))\\
&\stackrel{\eqref{estim_homotopy}}\le C\|f-g\|_{L^\oo(\supp B'')} \M(B'')\\
&\stackrel{\phantom{\eqref{estim_homotopy}}}\le C\sqrt{n_1}2^{-j}\M(B'')=C\sqrt{n_1}2^{-j}\M([\i A']\restr Q^j_m).
\end{align*}
%where we used the estimate~ with $\supp B''\sub\ov{Q^j_m}$.
Summing over $m$ and using~\eqref{Step2b_2}, we obtain
\[
\Fw(A'_j-A')\le C \sqrt{n_1}2^{-j}\M(\i A')\stackrel{\eqref{eq:iM}}{\le}C \sqrt{n_1}2^{-j}\Mw(A').
\]  
We conclude that $A'_j\to A'$ in $\Fw$-norm.
\medskip
 
Eventually, by continuity of $\j_{0,k}$, there holds
\[
\j_{0,k}\Ups A'=\j_{0,k}A=\j_{0,k}\lim A_j=\lim\j_{0,k}A_j=\lim A'_j=A'.
\]
We have established that $\j_{0,k}\circ\Ups=\Id$ on $\NN'$.\bigskip

\noindent
\textit{Step 2, $\Ups\circ\j_{0,k}=\Id$ on $\NN$.} Let us fix $\g\in I^n_k$ with $\g^1=\void$. We first show the following identity for $A'\in\NN'$,
\be\label{Ups_Step_1}
\Sl_\g A'=\j_{0,0}\Sl_\g\Ups A'.
\ee
% \be\label{Ups_Step_1}
% \Sl_\g\Ups A'=\Ups_0\Sl_\g A',
% \ee
% where $\Ups_0$ is the operator of Theorem~\ref{thm_MM00MM0}.\\
Returning to the definition of $\Ups$ in the previous step and using the notation there we have $A'_j=\j_{0,k} A_j$ so that by (iii) of Proposition \ref{prop_Sltfc},
\[
 \Sl_\g A'_j=j_{0,0} \Sl_\g A_j.
\]
By the continuity of $\Sl_\g$ and $\j_{0,0}$ we can pass to the limit (recall that $A'_j \to A'$  and $A_j=\Ups_j A'\to \Ups A'$ by the previous step) and  obtain~\eqref{Ups_Step_1}.\medskip

% \textcolor{red}{Enleve:we have by~\eqref{Step2b_0}\eqref{Step2b_1} (\textcolor{blue}{M: why \eqref{Step2b_1}?), estce que $\we$ devrait etre $\times$, aussi en dessous}  for every $j\ge0$,
% \[
% \Sl_\g A_j=\sum_m\d_{y^j_m}\we\Sl_\g \chi([\i A']\restr Q_j^m).
% \]
% Applying Proposition~\ref{prop_cplmt_chi} to the Lipschitz continuous morphism,
% \[
% \Phi=\Sl_\g:\FF^G_k(\X_\ova)\to L^1\lt(\X_\g,\FF^G_0(\X_\ova)\rt),
% \]
% we get in $L^1(\X_\g,\FF^G_0(\X_\ova))$ and for every $j\ge1$,
% \begin{align*}
% \Sl_\g^x\Ups_j A'=\Sl_\g^xA_j&=\sum_m\d_{y^j_m}\we\chi(\Sl^x_\g [[\i A']\restr Q_j^m])\\
% &=\sum_m\d_{y^j_m}\we\chi\lt([\i \Sl^x_\g A']\restr [Q_j^m\cap\X_\g(x)]\rt)=\Ups_{\textcolor{blue}{0}} \Sl_\g^xA'_j.
% \end{align*}
% Passing to the limit $j\up\oo$, we obtain~\eqref{Ups_Step_1}. }\medskip

Let now  $A\in\NN$. Using again (iii) of Proposition \ref{prop_Sltfc} and applying~\eqref{Ups_Step_1} with $A'=\j_{0,k}A$, we compute
\[
\j_{0,0} \Sl_\g A=\Sl_\g\j_{0,k}A=\j_{0,0} \Sl_\g \Ups \j_{0,k}A.
\]
Since $\j_{0,0}$ is one-to-one by Theorem~\ref{thm_MM00MM0}, we deduce that $\Sl_\g A=\Sl_\g \Ups \j_{0,k}A$.
% \[
% \Sl_\g\Ups\j_{0,k}A=\Ups\Sl_\g\j_{0,k}A=\Ups_0\j_{0,0}\Sl_\g A=\Sl_\g A,
% \]
% where the last identity follows from Theorem~\ref{thm_MM00MM0}.
Now if $|\gamma|=k$ but $\gamma^1\neq \void$ then $\Sl_\gamma A=0$ since by assumption  $A\in \NN=\NN^G_{k,(0,k)}(\R^n)$. Similarly, since $\Ups\j_{0,k} A\in \NN$, we also have $\Sl_\gamma \Ups\j_{0,k} A=0$ in this case. Hence, $\Ups\j_{0,k}A$ and $A$ have the same $0$-slices and it follows from Theorem~\ref{coro_defWhite2} that these chains are equal. We conclude that $\Ups\circ\j_{0,k}=\Id$ on $\NN$ and with Step~1, $\Ups$ is an isomorphism from $\NN'$ onto $\NN$ with inverse $\j_{0,k}$.
\end{proof}

\section{Proof of the main results and a counterexample for norm preserving decompositions}
\label{Smain}

We obtain Theorem~\ref{thm_main} as a corollary of the analogous result for rectifiable tensor chains (Theorem~\ref{thm_main2} below). Let us first define this notion. 
\begin{definition} Let $A'\in\MM^G_{k_1,k_2}(\R^n)$, we say that:
\begin{enumerate}[(1)]
\item $A'$ is rectifiable if $A'=A'\restr\Sigma$ for some $k$-rectifiable set $\Sigma\sub\R^n$.
\item $A'$ is $(k_1,k_2)$-rectifiable if $A'=A'\restr (\Sigma^1\t\Sigma^2)$ for some $k_l$-rectifiable subsets $\Sigma^l\sub\R^{n_l}$, $l=1,2$.
\end{enumerate}
\end{definition}

We first consider the case $k_1=0$.
\begin{proposition}\label{prop_main_0k}
Let $A'\in\NN^G_{0,k}(\R^n)$ be a rectifiable tensor chain, then there exists $S\sub\X_\a$, at most countable, such that $\i A'=(\i A')\restr S$.\\
More precisely, there exist sequences $y^1_j\in\X_\a$ and $A^2_j\in\NN^G_k(\X_\ova)$ with $\sum\N(A_j^2)<\oo$ such that (recall \eqref{wedge})
\be
\label{decomp_A'}
A'=\sum\lb y^1_j \rb\we A_j^2.
\ee
\end{proposition}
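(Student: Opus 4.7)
The plan is to transfer the problem to an ordinary normal rectifiable $k$-chain via the isomorphism $\Ups$ of Theorem~\ref{thm_Ups}. Set $A:=\Ups A'\in\NN^G_{k,(0,k)}(\R^n)$, so that $\j_{0,k}A=A'$. From $A'=A'\restr\Sigma$ for some $k$-rectifiable set $\Sigma$, Proposition~\ref{prop_restr}(ii) yields $\j_{k'_1,k'_2}(A\restr\Sigma)=(\j_{k'_1,k'_2}A)\restr\Sigma$, which vanishes for $(k'_1,k'_2)\ne(0,k)$ (since $A\in\NN^G_{k,(0,k)}(\R^n)$) and equals $A'$ for $(k'_1,k'_2)=(0,k)$. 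Injectivity of $\j$ on $\FF^G_k(\R^n)$ (Theorem~\ref{thm_j_121}) then forces $A=A\restr\Sigma$, so $A$ is a normal rectifiable $k$-chain, and Corollary~\ref{coro_splitting_iff_jk'1k'2=0} gives that $T\mu_A$ is $(0,k)$-split. It thus suffices to produce a decomposition $A=\sum\lb y^1_j\rb\t A^2_j$ with $A^2_j\in\NN^G_k(\R^{n_2})$ and $\sum\N(A^2_j)<\oo$; applying $\j_{0,k}$ and using the identification of Remark~\ref{rem_A1A2} yields the claimed $A'=\sum\lb y^1_j\rb\we A^2_j$.

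Apply Theorem~\ref{thm_decomp} to write $A=\sum A_j$ as a sum of set-indecomposable normal rectifiable chains with $\sum\N(A_j)=\N(A)$. Each $A_j=A\restr S_j$ inherits the vanishing of slices: by Proposition~\ref{prop_Slrestr}, $\Sl_{\{i\}}A_j$ is a restriction of $\Sl_{\{i\}}A$, which vanishes for $i\in\a$ by Corollary~\ref{coro_splitting_iff_jk'1k'2=0}. Fix $i\in\a$ and let $H_i(s):=\{x:x_i>s\}$. The argument of Proposition~\ref{prop_divformula} (valid for any coordinate) gives, for a.e.\ $s\in\R$,
\[
\pt(A_j\restr H_i(s))=(\pt A_j)\restr H_i(s)+A_j\cap\X_{\ov i}(se_i),
\]
and the final term vanishes for a.e.\ $s$ because $\Sl_{\{i\}}A_j=0$ and $\pi_\#$ along $e_i$ is an isometry on each affine hyperplane. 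Combined with the additivity of $\M$ and $\M\circ\pt$ under Borel partitions, this gives, for a.e.\ $s$,
\[
\N(A_j)=\N(A_j\restr H_i(s))+\N(A_j\restr(\R^n\sm H_i(s))).
\]
Set-indecomposability then forces, for a.e.\ $s$, either $A_j\restr H_i(s)=0$ or $A_j\restr H_i(s)=A_j$. Using $s<s'\Rightarrow H_i(s')\sub H_i(s)$ and Proposition~\ref{prop_restr}(iv), the two conditions are stable under the appropriate shift of $s$, so a dichotomy yields a unique threshold $y^{(j)}_i\in\R$ with $A_j$ supported in $\{x_i=y^{(j)}_i\}$ (the case $A_j=0$ being trivial).

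Iterating over $i=1,\ldots,n_1$, $A_j$ is supported in the affine $n_2$-plane $\{y^1_j\}\t\R^{n_2}$ with $y^1_j:=(y^{(j)}_1,\ldots,y^{(j)}_{n_1})$. Property~(4) of push-forwards by Lipschitz maps (Section~\ref{SGfc}) identifies $A_j$ with a normal chain $A^2_j\in\NN^G_k(\R^{n_2})$ satisfying $A_j=\lb y^1_j\rb\t A^2_j$ and $\N(A^2_j)=\N(A_j)$, so $\sum\N(A^2_j)=\N(A)<\oo$. Summing over $j$ gives the decomposition of $A$, hence of $A'$, as required. The main obstacles are conceptual rather than technical: the reduction via Theorem~\ref{thm_Ups} and the passage to set-indecomposable components via Theorem~\ref{thm_decomp}. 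Once these are in place, the rigidity is extracted by the routine but crucial dichotomy, in which the slice-vanishing $\Sl_{\{i\}}A_j=0$ (forced by the splitting) prevents the emergence of additional boundary when cutting along each of the first $n_1$ coordinates.
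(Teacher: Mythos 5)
Your proposal follows the paper's route closely: reduce via $\Ups$ to a normal rectifiable $k$-chain $A\in\NN^G_{k,(0,k)}(\R^n)$, apply the set-decomposition theorem, and use the boundary formula for restriction to half-spaces together with set-indecomposability to pin down each component. The identification of $A_j$ with $\lb y^1_j\rb\times A^2_j$ and the mass count $\N(A^2_j)=\N(A_j)$ are correct, and in fact give a cleaner estimate than the paper's $\sum\N(A^2_j)\le C\N(A)$.

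There is, however, one genuine gap. You assert that $\Sl_{\{i\}}A=0$ for $i\in\a$ ``by Corollary~\ref{coro_splitting_iff_jk'1k'2=0}.'' That corollary only controls the slices $\Sl_\g A$ for $\g\in I^n_k$ (codimension-$k$ coordinate planes), whereas $\{i\}$ is a singleton; when $k>1$ the corollary says nothing about $\Sl_{\{i\}}A$. The needed vanishing of the $1$-slices $A\cap\X_{\ov i}(se_i)$ for $i\in\a$ is exactly what Step~1 of the paper's proof establishes: for every $\g\in I^n_k$ with $i\in\g$, iterated slicing (Proposition~\ref{prop_MWSl}(ii)) gives $A\cap\X_{\ov\g}(x)=(A\cap\X_{\ov i}(se_i))\cap\X_{\ov\g}(x)=0$ almost everywhere, because $\g^1\supset\{i\}\ne\void$; since this exhausts all $0$-slices of the $(k-1)$-chain $A\cap\X_{\ov i}(se_i)$, Theorem~\ref{coro_defWhite2} yields $A\cap\X_{\ov i}(se_i)=0$ for a.e.\ $s$. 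Without this argument the divergence formula does not close, since the boundary term $A_j\cap\X_{\ov i}(se_i)$ has not been shown to vanish, and the dichotomy that forces each $A_j$ onto an affine slab $\{x_i=s_j\}$ cannot proceed. Insert this slicing argument before the application of Proposition~\ref{prop_divformula} and the proof is complete.
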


\begin{proof}
Let $A'\in\NN^G_{0,k}(\R^n)$ be rectifiable and let $\Sigma\sub\R^n$ be a rectifiable set such that $A'=A'\restr\Sigma$. Using the isomorphism $\Ups$ of Theorem~\ref{thm_Ups}, we set $A:=\Ups A'\in\NN^G_{k,(0,k)}(\R^n)$. Since the inverse of $\Ups$ (i.e. the restriction of $\j_{0,k}$ to $\NN_{k,(0,k)}^G(\R^n)$) commutes with restrictions (see~(ii) of Proposition \ref{prop_restr}), we also have $A\restr\Sigma=\Ups (A'\restr\Sigma)$ and $A$ is rectifiable. Besides, by Corollary~\ref{coro_splitting_iff_jk'1k'2=0},
\be\label{prop_main_0k_0}
\Sl_\g A=0\qquad\text{for }\g\in I^n_k\text{ with }(|\g^1|,|\g^2|)\ne(0,k).\bigskip 
\ee

\noindent
\textit{Step 1.}
Let $i\in\a$ and let $\g\in I^n_k$ such that $i\in\g$. By Proposition~\ref{prop_MWSl}, for almost every $s\in \R$,  and almost every $x\in \X_\g$ with $x_i=s$, there holds
\[
A\cap \X_{\ov\g}( x) = (A\cap\X_{\ov i}(s e_i)) \cap\X_{\ov\g}(x).
\]
Since $i\in\a\cap\g$, we have $\g^1\ne \void$ and we deduce from~\eqref{prop_main_0k_0} that  $A\cap \X_{\ov\g}( x)=0$ for almost every $x\in\X_\g$. Hence, for almost every $s\in\R$ and for $i\in \g\in I^n_k$, there holds,
\[
(A\cap\X_{\ov i}(s e_i)) \cap \X_{\ov\g}(x) = 0.
\]
In other words, almost all the coordinate $0$-slices of $A\cap\X_{\ov i}(s e_i)$ vanish. We deduce from Theorem~\ref{coro_defWhite2} that $A\cap\X_{\ov i}(s e_i)=0$ for almost every $s\in\R$.\bigskip 

\noindent
\textit{Step 2.}
Next, by Theorem~\ref{thm_decomp}, there exists a sequence of set-indecomposable (rectifiable and normal) chains $A_j$ such that $A=\sum A_j$ and such that  for each $j$,  $A_j=A\restr S_j$ for some Borel set $S_j\sub\R^n$.

Let us fix $j\ge 1$. By Proposition~\ref{prop_Slrestr} (commutation of slicing and restriction), $A_j$ also satisfies~\eqref{prop_main_0k_0}. As a consequence, the analysis of Step~1 applies to $A_j$ and 
\be\label{prop_main_0k_1}
A_j\cap\X_{\ov i}(s e_i)=0\quad\text{for }i\in\a\text{ and almost every }s\in\R.
\ee
Let us apply Proposition~\ref{prop_divformula} to $A_j$. For $s\in \R$, we denote $H(s):=\{x\in\R^n: x_1>s\}$, then for almost every $s\in\R$, $A_j\restr H(s)\in \NN^G_k(\R^n)$ and
\[
\pt(A_j\restr H(s))=(\pt A_j)\restr H(s)+A_j\cap \X_{\ov1}(se_1)\st{\eqref{prop_main_0k_1}}= (\pt A_j)\restr H(s).
\]
Denoting $A_j(s):=A_j\restr H(s)$,  the chain $A_j-A_j(s)= A_j\restr (\R^n\backslash H(s))$ satisfies
\[
\pt\lt(A_j\restr \R^n\sm H(s)\rt) = \pt A_j -\pt(A_j(s))=(\pt A_j)\restr\lt(\R^n\sm H(s)\rt).
\]
It follows that 
\begin{align*}
\M(\pt A_j(s))+\M(\pt[A_j-A_j(s)]) 
&= \M( (\pt A_j)\restr H(s))+ \M( (\pt A_j)\restr [\R^n\sm H(s)] )\\
&= \mu_{\pt A_j}(H(s))+ \mu_{\pt A_j}(\R^n\sm H(s))\\
&=\mu_{\pt A_j}(\R^n) =\M(\pt A_j).
\end{align*}
We also obviously have $\M(A_j(s))+\M(A_j-A_j(s)) =\M(A_j)$ and therefore 
\[
\lt(A_j(s),\,A_j-A_j(s),0,0,\dots\rt)
\]
is a set-decomposition of $A_j$. But by construction, $A_j$ is set-indecomposable and we conclude that for almost every $s\in\R$, either $A_j=A_j(s)=A_j\restr H(s)$ or $A_j=A_j\restr [\R^n \sm H(s)]$. Proceeding by dichotomy, we obtain that there exists some $s_j\in \R$ such that
\[
A_j=A_j\restr \lt\{x\in\R^n:x_1=s_j\rt\}.
\] 
We can perform the same reasoning with the direction $e_1$ replaced by $e_i$ for $i\in\a$. Taking the intersections, we conclude that there exists $y_j^1\in\X_\a$ such that 
\[
A_j=A_j\restr \lt\{x\in\R^n:x^1=y_j^1\rt\}.
\]     
Eventually, summing over $j$, we deduce $A=A\restr (S+\X_\a)$ with $S=\{y_j^1:j\ge 1\}$. Now, applying the operators $\j_{0,k}$ and $\i$, we get $\i A'=(\i A')\restr S$. This proves the first part of the proposition.
\bigskip 

\noindent
\textit{Step 3.} Let us establish~\eqref{decomp_A'}.  For $j\ge1$ and with the notation of {\it Step 2}, we denote $A'_j:=\j_{0,k}A_j$ and  set $A_j^2:=\chi(\i A'_j)$ so that $A'_j=\lb y^1_j \rb\we A_j^2$. Indeed, this formula holds true for $A'_j$ polyhedral $(0,k)$-chain supported on the affine subspace $y^1_j+\X_\ova$ and we deduce the general case by density using the estimate of Theorem~\ref{thm_chi}(ii). Summing over $j$ we obtain~\eqref{decomp_A'} since $A'=\j_{0,k} A =\sum \j_{0,k} A_j$. Moreover,  since $A_j$ is normal and rectifiable, $A'_j$ is also normal and rectifiable. From the fact that $A'_j=\lb y^1_j \rb\we A_j^2$ we conclude that each $A_j^2$ is normal and rectifiable. Moreover $\N(A^2_j)=\N(A'_j)$ so that
\[
 \sum \N(A_j^2)=\sum \Nw(A'_j)\le \sum \Nt(A_j)\stackrel{\eqref{MtA<=CMA}}{\le} C \sum \N(A_j)=C \N(A)<\oo.
\]
\end{proof}
The counterpart of Theorem~\ref{thm_main} for tensor chains states as follows.
\begin{theorem}\label{thm_main2}
Let $A'\in\NN^G_{k_1,k_2}(\R^n)$, The following are equivalent.
\begin{enumerate}[(i)]
\item $A'$ is rectifiable.
\item $A'$ is $(k_1,k_2)$-rectifiable.
\end{enumerate}
\end{theorem}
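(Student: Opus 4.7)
The direction (ii)$\implies$(i) is immediate, since the Cartesian product of a $k_1$-rectifiable set $\Sigma^1\sub\R^{n_1}$ and a $k_2$-rectifiable set $\Sigma^2\sub\R^{n_2}$ is $k$-rectifiable. For the converse (i)$\implies$(ii), the plan is to combine Proposition~\ref{prop_main_0k} (the case $k_1=0$) with White's rectifiable slices theorem through the isometric isomorphism $\i$. Set $G':=\FF^G_{k_2}(\X_\ova)$ and view $A'':=\i A'\in\FF^{G'}_{k_1}(\X_\a)$, which has finite mass since $\M(A'')\le\Mw(A')<\oo$. By Theorem~\ref{thm_White}, it suffices to check that for every $\g^1\sub\a$ with $|\g^1|=k_1$ and almost every $x\in\X_{\g^1}$, the $0$-slice $\Sl_{\g^1}^xA''\in\FF^{G'}_0(\X_{\a\sm\g^1})$ is a rectifiable (i.e., atomic) $G'$-valued $0$-chain.

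By Proposition~\ref{prop_Sltfc}(iv), $\Sl_{\g^1}^xA''=\i\,\Sl_{\g^1}^xA'$, so the question reduces to analysing the tensor chain $B':=\Sl_{\g^1}^xA'\in\FF^G_{0,k_2}(\X_{\overline{\g^1}})$. For a.e. $x$, $B'$ is normal by Proposition~\ref{prop_Sltfc}(i)--(ii) applied separately to $A'$, $\pt_1A'$ and $\pt_2A'$. It is also rectifiable: writing $A'=A'\restr\Sigma$ for a $k$-rectifiable $\Sigma\sub\R^n$, Proposition~\ref{prop_restr}(i) yields $B'=(\Sl_{\g^1}^xA')\restr[(\Sigma-x)\cap\X_{\overline{\g^1}}]$, and the classical slicing theorem for rectifiable sets (coarea formula) ensures that $(\Sigma-x)\cap\X_{\overline{\g^1}}$ is $k_2$-rectifiable for a.e. $x\in\X_{\g^1}$. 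Proposition~\ref{prop_main_0k} applied to $B'$, with the decomposition $\X_{\overline{\g^1}}=\X_{\a\sm\g^1}+\X_\ova$, then produces $B'=\sum\lb y_j^1\rb\we A_j^2$, whence $\i B'=\sum A_j^2\,\lb y_j^1\rb$ is atomic with coefficients in $G'$, as required.

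Theorem~\ref{thm_White} therefore produces a $k_1$-rectifiable set $\Sigma^1\sub\X_\a$ with $A''=A''\restr\Sigma^1$. By Proposition~\ref{prop_restr}(iii) this reads $A''\restr\Sigma^1=\i(A'\restr(\Sigma^1\t\X_\ova))$, and the injectivity of $\i$ yields $A'=A'\restr(\Sigma^1\t\X_\ova)$. Running the symmetric argument with the roles of $\a$ and $\ova$ exchanged (through the analogous morphism $\ov\i:\FF^G_{k_1,k_2}(\R^n)\to\FF_{k_2}(\X_\ova,\FF^G_{k_1}(\X_\a))$ already used in the proof of Proposition~\ref{coro_defWhite3}) produces a $k_2$-rectifiable $\Sigma^2\sub\X_\ova$ with $A'=A'\restr(\X_\a\t\Sigma^2)$. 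Iterating these two restrictions via Proposition~\ref{prop_restr}(iv) gives $A'=A'\restr(\Sigma^1\t\Sigma^2)$, which is the desired $(k_1,k_2)$-rectifiability.

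The main obstacle is the rectifiability of the tensor slice $B'$: it mixes the classical $k_2$-rectifiability of a.e. slice of the support $\Sigma$ with the commutation of set restriction and tensor slicing from Proposition~\ref{prop_restr}(i), together with the fact that being a rectifiable $(0,k_2)$-tensor chain is inherited by such a restriction. Once this is secured, Proposition~\ref{prop_main_0k} reduces the problem to atomic $0$-slices of $A''$, and White's theorem together with the symmetric use of $\ov\i$ finishes the proof.
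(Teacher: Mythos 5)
Your proof is correct and follows essentially the same route as the paper: push forward $A'$ via $\i$ to a $\FF^G_{k_2}(\X_\ova)$-valued $k_1$-chain, verify its $0$-slices are atomic by combining Proposition~\ref{prop_main_0k} with Proposition~\ref{prop_Sltfc}(iv), apply White's rectifiable slices theorem to extract $\Sigma^1$, and finish by symmetry via $\ov\i$ and successive restrictions. The only (welcome) difference is that you spell out why $\Sl_{\g^1}^xA'$ is normal and rectifiable for a.e.\ $x$ — via Proposition~\ref{prop_Sltfc}(i)--(ii), Proposition~\ref{prop_restr}(i), and the coarea theorem for rectifiable sets — whereas the paper asserts this without elaboration.
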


\begin{proof}
Since we clearly have $(ii) \Rightarrow (i)$, we only need to prove $(i)\Rightarrow (ii)$. Let $A'\in\NN^G_{k_1,k_2}(\R^n)$ be rectifiable and let us set $A'':=\i A'\in \FF_{k_1}(\X_\a,\FF^G_{k_2}(\X_\ova))$. We use the rectifiable slices theorem, Theorem~\ref{thm_White} to show that $A''$ is rectifiable. Let $\g\sub\a$ with $|\g|=k_1$, we have, by Proposition~\ref{prop_Sltfc}(iv), for a.e. $x\in \X_\g$,
\[
\Sl_\g^x A''=\Sl_\g^x \i A' = \i \Sl_\g^x A'.
\]
Since $A'$ is normal and rectifiable, for almost every $x\in\X_\g$, $\Sl_\g^x A'\in\NN^G_{0,k_2}(\X_{\ov\g})$ is normal and rectifiable. By Proposition~\ref{prop_main_0k}  there exists a countable set $S_x\sub\X_{\a\sm\g}$ such that $\Sl_\g^xA''=(\Sl_\g^xA'')\restr S_x$, this means exactly that $\Sl_\g^xA''$ is $0$-rectifiable for every $\g\sub\a$ with $|\g|=k_1$ and almost every $x\in\X_\g$. We conclude from Theorem~\ref{thm_White} that  $A''$ is rectifiable. Consequently, there exists a $k_1$-rectifiable set $\Sigma^1\sub\R^{n_1}$ such that,
\[
A''=A''\restr\Sigma^1.
\] 
By Proposition~\ref{prop_restr}(iii) and injectivity of $\i$ this is equivalent to 
\[
A'=A'\restr\Sigma^1\t\R^{n_2}.
\]
Eventually exchanging the roles of $\a$ and $\ova$, there exists a $k_2$-rectifiable set $\Sigma^2\subset \R^{n_2}$ such that $A'=A'\restr \R^{n_1}\t\Sigma^2$. By successive restrictions (Proposition~\ref{prop_restr}(iv)), we have,
\begin{align*}
A'&=A'\restr\Sigma^1\t\R^{n_2} =(A'\restr\Sigma^1\t\R^{n_2})\restr \R^{n_1}\t\Sigma^2\\
&=A'\restr (\Sigma^1\t\R^{n_2}  \cap  \R^{n_1}\t\Sigma^2)= A'\restr\Sigma^1\t\Sigma^2.
\end{align*}
This proves that $A'$ is $(k_1,k_2)$-rectifiable.
\end{proof}

We can now prove the main result. Taking into account the characterization of the splitting property of Proposition~\ref{prop_split_and_Sl}, it restates as follows.
\begin{theorem}[Reformulation of Theorem~\ref{thm_main}]
\label{thm_main_bis}
Let $k_1,k_2\ge0$ and $k:=k_1+k_2$. The following statements are equivalent for any rectifiable  flat chain $A\in \NN^G_k(\R^n)$.
\begin{enumerate}[(i)]
\item The measure $T\mu_A$ is $(k_1,k_2)$-split.
\item $A\in\NN^G_{k,(k_1,k_2)}(\R^n)$, \textit{i.e.} $\j_{k'_1,k'_2}A=0$ for $(k'_1,k'_2)\in D_k\sm\{(k_1,k_2)\}$.
\item $A$ is $(k_1,k_2)$-rectifiable.
\end{enumerate}
\end{theorem}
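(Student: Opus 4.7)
The plan is to leverage the already-established machinery in a nearly mechanical way: the hard work has been done in Proposition~\ref{prop_main_0k}, Theorem~\ref{thm_Ups}, Theorem~\ref{thm_main2}, Theorem~\ref{thm_j_121}, and Corollary~\ref{coro_splitting_iff_jk'1k'2=0}, so my proof will be a short assembly argument. The easy parts are (i)$\iff$(ii), which is a direct restatement of Corollary~\ref{coro_splitting_iff_jk'1k'2=0} (since $A$ is assumed rectifiable), and (iii)$\Rightarrow$(i), which is the implication~\eqref{obvious} (or equivalently (iii)$\Rightarrow$(ii) by Proposition~\ref{prop_restr}(ii) applied to a $(k_1,k_2)$-rectifiable decomposition). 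The whole content of the theorem is therefore the implication (ii)$\Rightarrow$(iii).

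For this, I would argue as follows. Let $A\in\NN^G_{k,(k_1,k_2)}(\R^n)$ be rectifiable and set $A':=\j_{k_1,k_2}A$. By Proposition~\ref{prop_embeddings} the tensor chain $A'$ is normal, i.e. $A'\in\NN^G_{k_1,k_2}(\R^n)$. First I would observe that $A'$ is rectifiable as a tensor chain: if $\Sigma\sub\R^n$ is a $k$-rectifiable Borel set with $A=A\restr\Sigma$, then Proposition~\ref{prop_restr}(ii) gives
\[
A'\restr\Sigma=\j_{k_1,k_2}(A\restr\Sigma)=\j_{k_1,k_2}A=A'.
\]
Thus the rectifiability of $A'$ is inherited directly from that of $A$. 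Theorem~\ref{thm_main2} then applies and yields $k_l$-rectifiable sets $\Sigma^l\sub\R^{n_l}$ for $l=1,2$ such that $A'=A'\restr(\Sigma^1\t\Sigma^2)$.

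To lift this decomposition from $A'$ back to $A$, I would use the injectivity of $\j$ (Theorem~\ref{thm_j_121}) together with the assumption (ii). For every $(k'_1,k'_2)\in D_k$, Proposition~\ref{prop_restr}(ii) gives
\[
\j_{k'_1,k'_2}\bigl(A\restr(\Sigma^1\t\Sigma^2)\bigr)=(\j_{k'_1,k'_2}A)\restr(\Sigma^1\t\Sigma^2).
\]
For $(k'_1,k'_2)=(k_1,k_2)$ this equals $A'\restr(\Sigma^1\t\Sigma^2)=A'=\j_{k_1,k_2}A$, while for $(k'_1,k'_2)\ne(k_1,k_2)$ the hypothesis (ii) makes both sides vanish. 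Hence $\j[A\restr(\Sigma^1\t\Sigma^2)-A]=0$ component-wise, and Theorem~\ref{thm_j_121} forces $A=A\restr(\Sigma^1\t\Sigma^2)$, which is precisely the $(k_1,k_2)$-rectifiability of $A$.

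I do not expect any serious obstacle at this stage: the argument is purely formal manipulation of the operator $\j$ and of restrictions. The real difficulty was already concentrated upstream, first in showing that normal tensor chains admit the expected rectifiable structure (Theorem~\ref{thm_main2}, whose proof reduced via slicing to the base case $k_1=0$ handled in Proposition~\ref{prop_main_0k}), and second in justifying that a normal chain $A$ satisfying the splitting condition can be legitimately identified with a tensor chain through $\j$, i.e. that $\j$ is injective and that $\j_{k_1,k_2}$ maps $\NN^G_{k,(k_1,k_2)}$ into $\NN^G_{k_1,k_2}$. Once these facts are in hand, the passage from the tensor statement to the classical statement is the short assembly sketched above.
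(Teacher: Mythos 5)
Your proof is correct and follows essentially the same route as the paper: the equivalence (i)$\iff$(ii) from Corollary~\ref{coro_splitting_iff_jk'1k'2=0}, the implication (iii)$\Rightarrow$(i) from~\eqref{obvious}, and the passage (ii)$\Rightarrow$(iii) by applying $\j_{k_1,k_2}$, invoking Theorem~\ref{thm_main2} for the resulting normal rectifiable tensor chain, and lifting the decomposition back via injectivity of $\j$ (Theorem~\ref{thm_j_121}). The only cosmetic difference is that you cite Proposition~\ref{prop_embeddings} for the normality of $A'=\j_{k_1,k_2}A$, whereas the paper invokes Proposition~\ref{prop_pt_and_j} directly; these are the same fact.
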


\begin{proof}
We have already established that (i)$\iff$(ii) in Corollary~\ref{coro_splitting_iff_jk'1k'2=0}. This equivalence is true even if $A$ not a normal chain. The implication (iii)$\implies$(i) is obvious (see~\eqref{obvious}) and again, we do not need $A$ to be normal. 

To complete the proof of the theorem, we have to show that (ii)$\implies$(iii).\\
Let $A\in\NN^G_{k,(k_1,k_2)}(\R^n)$ rectifiable and let $\Sigma\sub\R^n$ be $k$-rectifiable and such that $A\restr\Sigma=A$. We define $A'=\j_{k_1,k_2} A$. By Proposition~\ref{prop_restr}(iii), $A'\restr\Sigma=A'$ and by Proposition~\ref{prop_pt_and_j} and our assumption~(ii), $\pt_1 A'=\j_{k_1-1,k_2}\pt A$, $\pt_2 A'=\j_{k_1,k_2-1}\pt A$ so that $A'$ is a normal rectifiable $(k_1,k_2)$-chain. By Theorem~\ref{thm_main2} there exist $k_l$-rectifiable subsets $\Sigma^l\sub\R^{n_l}$, $l=1,2$ such that $A'\restr (\Sigma^1\t\Sigma^2)=A'$, that is
\[
\j_{k_1,k_2}A=(\j_{k_1,k_2}A)\restr(\Sigma^1\t\Sigma^2)=\j_{k_1,k_2}[A\restr(\Sigma^1\t\Sigma^2)].
\] 
Eventually, by assumption, $\j_{k'_1,k'_2}A=0$ for $(k'_1,k'_2)\ne(k_1,k_2)$ and we deduce that $\j A=\j(A\restr(\Sigma^1\t\Sigma^2))$. Since by Theorem~\ref{thm_j_121}, $\j$ is one-to-one we conclude that $A=A\restr  (\Sigma^1\t\Sigma^2)$ so that $A$ is $(k_1,k_2)$-rectifiable.
\end{proof}

\subsection{Decomposition of $(k_1,k_2)$-rectifiable chains in tensor products. A counterexample}
\label{Ss_ctrex}
In this last subsection we consider the group $G=\Z$ endowed with the usual norm. Starting from a $(0,k)-$rectifiable chain $A\in \NN^\Z_{k}(\R^n)$ and arguing as in the proof of Proposition~\ref{prop_main_0k} (using again that by Theorem~\ref{thm_Ups}, $\j_{0,k}$ is an isomorphism and recalling Remark~\ref{rem_A1A2}) we see that there exist sequences $y_j^1\in\X_\a$ and $A_j^2\in\NN^\Z_k(\X_\ova)$ such that%\footnote{Alternatively using that from \cite{GM_comp}, $\j_{0,k}$ is an isometry this can be directly obtained from Proposition~\ref{prop_main_0k}}
% \textcolor{red}{M: a enlever :Using the isomorphism of Theorem~\ref{thm_Ups}, Proposition~\ref{prop_main_0k} applies to rectifiable chains in $\NN^G_{k,(0,k)}(\R^n)$ and taking into account Remark~\ref{rem_A1A2}, we can replace the wedge products $A_j^1\we A_j^2$ by the cartesian products $A_j^1\t A_j^2$. More precisely, let $A\in\FF^G_k(\R^n)$ be a normal, $(0,k)$-rectifiable chain, then there exist sequences $y_j^1\in\X_\a$ and $A_j^2\in\NN^G_k(\X_\ova)$ such that,}
\be\label{NA=SumNAA}
A=\sum \lb y^1_j\rb\t A_j^2\ \in\FF^\Z_k(\R^n)\qquad\text{ and }\qquad\N(A)=\sum\N(A_j^2).
\ee
In the general case, one may ask whether any $(k_1,k_2)$-rectifiable chain $A\in\NN^\Z_k(\R^n)$ decomposes as $A=\sum A_j^1\t A_j^2$ where the $A_j^l$'s are normal rectifiable $k_l$-chains in $\X_\a$ and $\X_\ova$ respectively such that, 
 \begin{equation}\label{contrexample}
\N(A)=\sum \N(A_j^1\t A_j^2)=\sum \M(A_j^1)\M(A_j^2)+\sum \M(\pt A_j^1)\M(A_j^2)+\sum \M(A_j^1)\M(\pt A_j^2).
\end{equation}
Remark that we do not ask for the decomposition to be a set-decomposition. Remark also that for $A^l\in \MM^{\Z}_{k_l}(\R^{n_l})$, the identity $\M(A^1\times A^2)=\M(A^1)\times \M(A^2)$ (which is used in \eqref{contrexample}) can be readily obtained by considering $A^l$ as integral currents and using the definition of the mass through duality. \\
Let us assume for simplicity that $A$ is a cycle. In this case \eqref{contrexample} implies that $A_j^1$ is a cycle whenever $A_j^2\ne0$ and symmetrically $A_j^2$ is a cycle when $A_j^1\ne0$. We may thus assume that the $A_j^l$'s are cycles so that the identity rewrites as
\be\label{proof_ctrex_1}
\M(A)=\sum\M(A_j^1)\M(A_j^2).
\ee
The following result shows that in general, one cannot expect the existence of such decompositions. 

\begin{proposition}[Counterexample]\label{prop_ctrex} Let $n_1=n_2=2$. For every $\eps>0$, there exists a $(1,1)$-rectifiable cycle $A\in\MM^\Z_2(\R^4)$ with $\M(A)>0$ such that for every decomposition $A=\sum A_j^1\t A_j^2$ with $A_j^1\in\MM^\Z_2(\X_\a)$ and $A_j^2\in\MM^\Z_2(\X_\ova)$ cycles, there holds
\be\label{estim_prop_ctrex}
\sum \M(A_j^1)\M(A_j^2)\,\ge\lt(\dfrac43-\eps\rt)\M(A).
\ee
\end{proposition}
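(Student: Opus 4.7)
The plan is to exhibit, for each $\varepsilon>0$, an explicit rectifiable $(1,1)$-cycle $A\in\MM^\Z_2(\R^4)$ with $\M(A)>0$ for which every cycle-product decomposition satisfies the bound~\eqref{estim_prop_ctrex}. Fix a large integer $n=n(\varepsilon)$ and choose pairwise disjoint, well-separated integer unit $1$-cycles $a_1,\dots,a_n\in\MM^\Z_1(\X_\a)$ and $b_1,\dots,b_n\in\MM^\Z_1(\X_{\ov\a})$ (for instance, small simple loops around widely-spaced points), together with a bipartite incidence set $E\subset\{1,\dots,n\}^2$ to be specified. The candidate chain is
\[
A := \sum_{(i,j)\in E} a_i\times b_j.
\]
Since the supports of the tori $a_i\times b_j$ are pairwise disjoint in $\R^4$, $A$ is automatically a $(1,1)$-rectifiable $2$-cycle of mass $\M(A)=|E|$.

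For any decomposition $A=\sum_\ell C_\ell^1\times C_\ell^2$ into products of integer $1$-cycles, the argument has two parts. First, a local slicing analysis (using Proposition~\ref{prop_MWSl} in small product neighborhoods of each $\supp(a_i)\times\supp(b_j)$) extracts integer winding coefficients $\mu_\ell^1(i),\mu_\ell^2(j)\in\Z$ and yields the matrix identity $\mathbf 1_{(i,j)\in E}=\sum_\ell\mu_\ell^1(i)\mu_\ell^2(j)$. Second, the cycle constraint $\partial C_\ell^l=0$, combined with the wide separation of the base loops, forces each $C_\ell^l$ to close up along connecting paths that contribute \emph{extra} mass beyond the naive winding contribution $\sum_i|\mu_\ell^l(i)|$. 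Aggregating these contributions and optimizing the choice of $E$ over a carefully designed family of bipartite graphs will yield
\[
\sum_\ell\M(C_\ell^1)\M(C_\ell^2)\,\ge\,\left(\tfrac{4}{3}-\varepsilon\right)|E|\,=\,\left(\tfrac{4}{3}-\varepsilon\right)\M(A).
\]

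The main obstacle is producing the sharp constant $4/3$. The purely local $\ell^1$-nuclear bound (ignoring cycle constraints and the extra closing mass) saturates at ratio $1$, so the improvement to $4/3$ must come from the interplay of the closedness constraint $\partial C_\ell^l=0$ with the combinatorial structure of $E$. Realizing this will require a duality or LP-integrality-gap argument on the combinatorial side together with a separation-of-scales argument on the geometric side, and crucially a verification that the natural ``diagonal'' decomposition $A=\sum_{(i,j)\in E}a_i\times b_j$ is asymptotically optimal among product-of-cycles decompositions.
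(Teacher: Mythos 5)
Your proposed construction cannot work, and the obstruction is fatal. You choose the $a_i$'s and $b_j$'s to be integer unit \emph{cycles} in $\X_\a$ and $\X_{\ov\a}$ respectively. But then the defining formula $A=\sum_{(i,j)\in E}a_i\times b_j$ is itself a decomposition of $A$ into products of cycles, and since the supports of the tori are pairwise disjoint and the factors have unit mass, its cost is $\sum_{(i,j)\in E}\M(a_i)\M(b_j)=|E|=\M(A)$. So for this $A$ the infimum in \eqref{estim_prop_ctrex} is at most $\M(A)$, i.e.\ the ratio is at most $1$, and the claimed bound $\left(\tfrac{4}{3}-\eps\right)\M(A)$ fails for small $\eps$. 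In other words, the chain you built is a witness \emph{against} the proposition, not for it. Your later remark that the "$\ell^1$-nuclear bound saturates at ratio $1$'' is correct, but the hope that the cycle constraint and ``extra closing mass'' push it above $1$ is vacuous here: with closed loops as building blocks, there is nothing to close up, and the natural decomposition already satisfies the constraint.

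The paper avoids this by making the building blocks \emph{non-cycles}: three polyhedral $1$-chains $C_0,C_1,C_2$ of length $\ell$ in $\X_\a$, each with boundary $\lb N\rb-\lb S\rb$, and their isometric images $C_i'$ in $\X_{\ov\a}$; the chain is $A=\sum_i C_i\times(C_i'-C_{(i+1)\bmod 3}')$. Each term in this natural sum has one factor ($C_i$) that is \emph{not} a cycle, even though $A$ itself is a cycle of mass $6\ell^2$. Any decomposition into products of two cycles therefore cannot simply read off this representation. The lower bound is then obtained by a genuinely different mechanism than your LP/duality sketch: one takes the $0$-slice $\Sl_\g^xA$ for $\g=(1,3)$ at generic $x=se_1+te_3$, uses the slice inequality of Proposition~\ref{prop_MWSl} to relate slice masses to $\M(A_j^1),\M(A_j^2)$, and exploits that, after lumping extraneous atoms onto $c_0$, the resulting $0$-chains $\wt A^1_j(s)$, $\wt A^2_j(t)$ have coefficients summing to zero (because $A_j^1,A_j^2$ are cycles), hence have \emph{even} integer mass. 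Thus $\sum_j\M(\wt A^1_j(s))\M(\wt A^2_j(t))\in\{0,4,8,\dots\}$, while it is also $\ge 6=\M(\Sl_\g^xA)$; the next even multiple of $4$ above $6$ is $8$, giving $\sum\M(A_j^1)\M(A_j^2)\ge 8\ge (4/3-\eps)\cdot 6\ell^2$ for $\ell$ close to $1$. This parity/integrality step is the heart of the argument and has no counterpart in your sketch. To salvage your approach you would need to (a) replace the closed loops by arcs with shared endpoints so that the natural decomposition is not admissible, and (b) replace the vague ``duality or LP-integrality-gap argument'' by a concrete obstruction such as the slicing--parity argument above.
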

\begin{proof}
By assumption, $n_1=n_2=2$ so that $\dim \X_\a=\dim\X_\ova=2$. Let us denote $S=0_{\X_\a}$ and $N=e_1$ and let $\ell>1$. We consider three polyhedral $1$-chains in $\X_\a$ denoted $C_i$ ($i=0,1,2$). We assume that the $C_i$'s have multiplicity $1$ and are supported by three broken lines, each of which of length $\ell$ and with end points $S$ and $N$. We assume that the supports of the $C_i$'s only  intersect at $S$ and $N$ and we orient the edges so that $\pt C_i=\lb N\rb-\lb S \rb$. Eventually, let $L:\X_\a\to\X_\ova$ be the linear isometry defined by $L(e_j)=e_{j+2}$ for $j=1,2$. We set $C'_i:=L\pf C_i$ for $i=0,1,2$. We define:
\[
A:=C_0\t(C'_0-C'_1)+C_1\t(C'_1-C'_2)+C_2\t(C'_2-C'_0)\ \in\ \FF^\Z_{1,1}(\R^4).
\]
This definition has some symmetry. Indeed, $A$ rewrites as
\[
A=(C_2-C_1)\t C'_2+(C_0-C_2)\t C'_0+(C_1-C_0)\t C'_1.
\]
Obviously $A$ is a $(1,1)$-rectifiable chain with integer coefficients in $\R^4$ and $\M(A)=6\ell^2$. We compute, using the first formula for $\pt_1$ and the second formula for $\pt_2$,
\begin{align*}
\pt A=&\, \lt(\lb N\rb-\lb S \rb\rt)\t(C'_0-C'_1+C'_1-C'_2+C'_2-C'_0)\\ 
&\,-(C_2-C_1+C_0-C_2+C_1-C_0)\t\lt(\lb L(N)\rb-\lb L(S) \rb\rt)=0.
\end{align*}
In conclusion, $A$ is a $(1,1)$-rectifiable and normal cycle.\smallskip

Now, let us consider two sequences of cycles $A_j^1\in\MM^\Z_2(\X_\a)$, $A^2_j\in\MM^\Z_2(\X_\ova)$ such that $A=\sum A^1_j\t A^2_j$ and the mass identity~\eqref{proof_ctrex_1} holds. Let $\gamma=(1,3)$, we have almost every $x=se_1+te_3 \in\X_\g$,
\be\label{prf_ctrx0}
\Sl_\g^x A= \sum\Sl_1^{se_1}(A^1_j)\t\Sl_3^{te_3}(A^2_j) 
\ee
In order to obtain a lower bound on $\sum_j \M(A_j^1)\M(A_j^2)$ we estimate the masses of the slices $\Sl_1^{se_1}(A^1_j)$, $\Sl_3^{te_3}(A^2_j)$ from below and use the following bounds form Proposition~\ref{prop_Sltfc}(ii).
\be\label{prf_ctrx1}
\M(A_j^1)\ge\int_0^1\M(\Sl_1^{se_1}(A^1_j))\, ds,\qquad\M(A_j^2)\ge\int_0^1\M(\Sl_3^{te_3}(A^2_j))\, dt\qquad\text{for }j\ge1.
\ee
Let us fix $0<s,t<1$ and let us denote $se_2+c_ie_2:=C_i\cap(se_1+\R e_2)$, $te_3+c'_ie_4:=C'_i\cap(te_2+\R e_4)$. We have for every $s,t\in(0,1)$,
\be\label{prf_ctrx15}
\Sl_\g^x A=
\lb c_0e_2\rb\t(\lb c'_0e_4\rb-\lb c'_1e_4\rb)+\lb c_1e_2\rb\t(\lb c'_1e_4\rb-\lb c'_2e_4\rb)+\lb c_2e_2\rb\t(\lb c'_2e_4\rb-\lb c'_0e_4\rb).
\ee
For $j\ge1$ and almost every $0<s<1$,  $\Sl_1^{se_1}(A^1_j)$ is a sum of $0$-cells $\sum q_{j,a}\lb ae_2\rb$ with $a\in\R$, $q_{j,a}\in\Z$. We single out the points $a=c_0,c_1,c_2$ and write 
\[
A^1_s:=\Sl_1^{se_1}(A^1_j)=m_{j,0}\lb c_0e_2\rb+m_{j,1}\lb c_1e_2\rb+m_{j,2}\lb c_2e_2\rb+\sum_{a\not\in\{c_0,c_1,c_2\}}q_{j,a}\lb ae_2\rb.
\]
Since $A^1_j$ is a cycle, there holds for $j\ge 1$,
\be\label{prf_ctrx2}
m_{j,0}+m_{j,1}+m_{j,2}+\sum_{a\not\in\{c_0,c_1,c_2\}} q_{j,a}=0.
\ee
Similarly, for almost every $0<t<1$ we write 
\[
A^2_t:=\Sl_3^{te_3}(A^2_j)=m'_{j,0}\lb c'_0e_4\rb+m'_{j,1}\lb c'_1e_4\rb+m'_{j,2}\lb c'_2e_4\rb+\sum_{a'\not\in\{c'_0,c'_1,c'_2\}}q'_{j,a'}\lb a'e_4\rb,
\]
and there holds for $j\ge 1$,
\be\label{prf_ctrx3}
m'_{j,0}+m'_{j,1}+m'_{j,2}+\sum_{a'\not\in\{c'_0,c'_1,c'_2\}} q_{j,a'}=0.
\ee
By identification,  the identity~\eqref{prf_ctrx0} writes as 
\be\label{prf_ctrx4}
\begin{array}{rl}
\ds\sum_j m_{j,0}m'_{j,0}=\sum_j m_{j,1}m'_{j,1}=\sum_j m_{j,2}m'_{j,2}&=1,\smallskip\\
\ds\sum_j m_{j,0}m'_{j,1}=\sum_j m_{j,1}m'_{j,2}=\sum_j m_{j,2}m'_{j,0}&=-1,
\end{array}
\ee
and 
\begin{multline}\label{prf_ctrx5}
\sum_jm_{j,i}q'_{j,a'}=\sum_jq_{j,a}m'_{j,i}=\sum_jq_{j,a}q'_{j,a'}=0\\
\text{for }i=0,1,2,\text{ for }a\in\R\sm\{c_0,c_1,c_2\}\text{ and for }a'\in\R\sm\{c'_0,c'_1,c'_2\}.
\end{multline}
Next, we define $\wt m_{j,0}:=m_{j,0}+\sum_{a\not\in\{c_0,c_1,c_2\}}q_{j,a}$ and set 
\[
\wt A^1_j(s):=\wt m_{j,0}\lb c_0e_2\rb+m_{j,1}\lb c_1e_2\rb+m_{j,2}\lb c_2e_2\rb.
\]
Similarly, setting $\wt m'_{j,0}:=m'_{j,0}+\sum_{a\not\in\{c'_0,c'_1,c'_2\}}q'_{j,a}$,
\[
\wt A^2_j(t):=\wt m'_{j,0}\lb c'_0e_4\rb+m'_{j,1}\lb c'_1e_4\rb+m'_{j,2}\lb c'_2e_4\rb.
\]
We have by triangular inequality,
\be\label{prf_ctrx6}
\M(\wt A^1_j(s))\le\M(\Sl_1^{se_1}),\qquad\M(\wt A^2_j(t))\le\M(\Sl_1^{te_3})\qquad\text{for almost every }s,t\in(0,1)
\ee
and taking into account~\eqref{prf_ctrx2}--\eqref{prf_ctrx5} there hold
\be\label{prf_ctrx7}
\wt m_{j,0}+m_{j,1}+m_{j,2}=0,\qquad\qquad\wt m'_{j,0}+m'_{j,1}+m'_{j,2} = 0,
\ee
and 
\be\label{prf_ctrx8}
\sum \wt A^1_j(s)\t \wt A^2_j(t)=\Sl_\g^x A.
\ee
We see from~\eqref{prf_ctrx7} that $\M\lt(\wt A^1_j(s)\rt)=|\wt m_{j,0}|+|m_{j,1}|+|m_{j,2}|$ and $\M\lt(\wt A^2_j(t)\rt)=|\wt m'_{j,0}|+|m'_{j,1}|+|m'_{j,2}|$ are even integers, hence
\be\label{prf_ctrx9}
\sum_j\M\lt(\wt A^1_j(s))\rt)\M\lt(\wt A^2_j(t)\rt)\,\in \, \{0,4,8,12,\dots\}.
\ee
But from~\eqref{prf_ctrx8},~\eqref{prf_ctrx15} and the triangle inequality we also have
\[
6=\M\lt(\Sl_\g^x A\rt)\le\sum_j\M\lt(\wt A^1_j(s)\t\wt A^2_j(t)\rt)\le\sum_j\M\lt(\wt A^1_j(s)\rt)\M\lt(\wt A^2_j(t)\rt).
\]
Together with~\eqref{prf_ctrx9} we get for almost every $s,t\in(0,1)$,
\[
8\le \sum_j\M\lt(\wt A^1_j(s)\rt)\M\lt(\wt A^2_j(t)\rt).
\]
Eventually, integrating on $\{(s,t):0<s,t<1\}$ and using~\eqref{prf_ctrx1} and~\eqref{prf_ctrx6} we conclude that 
\[
\sum \M(A_j^1)\M(A_j^2)\ge 8.
\] 
Since $\M(A)=6\ell^2$, given $\eps>0$, we can choose $\ell>1$ such that $\sum \M(A_j^1)\M(A_j^2)\ge8\ge 8\ell^2-6\eps=(4/3-\eps)\M(A)$. This proves the proposition.
\end{proof}

\begin{remark}\label{rem_ctrex_1}~

\noindent
(1) If $k_1=1$, we have seen that  decompositions satisfying~\eqref{NA=SumNAA} do exist. In the other extreme case $k_1=n_1$, the only $(n_1,k_2)$-rectifiable cycle is $0$. In the other cases $1\le k_1\le n_1-1$, $1\le k_2\le n_2-1$, we obtain  counterexamples as above by substituting for the $C_i$'s three multiplicity 1 smooth $k_1$-chains close to a unit $k_1$-disk whose boundary is a unit $(k_1-1)$-sphere  and whose supports only meet on their common frontier. We substitute similarly for the $C'_i$'s three smooth $k_2$-chains with the corresponding properties. \medskip

\noindent
(2) One may wonder whether there exists a universal constant $\kappa\ge0$ possibly depending on $k_1,k_2,n_1,n_2$ such that for every $(k_1,k_2)$-rectifiable cycle $A\in\MM^\Z_k(\R^n)$,
\be\label{infimum_decomp}
\inf \lt\{\sum \M(A^1_j)\M(A^2_j) : A_j^l\ \text{rectifiable $1$-cycles s.t. } A=\sum A^1_j\t A^2_j\rt\}
\le\kappa\M(A).
\ee
From the example considered above we see that for $1\le k_1\le n_1-1$, $1\le k_2\le n_2-1$, we have $\kappa\ge4/3$. This lower bound is not optimal. Indeed, modifying the construction of the proof of the proposition by considering $N>3$ non-intersecting $1$-chains $C_i$ instead of three and setting $C'_i:=L\pf C_i$ for $i=0,1,\dots,N-1$ (where $L:\X_\a\to\X_\ova$ is still the linear isometry defined by $L(e_j)=e_{j+2}$ for $j=1,2$) and then 
\[
A:=\sum_{i=0}^{N-1} C_i\t\lt(C'_i-C'_{(i+1)(\text{mod }N)}\rt),
\]
we obtain a $(1,1)$-rectifiable cycle such that $\M(A)=2N\ell^2$. Reasoning as in the proof of the proposition, we get that the left hand side of~\eqref{infimum_decomp} is bounded from below by $4(N-1)$ (details are left to the reader). This leads to $\kappa\ge2$. However, we do not believe that~\eqref{infimum_decomp} holds true in general for $\kappa=2$ and we do not even know whether there exist $\kappa<\oo$ such that~\eqref{infimum_decomp} holds true. The question remains open.
\end{remark}

\subsection*{Acknowledments}
The authors thank Antoine Julia and Filippo Santambroggio for suggesting the counterexample~{5} of Section~\ref{Sexples}. M. Goldman is partially supported by the ANR SHAPO.
B. Merlet is partially supported by the INRIA team RAPSODI and the Labex CEMPI (ANR-11-LABX-0007-01).

\bibliographystyle{alpha}
\bibliography{BibStripes}

\end{document}